\keywords{Weihrauch reducibility, computable analysis, well-quasiorders, reverse mathematics}
\title{Finding descending sequences through ill-founded linear orders}
\author{Jun Le Goh}
\address{Department of Mathematics\\
University of Wisconsin, Madison}
\email{junle.goh@wisc.edu}
\author{Arno Pauly}
\address{Department of Computer Science\\
University of Swansea}
\email{arno.m.pauly@gmail.com}
\author{Manlio Valenti}
\address{Department of Mathematics, Computer Science and Physics\\
University of Udine}
\email{manlio.valenti@uniud.it}
\thanks{The second author is grateful to Mathieu Hoyrup for answering a question on the impact of the codomain on the deterministic part, and to Linda Westrick for several helpful discussions.
The third author thanks Alberto Marcone for many valuable suggestions during the preparation of the draft. He also thanks  Damir Dzhafarov, Giovanni Sold\`a and Vittorio Cipriani for useful discussions on the topics of the paper. His research was partially supported by the Italian PRIN 2017 Grant ``Mathematical Logic: models, sets, computability".
The authors made significant progress while all attending the BIRS-CMO workshop ``Reverse Mathematics of Combinatorial Principles (19w5111)''.}
\newcommand{\textdef}[1]{\textit{#1}}
\newcommand{\st}{:}
\newcommand{\sequence}[2]{(#1)_{#2}}
\newcommand{\coding}[1]{\langle #1 \rangle}
\newcommand{\defined}{\downarrow}
\newcommand{\defiff}{:\hspace{-1mm}\iff}
\newcommand{\pfunction}{:\subseteq}
\newcommand{\mfunction}[2]{:#1 \rightrightarrows #2}
\newcommand{\pmfunction}[2]{\pfunction #1 \rightrightarrows #2}
\newcommand{\charfun}[1]{\chi_{#1}}
\newcommand{\chiPi}{\charfun{\Pi^1_1} }
\newcommand{\setdifference}{\backslash}
\newcommand{\setcomplement}[2][]{\ifthenelse{\equal{#1}{}}{#2^\mathrm{C}}{ {#1}\setdifference {#2} }}
\newcommand{\ran}{\operatorname{ran}}
\newcommand{\dom}{\operatorname{dom}}
\newcommand{\id}{\operatorname{id}}
\newcommand{\length}[1]{|#1|}
\newcommand{\prefix}{\sqsubseteq}
\newcommand{\pprefix}{\sqsubset}
\newcommand{\concat}{\raisebox{.9ex}{\ensuremath\smallfrown} }
\newcommand{\abslength}[1]{|#1|}
\newcommand{\Baire}{{\mathbb{N}^\mathbb{N}}}
\newcommand{\baire}{{\mathbb{N}^{<\mathbb{N}}}}
\newcommand{\Cantor}{{2^\mathbb{N}}}
\newcommand{\boldfaceDelta}{\boldsymbol{\Delta}}
\newcommand{\boldfaceSigma}{\boldsymbol{\Sigma}}
\newcommand{\boldfacePi}{\boldsymbol{\Pi}}
\newcommand{\boldfaceGamma}{\boldsymbol{\Gamma}}
\newcommand{\repmap}[1]{\delta_{#1}}
\newcommand{\LO}{\mathrm{LO}}
\newcommand{\QO}{\mathrm{QO}}
\newcommand{\WO}{\mathrm{WO}}
\newcommand{\KB}{\mathrm{KB}}
\newcommand{\repTree}{\mathbf{Tr}}
\newcommand{\weireducible}{\le_{\mathrm{W}}}
\newcommand{\strictlyweireducible}{<_\mathrm{W}}
\newcommand{\weiequiv}{\equiv_{\mathrm{W}}}
\newcommand{\weiincomparable}{~|_{\mathrm{W}~}}
\newcommand{\strongweireducible}{\le_{\mathrm{sW}}}
\newcommand{\strongweiequiv}{\equiv_{\mathrm{sW}}}
\newcommand{\muchnikreducible}{\le_w}
\newcommand{\LPO}{ \mathsf{LPO} }
\newcommand{\KL}{ \mathsf{KL} }
\newcommand{\mflim}{\mathsf{lim}}
\newcommand{\mfJ}{\mathsf{J}}
\newcommand{\CNatural}{\mathsf{C}_{\mathbb{N}}}
\newcommand{\UCNatural}{\mathsf{UC}_{\mathbb{N}}}
\newcommand{\RT}[2]{\mathsf{RT}^{{#1}}_{{#2}} }
\newcommand{\cRT}[2]{\mathsf{cRT}^{{#1}}_{{#2}} }
\newcommand{\ATR}{\mathsf{ATR}}
\newcommand{\wList}{\ensuremath{\mathsf{wList}}}
\newcommand{\Comprehension}[1]{{#1}\text{-}\mathsf{CA}}
\newcommand{\PiCA}{\Comprehension{\boldfacePi^1_1}}
\newcommand{\UCBaire}{\mathsf{UC}_\Baire}
\newcommand{\CBaire}{\mathsf{C}_\Baire}
\newcommand{\TCBaire}{\mathsf{TC}_\Baire}
\newcommand{\CCantor}{\mathsf{C}_\Cantor}
\newcommand{\findC}[2]{\mathsf{FindC}^{{#1}}_{{#2}}}
\newcommand{\findS}[1]{\mathsf{FindS}^{#1}}
\newcommand{\ADS}{\mathsf{ADS}}
\newcommand{\GSADS}{\mathsf{General}\text{-}\mathsf{SADS}}
\newcommand{\codedBS}[1]{\ifthenelse{\equal{#1}{}}{\mathsf{BS}}{ {#1}\text{-}\mathsf{BS} }}
\newcommand{\codedDS}[1]{\ifthenelse{\equal{#1}{}}{\mathsf{DS}}{ {#1}\text{-}\mathsf{DS} }}
\newcommand{\DS}{\mathsf{DS}}
\newcommand{\BS}{\codedBS{}}
\newcommand{\codedBound}[1]{{#1}\mathsf{-Bound}}
\newcommand{\PiBound}{\codedBound{\boldfacePi^1_1}}
\newcommand{\codedChoice}[3]{\ifthenelse{\equal{#1}{}}{\mathsf{C}^{\vphantom{g}\mathsf{#2}}_{{#3}} }{ {#1}\text{-}\mathsf{C}^{\vphantom{g}\mathsf{#2}}_{{#3}} }}
\newcommand{\Choice}[1]{\codedChoice{}{}{#1}}
\newcommand{\parallelization}[1]{\widehat{#1}}
\newcommand{\compproduct}{*}
\newcommand{\firstOrderPart}[1]{{}^1{#1}}
\newcommand{\DetPartX }[2]{\mathrm{Det}_{ #2 }(#1)}
\newcommand{\DetPart}[1]{\DetPartX{#1}{}}
\tikzstyle{box}=[fill={rgb,255: red,228; green,228; blue,228}, draw=black, shape=rectangle]
\tikzstyle{reducible}=[->, dashed]
\tikzstyle{strictreducible}=[->]
\tikzstyle{nonreducible}=[->, draw=red]
\tikzstyle{uncomp}=[draw=red, <->]
\tikzstyle{thmref}=[opacity=0,inner sep=2mm]
\newcites{err}{References}
\newtheorem{theorem}{Theorem}[section]
\newtheorem{proposition}[theorem]{Proposition}
\newtheorem{lemma}[theorem]{Lemma}
\newtheorem{corollary}[theorem]{Corollary}
\theoremstyle{definition}
\newtheorem{definition}[theorem]{Definition}
\newtheorem{question}[theorem]{Open Question}
\newtheorem{example}[theorem]{Example}
\subjclass{Primary: 03D30; Secondary: 03D78, 06A75.}
\begin{document}

\begin{abstract}
	In this work we investigate the Weihrauch degree of the problem $\DS$ of finding an infinite descending sequence through a given ill-founded linear order. 
	We show that $\DS$, despite being hard to solve (it has computable inputs with no hyperarithmetic solution), is rather weak in terms of uniform computational strength. To make the latter precise, we introduce the notion of the deterministic part of a Weihrauch degree. We then generalize $\DS$ 
	by considering $\boldfaceGamma$-presented orders, where $\boldfaceGamma$ is a Borel pointclass or $\boldfaceDelta^1_1$, $\boldfaceSigma^1_1$, $\boldfacePi^1_1$. We study the obtained $\DS$-hierarchy 
	of problems in comparison with the (effective) Baire hierarchy and show that 
	it does not collapse at any finite level.
\end{abstract}

\begin{center}
	\bfseries\uppercase{Errata}
	\makeatletter
	\def\@currentlabel{Errata}
	\label{errata}
	\makeatother
\end{center}

\vspace*{1cm}

In the original version of this work, we claimed that the problem $\DS$ (find an infinite descending sequence through an ill-founded linear order) and the problem $\BS$ (find an infinite bad sequence through a non-well quasi-order) are Weihrauch equivalent (\thref{thm:bs=ds}). Recently, Takayuki Kihara found a critical gap in our proof. In fact, $\DS\strictlyweireducible \BS$ \citeerr{GPVweaknessDS}.

The following is a list of the results that are affected:
\begin{itemize}
	\item \thref{thm:bs=ds};
	\item \thref{thm:delta0kDS=delta0kBS} and \thref{thm:delta11DS=delta11BS}: these are one-line relativizations of \thref{thm:bs=ds};
	\item \thref{thm:recap_coded}: the equivalences $\codedDS{\boldfacePi^0_k} \weiequiv \codedDS{\boldfaceDelta^0_{k+1}} \weiequiv \codedDS{\boldfaceSigma^0_{k+1}}$ are unaffected, but the reductions involving $\codedBS{\boldfacePi^0_k}$ and $\codedBS{\boldfaceDelta^0_{k+1}}$ were obtained using \thref{thm:delta0kDS=delta0kBS} and transitivity.
\end{itemize}

At the moment of writing, we are not aware whether all the above-mentioned claims admit a counterexample. 

All the other results are not using \thref{thm:bs=ds}, and either they only deal with $\DS$ or they are standalone results about $\BS$ which are not affected by the above error. In particular,
\thref{thm:codedBS=jump_of_BS}, 
\thref{thm:Sigma0kDS_LQO=Delta0_k+1DS_LO}, \thref{thm:Delta11-DS=DS*UCBaire}, \thref{thm:Pi11ca<Sigma11DS_LQO}, and \thref{thm:Pi11DS<Pi11BS} are correct to the best of our knowledge.

In order to keep the same structure and numbering of the published version, we only applied minimal changes to the following draft. In particular, we rephrased the parts where the equivalence between $\DS$ and $\BS$ was stated in words and the proofs of the theorems affected by the error have been replaced with a reference to this erratum. We also updated Figure~\ref{fig:summary_coded} to reflect our current knowledge on the relation between the coded versions of $\DS$.

\vspace*{1cm}

{
\let\oldaddcontentsline\addcontentsline
\renewcommand{\addcontentsline}[3]{}
\bibliographystyleerr{amsplain}
\bibliographyerr{bibliography}
\let\addcontentsline\oldaddcontentsline
}

\newpage

\maketitle

\vspace*{-3ex}

\tableofcontents
\vspace*{-4ex}

\section{Introduction}

We study the difficulty of the following two 
computational problems:
\begin{itemize}
	\item Given an ill-founded countable linear order, find an infinite decreasing sequence in it ($\DS$) 
	\item Given a countable quasi-order which is not well, find a bad sequence in it ($\BS$).
\end{itemize}
Motivation for the first stems from the treatment of ordinals in reverse mathematics. When working within submodels of second order arithmetic, the notion of well-order depends on the fixed model. This leads to the so-called pseudo-well-orders, i.e.\ ill-founded linear orders s.t.\ no descending sequence exists within the model itself. Such a linear order would appear to be well-founded from the point of view of the model. As a classic example of a pseudo-well-order, consider Kleene's computable linear order with no hyperarithmetic descending sequence (\cite[Lem.\ III.2.1]{SacksHRT}). Such a linear order is a well-order when seen within the $\omega$-model $\mathrm{HYP}$ consisting exactly of the hyperarithmetic sets. Pseudo-well-orders were first studied in \cite{Harrison68} and proved to be a powerful tool in reverse mathematics, especially when working at the level of $\mathrm{ATR}_0$ (see \cite[Sec.\ V.4]{Simpson09}). Our first task can essentially be rephrased as being concerned with the difficulty of revealing a pseudo-ordinal as not actually being an ordinal.

Our second task can be seen as a abstraction of the computational content of theorems in well-quasi-order (wqo) theory. There are many famous theorems asserting that wqo's are closed under certain operations. Examples such as Kruskal's tree theorem, as well as Extended Kruskal's theorem and Higman's theorem, have been well-studied in proof theory via their proof-theoretic ordinals (see \cite{SimpsonWQO}). However, in their usual form these results lack computational content. Indeed, these theorems state that a certain quasi-order $(Q,\preceq_Q)$ is a wqo. Phrasing a result of this kind in the classical $\Pi^1_2$-form would yield a statement of the type ``given an infinite sequence $\sequence{q_n}{n\in\mathbb{N}}$ in $Q$, find a pair of indexes $i<j$ s.t.\ $q_i \preceq_Q q_j$''. Such a pair $(i,j)$ would be a witness of the fact that the sequence $\sequence{q_n}{n\in\mathbb{N}}$ is not bad. However, while proving that $(Q,\preceq_Q)$ is a wqo can be ``hard'' (in particular Extended Kruskal's theorem is not provable in $\boldfacePi^1_1\mathrm{-CA}_0$ \cite{SimpsonWQO}), producing a pair of witnesses for each infinite sequence is a $\preceq_Q$-computable problem (as it can be solved by an extensive search)! 

These theorems are very extreme examples of a well-known difference between reverse mathematics and computable analysis: quoting \cite{GM09} ``the computable analyst is allowed to conduct an unbounded search for an object that is guaranteed to exist by (nonconstructive) mathematical knowledge, whereas the reverse mathematician has the burden of an existence proof with limited means''. 

On the other hand, considering the contrapositives of the above theorems can reveal some (otherwise hidden) computational content. For example, to show that a given quasi-order is not a wqo it suffices to produce a bad sequence in it. Extended Kruskal's theorem or Higman's theorem can be stated in the form ``given a bad sequence for the derived quasi-order, find a bad sequence for the original quasi-order". Our second problem trivially is an upper bound for all these statements, as we disregard any particular reason for why the given quasi-order is not a wqo, and just start with the promise that it is not. Our results thus lay the groundwork for a future exploration of the computational content of individual theorems from wqo theory.

We use the framework of Weihrauch reducibility for our investigation. This means that we compare the problems under investigation to a scaffolding of benchmark problems by asking whether there is an otherwise computable uniform procedure that solves one problem while invoking a single oracle call to the other problem. We are not constrained to particular weak systems in proving that these procedures are actually correct, but rather use whatever proof techniques of ordinary mathematics are suitable. In particular, we can take aspects like the ill-foundedness of the given linear order as external promises not represented in the coding of the input. We can use the fact freely in reasoning about the correctness of our procedure, but there is no evidence provided as input of the procedure.

\subsection{Summary of our results} There are a number of problems whose degrees are milestones in the Weihrauch lattice and are often used as benchmarks to calibrate the uniform strength of the multi-valued function under analysis. Some of them roughly correspond to the so-called \textit{big five} subsystems of second order arithmetic: computable problems correspond to $\mathrm{RCA}_0$, $\CCantor$ (closed choice on the Cantor space) corresponds to $\mathrm{WKL}_0$, $\mflim$ (limit in the Baire space) and its iterations correspond to $\mathrm{ACA}_0$, $\CBaire$ (closed choice on the Baire space) and its variants $\UCBaire$ and $\TCBaire$ correspond to $\mathrm{ATR}_0$, $\PiCA$ corresponds to $\boldfacePi^1_1\mathrm{-CA}_0$.

We show that $\DS$ does not belong to this ``explored'' part of the lattice. To put it in a nutshell, our results show that it is difficult to solve $\DS$, but that $\DS$ is rather weak in solving other problems. For example, $\DS$ has computable inputs without any hyperarithmetic solutions. On the other hand, $\DS$ uniformly computes only the limit computable functions. We provide a few characterizations that tell us what the greatest Weihrauch degree with representatives of particular types below $\DS$ is, and include some general observations on this approach. The diagram in Figure~\ref{fig:summary} shows the relations between $\DS$ and several other Weihrauch degrees. Dashed arrows represent Weihrauch reducibility in the direction of the arrow, solid arrows represent strict Weihrauch reducibility.
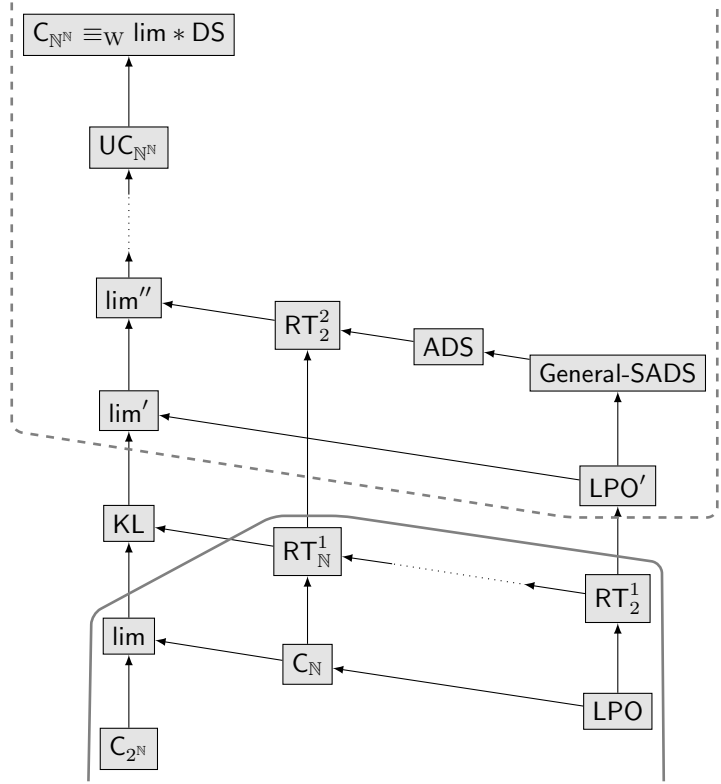
\begin{figure}[tbh]
	\begin{center}
		\tikzstyle{every picture}=[tikzfig]
		\begin{tikzpicture}
	\begin{pgfonlayer}{nodelayer}
		\node [style=box] (1) at (-1, 14) {$\UCBaire$};
		\node [style=box] (3) at (-1, 17) {$\CBaire \weiequiv \mflim\compproduct \DS$};
		\node [style=box] (7) at (-1, 1) {$\mflim$};
		\node [style=box] (9) at (12, 5) {$\LPO'$};
		\node [style=box] (13) at (3.75, 3.25) {$\RT{1}{\mathbb{N}}$};
		\node [style=box] (14) at (-1, 4) {$\mathsf{KL}$};
		\node [style=box] (15) at (12, 8) {$\GSADS$};
		\node [style=box] (16) at (-1, 7) {$\mflim'$};
		\node [style=box] (17) at (-1, 10) {$\mflim''$};
		\node [style=box] (18) at (12, 2) {$\RT{1}{2}$};
		\node [style=box] (19) at (12, -1) {$\LPO$};
		\node [style=box] (20) at (3.75, 0.25) {$\CNatural$};
		\node [style=box] (21) at (-1, -2) {$\CCantor$};
		\node [style=box] (22) at (7.5, 8.7) {$\ADS$};
		\node [style=box] (23) at (3.75, 9.27) {$\RT{2}{2}$};
		\node [style=none] (24) at (-1, 11.25) {};
		\node [style=none] (25) at (-1, 12.75) {};
		\node [style=none] (26) at (6, 2.925) {};
		\node [style=none] (27) at (9.5, 2.38) {};
	\end{pgfonlayer}
	\begin{pgfonlayer}{edgelayer}
		\draw [style=strictreducible] (7) to (14);
		\draw [style=strictreducible] (13) to (14);
		\draw [style=strictreducible] (9) to (15);
		\draw [style=strictreducible] (16) to (17);
		\draw [style=strictreducible] (14) to (16);
		\draw [style=strictreducible] (1) to (3);
		\draw [style=strictreducible] (21) to (7);
		\draw [style=strictreducible] (20) to (7);
		\draw [style=strictreducible] (20) to (13);
		\draw [style=strictreducible] (19) to (20);
		\draw [style=strictreducible] (19) to (18);
		\draw [style=strictreducible] (18) to (9);
		\draw [style=strictreducible] (15) to (22);
		\draw [style=strictreducible] (22) to (23);
		\draw [style=strictreducible] (23) to (17);
		\draw [style=strictreducible] (9) to (16);
		\draw [style=strictreducible] (13) to (23);
		\draw [style=strictreducible] (17) to (24.center);
		\draw [style=strictreducible] (25.center) to (1);
		\draw [style=strictreducible] (18) to (27.center);
		\draw [style=strictreducible] (26.center) to (13);
		\draw [dotted] (24) to (25);
		\draw [dotted] (26) to (27);
		\draw [gray, rounded corners, line width=1pt] plot coordinates {($(21.south west) + (-0.3,-0.3)$) ($(7.north west) + (-0.3,0.1)$) ($ (13.north west)+(0,0.3) $) ($ (13.north east)+(0,0.3) $) ($(18.north east) + (0.3,0.3)$) ($ (21.south -| 19.east) + (0.3,-0.3)$)};
		\draw [gray, dashed, rounded corners, line width=1pt] plot coordinates {($(3.north west)+ (-0.3,0.3)$) ($(3.west |- 16.south)+ (-0.3,0)$) ($(9.south west)+(-0.3,-0.3)$) ($(9.south -| 15.east)+(0.3,-0.3)$) ($(3.north -| 15.east)+(0.3,0.3)$) };
	\end{pgfonlayer}
\end{tikzpicture}
		\caption{An overview of some parts of the Weihrauch lattice. The solid frame collects the degrees belonging to the lower cone of $\DS$, while the dashed frame collects principles that are not Weihrauch reducible to $\DS$. The only principle shown which is above $\DS$ is $\CBaire$. We do not know whether $\KL$ is reducible to $\DS$.}
		\label{fig:summary}
	\end{center}
\end{figure}
Next, we generalize our results by exploring how different presentations of the same order can affect the uniform strength of the same computational task (finding descending sequences in it). We study the problems $\codedDS{\boldfaceGamma}$ and $\codedBS{\boldfaceGamma}$, where the name of the input order carries ``less accessible information'' on the order itself (namely $a \le_L b$ is assumed to be a $\Gamma$-condition relative to the name of the order). We summarize the results in Figure~\ref{fig:summary_coded}.
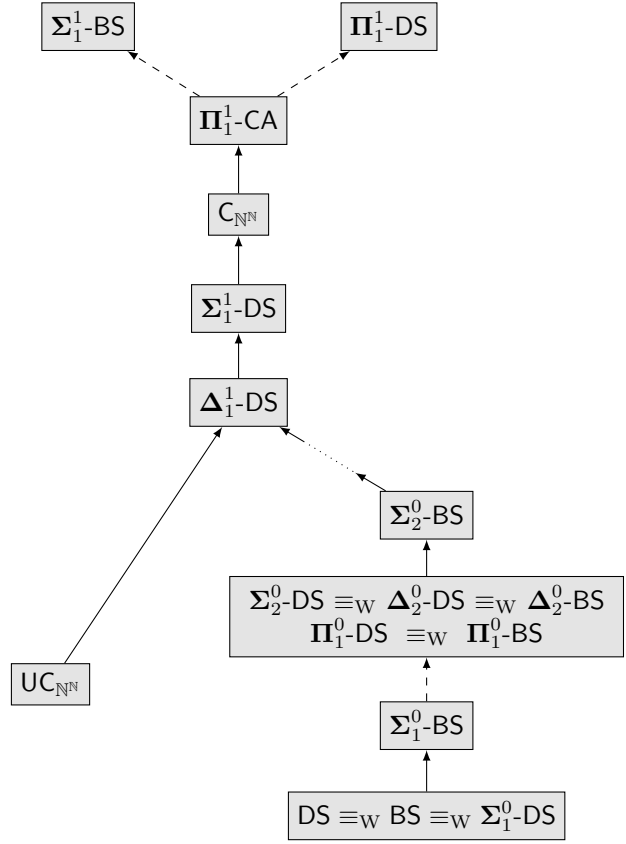
\begin{figure}[tbh]
	\begin{center}
		\tikzstyle{every picture}=[tikzfig]
		\begin{tikzpicture}
	\begin{pgfonlayer}{nodelayer}
		\node [style=box] (1) at (4, 7.5) {$\UCBaire$};
		\node [style=box] (3) at (9, 20) {$\CBaire$};
		\node [style=box] (4) at (14, 5) {$\DS\weiequiv \codedDS{\boldfaceSigma^0_1}$};

		\node [style=box, align=center, text width=5cm] (5) at (14, 8) {
		$\codedDS{\boldfaceSigma^0_2} \weiequiv \codedDS{\boldfaceDelta^0_2} \weiequiv \codedDS{\boldfacePi^0_1}$};
		\node [style=box, align=center, text width=5cm] (12) at (14, 11) {
		$\codedDS{\boldfaceSigma^0_3} \weiequiv \codedDS{\boldfaceDelta^0_3} \weiequiv \codedDS{\boldfacePi^0_2}$};
		\node [style=box] (13) at (9, 15) {$\codedDS{\boldfaceDelta^1_1}$};
		\node [style=box] (14) at (9, 17.5) {$\codedDS{\boldfaceSigma^1_1}$};
		\node [style=box] (15) at (9, 25) {$\codedDS{\boldfacePi^1_1}$};
		\node [style=box] (20) at (9, 22.5) {$\PiCA$};
		\node [style=none] (24) at (12.4, 12.3) {};
		\node [style=none] (25) at (10.6, 13.7) {};
	\end{pgfonlayer}
	\begin{pgfonlayer}{edgelayer}
		\draw [style=strictreducible] (1) to (13);
		\draw [style=strictreducible] (5) to (12);
		\draw [style=strictreducible] (13) to (14);
		\draw [style=strictreducible] (14) to (3);
		\draw [style=strictreducible] (3) to (20);
		\draw [style=reducible] (20) to (15);
		\draw [style=strictreducible] (4) to (5);
		\draw [style=strictreducible] (12) to (24.center);
		\draw [style=strictreducible] (25.center) to (13);
		\draw [dotted] (24) to (25);
	\end{pgfonlayer}
\end{tikzpicture}
		\caption{Diagram presenting the relations between the various generalizations of $\DS$.}
		\label{fig:summary_coded}
	\end{center}
\end{figure}

\subsection{Structure of the paper} After a short introduction on the preliminary notions on represented spaces and Weihrauch reducibility (Section~\ref{sec:background}), we define the deterministic part of a multi-valued function and explore the algebraic properties of the operator $\DetPartX{\cdot}{\mathbf{X}}$ (Section~\ref{sec:detpart}). These results will be very useful in the study of the problems $\DS$ and $\BS$ (Section~\ref{sec:ds}) and their generalizations $\codedDS{\boldfaceGamma}$ and $\codedBS{\boldfaceGamma}$ (Section~\ref{sec:presentations}).

\section{Background}
\label{sec:background}
For an introduction to Weihrauch reducibility, we point the reader to \cite{BGP17}; for represented spaces to \cite{Pauly16}. Below we briefly introduce the notions we will need, as well as state useful results. Those familiar with Weihrauch reducibility should read \thref{def:first_order_part} where we define the first-order part of a problem, recently studied by Dzhafarov, Solomon, Yokoyama \cite{DSYFirstOrder}.

A \emph{represented space} $\textbf{X}$ is a set $X$ together with a (possibly partial) surjection $\repmap{\textbf{X}} \pfunction \Baire \to X$. We can transfer notions of computability from $\Baire$ to $\textbf{X}$ as follows. For each $x \in X$, we say that $p$ is a \textdef{($\repmap{\textbf{X}}$-)name of $x$} if $\repmap{\textbf{X}}(p) = x$. We say that $x \in X$ is \textdef{($\repmap{\mathbf{X}}$-)computable} if it has a computable ($\repmap{\textbf{X}}$-)name. 

We list some relevant examples. Let $\mathbf{LO} = (\LO,\repmap{\mathbf{LO}})$ be the represented space of linear orders with domain contained in $\mathbb{N}$, where each linear order $(L, \le_L)$ is represented by the characteristic function of the set $\{\coding{a,b} \in \mathbb{N} \st a\le_L b\}$. Let $\textbf{WO} = (\WO,\repmap{\mathbf{WO}})$ be the represented space of well-orders with domain contained in $\mathbb{N}$, where $\repmap{\WO}$ is the restriction of $\repmap{\LO}$ to codes of well-orders. Similarly, let $\mathbf{QO} = (\QO,\repmap{\mathbf{QO}})$ be the represented space of quasi-orders (represented via the characteristic function of the relation). Let also $\repTree$ be the space of subtrees of $\baire$, represented by characteristic functions. For every string $\sigma \in \baire$ we denote with $\sigma[n]$ the prefix of length $n$ of $\sigma$.

We will formalize the problems under investigation as partial multi-valued functions between represented spaces $f \pmfunction{\textbf{X}}{\textbf{Y}}$. For each $x \in X$, $f(x)$ denotes the set of possible outputs corresponding to the input $x$. The \emph{domain} $\dom(f)$ is the set of all $x \in X$ such that $f(x)$ is nonempty. We often refer to each $x \in \dom(f)$ as an \emph{$f$-instance} and each $y \in f(x)$ as an \emph{$f$-solution to $x$}. When we define a problem, we will often not specify its domain explicitly, in which case its domain should be taken to be as large as possible. The \emph{codomain} of $f \pmfunction{\textbf{X}}{\textbf{Y}}$ is $\textbf{Y}$. If $f \pmfunction{\textbf{X}}{\textbf{Y}}$ is such that $f(x)$ is a singleton for each $x \in \dom(f)$, then we say that $f$ is \emph{single-valued}. We indicate that by writing $f \pfunction \textbf{X} \to \textbf{Y}$. In this case we will write $f(x) = y$ instead of (the formally correct) $f(x) = \{y\}$. An example of a single-valued problem is the identity function $\id: \Baire \to \Baire$.

We can define the computability or continuity of problems via realizers: we say that a function $F \pfunction \Baire \to \Baire$ is a \emph{realizer} of a problem $f \pmfunction{\textbf{X}}{\textbf{Y}}$ if whenever $p$ is a name for some $x \in \dom(f)$, $F(p)$ is a name for some $y \in f(x)$. A problem is \emph{computable} (respectively \emph{continuous}) if it has a computable (respectively continuous) realizer.

In order to measure the relative uniform computational strength of problems, we use Weihrauch reducibility. A problem $f$ is \emph{Weihrauch reducible} to a problem $g$, written $f \weireducible g$, if there are computable maps $\Phi, \Psi \pfunction \Baire \to \Baire$ such that if $p$ is a name for some $x \in \dom(f)$, then
\begin{enumerate}
	\item $\Phi(p)$ is a name for some $y \in \dom(g)$;
	\item if $q$ is a name for some element of $g(y)$, then $\Psi(p,q)$ is a name for some element of $f(x)$.
\end{enumerate}
This means that there is a procedure for solving $f$ which is computable except for a single invocation to an oracle for $g$. Equivalently, there is a computable procedure which transforms realizers for $g$ into realizers for $f$. A problem $f$ is \emph{strongly Weihrauch reducible} to a problem $g$, written $f \strongweireducible g$, if there are computable maps $\Phi$ and $\Psi$ as above, except that $\Psi$ is not allowed access to $p$ in its computation.

Weihrauch reducibility and strong Weihrauch reducibility are quasi-orders, so they define a degree structure on problems: $f \weiequiv g$ if $f \weireducible g$ and $g \weireducible f$ (likewise for $\strongweireducible$). Both the Weihrauch degrees and the strong Weihrauch degrees form lattices (see \cite[Thm.\ 3.9 and Thm.\ 3.10]{BGP17}). There are several natural operations on problems which also lift to the $\weiequiv$-degrees and the $\strongweiequiv$-degrees. Below we present the operations that we need in this paper. 

The \textdef{parallel product} $f \times g$ is defined by $(f \times g)(x,y) = f(x) \times g(y)$. We call $f$ a \textdef{cylinder} if $f \strongweiequiv f \times \id$. If $f$ is a cylinder, then $g \weireducible f$ if and only if $g \strongweireducible f$ (\cite[Cor.\ 3.6]{BG09}). This is useful for establishing nonreductions because if $f$ is a cylinder, then it suffices to diagonalize against all strong Weihrauch reductions from $g$ to $f$ in order to show that $g \not\weireducible f$. Cylinders will also be useful when working with compositional products (discussed below). Observe that for every problem $f$, $f \times \id$ is a cylinder which is Weihrauch equivalent to $f$.

The \emph{parallelization} $\parallelization{f}$ is defined by $\parallelization{f}((x_n)_{n \in \mathbb{N}}) = \prod_{n \in \mathbb{N}} f(x_n)$. In other words, given a countable sequence of $f$-instances, $\parallelization{f}$ asks for an $f$-solution for each given $f$-instance.

The \emph{composition} of $f \pmfunction{\textbf{Y}}{\textbf{Z}}$ and $g \pmfunction{\textbf{X}}{\textbf{Y}}$ is defined by $\dom(f \circ g) = \{x \in \dom(g) : g(x) \subseteq \dom(f)\}$ and $(f \circ g)(x) = \bigcup_{y \in g(x)} f(y)$ for $x \in \dom(f \circ g)$. The composition does \emph{not} respect $\weireducible$ or $\strongweireducible$. Instead, for any problems $f$ and $g$ (regardless of domain and codomain), we can consider the \emph{compositional product} $f \compproduct g$, which satisfies the following property:
\[ f \compproduct g \weiequiv \max_{\weireducible} \{f_1 \circ g_1 \st f_1 \weireducible f \land g_1 \weireducible g\}. \]
This captures what can be achieved by first applying $g$, possibly followed by some computation, and then applying $f$. The compositional product was first introduced in \cite{BolWei11}, and proven to be well-defined in \cite{BP16}. A useful tool is the \emph{cylindrical decomposition lemma} (\cite[Lem.\ 3.10]{BP16}): for all problems $f$ and $g$, if $F \weiequiv f$ and $G \weiequiv g$ are both cylinders, then there is some computable map $\Phi$ such that $f \compproduct g \weiequiv F \circ \Phi \circ G$. For each problem $f$, let $f^{[n]}$ denote the $n$-fold iteration of the compositional product of $f$ with itself, i.e., $f^{[1]} = f$, $f^{[2]} = f \compproduct f$, and so on.

The \textdef{jump} of $f \pmfunction{\textbf{X}}{\textbf{Y}}$ is the problem $f' \pmfunction{\textbf{X}'}{\textbf{Y}}$ defined by $f'(x) := f(x)$, where $\textbf{X}'$ is the represented space $(X,\repmap{\mathbf{X}'})$ and $\repmap{\mathbf{X}'}$ takes in input a convergent sequence $\sequence{p_n}{n\in\mathbb{N}}$ in $\Baire$ and returns $\repmap{\mathbf{X}}(\lim_{n\to\infty} p_n)$. In other words, $f'$ is the following task: given a sequence which converges to a name of an $f$-instance, produce an $f$-solution to that instance. The jump respects $\strongweireducible$ but does \emph{not} lift to the $\weiequiv$-degrees. We use $f^{(n)}$ to denote the $n$-th jump of a problem. If we define $\mflim \pfunction (\Baire)^{\mathbb{N}} \to \Baire$ by $\mflim((p_n)_{n \in \mathbb{N}}) := \lim_{n \to \infty} p_n$ then it is straightforward from the definition that $f^{(n)}\weireducible f\compproduct \mflim^{[n]}$. The converse reduction does not hold in general. However if $f$ is a cylinder, then for each $n$, $f^{(n)}$ is a cylinder and $f^{(n)} \weiequiv f \compproduct \mflim^{[n]}$ (see \cite[Prop.\ 6.14]{BGP17}). In particular, since  $\mflim$ is a cylinder, $\mflim^{(n)} \weiequiv \mflim^{[n+1]}$ for each $n$. We say that a problem is \emph{arithmetic} if it is Weihrauch reducible to $\mflim^{(n)}$ for some $n$.

Next we introduce some problems which are milestones in the Weihrauch lattice. Apart from $\mflim$ and its jumps, most prominent is the family of problems of \textdef{(closed) choice} $\Choice{\textbf{X}}$ defined below. For a represented space $\mathbf{X}$, let $\mathcal{A}(X)$ denote the space of closed subsets of $\mathbf{X}$. These are given by the ability to recognize membership in the complement. We define $\Choice{\textbf{X}} \pmfunction{\mathcal{A}(X)}{X}$ by $\Choice{\textbf{X}}(A) := A$. In other words, $\Choice{\textbf{X}}$ is the task of producing an element of $\mathbf{X}$ given a way to recognize wrong answers. Define \emph{unique choice} $\mathsf{UC}_{\textbf{X}}$ to be the restriction of $\Choice{\textbf{X}}$ to closed sets which are singletons. By definition, $\mathsf{UC}_{\textbf{X}}$ is single-valued.

Of particular interest to us are $\CNatural$, $\CBaire$ and $\UCBaire$. We can view elements of $\mathcal{A}(\mathbb{N})$ to be given as an enumeration of its complement. Thus, $\CNatural$ is the task of finding a natural number not occurring in a given list. Given a name for a closed set $A \subseteq \Baire$, we can compute a tree $T \subseteq \baire$ such that the set $[T]$ of (infinite) paths on $T$ is $A$. Conversely, given a tree $T \subseteq \baire$, we can compute a name for the closed set $[T]$. Therefore we can view $\CBaire$ as the problem of computing a path on a given ill-founded subtree of $\baire$. Similarly, we can view $\UCBaire$ as the problem of computing the unique path on a given subtree of $\baire$. Both $\CBaire$ and $\UCBaire$ are closed under compositional product \cite[Thm.\ 7.3]{BdBPLow12}. We have:

\begin{theorem}[{\cite[Cor.\ 3.4]{KMP20}}]
	\thlabel{thm:ucbaire_hyperarithmetic_solution}
If $f\pmfunction{\Baire}{\mathbf{X}}$ is Weihrauch reducible to $\UCBaire$, then for every $x\in\dom(f)$, $f(x)$ contains some $y$ hyperarithmetical relative to $x$.
\end{theorem}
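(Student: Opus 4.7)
The plan is to unfold the reduction to $\UCBaire$ and exploit the fact that $\UCBaire$ can only yield hyperarithmetic outputs. Suppose $f \weireducible \UCBaire$ via computable maps $\Phi, \Psi$. Fix $x \in \dom(f)$; since names of elements of $\Baire$ are the reals themselves, $x$ is its own $\repmap{\Baire}$-name. Then $\Phi(x)$ is a name for a $\UCBaire$-instance, which, via the computable translation from the closed-set presentation to the tree presentation, corresponds to a subtree $T_x \subseteq \baire$ whose set of infinite paths is a singleton $\{q_x\}$, with $T_x$ computable in $x$. If $q$ is any valid name for the unique element $q_x$ (as a point of $\Baire$), then $\Psi(x, q)$ is a name for some element of $f(x)$.

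The crux is that $q_x$ is hyperarithmetic relative to $x$. Indeed, since $T_x$ is computable in $x$, the set $[T_x] = \{q_x\}$ is $\Pi^0_1$ in $x$ as a subset of $\Baire$, hence in particular $\{q_x\}$ is a $\Pi^1_1$-in-$x$ singleton. Kleene's theorem that every $\Pi^1_1$ singleton in $\Baire$ is $\Delta^1_1$, applied relative to $x$, then yields that $q_x$ is hyperarithmetic in $x$. One can take $q := q_x$ as its own $\repmap{\Baire}$-name, which is the data needed by $\Psi$.

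Finally, $\Psi(x, q)$ is computable from the two inputs $x$ and $q$, both of which are hyperarithmetic in $x$, so $\Psi(x,q)$ is hyperarithmetic in $x$ as well. By construction $\Psi(x, q)$ names some $y \in f(x)$, exhibiting a hyperarithmetic-in-$x$ name for a solution, which is what it means for $y$ to be hyperarithmetic relative to $x$. The only substantive ingredient is Kleene's theorem on $\Pi^1_1$ singletons; everything else is routine bookkeeping with the definitions, and no serious obstacle arises.
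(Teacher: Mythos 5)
The paper does not actually prove this statement---it is imported verbatim from \cite[Cor.\ 3.4]{KMP20}---so there is no internal proof to compare against; your argument has to stand on its own. Its overall architecture is the standard one and is fine: unfold the reduction, observe that the $\UCBaire$-instance $\Phi(x)$ is (a name for) an $x$-computable tree $T_x$ with a unique path $q_x$, argue that $q_x$ is hyperarithmetic in $x$, and push through $\Psi$. The problem is the key lemma you invoke. It is \emph{not} a theorem that every $\Pi^1_1$ singleton in $\Baire$ is $\Delta^1_1$; this is false, and famously so---Kleene's $\mathcal{O}$ is a $\Pi^1_1$ singleton that is not hyperarithmetic. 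The correct classical fact (due to Kleene) is that every $\Sigma^1_1$ singleton is $\Delta^1_1$ (equivalently, a countable $\Sigma^1_1$ set has only hyperarithmetic members). So your step ``$\{q_x\}$ is $\Pi^0_1$ in $x$, hence a $\Pi^1_1$-in-$x$ singleton, hence $q_x$ is hyperarithmetic in $x$'' is justified by a false statement, even though its conclusion happens to be true.

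The repair is immediate: since $\Pi^0_1 \subseteq \Sigma^1_1$, the set $\{q_x\}=[T_x]$ is a $\Sigma^1_1(x)$ singleton, and the (relativized) theorem on $\Sigma^1_1$ singletons gives $q_x \in \Delta^1_1(x)$. Alternatively, one can avoid quoting any basis-type theorem and compute directly, as in the proof of Proposition~\ref{thm:Delta11-DS=DS*UCBaire}: writing $\mathrm{Ext}$ for the set of strings extendible to a path of $T_x$ (a $\Sigma^1_1(x)$ predicate), one has
\[ q_x(n)=k \iff (\exists \sigma\in T_x)(\sigma\in\mathrm{Ext}\land \sigma(n)=k) \iff (\forall \sigma\in T_x)(\sigma\in\mathrm{Ext}\rightarrow \sigma(n)=k), \]
which exhibits $q_x$ as $\Delta^1_1(x)$. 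With either fix the remainder of your argument---that $\Psi(x,q_x)$ is then a hyperarithmetic-in-$x$ name for some $y\in f(x)$---is correct. Please also be careful with the direction of the pointclass: had $[T_x]$ only been $\Pi^1_1(x)$ rather than $\Sigma^1_1(x)$, the conclusion would genuinely fail, so the distinction is not cosmetic.
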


Another prominent problem is $\LPO: \Baire \to \{0,1\}$, defined by $\LPO(p) := 0$ if $p = 0^\mathbb{N}$ and $\LPO(p) := 1$ otherwise. Its jump $\LPO'$ (and its iterated jumps $\LPO^{(k)}$) will play an important technical role. We notice that $\mflim^{(n)}\weiequiv\parallelization{\LPO^{(n)}}$ (see e.g.\ \cite[Thm.\ 6.7 and Prop.\ 6.10]{BGP17}).

Next we define the represented space $\boldfaceGamma(\textbf{X})$ of all $\boldfaceGamma$-definable subsets of a computable metric space $X$, where $\boldfaceGamma \in \{ \boldfaceSigma^0_k, \boldfacePi^0_k, \boldfaceDelta^0_k, \boldfaceSigma^1_1, \boldfacePi^1_1, \boldfaceDelta^1_1 \}$. This is based on the well-known concept of Borel codes \cite{moschovakisb}. For a more detailed development in the context of computable analysis we refer to \cite{pauly-gregoriades}. A more abstract and general treatment is provided in \cite{paulydebrecht2-lics}. A $\repmap{\boldfaceSigma^0_1}$-name for a set $B \subseteq X$ is a sequence of indices of rational open balls whose union is $B$. A $\repmap{\boldfacePi^0_k}$-name for a set is a $\repmap{\boldfaceSigma^0_k}$-name for its complement. (Note that $\repmap{\boldfacePi^0_1}$ agrees with how we represented closed sets previously.) A $\repmap{\boldfaceDelta^0_k}$-name for a set is a pair of $\repmap{\boldfaceSigma^0_k}$-names, one for the set itself and one for its complement. A $\repmap{\boldfaceSigma^0_{k+1}}$-name for a set $B \subseteq X$ is a sequence of names for $\boldfacePi^0_k$ sets whose union is $B$.

A $\repmap{\boldfaceSigma^1_1}$-name for a set $S \subseteq X$ is a $\repmap{\boldfacePi^0_1}$-name for a set $P \subseteq \Baire \times X$ such that $S = \{x \in X: (\exists g)((g,x) \in P)\}$. We define $\repmap{\boldfacePi^1_1}$ and $\repmap{\boldfaceDelta^1_1}$ similarly to $\repmap{\boldfacePi^0_k}$ and $\repmap{\boldfaceDelta^0_k}$. If $X$ is $\mathbb{N}$, we can think of a $\repmap{\boldfaceSigma^1_1}$-name for $S \subseteq \mathbb{N}$ as a sequence $(T_n)_{n \in \mathbb{N}}$ of subtrees of $\baire$ such that $n \in S$ if and only if $T_n$ is ill-founded.

In practice, we rarely construct $\repmap{\boldfaceGamma}$-names explicitly. If we want to construct a $\repmap{\boldfaceGamma}$-name for a set $A \subseteq X$, we typically only check that there is a $\boldfaceGamma$-formula which defines $A$. By invoking computable closure properties, one can construct a computable map which takes a $\boldfaceGamma$-formula $\phi$ and its parameter $p$ to a $\repmap{\boldfaceGamma}$-name for the set defined by $\phi$. Conversely, one can construct a computable map which takes a $\repmap{\boldfaceGamma}$-name $p$ for a set $A$ to a $\boldfaceGamma$-formula $\phi$ with parameter $p$ which defines $A$.

We define the (single-valued) functions $\Comprehension{\boldfaceGamma}\pfunction \boldfaceGamma(\mathbb{N})\to \Cantor$ corresponding to comprehension principles: given a $\repmap{\boldfaceGamma}$-name $p$ for a subset $A$ of $\mathbb{N}$, produce its characteristic function. Notice that, for each $k$ and each $A\in \boldfaceSigma^0_{k+1}$, we can use $\LPO^{(k)}$ to check whether $n\in A$ (intuitively, for every $p$ we can use $\LPO^{(k)}$ to answer a $\Sigma^{0,p}_{k+1}$ question). This shows that, for each $k$,
\[ \mflim^{(k)} \weiequiv \parallelization{\LPO^{(k)}} \weiequiv \Comprehension{\boldfaceSigma^0_{k+1}}, \]
somewhat implicitly written in \cite{brattka}. The problem $\PiCA$ can be seen as the analogue of $\boldfacePi^1_1\mathrm{-CA}_0$. It is Weihrauch equivalent to the parallelization of $\chiPi$, which is the characteristic function of a $\Pi^1_1$-complete set. It is convenient to think of $\chiPi$ as the function that takes in input a subtree of $\baire$ and checks whether it is well-founded. 

We can also define $\boldfaceGamma$-choice $\codedChoice{\boldfaceGamma}{}{\textbf{X}} \pmfunction{\boldfaceGamma(\textbf{X})}{\textbf{X}}$ by $\codedChoice{\boldfaceGamma}{}{\textbf{X}}(A) := A$. In other words, $\codedChoice{\boldfaceGamma}{}{\textbf{X}}$ is the task of producing an element of a nonempty $\boldfaceGamma$-set $A$, given a $\repmap{\boldfaceGamma}$-name of $A$. We can define $\boldfaceGamma\text{-}\mathsf{UC}_{\textbf{X}} \pfunction \boldfaceGamma(\textbf{X}) \to \textbf{X}$ analogously. When reducing problems to $\CBaire$ and $\UCBaire$, the following facts are helpful: $\codedChoice{\boldfaceSigma^1_1}{}{\Baire} \weiequiv \CBaire$ and $\UCBaire \weiequiv \boldfaceSigma^1_1\text{-}\mathsf{UC}_{\Baire}$ (see \cite{KMP20}).

We define the represented spaces $\boldfaceGamma(\textbf{LO})$ and $\boldfaceGamma(\textbf{QO})$ by restricting the codomain of $\repmap{\boldfaceGamma}$ to the set of subsets of $\mathbb{N}$ which are characteristic functions of linear orders and quasiorders respectively. This will be used in Section~\ref{sec:presentations}.

We will often construct linear orders using the following method. For every tree $T\subseteq \baire$, we define the \emph{Kleene-Brouwer order} $\KB(T)$ on $T$ as follows: $\sigma \le_{\KB} \tau$ if and only if $\tau \prefix\sigma$ or $\sigma\le_{lex} \tau$. The map $T\mapsto \KB(T)$ from $\repTree$ to $\textbf{LO}$ is computable. It is known that $\KB(T)$ is a well-order if and only if $T$ is well-founded (see e.g.\ \cite[Lem.\ V.1.3]{Simpson09}).

Finally we present a notion recently studied by Dzhafarov, Solomon, Yokoyama \cite{DSYFirstOrder}:

\begin{definition}
    \thlabel{def:first_order_part}
	Let $\mathcal{F}$ be the set of \textdef{first-order} problems, i.e.\ the set of problems with codomain $\mathbb{N}$. For every problem $f\pmfunction{\mathbf{Y}}{\mathbf{Z}}$, the \textdef{first-order part of $f$} is the multi-valued function $\firstOrderPart{f}\pmfunction{\Baire\times \mathbf{Y}}{\mathbb{N}}$ defined as follows:
	\begin{itemize}
		\item instances are pairs $(p,y)$ s.t.\ $y\in \dom(f)$ and for every $z\in f(y)$ and every name $t$ for $z$, $\Phi_p(t)(0)\downarrow$, where $\Phi_{(\cdot)}$ is a fixed universal Turing functional;
		\item a solution for $(p,y)$ is any $n$ s.t.\ there is a name $t$ for a solution $z\in f(y)$ s.t.\ $\Phi_p(t)(0)\downarrow=n$.
	\end{itemize}
\end{definition}

The motivation for this notion comes from the following fact:

\begin{proposition}[{\cite{DSYFirstOrder}}]
	For every problem $f$, 
	\[ \firstOrderPart{f} \weiequiv \max_{\weireducible}\{ g\in \mathcal{F} \st g \weireducible f\}. \]
\end{proposition}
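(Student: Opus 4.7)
The plan is to establish the equivalence by verifying three claims: (a) $\firstOrderPart{f} \in \mathcal{F}$, (b) $\firstOrderPart{f} \weireducible f$, and (c) every $g \in \mathcal{F}$ with $g \weireducible f$ satisfies $g \weireducible \firstOrderPart{f}$. Together these place $\firstOrderPart{f}$ at the $\weireducible$-maximum of $\{g \in \mathcal{F} \st g \weireducible f\}$. Point (a) is immediate from \thref{def:first_order_part}, since the codomain is $\mathbb{N}$ by construction.

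For (b), the reduction is the obvious one. Given an instance $(p, \tilde{y})$ of $\firstOrderPart{f}$, I forward $\tilde{y}$ to $f$ as a name for $y$; when the oracle returns a name $t$ for some $z \in f(y)$, I compute and output $\Phi_p(t)(0)$. Convergence is guaranteed by the very definition of a valid $\firstOrderPart{f}$-instance, and the value is by definition a $\firstOrderPart{f}$-solution.

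The substantive direction is (c). Fix computable maps $\Phi, \Psi$ witnessing $g \weireducible f$. Given a name $\tilde{x}$ of a $g$-instance $x$, I output the pair $(p(\tilde{x}), \Phi(\tilde{x}))$ as a $\firstOrderPart{f}$-instance, where $p(\tilde{x}) \in \Baire$ is a code, uniformly computable from $\tilde{x}$ via the universality of $\Phi_{(\cdot)}$, for the functional $t \mapsto \Psi(\tilde{x}, t)$. Any $\firstOrderPart{f}$-solution $n$ to this instance then has the form $\Phi_{p(\tilde{x})}(t)(0) = \Psi(\tilde{x}, t)(0)$ for some name $t$ of an $f$-solution to the instance named by $\Phi(\tilde{x})$; since $g$ has codomain $\mathbb{N}$, $\Psi(\tilde{x}, t)$ is a name for an element of $g(x)$, so reading off its first coordinate yields a natural number in $g(x)$. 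Thus $n$ is itself a $g$-solution and can be returned unchanged.

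The one step that needs care is verifying that $(p(\tilde{x}), \Phi(\tilde{x}))$ really is a valid $\firstOrderPart{f}$-instance, i.e., that $\Phi_{p(\tilde{x})}(t)(0)\!\downarrow$ for \emph{every} name $t$ of \emph{every} $z \in f(y)$. This is precisely where the full strength of the reduction hypothesis is used: those inputs $(\tilde{x}, t)$ are exactly the ones on which $\Psi$ is required to output a name of a $g$-solution, so $\Psi(\tilde{x}, t)$ is total and its first coordinate is defined. No further obstacles arise, and the three claims combine to yield the proposition.
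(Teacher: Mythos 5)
Your proof is correct. Note that the paper itself gives no proof of this proposition---it is quoted from \cite{DSYFirstOrder} as a known fact---so there is nothing in-paper to compare against; your argument is the standard one. All three claims are verified properly: (a) is immediate, (b) is the evident reduction using the convergence built into the definition of a $\firstOrderPart{f}$-instance, and in (c) you correctly identify and discharge the only delicate point, namely that the pair $(p(\tilde{x}),\Phi(\tilde{x}))$ satisfies the totality condition required of a $\firstOrderPart{f}$-instance, which follows because $\Psi(\tilde{x},\cdot)$ must produce a name of a $g$-solution on every name of every element of $f(y)$.
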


We conclude this section with the following proposition:

\begin{proposition}
	\thlabel{thm:fo_part_cbaire}
	$\firstOrderPart{\CBaire}\weiequiv \codedChoice{\boldfaceSigma^1_1}{}{\mathbb{N}}$.
\end{proposition}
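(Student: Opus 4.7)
\emph{Proof plan.} I will establish both reductions separately. For $\weireducible$ from right to left, the strategy is to observe that $\codedChoice{\boldfaceSigma^1_1}{}{\mathbb{N}}$ is by construction a first-order problem, and to give a direct reduction to $\CBaire$; the characterization of $\firstOrderPart{\CBaire}$ as the $\weireducible$-greatest first-order problem below $\CBaire$ then yields the conclusion. For $\weireducible$ from left to right, I will extract a natural $\boldfaceSigma^1_1$ set of candidate outputs from a $\firstOrderPart{\CBaire}$-instance.

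For $\codedChoice{\boldfaceSigma^1_1}{}{\mathbb{N}} \weireducible \firstOrderPart{\CBaire}$: given a $\repmap{\boldfaceSigma^1_1}$-name of a nonempty $A \subseteq \mathbb{N}$, presented as a sequence $(T_n)_{n\in\mathbb{N}}$ of subtrees of $\baire$ with $n\in A$ iff $T_n$ is ill-founded, I compute the tree
\[ T := \{\langle\rangle\} \cup \{n \concat \sigma \st n \in \mathbb{N},\ \sigma \in T_n\} \subseteq \baire. \]
Then $[T] \neq \emptyset$ (as $A \neq \emptyset$) and every $g \in [T]$ has $g(0) \in A$. Letting $p$ be the index of the Turing functional that, on oracle $g$, outputs $g(0)$ at position $0$, the pair $(p,T)$ is a valid instance of $\firstOrderPart{\CBaire}$, and any $\firstOrderPart{\CBaire}$-solution to it is some $n = g(0)$ for some $g \in [T]$, hence an element of $A$.

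For $\firstOrderPart{\CBaire} \weireducible \codedChoice{\boldfaceSigma^1_1}{}{\mathbb{N}}$: given an instance $(p,T)$ of $\firstOrderPart{\CBaire}$, where $T \subseteq \baire$ represents the closed $\CBaire$-input, I consider
\[ A_{p,T} := \{n \in \mathbb{N} \st (\exists g \in [T])\,\Phi_p(g)(0)\defined = n\}. \]
By the promise on $(p,T)$ this set is nonempty, and its elements are precisely the valid first-order solutions. To produce a $\repmap{\boldfaceSigma^1_1}$-name of $A_{p,T}$ computably in $(p,T)$, I define, for each $n$,
\[ S_n := \{\sigma \in T \st \Phi_p^\sigma(0)\text{ has not halted within } |\sigma|\text{ steps, or has halted with output } n\}. \]
Each $S_n$ is closed under prefixes by the use principle, and the promise that $\Phi_p(g)(0)\defined$ for every $g \in [T]$ ensures that $S_n$ is ill-founded iff there is $g \in [T]$ with $\Phi_p(g)(0) = n$, i.e., iff $n \in A_{p,T}$. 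Thus $(S_n)_{n\in\mathbb{N}}$ is a $\repmap{\boldfaceSigma^1_1}$-name of $A_{p,T}$, and applying $\codedChoice{\boldfaceSigma^1_1}{}{\mathbb{N}}$ returns an element of $A_{p,T}$.

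I expect no serious obstacle: the only point requiring some care is verifying that $S_n$ really is a tree and that the ``not yet halted'' disjunct correctly accounts for the difference between ``halts using oracle $\sigma$'' and ``halts within $|\sigma|$ steps'', which is handled by the use principle together with the promise built into the definition of $\firstOrderPart{\CBaire}$.
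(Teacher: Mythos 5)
Your proposal is correct and follows essentially the same route as the paper: both directions hinge on the observations that $\codedChoice{\boldfaceSigma^1_1}{}{\mathbb{N}}$ is a first-order problem reducible to $\CBaire$ (the paper cites this from the literature where you construct the tree $T$ explicitly) and that the set of possible first-order outputs $\{n \st (\exists g\in[T])\,\Phi_p(g)(0)\defined = n\}$ is $\boldfaceSigma^1_1$ relative to the input. Your explicit trees $S_n$ just unfold the paper's appeal to the computable passage from $\boldfaceSigma^1_1$-formulas to $\repmap{\boldfaceSigma^1_1}$-names, and your handling of the tree property and ill-foundedness of $S_n$ via the use principle is sound.
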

\begin{proof}
	It is known that $\codedChoice{\boldfaceSigma^1_1}{}{\mathbb{N}}\strictlyweireducible \parallelization{\codedChoice{\boldfaceSigma^1_1}{}{\mathbb{N}}}\strictlyweireducible \CBaire$ (\cite[Thm.\ 3.34]{KiharaADauriacChoice}). On the other hand, if $f\pmfunction{\mathbf{X}}{\mathbb{N}}$ is s.t.\ $f\weireducible \CBaire$ via $\Phi,\Psi$ then, for every name $p$ of some $x\in\dom(f)$, $\Phi(p)$ is the name of an ill-founded tree $T_p$ and, for every $t\in [T_p]$ we have $\Psi(t)(0)\in f(x)$. This means that we can compute a solution choosing an element from 
	\[ \{ n \in\mathbb{N} \st (\exists t \in \Baire)(t\in [T_p] \land \Psi(t)(0)=n) \}, \]
	which is a $\Sigma^{1,p}_1$ subset of $\mathbb{N}$.
\end{proof}

We will characterize the first-order part of $\DS$ in 	\thref{thm:below_BS_below_Pi11Bound}.

\section{The deterministic part of a problem}
\label{sec:detpart}

\begin{definition}
	Let $\mathbf{X}$ be a represented space and $f\pmfunction{\mathbf{Y}}{\mathbf{Z}}$ be a multi-valued function. We define $\DetPartX{f}{\mathbf{X}}\pfunction \Baire \times \mathbf{Y} \to \mathbf{X}$ by 
	\[ \DetPartX{f}{\mathbf{X}}(p,y) = x \defiff (\forall z \in \delta_\mathbf{Z}^{-1}(f(y)))(\delta_\mathbf{X}(\Phi_p(z)) = x),\] 
	where $\Phi_{(\cdot)}$ is a universal Turing functional. The domain of $\DetPartX{f}{\mathbf{X}}$ is maximal for this to be well-defined. We just write $\DetPart{f}$ for $\DetPartX{f}{\Baire}$.
\end{definition}
	
Notice that $\DetPart{f}$ is always a cylinder. This is not true for all $\textbf{X}$  (if $\mathbf{X}=\mathbb{N}$ then $\DetPartX{f}{\mathbf{X}}$ always has computable solutions, and therefore $\id\not\strongweireducible \DetPartX{f}{\mathbf{X}}$).

Our interest in the principle $\DetPartX{f}{\mathbf{X}}$ lies in the fact that it has the maximal Weihrauch degree of all (single-valued!) functions with codomain $\mathbf{X}$ that are Weihrauch below $f$:

\begin{theorem}
	\thlabel{thm:detpart_total}
$\DetPartX{f}{\mathbf{X}} \weiequiv \max_{\weireducible} \{g : \subseteq \mathbf{W} \to \mathbf{X} \st g \weireducible f\}$.
\begin{proof}
Clearly, $\DetPartX{f}{\mathbf{X}}$ is itself present in the set on the right hand side. Assume $g\pfunction \mathbf{W} \to \mathbf{X}$ satisfies $g \weireducible f$ with reduction witnesses $\Phi$ and $\Psi$. Given a name $q$ for an input to $g$, let $y = \delta_\mathbf{Y}(\Phi(q))$ be the value $f$ is called on, and let $p$ be a name for the function $\Psi(q,\cdot)$. Then $\DetPartX{f}{\mathbf{X}}(p,y) = g(\delta_\mathbf{W}(q))$.
\end{proof}
\end{theorem}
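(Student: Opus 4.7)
The plan is to split the Weihrauch equivalence into two inclusions, matching the two features a maximum element must have: membership in the set, and dominance over every other member.

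For the easy direction, I would verify that $\DetPartX{f}{\mathbf{X}}$ itself lies in the set on the right-hand side. By construction it is single-valued (the universal quantifier in the defining condition forces uniqueness of $x$), it has codomain $\mathbf{X}$, and its domain is precisely the set on which the condition can be met. To see $\DetPartX{f}{\mathbf{X}} \weireducible f$, I would describe the forward reduction as extracting the second coordinate $y$ of the name $(p,y)$ and feeding it to $f$, and the backward reduction as applying $\Phi_p$ to any $\repmap{\mathbf{Z}}$-name $t$ of the $f$-solution returned. By definition of the domain of $\DetPartX{f}{\mathbf{X}}$, the string $\Phi_p(t)$ is then a $\repmap{\mathbf{X}}$-name of the unique element $\DetPartX{f}{\mathbf{X}}(p,y)$, independently of which solution $f$ chose.

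For the maximality direction, suppose $g\pfunction \mathbf{W}\to \mathbf{X}$ satisfies $g\weireducible f$ with computable reduction witnesses $\Phi,\Psi$. Given a $\repmap{\mathbf{W}}$-name $q$ for an input $w\in\dom(g)$, the forward map should compute $y = \repmap{\mathbf{Y}}(\Phi(q))\in\dom(f)$ together with, via the s-m-n theorem, an index $p$ (uniformly computable from $q$) satisfying $\Phi_p(\cdot) = \Psi(q,\cdot)$. Since $g$ is single-valued, for every name $t$ of every $z\in f(y)$ the string $\Psi(q,t)$ is a $\repmap{\mathbf{X}}$-name of the one and only element $g(w)$; consequently the pair $(p,y)$ lies in $\dom(\DetPartX{f}{\mathbf{X}})$ and $\DetPartX{f}{\mathbf{X}}(p,y)=g(w)$. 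The backward map can then just copy its input. This yields $g\weireducible \DetPartX{f}{\mathbf{X}}$.

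I do not expect a serious obstacle; the argument is largely bookkeeping once one reads the definition of $\DetPartX{f}{\mathbf{X}}$ as exactly encoding ``a computable post-processing of any realizer of $f$ that happens to produce a single point.'' The one place to be careful is the use of single-valuedness of $g$: it is what guarantees that the requirement ``$\repmap{\mathbf{X}}(\Phi_p(z))=x$ for every $z\in\repmap{\mathbf{Z}}^{-1}(f(y))$'' can be met, so that the pair $(p,y)$ really does belong to the domain of $\DetPartX{f}{\mathbf{X}}$; without it, different realizer choices of $z$ could force different values of $\Psi(q,t)$ and the argument collapses.
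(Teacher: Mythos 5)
Your proposal is correct and follows essentially the same route as the paper's proof: membership of $\DetPartX{f}{\mathbf{X}}$ in the set is checked directly from the definition, and maximality is obtained by sending a name $q$ for a $g$-instance to the pair $(p,y)$ with $y=\repmap{\mathbf{Y}}(\Phi(q))$ and $p$ an index for $\Psi(q,\cdot)$, using single-valuedness of $g$ to see that $(p,y)\in\dom(\DetPartX{f}{\mathbf{X}})$. The paper's version is just terser; your added detail (in particular the explicit remark on where single-valuedness is needed) fills in exactly the steps the paper leaves implicit.
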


In the same spirit, we can identify several other operators $\Lambda_\mathcal{Y}$ of the type $\Lambda_\mathcal{Y}(f):= \max_{\weireducible} \{ g \in \mathcal{Y} \st g\weireducible f \}$. In particular the proof strategy used in \thref{thm:detpart_total} can be used to prove that $\Lambda_{\mathcal{U}_N}$ and $\Lambda_{\mathcal{V}_N}$ are total, where $\mathcal{U}_N$ is the set of first-order problems with codomain $N$, and $\mathcal{V}_N$ is the set of problems in $\mathcal{U}_N$ which are also single-valued. This will come into play in \thref{thm:below_DS_below_RT1k} and in \thref{thm:below_DS_below_lim_k}.

\begin{corollary}
	$\DetPartX{\cdot}{\mathbf{X}}$ is an interior degree-theoretic operator on Weihrauch degrees, i.e.\ 
	\begin{gather*}
		\DetPartX{\DetPartX{f}{\mathbf{X}}}{\mathbf{X}} \weiequiv \DetPartX{f}{\mathbf{X}} \weireducible f; \\
		f \weireducible g \Rightarrow \DetPartX{f}{\mathbf{X}} \weireducible \DetPartX{g}{\mathbf{X}}.
	\end{gather*}
\end{corollary}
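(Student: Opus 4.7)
The plan is to derive all three properties directly from the maximality characterization in \thref{thm:detpart_total}, without having to unpack the definition of $\DetPartX{\cdot}{\mathbf{X}}$ any further.

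First, for the decreasing property $\DetPartX{f}{\mathbf{X}} \weireducible f$: by definition $\DetPartX{f}{\mathbf{X}}$ is a (possibly partial) single-valued function with codomain $\mathbf{X}$, and it belongs to the set on the right-hand side of \thref{thm:detpart_total} (this is the content of the theorem). In particular $\DetPartX{f}{\mathbf{X}} \weireducible f$.

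Next, for monotonicity: assume $f \weireducible g$. Combining with the previous step and transitivity of $\weireducible$, we get $\DetPartX{f}{\mathbf{X}} \weireducible f \weireducible g$. Since $\DetPartX{f}{\mathbf{X}}$ is a single-valued function with codomain $\mathbf{X}$ which is Weihrauch below $g$, \thref{thm:detpart_total} applied to $g$ yields $\DetPartX{f}{\mathbf{X}} \weireducible \DetPartX{g}{\mathbf{X}}$.

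Finally, for idempotence: applying the decreasing property to $\DetPartX{f}{\mathbf{X}}$ in place of $f$ gives $\DetPartX{\DetPartX{f}{\mathbf{X}}}{\mathbf{X}} \weireducible \DetPartX{f}{\mathbf{X}}$. For the converse reduction, note that $\DetPartX{f}{\mathbf{X}}$ is trivially Weihrauch reducible to itself and is single-valued with codomain $\mathbf{X}$, so the maximality statement of \thref{thm:detpart_total} applied to $\DetPartX{f}{\mathbf{X}}$ in the role of $f$ gives $\DetPartX{f}{\mathbf{X}} \weireducible \DetPartX{\DetPartX{f}{\mathbf{X}}}{\mathbf{X}}$. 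There is no real obstacle here: once one has the universal property of \thref{thm:detpart_total}, all three clauses are formal consequences, analogous to the standard verification that the interior operator on a topology is idempotent, decreasing and monotone.
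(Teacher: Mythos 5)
Your proof is correct and follows exactly the route the paper intends: the corollary is stated immediately after \thref{thm:detpart_total} with no separate proof, precisely because all three clauses (decreasing, monotone, idempotent) are formal consequences of that maximality characterization, as you derive them. No gaps.
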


\subsection{Impact of the codomain space}
We make some basic observations on how the space $\mathbf{X}$ impacts the degrees $\DetPartX{f}{\mathbf{X}}$ for arbitrary $f$. Clearly, whenever $\mathbf{Y}$ computably embeds into $\mathbf{X}$ (i.e.\ there is a computable injection $\mathbf{Y}\to\mathbf{X}$ with computable inverse), then $\DetPartX{f}{\mathbf{Y}} \weireducible \DetPartX{f}{\mathbf{X}}$. In general, we obtain many different operations. To see this, we consider the point degree spectrum of a represented space as introduced by Kihara and P. \cite{pauly-kihara-arxiv}. The point degree spectrum of $(X,\delta_\mathbf{X})$ is the set of Medvedev degrees of the form $\delta^{-1}_{\mathbf{X}}(x)$ for $x \in X$. 

The spectrum of $\mathbf{Y}$ is included in that of $\mathbf{X}$ iff $\mathbf{Y}$ can be decomposed into countably many parts each of which embeds into $\mathbf{X}$ (\cite[Lem.\ 3.6]{pauly-kihara-arxiv}). If the spectrum of $\mathbf{Y}$ is not included in that of $\mathbf{X}$, we can consider a constant function $y$ witnessing this. Then $\DetPartX{y}{\mathbf{X}} \strictlyweireducible \DetPartX{y}{\mathbf{Y}} \weiequiv y$. We have thus seen that if $\DetPartX{f}{\mathbf{X}} \weiequiv \DetPartX{f}{\mathbf{Y}}$ for all $f$, then $\mathbf{X}$ and $\mathbf{Y}$ must have the same point degree spectrum. Miller \cite{miller2} has shown that the spectrum of $[0,1]^\omega$ is not contained in the Turing degrees (i.e.~the spectrum of $\Cantor$), which was extended in \cite{pauly-kihara-arxiv} to the result that the spectrum of a computable Polish space is contained in the Turing degrees relative to some oracle iff that space is countably dimensional. The spectra of further spaces have been explored in \cite{pauly-kihara3}. 

We can extend the separation arguments based on the spectrum by considering sequences rather than just constant functions\footnote{The ideas in this paragraph were pointed out to us by Mathieu Hoyrup.}. Whenever we have a sequence $f_0 : \mathbb{N} \to \mathbf{X}_0$ and a function $g_0 : \subseteq \Baire \to \mathbf{X}_1$ with $f_0 \weiequiv g_0$, then there is a sequence $h: \mathbb{N} \to \mathbf{X}_1$ with $f_0 \weiequiv h$. A Weihrauch reduction $f \weireducible g$ for $f : \mathbb{N} \to \mathbf{X}$ and $g : \mathbb{N} \to \mathbf{Y}$ gives rise to a computable partial function $F : \subseteq \mathbf{Y}^\mathbb{N} \to \mathbf{X}^\mathbb{N}$ with $F(g) = f$. It follows that it suffices to separate $\mathbf{Y}^\mathbb{N}$ and $\mathbf{X}^\mathbb{N}$ via their spectrum to conclude that $\DetPartX{\cdot}{\mathbf{X}}$ and $\DetPartX{\cdot}{\mathbf{Y}}$ are distinct operators. In particular, Miller's result implies that there is a function with codomain $\mathbb{R}$ that is not equivalent to any function with codomain $\Baire$.

\subsection{The deterministic part and the first-order part}

Let us now explore the interplay between the deterministic part and the first-order part.

\begin{proposition}
	$\firstOrderPart{\DetPart{f}} \weiequiv \DetPartX{f}{\mathbb{N}} \weireducible \DetPart{\firstOrderPart{f}}$.
	\begin{proof}
		By considering what the relevant maxima in the characterizations are taken about, it is clear that $\DetPartX{f}{\mathbb{N}} \weireducible \firstOrderPart{\DetPart{f}}$ and $\DetPartX{f}{\mathbb{N}} \weireducible \DetPart{\firstOrderPart{f}}$. To see that $\firstOrderPart{\DetPart{f}} \weireducible \DetPartX{f}{\mathbb{N}}$, we consider a function $f : \subseteq \Baire \to \Baire$ and a multi-valued function $g\pmfunction{\Baire}{\mathbb{N}}$ with $g \weireducible f$. But this reduction actually yields some choice function of $g$, showing that $g \weireducible \DetPartX{f}{\mathbb{N}}$.
	\end{proof}
\end{proposition}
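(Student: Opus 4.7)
My plan is to leverage the two maximality characterizations in play: by \thref{thm:detpart_total}, $\DetPartX{f}{\mathbf{X}}$ is the $\weireducible$-maximum among single-valued functions with codomain $\mathbf{X}$ that are below $f$, and analogously $\firstOrderPart{f}$ is the $\weireducible$-maximum among first-order problems below $f$. Both easy reductions $\DetPartX{f}{\mathbb{N}} \weireducible \firstOrderPart{\DetPart{f}}$ and $\DetPartX{f}{\mathbb{N}} \weireducible \DetPart{\firstOrderPart{f}}$ then follow by noting that $\DetPartX{f}{\mathbb{N}}$ belongs to both relevant families at once: it is single-valued (with codomain $\mathbb{N}$, which embeds computably into $\Baire$), first-order, and by \thref{thm:detpart_total} below $f$. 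These properties place it below $\DetPart{f}$, hence below $\firstOrderPart{\DetPart{f}}$, and below $\firstOrderPart{f}$, hence below $\DetPart{\firstOrderPart{f}}$.

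The remaining reduction $\firstOrderPart{\DetPart{f}} \weireducible \DetPartX{f}{\mathbb{N}}$ is the nontrivial one, and I expect it to be the main obstacle. By maximality of the left-hand side, it suffices to show that every first-order multi-valued $g$ with $g \weireducible \DetPart{f}$ already satisfies $g \weireducible \DetPartX{f}{\mathbb{N}}$. The key conceptual step is to exploit the single-valuedness of $\DetPart{f}$: any reduction $g \weireducible \DetPart{f}$ with witnesses $\Phi, \Psi$ then induces a specific selector $c(p) := \Psi(p, \DetPart{f}(\Phi(p)))(0)$ for $g$. Since $g$ is first-order, $c$ is a single-valued function with codomain $\mathbb{N}$ satisfying $c(p) \in g(x)$ whenever $p$ names the $g$-instance $x$. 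Because $c \weireducible \DetPart{f} \weireducible f$ by construction, \thref{thm:detpart_total} places $c$ below $\DetPartX{f}{\mathbb{N}}$; as $g \weireducible c$ trivially, chaining these reductions gives the desired bound. No explicit index-manipulation is needed once the maximality principles are in hand; the subtlety lies entirely in spotting that the first-order codomain of $g$ combined with the single-valuedness of the oracle converts the reduction witnesses into a bona fide function of the type demanded by $\DetPartX{\cdot}{\mathbb{N}}$.
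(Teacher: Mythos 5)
Your argument is correct and follows essentially the same route as the paper: the two easy reductions come from noting that $\DetPartX{f}{\mathbb{N}}$ lies in both families of the relevant maximality characterizations, and the nontrivial direction $\firstOrderPart{\DetPart{f}} \weireducible \DetPartX{f}{\mathbb{N}}$ is obtained by observing that a reduction of a first-order $g$ to the single-valued $\DetPart{f}$ yields a choice function for $g$ that is itself below $f$. Your write-up merely makes explicit the selector $c(p) := \Psi(p, \DetPart{f}(\Phi(p)))(0)$ that the paper leaves implicit.
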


\begin{question}
	Is there some $f$ with $\DetPartX{f}{\mathbb{N}} \strictlyweireducible \DetPart{\firstOrderPart{f}}$?
\end{question}

The question above asks whether whenever there is a countable cover making a partial function on Baire space piecewise computable, there also is a partition of the same or lower complexity that renders the function piecewise computable. The complexity here is not merely the complexity of the individual pieces, but the Weihrauch degree of the map that assigns the piece to any Baire space element.

\begin{proposition}
	$\DetPart{f} \weireducible \widehat{\DetPartX{f}{\mathbb{N}}}$.
	\begin{proof}
		A function $f : \subseteq \Baire \to \Baire$ is reducible to the parallelization of its uncurried form $F : \subseteq \mathbb{N} \times \Baire \to \mathbb{N}$ where $F(n,p) = f(p)(n)$.
	\end{proof}
\end{proposition}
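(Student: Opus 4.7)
\medskip\noindent
\textbf{Proof plan.} The plan is to combine the characterization in \thref{thm:detpart_total} with the monotonicity of parallelization. The underlying observation is that any single-valued function with codomain $\Baire$ is determined by its countable sequence of coordinates, each of which is a function with codomain $\mathbb{N}$; hence every such function should reduce to the parallelization of its ``coordinatewise'' restrictions.

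First I would uncurry $\DetPart{f}$ into the single-valued function $G\pfunction \mathbb{N}\times \Baire\times \mathbf{Y}\to \mathbb{N}$ given by $G(n,p,y):=\DetPart{f}(p,y)(n)$, and verify that $G\weireducible f$. The reduction is immediate: on input $(n,p,y)$ the forward map passes $y$ to $f$; given a name $z$ of any solution in $f(y)$, the backward map returns $\Phi_p(z)(n)$, which by the definition of $\DetPart{f}$ equals $\DetPart{f}(p,y)(n)$ independently of the chosen $z$. Since $G$ has codomain $\mathbb{N}$, \thref{thm:detpart_total} (applied with $\mathbf{X}=\mathbb{N}$) then yields $G\weireducible \DetPartX{f}{\mathbb{N}}$, and monotonicity of parallelization gives $\parallelization{G}\weireducible \parallelization{\DetPartX{f}{\mathbb{N}}}$.

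Next I would check that $\DetPart{f}\weireducible \parallelization{G}$. On a name of $(p,y)\in\dom(\DetPart{f})$, the forward reduction outputs the sequence of $G$-instances $((n,p,y))_{n\in\mathbb{N}}$; any $\parallelization{G}$-solution is then a sequence $(a_n)_{n\in\mathbb{N}}$ with $a_n = \DetPart{f}(p,y)(n)$, and concatenating these digits yields a name of $\DetPart{f}(p,y)$. Chaining the two reductions gives $\DetPart{f}\weireducible \parallelization{\DetPartX{f}{\mathbb{N}}}$.

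There is essentially no obstacle here: the proof merely formalizes the fact that, for single-valued outputs in $\Baire$, being ``computable from $f$'' is equivalent to being ``coordinatewise computable from $f$ in parallel'', which is precisely what $\parallelization{\DetPartX{f}{\mathbb{N}}}$ affords.
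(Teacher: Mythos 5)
Your proof is correct and is essentially the paper's own argument, just written out in full: the paper's one-line proof also proceeds by uncurrying the single-valued function into a form with codomain $\mathbb{N}$ and reducing to the parallelization of that uncurried form. The extra details you supply (verifying $G\weireducible f$, invoking \thref{thm:detpart_total} with $\mathbf{X}=\mathbb{N}$, and the two-step chaining through $\parallelization{G}$) are exactly what the paper leaves implicit.
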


\begin{corollary}
	$\DetPart{f} \weireducible \widehat{\firstOrderPart{f}}$.
\end{corollary}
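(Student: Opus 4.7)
The plan is to chain the previous proposition with a monotonicity argument about parallelization. The previous proposition gives $\DetPart{f} \weireducible \widehat{\DetPartX{f}{\mathbb{N}}}$, so it suffices to show $\widehat{\DetPartX{f}{\mathbb{N}}} \weireducible \widehat{\firstOrderPart{f}}$.

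First I would observe that $\DetPartX{f}{\mathbb{N}} \weireducible \firstOrderPart{f}$. This is immediate from the two maximality characterizations available to us: $\DetPartX{f}{\mathbb{N}}$ is itself a problem with codomain $\mathbb{N}$ which is Weihrauch reducible to $f$ (by \thref{thm:detpart_total}), hence it belongs to the class of first-order problems below $f$, whose maximum is $\firstOrderPart{f}$.

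Then I would invoke the standard fact that parallelization is a monotone operator on the Weihrauch degrees, i.e.\ $g \weireducible h$ implies $\parallelization{g} \weireducible \parallelization{h}$, which follows by applying the reduction witnesses coordinatewise. Combining this with the reduction $\DetPartX{f}{\mathbb{N}} \weireducible \firstOrderPart{f}$ established above yields $\widehat{\DetPartX{f}{\mathbb{N}}} \weireducible \widehat{\firstOrderPart{f}}$, and transitivity with the preceding proposition completes the argument. There is no real obstacle here; this corollary is purely a formal consequence of the previous proposition together with the maximality property defining $\firstOrderPart{f}$.
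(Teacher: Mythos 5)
Your argument is correct and matches the route the paper intends: the corollary is stated without proof precisely because it follows by chaining the preceding proposition $\DetPart{f} \weireducible \parallelization{\DetPartX{f}{\mathbb{N}}}$ with the observation that $\DetPartX{f}{\mathbb{N}}$ is a first-order problem reducible to $f$ (hence $\DetPartX{f}{\mathbb{N}} \weireducible \firstOrderPart{f}$ by the maximality characterization) and the monotonicity of parallelization. No gaps.
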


\subsection{Interaction with other operations on Weihrauch degrees}
A first straightforward observation is that $\DetPart{f}\,\square\, \DetPart{g} \weireducible \DetPart{f\,\square\, g}$ whenever $\square$ is a degree-theoretic operator that preserves single-valuedness. We will look at the interaction with the usual well-studied operations on Weihrauch degrees. Besides those introduced in Section \ref{sec:background}, we consider $\sqcup$ and $\sqcap$, the join and meet in the Weihrauch lattice, and the finite parallelization $^*$ (which essentially is closure under $\times$). See \cite[Section 3]{BGP17} for definitions. The diamond operator ${}^\diamond$ was introduced in \cite{NP18}, and corresponds to the possibility of using the oracle an arbitrary (but finite) number of times (essentially closure under compositional product).

It is imminent from the definition that $\DetPart{f}\sqcup \DetPart{g} \weiequiv \DetPart{f \sqcup g}$. 

Moreover $\DetPart{f \sqcap g} \weireducible \DetPart{f}$ and $\DetPart{f \sqcap g} \weireducible \DetPart{g}$ by monotonicity, hence $\DetPart{f \sqcap g} \weireducible \DetPart{f}\sqcap \DetPart{g}$, as $\sqcap$ is the meet on Weihrauch degrees (\cite[Prop.\ 3.11]{BG09}). To see that the inequality can be strict, let $p,q \in \Cantor$ be a minimal pair of Turing degrees (which we identify with the constant functions returning these values). Then $\DetPart{p \sqcap q} \weiequiv \id_\Baire \strictlyweireducible \DetPart{p} \sqcap \DetPart{q} \weiequiv p \sqcap q$. 

Our principle $\DS$ (to be defined) already witnesses that the deterministic part does not distribute over $\times$ and $\compproduct$, and does not commute with $^*$, $^\diamond$ and $\widehat{\phantom{f}}$: we will prove that $\DetPart{\DS}\weiequiv \mflim$ (\thref{thm:below_DS_below_lim}), while $\LPO'\weireducible \DS \times \DS$ (\thref{thm:DS_not_closed_under_product}). Here we also give another example with a more computability-theoretic flavour:

\begin{example}
	There is a Weihrauch degree $f$ such that: \[\DetPart{f} \weiequiv \id \strictlyweireducible f \strictlyweireducible f \times f \weiequiv f^\diamond \weiequiv \widehat{f} \weiequiv \DetPart{f \times f}.\]	
	Indeed, consider the degrees of points in the spaces $\mathbb{R}_<$, $\mathbb{R}_>$ and $\mathbb{R}$ (see \cite{pauly-kihara3} for details). Let $x \in \mathbb{R}$ be neither left-c.e.\ nor right-c.e.; i.e.\ it lacks computable names in both $\mathbb{R}_<$ and $\mathbb{R}_>$. Then $x \in \mathbb{R}_<$ and $x \in \mathbb{R}_>$ have quasi-minimal degrees, that is do not compute any non-computable elements of Cantor space. We define $f\colon\mathbf{2} \to \mathbb{R}_< + \mathbb{R}_>$ by $f(0) = x \in \mathbb{R}_<$ and $f(1) = x \in \mathbb{R}_>$. The quasi-minimality implies that $\DetPart{f} \weiequiv \id$. However, $f \times f$ is equivalent to the constant function returning $x \in \mathbb{R}$, which is also equivalent to the constant function returning the decimal expansion of $x$. Thus, $f \times f \weiequiv \DetPart{f \times f}$. Any of $f^*$, $f \compproduct f$, $f^\diamond$ and $\widehat{f}$ clearly share the same degree.
\end{example}

\begin{theorem}
	\thlabel{thm:det(f*g)<=det(f)*g}
	For every represented space $\mathbf{X}$ and every problems $f,g$, 
	\[ \DetPartX{f\compproduct g}{\mathbf{X}} \weireducible \DetPartX{f}{\mathbf{X}} \compproduct g. \] 
\end{theorem}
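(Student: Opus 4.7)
The plan is to unfold the Weihrauch reduction $\DetPartX{f \compproduct g}{\mathbf{X}} \weireducible f \compproduct g$ (which holds because $\DetPart$ is an interior operator) into its three computable pieces, and to recognise the third piece as an instance of $\DetPartX{f}{\mathbf{X}}$ by exploiting that the source is single-valued with codomain $\mathbf{X}$.

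Write $h := \DetPartX{f \compproduct g}{\mathbf{X}}$. First I would apply the cylindrical decomposition lemma, fixing cylinders $F \weiequiv f$ and $G \weiequiv g$ and a computable $\Phi$ with $f \compproduct g \weiequiv F \circ \Phi \circ G$. The reduction $h \weireducible F \circ \Phi \circ G$ then supplies computable maps $\Phi_0$ and $\Psi$ such that, for every name $p$ of an input $y \in \dom(h)$, $\Phi_0(p)$ names a $G$-instance, $\Phi(q)$ names an $F$-instance for every name $q$ of a $G$-solution, and $\Psi(p,r)$ names $h(y) \in \mathbf{X}$ for every name $r$ of the corresponding $F$-solution.

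The key observation is that, since $h$ is single-valued with codomain $\mathbf{X}$, the point $h(y)$ depends only on $y$; hence for any fixed $p$ and any valid $q$, the Turing functional $r \mapsto \Psi(p,r)$ returns a $\delta_\mathbf{X}$-name of the same element of $\mathbf{X}$ on every $F$-solution $r$. Letting $e(p)$ be a computable (via s-m-n) index for this functional, the pair $(e(p), \Phi(q))$ lies in $\dom(\DetPartX{F}{\mathbf{X}})$ and satisfies $\DetPartX{F}{\mathbf{X}}(e(p), \Phi(q)) = h(y)$.

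Finally I would assemble the reduction to $\DetPartX{f}{\mathbf{X}} \compproduct g$ using the maximum characterisation of $\compproduct$. Let $g_1$ be the problem that, given $p$, outputs the pair $\langle p, q \rangle$ for any $G$-solution $q$ to $\Phi_0(p)$, so that $g_1 \weireducible G \weiequiv g$. Let $f_1$ be the single-valued function that, given $\langle p, q \rangle$, computes $e(p)$ and $\Phi(q)$ and invokes $\DetPartX{F}{\mathbf{X}}$, so that $f_1 \weireducible \DetPartX{F}{\mathbf{X}} \weiequiv \DetPartX{f}{\mathbf{X}}$. Then $h \weireducible f_1 \circ g_1 \weireducible \DetPartX{f}{\mathbf{X}} \compproduct g$, as required. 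The main delicate point to check is that the derived $\DetPart$-instance really is in the domain, i.e.\ that the functional in question is total on names of $F$-solutions and always yields a name of the same point; but this is exactly what the single-valuedness of $h$ (together with the choice of codomain $\mathbf{X}$) supplies, so beyond careful bookkeeping of names no further work is needed.
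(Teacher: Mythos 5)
Your proposal is correct and follows essentially the same route as the paper's proof: both apply the cylindrical decomposition lemma and then split the reduction $\DetPartX{f\compproduct g}{\mathbf{X}} \weireducible f \compproduct g$ into a $g$-stage followed by a single-valued ``second half'' that is reducible to $f$ and hence to $\DetPartX{f}{\mathbf{X}}$, using single-valuedness of the source to see that this second half is well-defined. The only cosmetic difference is that the paper upgrades to a strong reduction (so the backward functional depends only on the $F$-solution and the second half becomes a function of $q$ alone), whereas you thread the original name $p$ through the composition via the pair $\langle p,q\rangle$; both devices serve the same purpose.
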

\begin{proof}
	Fix a single-valued $h$ with codomain $\mathbf{X}$ and assume, without loss of generality, that $\dom(h)\subset \Baire$ (if $h$ is single-valued then the map $p\mapsto h\circ\repmap{}(p)$ is single-valued as well, where $\repmap{}$ is the representation map for the domain of $h$). Assume also, for the sake of readability, that $f$ and $g$ are cylinders (if not we can just replace $f$ with $f \times \id{}$, as $\DetPartX{\cdot}{\mathbf{X}}$ is a degree-theoretic operation).
	
	By the cylindrical decomposition lemma, there is a computable function $\Phi_e$ s.t.\ 
	\[ h \strongweireducible f \circ \Phi_e\circ g. \]
	Let $\Phi,\Psi$ be two maps witnessing this strong reduction. Define $\phi$ as the restriction of $\repmap{\mathbf{X}}\circ\Psi\circ f \circ \Phi_e$ to $\dom(g \circ \Phi \circ h)$. The choice of the domain of $\phi$ guarantees that $\phi$ is single-valued: intuitively $\phi$ witnesses the ``second part'' of the reduction $h\strongweireducible f \circ \Phi_e\circ g$, and the fact that $h$ is single-valued implies that so is $\phi$. In particular, $\phi \weireducible \DetPartX{f}{\mathbf{X}}$ (as $\phi \weireducible f$ trivially). Since $h\weireducible \phi\compproduct g$ we have that $h\weireducible \DetPartX{f}{\mathbf{X}}\compproduct g$.
\end{proof}

Notice that this implies the choice elimination theorem \cite[Thm.\ 7.25]{BGP17}, as $\DetPart{\CCantor}\weiequiv\id{}$ (\cite[Cor.\ 8.8]{BG09}).

\begin{corollary}
	If $g$ is single-valued with codomain $\Baire$ then $\DetPart{f\compproduct g}\weiequiv \DetPart{f} \compproduct g$. 
\end{corollary}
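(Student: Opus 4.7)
The plan is to combine Theorem~\thref{thm:det(f*g)<=det(f)*g} with the maximum characterization of the deterministic part from Theorem~\thref{thm:detpart_total}. The first direction $\DetPartX{f\compproduct g}{\mathbf{X}} \weireducible \DetPartX{f}{\mathbf{X}} \compproduct g$ is immediate from Theorem~\thref{thm:det(f*g)<=det(f)*g}, which requires no assumption on $g$, so the entire content of the corollary is the reverse reduction $\DetPartX{f}{\mathbf{X}} \compproduct g \weireducible \DetPartX{f\compproduct g}{\mathbf{X}}$.

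For this reverse direction, monotonicity of $\compproduct$ together with $\DetPartX{f}{\mathbf{X}} \weireducible f$ yields $\DetPartX{f}{\mathbf{X}} \compproduct g \weireducible f \compproduct g$. Hence, by Theorem~\thref{thm:detpart_total} applied to $f\compproduct g$, it suffices to exhibit a single-valued function $h\pfunction \mathbf{W}\to\mathbf{X}$ that is Weihrauch equivalent to $\DetPartX{f}{\mathbf{X}} \compproduct g$; one then concludes $\DetPartX{f}{\mathbf{X}} \compproduct g \weiequiv h \weireducible \DetPartX{f\compproduct g}{\mathbf{X}}$.

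The construction of $h$ is where the single-valuedness of $g$ is used. I would apply the cylindrical decomposition lemma to the cylinders $F := \DetPartX{f}{\mathbf{X}}\times\id$ and $G := g\times\id$, both of which are single-valued by our hypotheses. This yields $\DetPartX{f}{\mathbf{X}} \compproduct g \weiequiv F\circ\Phi\circ G$ for some computable $\Phi$, a composition of single-valued functions, hence single-valued, with codomain $\mathbf{X}\times\Baire$. The candidate $h$ is obtained by retaining only the $\mathbf{X}$-coordinate of this composition, giving a single-valued function with codomain precisely $\mathbf{X}$.

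The main obstacle is verifying that passing to the $\mathbf{X}$-coordinate does not strictly decrease the Weihrauch degree. The discarded $\Baire$-coordinate of $F\circ\Phi\circ G$ is, by inspection, the image under a fixed computable map of $(g(y),s)$, where $(y,s)$ is the input. Here the single-valuedness of $g$ is essential: because $g$ produces a unique output, this auxiliary data is a computable function of the single $g$-output already being produced inside $h$, so it can be reconstructed without invoking $g$ a second time, giving $h \weiequiv F\circ\Phi\circ G$. Once this equivalence is established, the chain $\DetPartX{f}{\mathbf{X}} \compproduct g \weiequiv h \weireducible \DetPartX{f\compproduct g}{\mathbf{X}}$ combines with Theorem~\thref{thm:det(f*g)<=det(f)*g} to close the proof.
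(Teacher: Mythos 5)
The forward reduction and your overall strategy for the converse (exhibit a single-valued representative of $\DetPartX{f}{\mathbf{X}}\compproduct g$ with codomain $\mathbf{X}$, then invoke the maximality from \thref{thm:detpart_total}) are sound, but the step where you pass from $F\circ\Phi\circ G$ to its $\mathbf{X}$-coordinate is a genuine gap, and it cannot be repaired as written. The discarded $\Baire$-coordinate is computed from a \emph{name} of $(g(y),s)$, and a name of $g(y)$ is exactly what the backward functional of a reduction $F\circ\Phi\circ G\weireducible h$ does not have: it sees only the original input together with the $\mathbf{X}$-valued output of $h$, and recovering a name of $g(y)$ from these would amount to invoking $g$ a second time. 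Your sentence ``it can be reconstructed without invoking $g$ a second time'' is precisely the assertion that needs proof, and it is false in general. Concretely, take $\mathbf{X}=\mathbb{N}$, $f=\id_\Baire$ and $g=\mflim$ (which is single-valued): then $\DetPartX{f}{\mathbb{N}}\compproduct g\weiequiv\mflim$, whereas any single-valued $u$ reducible to a single-valued function with codomain $\mathbb{N}$ has all of its values computable from names of its inputs (the backward functional receives the input name plus a single natural number), so $\mflim\not\weireducible\DetPartX{\mflim}{\mathbb{N}}=\DetPartX{f\compproduct g}{\mathbb{N}}$. Hence the equivalence fails for this choice of $f$, $g$, $\mathbf{X}$; no argument that never engages with the codomain of $g$, as yours does not, can succeed.

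The hypothesis that is actually doing the work is that $g$ is (Weihrauch equivalent to) a single-valued function \emph{with codomain $\mathbf{X}$}, and this is how the paper's one-line proof uses single-valuedness: since $g$ is then itself one of the functions over which the maximum defining $\DetPartX{g}{\mathbf{X}}$ is taken, one gets $\DetPartX{g}{\mathbf{X}}\weiequiv g$, and therefore $\DetPartX{f}{\mathbf{X}}\compproduct g\weiequiv\DetPartX{f}{\mathbf{X}}\compproduct\DetPartX{g}{\mathbf{X}}\weireducible\DetPartX{f\compproduct g}{\mathbf{X}}$, the last inequality being the general semi-distributivity of $\DetPartX{\cdot}{\mathbf{X}}$ over degree-theoretic operations preserving single-valuedness noted at the start of the subsection (applied here to $\compproduct$). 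In the paper's only application, $g=\mflim^{[k]}$ and $\mathbf{X}=\Baire$, so the codomain condition holds. If you add that hypothesis, you should follow this route rather than the cylindrical decomposition: the composite $F\circ\Phi\circ G$ with codomain $\mathbf{X}\times\Baire$ still forces you to justify a projection, which is the step your argument cannot carry.
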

\begin{proof}
	This follows from \thref{thm:det(f*g)<=det(f)*g}, as $\DetPart{f}\compproduct \DetPart{g}\weireducible\DetPart{f\compproduct g}$ always holds and $\DetPart{g}\weiequiv g$ as $g$ is single-valued with codomain $\Baire$.
\end{proof}

\begin{corollary}
	\thlabel{thm:detpart_commutes_jump}
	For every cylinder $f$ and every $k\in\mathbb{N}$
	\[ \DetPart{f}^{(k)} \weiequiv\DetPart{f^{(k)}}. \]
\end{corollary}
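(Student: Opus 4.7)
The plan is to reduce both sides to a compositional product with $\mflim^{[k]}$ and then invoke the preceding corollary, which commutes $\DetPart{\cdot}$ past a single-valued right factor. Three ingredients from the preceding material come together: (i) for any cylinder $h$, the $k$-th jump satisfies $h^{(k)}\weiequiv h\compproduct \mflim^{[k]}$ (quoted in the background); (ii) $\DetPart{f}$ is always a cylinder (observed immediately after the definition of $\DetPart{\cdot}$); and (iii) the corollary above saying that $\DetPart{f\compproduct g}\weiequiv \DetPart{f}\compproduct g$ whenever $g$ is single-valued.

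The argument then proceeds as follows. Since $f$ is a cylinder by hypothesis, ingredient (i) yields $f^{(k)}\weiequiv f\compproduct \mflim^{[k]}$. Since $\DetPart{f}$ is a cylinder by ingredient (ii), the same statement applied to $\DetPart{f}$ gives $\DetPart{f}^{(k)}\weiequiv \DetPart{f}\compproduct \mflim^{[k]}$. Next, $\mflim^{[k]}$ is (equivalent to) a single-valued function: by the background, $\mflim^{[k]}\weiequiv \mflim^{(k-1)}$ for $k\ge 1$, and jumps preserve single-valuedness. Hence ingredient (iii) applies with $g=\mflim^{[k]}$ and gives $\DetPart{f\compproduct \mflim^{[k]}}\weiequiv \DetPart{f}\compproduct \mflim^{[k]}$.

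Putting it together, and using the fact that $\DetPart{\cdot}$ is a degree-theoretic operator, we get
\[
\DetPart{f^{(k)}} \;\weiequiv\; \DetPart{f\compproduct \mflim^{[k]}} \;\weiequiv\; \DetPart{f}\compproduct \mflim^{[k]} \;\weiequiv\; \DetPart{f}^{(k)},
\]
which is the claim. The case $k=0$ is immediate since $f^{(0)}=f$ and $\DetPart{f}^{(0)}=\DetPart{f}$.

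There is no substantive obstacle: the proof is a clean three-step chain, and the only small thing to check is that $\mflim^{[k]}$ qualifies as single-valued for the purposes of the corollary, which it does because it is Weihrauch equivalent to a jump of the single-valued function $\mflim$ and the corollary is insensitive to replacing $g$ by a Weihrauch equivalent representative.
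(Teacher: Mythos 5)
Your proof is correct and follows essentially the same route as the paper: both decompose $f^{(k)}$ and $\DetPart{f}^{(k)}$ as compositional products with $\mflim^{[k]}$ via the two cylinder hypotheses and then commute $\DetPart{\cdot}$ past the single-valued factor using \thref{thm:det(f*g)<=det(f)*g} (the paper argues the easy direction slightly more directly, via $\DetPart{f}^{(k)}\weireducible f^{(k)}$ and maximality, but this is the same content as the corollary you cite). Your explicit checks that $\mflim^{[k]}\weiequiv\mflim^{(k-1)}$ is single-valued up to equivalence and that $k=0$ is trivial are both fine.
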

\begin{proof}
	The left-to-right reduction is straightforward as
	\[ \DetPart{f}^{(k)} \weireducible \DetPart{f}\compproduct\mflim^{[k]} \weireducible f \compproduct\mflim^{[k]} \weiequiv f^{(k)},\]
	where the last equality follows from the fact that $f$ is a cylinder. Since $\DetPart{f}^{(k)}$ is single-valued, this implies $\DetPart{f}^{(k)}\weireducible \DetPart{f^{(k)}}$.

	The right-to-left reduction follows from \thref{thm:det(f*g)<=det(f)*g} as 
	\[ \DetPart{f^{(k)}} \weiequiv \DetPart{f\compproduct\mflim^{[k]}}\weireducible\DetPart{f} \compproduct\mflim^{[k]} \weiequiv \DetPart{f}^{(k)},\]
	where the last equality follows from the fact that $\DetPart{f}$ is a cylinder.
\end{proof}

The previous corollary can be generalized in a straightforward way to any represented space $\mathbf{X}$ s.t.\ $\DetPartX{f}{\mathbf{X}}$ is a cylinder. Notice that it is false (in general) if $f$ is not a cylinder: take $f=\Choice{2}$ and $k=1$. Since $\mathsf{C}_2'\weiequiv \RT{1}{2}$ (see e.g.\ \cite[Fact 2.3 and Prop.\ 3.4]{BRramsey17}) we have $\DetPart{\mathsf{C}_2'} \weireducible \RT{1}{2}$, 
hence in particular $\mflim \not \weireducible \DetPart{\mathsf{C}_2'}$. On the other hand $\mflim \weireducible \DetPart{\Choice{2}}'$ (as $\DetPart{\Choice{2}}$ is a cylinder).

\begin{definition}
	Given some $f\pmfunction{\Baire}{\Baire}$ let $?f\pmfunction{\Baire}{\Baire}$ be defined by $0^\omega \in ?f(0^\omega)$ and $0^n1p \in ?f(0^n1q)$ iff $p \in f(q)$.
\end{definition}

It is easy to see that $?$ defines an operation on Weihrauch degrees, and represents the idea of being able to maybe ask a question to $f$ -- but never having to decide to forgo this (which would be the case for $1 \sqcup f$). Many well-studied principles are equivalent to their maybe-variants, this in particular holds for all pointed fractals. We introduce the operation here to be able to express how the deterministic part interacts with the notion of completion $\overline{(\cdot)}$ introduced by Brattka and Gherardi \cite{BGBrouwerian,BGCompOfChoice19}.

\begin{proposition}
	$\DetPart{\overline{f}} \weiequiv \DetPart{?f}\weiequiv ?\DetPart{f}$.
	\begin{proof}
		To show that $\DetPart{\overline{f}} \weireducible \DetPart{?f}$, wlog assume that $f\pmfunction{\Baire}{\Baire}$ and consider a function $g : \subseteq \Baire \to \Baire$ with $g \weireducible \overline{f}$ witnessed by $\Phi,\Psi$. Now if for some prefix $w$ the computation of $\Psi(w,\cdot)$ outputs two different things depending on the second part of the input, then in order for $g$ to be a function, we have the guarantee that all extensions of $w$ in the domain of $g$ will be mapped to inputs in the domain of $f$, i.e.\ we are actually calling $f$ rather than making use of $\overline{f}$. On the other hand, if $\Psi(w,\cdot)$ would output the same thing regardless of the second argument, we can postpone actually calling $f$ (which $?f$ lets us do) and go with that output for the time being. This reasoning establishes that $g \weireducible ?f$.
		
		To see that $\DetPart{?f} \weireducible ?\DetPart{f}$, we just inspect the technical definition of $\DetPart{\cdot}$.
		
		Finally, for $?\DetPart{f} \weireducible \DetPart{\overline{f}}$ we observe that $?\DetPart{f}$ is single-valued with codomain $\Baire$, thus it suffices to show $?\DetPart{f} \weireducible \overline{f}$. But already $?f \weireducible \overline{f}$ holds: $\overline{f}$ accepts an input that is completely void of information. We provide this as long as our $?f$ instance does not want to use $f$; if it ever does, we have the relevant $f$-instance which we can then feed into $\overline{f}$. Note that we do not get a strong reduction here, in general.
	\end{proof}
\end{proposition}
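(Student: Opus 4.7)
The plan is to prove the chain of reductions
\[ \DetPart{\overline{f}} \weireducible \DetPart{?f} \weireducible ?\DetPart{f} \weireducible \DetPart{\overline{f}}, \]
trading on the intuition that $\overline{f}$ (which accepts a ``garbage'' input outside $\dom(f)$, mapping it to a fixed garbage solution) and $?f$ (which carries the explicit signal $0^\omega$ for not calling $f$) are two different ways of encoding an optional invocation of $f$. In both cases the caller has a flag channel available to decide whether they actually want an $f$-answer; they differ only in how the flag is presented.

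For the first reduction, I would take a single-valued $g\pfunction \Baire \to \Baire$ reducible to $\overline{f}$ via forward and backward maps $\Phi,\Psi$, and produce a reduction $g \weireducible ?f$. Single-valuedness of $g$ forces the following dichotomy on each prefix $w$ of the input to $\Psi$: either $\Psi(w,\cdot)$ eventually produces the same partial output on every possible second argument (in which case, at this stage of the computation, we may route through the $0^\omega$-branch of $?f$ without actually using $f$), or it produces distinct partial outputs. In the latter case, the $\overline{f}$-instance handed over by $\Phi$ must be a genuine $f$-instance, since a garbage input to $\overline{f}$ has a unique solution and would force $\Psi(w,\cdot)$ to be constant on the corresponding name; hence we route this call through the nontrivial branch of $?f$. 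Applying \thref{thm:detpart_total} then gives $g \weireducible \DetPart{?f}$.

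For the middle reduction $\DetPart{?f} \weireducible ?\DetPart{f}$, I would just unfold the defining condition of $\DetPart{\cdot}$: a $\DetPart{?f}$-instance $(p,y)$ with $y = 0^\omega$ is trivially handled (the answer is $\delta_\Baire(\Phi_p(0^\omega))$, which is computable directly from $p$), corresponding to the $0^\omega$-branch of $?\DetPart{f}$; while $y = 0^n 1 q$ with $q \in \dom(f)$ translates into a genuine $\DetPart{f}$-instance via the functional $r \mapsto \Phi_p(0^n 1 r)$, which one feeds into the nontrivial branch of $?\DetPart{f}$. For the last reduction, I would first establish $?f \weireducible \overline{f}$ directly: map $0^\omega$ to any fixed name outside $\dom(f)$, and $0^n 1 q$ to $q$. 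Using monotonicity of $?$ together with $\DetPart{f} \weireducible f$, this yields $?\DetPart{f} \weireducible ?f \weireducible \overline{f}$. Since $?\DetPart{f}$ is single-valued with codomain $\Baire$, \thref{thm:detpart_total} promotes this to $?\DetPart{f} \weireducible \DetPart{\overline{f}}$.

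The hard part will be the first step: the dichotomy must be applied uniformly and on the fly, by a procedure that interleaves reading more of the original input with simulating $\Psi$ on sufficiently many candidate second arguments, committing to the $0^\omega$-branch as soon as the constancy of $\Psi$ on a relevant prefix is manifest and to the genuine-call branch as soon as a distinguishing pair of second arguments appears. One must check that this interleaving always eventually resolves and that the resulting output really is a valid reduction to $?f$; the single-valuedness of $g$ is what makes this possible, but carving up the computation cleanly along these lines is the only nontrivial piece of bookkeeping.
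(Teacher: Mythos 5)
Your proposal is correct and follows essentially the same route as the paper: the same cycle of three reductions, the same single-valuedness dichotomy on prefixes $w$ for the first step, the same unfolding of the definition of $\DetPart{\cdot}$ for the middle step, and the same factoring of the last step through $?f \weireducible \overline{f}$ together with the single-valuedness of $?\DetPart{f}$. One small slip: an input to $\overline{f}$ outside $\dom(f)$ does not have a \emph{unique} solution but rather admits \emph{every} point as a solution, and it is precisely this that forces $\Psi(w,\cdot)$ to be constant when $g$ is single-valued — your conclusion is the right one, only the stated reason is off.
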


\subsection{Previous appearances in the literature}
\label{sec:detpar_literature}
While the deterministic part as such has not been introduced before, and in particular the observation that it is always well-defined is new, there are several results in the literature on Weihrauch degrees that implicitly use it. Already in the first paper introducing the modern definition of Weihrauch reducibility \cite{GM09}, it was shown that $\DetPart{\CCantor} \weiequiv \id$. It was observed in \cite{paulyleroux} that the argument actually even establishes that $\DetPartX{\CCantor}{\mathbf{X}} \weiequiv \id$ for any computably admissible space $\mathbf{X}$.

In \cite{KMP20} the principle $\wList_{\Cantor,\leq\omega}$ which produces an enumeration of the elements of a countable closed subset of Cantor space was introduced, and \cite[Prop.\ 6.14]{KMP20} states that $\DetPart{\wList_{\Cantor,\leq\omega}} \weiequiv \mflim$. The authors also proved the following result, which will be useful in \thref{thm:detpart_Delta11-DS}:
\begin{theorem}[{\cite[Thm.\ 8.5]{KMP20}}]
	\thlabel{thm:detpart_cbaire}
	$\UCBaire \weiequiv \DetPart{\CBaire}\weiequiv\DetPart{\parallelization{\TCBaire}}$.
\end{theorem}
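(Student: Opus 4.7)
My plan is to establish the three-way equivalence by closing the cycle $\UCBaire \weireducible \DetPart{\CBaire} \weireducible \DetPart{\parallelization{\TCBaire}} \weireducible \UCBaire$. Two of these three reductions are essentially formal. First, $\UCBaire \weireducible \DetPart{\CBaire}$ is immediate from \thref{thm:detpart_total}: $\UCBaire$ is single-valued with codomain $\Baire$ by definition, and it restricts $\CBaire$ (so $\UCBaire \weireducible \CBaire$), whence the maximality of $\DetPart{\CBaire}$ among single-valued problems below $\CBaire$ delivers the reduction. Second, $\DetPart{\CBaire} \weireducible \DetPart{\parallelization{\TCBaire}}$ follows from the monotonicity of $\DetPart{\cdot}$ together with the trivial reduction $\CBaire \weireducible \parallelization{\TCBaire}$, since a $\CBaire$-instance is nothing but a length-one $\parallelization{\TCBaire}$-instance whose sole coordinate is assumed nonempty.

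The substantive step is $\DetPart{\parallelization{\TCBaire}} \weireducible \UCBaire$. The key tool is the identification $\UCBaire \weiequiv \boldfaceSigma^1_1\text{-}\mathsf{UC}_{\Baire}$ recorded in Section~\ref{sec:background}. Given an input $(p,y)$ to $\DetPart{\parallelization{\TCBaire}}$, where $y$ names a sequence $(T_n)$ of subtrees of $\baire$ and the promise is that $\Phi_p$ outputs the same element $x\in\Baire$ on every name of a $\parallelization{\TCBaire}$-solution of $(T_n)$, I want to compute, uniformly in $(p,y)$, a $\repmap{\boldfaceSigma^1_1}$-name of the singleton $\{x\}$. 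The natural candidate is the image
\[ S \;=\; \{\, w \in \Baire \st (\exists (q_n) \text{ valid})(\Phi_p((q_n)) = w) \,\}, \]
where validity means $q_n \in [T_n]$ whenever $[T_n]\neq\emptyset$. The clause ``$\Phi_p((q_n)) = w$'' can be expressed as the projection of a $\Pi^0_1$ condition by bringing in a computation-record parameter, and so in isolation stays within $\Sigma^1_1$. The assumption forces $S\subseteq\{x\}$; existence of at least one valid sequence gives $S=\{x\}$, and applying $\boldfaceSigma^1_1\text{-}\mathsf{UC}_{\Baire}$ then returns $x$. The same template, stripped of the parallelization, already delivers $\DetPart{\CBaire}\weireducible\UCBaire$ directly, giving the first equivalence of the theorem without appealing to the third.

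The main obstacle is taming the validity clause for $\parallelization{\TCBaire}$: ``$[T_n]=\emptyset \lor q_n\in[T_n]$'' contains a $\Pi^1_1$ disjunct per coordinate, so a naive description of $S$ lands in $\Sigma^1_2$ rather than $\Sigma^1_1$. My proposed workaround is to bypass the direct $\Sigma^1_1$-singleton construction by first establishing a (strong) reduction $\parallelization{\TCBaire} \weireducible \CBaire$, bundling the sequence $(T_n)$ of possibly-empty $\Pi^0_1$-subsets of $\Baire$ into a single nonempty $\Pi^0_1$-subset whose paths decode into valid $\parallelization{\TCBaire}$-solutions; monotonicity of $\DetPart{\cdot}$ then combines with the clean $\DetPart{\CBaire}\weireducible \UCBaire$ argument above to close the cycle. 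Designing that coding is the delicate step: we must interleave ``genuine'' path extensions through $T_n$ with fallback branches whose decoding is automatically valid under the $\TCBaire$-tolerance at empty coordinates, while arranging that the fallback branch is, up to decoding, indistinguishable from a genuine solution when $[T_n]$ happens to be nonempty (so that $\Phi_p$ still returns $x$). Expect this to require a subtle joint bookkeeping across coordinates rather than an independent per-coordinate padding.
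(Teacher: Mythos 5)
Your two formal reductions are fine: $\UCBaire \weireducible \DetPart{\CBaire}$ is immediate from \thref{thm:detpart_total}, and $\DetPart{\CBaire} \weireducible \DetPart{\parallelization{\TCBaire}}$ follows from monotonicity of $\DetPart{\cdot}$ together with $\CBaire \weireducible \parallelization{\TCBaire}$. Your direct argument for $\DetPart{\CBaire} \weireducible \UCBaire$ is also correct: the image of the nonempty $\Pi^0_1$ solution set under $\Phi_p$ (total on it by the promise) is, uniformly in the input, a $\Sigma^1_1$ singleton, so $\boldfaceSigma^1_1\text{-}\mathsf{UC}_{\Baire}\weiequiv\UCBaire$ finishes that half. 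Note, for the record, that the paper does not prove this theorem at all; it is imported from \cite[Thm.\ 8.5]{KMP20}.

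The substantive step, however, rests on a false lemma: $\parallelization{\TCBaire} \not\weireducible \CBaire$, so the coding you propose cannot exist. To see this, note that $\PiCA \weireducible \mflim \compproduct \parallelization{\TCBaire}$: given trees $(T_n)_{n\in\mathbb{N}}$, feed the closed sets $[T_n]$ to $\parallelization{\TCBaire}$ to obtain $(q_n)_{n\in\mathbb{N}}$, and observe that $[T_n]=\emptyset$ if and only if $q_n \notin [T_n]$ (a correct answer for a nonempty coordinate must lie in it); the latter is a uniformly $\Sigma^0_1$ condition, decided by $\parallelization{\LPO}\weiequiv\mflim$. Hence $\parallelization{\TCBaire}\weireducible\CBaire$ would give $\PiCA \weireducible \mflim\compproduct\CBaire \weiequiv \CBaire$, contradicting $\CBaire \strictlyweireducible \PiCA$ (used, e.g., in \thref{thm:Pi11DS<Pi11BS}). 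In other words, no nonempty closed subset of $\Baire$ can have all of its paths decode to valid $\parallelization{\TCBaire}$-solutions: the per-coordinate $\Pi^1_1$ disjunct you correctly identified as the obstacle cannot be coded away, and the entire content of the cited theorem is precisely that $\parallelization{\TCBaire}$ lies strictly above $\CBaire$ while nevertheless having the same deterministic part. The $\parallelization{\TCBaire}$ half therefore needs a genuinely different argument that exploits the single-valuedness of the reduced problem before, rather than after, eliminating the multivaluedness of $\parallelization{\TCBaire}$, as in the proof of \cite[Thm.\ 8.5]{KMP20}.
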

This, in particular, shows that $\DetPart{\cdot}$ is not useful to separate principles that are between $\UCBaire$ and $\CBaire$.

In the context of probabilistic computation \cite{hoelzl,hoelzl2}, the fact that the upper cones of non-trivial enumeration degrees are measure zero is equivalent to the statement that if $f\pmfunction{\Baire}{\Baire}$ has the property that $f(p)$ has positive measure for every $p \in \dom(f)$ and $\mathbf{X}$ is an effectively countably based space, then there is some $g$ with $\DetPartX{f}{\mathbf{X}} \weireducible \firstOrderPart{g}$.

\section{Finding descending sequences}
\label{sec:ds}
Let us formally define the problem of finding descending sequences in an ill-founded linear order as a multi-valued function.
\begin{definition}
	Let $\DS\pmfunction{\mathbf{LO}}{\Baire}$ be the multi-valued function defined as 
	\[ \DS(L):= \{ x\in\Baire \st (\forall i)(x(i+1)<_L x(i)) \}, \] 
	with $\dom(\DS):= \LO \setdifference \WO$.
\end{definition}

\subsection{The uniform strength of \texorpdfstring{$\DS$}{DS}}

We can immediately notice the following:

\begin{proposition}
	\thlabel{thm:bs_not_<=_ucbaire}
	$\DS\weireducible \CBaire$ but $\DS\not\weireducible \UCBaire$.
\end{proposition}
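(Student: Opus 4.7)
The plan is to handle the two halves separately, each via a short argument that invokes material already set up in the excerpt.

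For the reduction $\DS \weireducible \CBaire$, I will view the set of descending sequences through a linear order $L$ as a closed subset of $\Baire$. Given a name for $L \in \LO$, the set $A_L := \{ x \in \Baire \st (\forall i)(x(i+1) <_L x(i)) \}$ is $\Pi^0_1$ in $L$: membership in its complement can be recognized by waiting for some $i$ such that $x(i+1) \not<_L x(i)$. As noted in the preliminaries, $\CBaire$ is equivalent to choice on closed subsets of Baire space given as trees, so from $L$ we can compute a tree $T_L \subseteq \baire$ with $[T_L] = A_L$ (e.g.\ take $T_L$ to consist of all $\sigma \in \baire$ that are strictly $\le_L$-decreasing). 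The promise $L \in \LO \setdifference \WO$ guarantees that $A_L$ is nonempty, so $T_L$ is ill-founded. A single call to $\CBaire$ on $T_L$ then yields an element of $A_L$, i.e.\ a $\DS$-solution for $L$, and the post-processing is just the identity. Hence $\DS \weireducible \CBaire$.

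For the nonreduction $\DS \not\weireducible \UCBaire$, I will appeal to \thref{thm:ucbaire_hyperarithmetic_solution}: any $f \weireducible \UCBaire$ with domain a subspace of $\Baire$ must map every input $x$ to some output hyperarithmetic relative to $x$. The obstruction is then the classical Harrison/Kleene construction cited in the introduction: there is a \emph{computable} ill-founded linear order $L \in \dom(\DS)$ with no hyperarithmetic descending sequence (\cite[Lem.\ III.2.1]{SacksHRT}). If we had $\DS \weireducible \UCBaire$, then applying \thref{thm:ucbaire_hyperarithmetic_solution} to this $L$ would produce a hyperarithmetic descending sequence, a contradiction.

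Neither step looks technically delicate: the first is a routine closed-set-as-tree translation, and the second is a direct application of a theorem already recorded in the preliminaries together with a classical effective construction. The only minor care needed is to make sure the computation of $T_L$ from a $\repmap{\mathbf{LO}}$-name of $L$ is genuinely uniform (it is, since the representation gives the characteristic function of $\le_L$), and that the Harrison order is an instance of $\DS$ in the sense of the chosen representation (it is, being given by a computable characteristic function).
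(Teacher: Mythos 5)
Your proposal is correct and follows essentially the same route as the paper: the reduction $\DS \weireducible \CBaire$ via the tree of finite $<_L$-descending sequences, and the nonreduction via the computable ill-founded linear order with no hyperarithmetic descending sequence combined with \thref{thm:ucbaire_hyperarithmetic_solution}. The extra care you take about uniformity of the tree construction is fine but not needed beyond what the paper states.
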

\begin{proof}
	To show that $\DS\weireducible\CBaire$ it is enough to notice that being a descending sequence in a linear order $L$ is a $\Pi^{0,L}_1$ property. In other words, we can obtain a descending sequence through $L$ by choosing a path through the tree 
	\[ \{ \sigma\in \baire \st (\forall i<\length{\sigma}-1)(\sigma(i+1)<_L \sigma(i)) \}. \]

	To show that $\DS\not\weireducible\UCBaire$, recall that there is a computable linear order with no hyperarithmetic descending sequence (see e.g.\ \cite[Lem.\ III.2.1]{SacksHRT}). A reduction $\DS\weireducible\UCBaire$ would therefore contradict \thref{thm:ucbaire_hyperarithmetic_solution}.
\end{proof}

In particular, this shows that $\DS$ is not an arithmetic problem (i.e.\ $\DS\not\weireducible\mflim^{(n)}$, for any $n$). 

\begin{proposition}
    \thlabel{thm:CBaire_below_lim_cp_DS}
	$\CBaire \weiequiv \mflim\compproduct \DS$.
\end{proposition}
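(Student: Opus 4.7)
The plan is to prove both directions of $\CBaire \weiequiv \mflim \compproduct \DS$ separately.

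\emph{Easy direction, $\mflim \compproduct \DS \weireducible \CBaire$:} I would combine $\mflim \weireducible \CBaire$ (standard), $\DS \weireducible \CBaire$ (\thref{thm:bs_not_<=_ucbaire}), and the closure of $\CBaire$ under compositional product (noted in Section~\ref{sec:background}), concluding $\mflim \compproduct \DS \weireducible \CBaire \compproduct \CBaire \weiequiv \CBaire$.

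\emph{Hard direction, $\CBaire \weireducible \mflim \compproduct \DS$:} Given an ill-founded tree $T \subseteq \baire$, compute its Kleene--Brouwer linear order $\KB(T)$, which is ill-founded because $T$ is. Apply $\DS$ to obtain a descending sequence $(\sigma_n)_{n \in \mathbb{N}}$ in $\KB(T)$; then compute $(\tau_n)_{n \in \mathbb{N}}$ in $\Baire$ by zero-padding: $\tau_n(k) := \sigma_n(k)$ if $k < |\sigma_n|$, else $\tau_n(k) := 0$. The key claim is that $(\tau_n)$ converges in $\Baire$ to a path $f \in [T]$, so that one final application of $\mflim$ recovers $f$.

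The analysis of $(\sigma_n)$ has two ingredients. First, $|\sigma_n| \to \infty$: restricted to strings of length $\le K$, the Kleene--Brouwer order is a sub-order of $\KB(\mathbb{N}^{\le K})$, which is a well-order (the tree $\mathbb{N}^{\le K}$ having finite height, hence being well-founded), so only finitely many $\sigma_n$ can have length $\le K$. Second, for each $k$ the sequence $(\sigma_n(k))_n$ eventually stabilizes at some $v_k \in \mathbb{N}$: by induction on $k$, once $\sigma_n(j) = v_j$ for all $j < k$ and $|\sigma_n| > k$ (both hold for $n$ large), the descent $\sigma_{n+1} <_{\KB} \sigma_n$ forces either $\sigma_n \prefix \sigma_{n+1}$ (so $\sigma_{n+1}(k) = \sigma_n(k)$) or $\sigma_{n+1} <_{lex} \sigma_n$ with first disagreement at some position $\ge k$ (so $\sigma_{n+1}(k) \le \sigma_n(k)$); hence $(\sigma_n(k))_n$ is eventually non-increasing in $\mathbb{N}$ and stabilizes. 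Combining the two ingredients, $\tau_n(k)$ equals $\sigma_n(k) = v_k$ for $n$ large enough, so $\tau_n \to f := (v_k)_k$ in $\Baire$; and $f \in [T]$ because $(v_0, \ldots, v_k)$ is a prefix of $\sigma_n \in T$ for large $n$, and $T$ is closed under prefixes.

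The main obstacle is orchestrating the coordinate-wise stabilization carefully: the $\sigma_n$'s have varying lengths, so padding is required before $\mflim$ can be applied, and the argument needs both $|\sigma_n| \to \infty$ and the inductive coordinate monotonicity to push through.
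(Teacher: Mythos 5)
Your proposal is correct and follows essentially the same route as the paper: the easy direction via closure of $\CBaire$ under compositional product, and the hard direction by applying $\DS$ to $\KB(T)$, zero-padding the resulting descending sequence, and recovering a path with $\mflim$. The paper simply asserts that $\sequence{\sigma_n\concat 0^\omega}{n\in\mathbb{N}}$ converges to some $x\in[T]$, whereas you supply the (correct) verification via $|\sigma_n|\to\infty$ and coordinatewise stabilization.
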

\begin{proof}
	The reduction $\mflim\compproduct \DS\weireducible \CBaire$ follows from the fact that both $\mflim$ and $\DS$ are reducible to $\CBaire$ and that $\CBaire$ is closed under compositional product.
	
	To prove the left-to-right reduction notice that, given a tree $T$, we can computably build the linear order $\KB(T)$. It is known that $[T]\neq\emptyset$ iff $\KB(T)$ is ill-founded (see e.g.\ \cite[Lem.\ V.1.3]{Simpson09}). Moreover, given a infinite descending sequence $\sequence{\sigma_n}{n\in\mathbb{N}}$ in $\KB(T)$, the sequence $\sequence{\sigma_n\concat 0^\omega}{n\in\mathbb{N}}$ converges to some $x\in [T]$, and therefore the claim follows.
\end{proof}

We can generalize the problem $\DS$ to the context of quasi-orders. It is easy to see that the problem of finding descending sequences in a quasi-order is Weihrauch equivalent to $\CBaire$. Indeed, on the one hand, being a descending sequence in a quasi-order $P$ is a $\Pi^{0,P}_1$ property. On the other hand, every tree, ordered by the prefix relation, is a partial order where the descending sequences provide arbitrarily long prefixes of a path.

When working with non-well quasi-orders, it is more natural to ask for bad sequences instead. 
\begin{definition}
\thlabel{def:bs}
	We define the multi-valued function $\BS\pmfunction{\mathbf{QO}}{\Baire}$ as
	\[ \BS(P):= \{ x\in\Baire \st (\forall i)(\forall j>i)(x(i) \not\preceq_P x(j)) \}, \] 
	where $\dom(\BS)$ is the set of quasi-orders that are not well-quasi-orders.
\end{definition}

It follows from the definition that every ill-founded linear order is a non-well quasi-order and that every bad sequence through an ill-founded linear order is indeed a descending sequence.

By expanding a bit on a classical argument we can prove that the two problems are uniformly equivalent.
\begin{proposition}
	\thlabel{thm:bs=ds}
	$\DS\weiequiv \BS$.
\end{proposition}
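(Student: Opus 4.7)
The reduction $\DS\weireducible\BS$ is immediate: a linear order is a quasi-order, an ill-founded linear order is not a well-quasi-order, and in a linear order the notions of descending sequence and bad sequence coincide. Hence the identity on both instances and solutions realises the reduction.

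For the nontrivial direction $\BS\weireducible\DS$, the plan is to uniformly build, from a non-wqo quasi-order $P$ on $\mathbb{N}$, an ill-founded linear order $L$ on $\mathbb{N}$ by greedy bottom-up linearisation. Writing $\prec_P$ and $\equiv_P$ for the strict and equivalence parts of $\preceq_P$, at stage $n$ I would insert $n$ into the linear order already built on $\{0,\dots,n-1\}$ at the lowest position compatible with the constraints $m<_L n$ whenever $m\prec_P n$ and $n<_L m$ whenever $n\prec_P m$ (for $m<n$), using the natural order on $\mathbb{N}$ as a tiebreak within each $\equiv_P$-class. The resulting $L$ is uniformly computable from a name for $P$; by construction it linearly extends $\prec_P$ and restricts to the natural order on each $\equiv_P$-class, which is therefore well-ordered in $L$.

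Granting these properties, the output transformation is straightforward. Any infinite $L$-descending sequence $(c_n)_{n\in\mathbb{N}}$ must meet infinitely many $\equiv_P$-classes, since otherwise some class would contain an infinite $L$-descending subsequence, contradicting that the class is well-ordered in $L$. As $\equiv_P$ is decidable from the representation of $P$, one computably thins $(c_n)$ to a subsequence with pairwise distinct $\equiv_P$-classes. Such a subsequence is bad in $P$: for $i<j$, a relation $c_i\preceq_P c_j$ would either give $c_i\prec_P c_j$ (contradicting $L$-descent, since $L$ extends $\prec_P$) or $c_i\equiv_P c_j$ (contradicting class-distinctness).

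The crux, and the main obstacle, is to show that $L$ is ill-founded whenever $P$ is not wqo; the converse implication is the classical theorem that every linear extension of a wqo is a well-order. For the nontrivial direction I would start with an infinite bad sequence $b_0<b_1<\dots$ in $\mathbb{N}$ (obtained from any bad sequence by subsequencing so that the indices are increasing) and argue inductively that an infinite $L$-descending subsequence of $(b_n)$ can be extracted. The only potential obstruction is that greedy insertion may place some $b_n$ above $b_{n-1}$ because of a third-party $x\prec_P b_n$ lying above $b_{n-1}$ in $L$ at stage $b_n$; but such an $x$ automatically extends $b_0,\dots,b_{n-1}$ to a bad finite sequence (since $b_j\preceq_P x$ would force $b_j\prec_P b_n$, contradicting badness), so a careful combinatorial analysis on the tree of finite bad sequences of $P$ should let one redirect through successive such $x$'s while keeping the descent.
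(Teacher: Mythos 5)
Your reduction $\DS\weireducible\BS$ and your output transformation for the converse direction are fine, but the step you yourself flag as the crux --- that the greedy bottom-up linearisation $L$ is ill-founded whenever $P$ is not a wqo --- is not just left unproven: it is false. Here is a counterexample. Let $P$ be the partial order on $\{z_1,z_2,\dots\}\cup\{a_1,a_2,\dots\}$ generated by $z_1\prec_P z_2\prec_P z_3\prec_P\cdots$ and $z_i\prec_P a_j$ exactly when $i\le j$; the $a_j$ are pairwise incomparable, and $a_j$ is incomparable with $z_i$ for $i>j$. The set $\{a_j : j\ge 1\}$ is an infinite antichain, so $P$ is not a wqo. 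Enumerate the domain in the order $z_1,z_2,a_1,z_3,a_2,z_4,a_3,\dots$, so that $z_{j+1}$ arrives just before $a_j$. Greedy insertion places each $z_{k+1}$ immediately above $z_k$ (its unique maximal $P$-predecessor so far), and places $a_j$ immediately above $\max_L\{z_1,\dots,z_j\}=z_j$, i.e.\ in the gap between $z_j$ and $z_{j+1}$. An easy induction shows the resulting order is
\[ z_1<_L a_1<_L z_2<_L a_2<_L z_3<_L a_3<_L\cdots, \]
of order type $\omega$. So $L$ is a well-order, hence not a valid $\DS$-instance, and the reduction breaks. The repair you sketch (redirecting through the supports $x\prec_P b_n$ on the tree of finite bad sequences) cannot succeed here: the supports $z_j$ climb cofinally in $L$ and there is no infinite $L$-descending sequence to extract.

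The paper's construction avoids exactly this trap by being non-greedy in the decisive place: each new element is placed below \emph{every} older element that is $P$-incomparable with it (while the order is extended consistently with $P$ on comparable pairs), not merely at the locally lowest admissible slot. With that rule, an infinite antichain read off in order of insertion becomes an infinite descending sequence outright, and combined with the Ramsey dichotomy (every bad sequence contains an infinite strictly descending subsequence or an infinite antichain) this gives ill-foundedness with no analysis of finite bad sequences at all; the paper then passes from the resulting linear quasi-order to a linear order by computably selecting the least representative of each equivalence class. In the example above, that rule would force $a_j$ below $a_1,\dots,a_{j-1}$ and below $z_{j+1}$, destroying the $\omega$ pattern. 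To salvage your approach you would have to build this ``new incomparable elements go all the way down'' behaviour into the insertion rule; local greediness is not enough.
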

\begin{proof}
	See \ref{errata}.
\end{proof}

We will show that $\DS$ 
is quite weak in terms of uniform computational strength (a fortiori $\CBaire\not\weireducible\DS$). Let us first underline the following useful proposition.

\begin{proposition}
	\thlabel{thm:DS_cylinder}
	$\DS$ is a cylinder.
\end{proposition}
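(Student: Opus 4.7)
The plan is to prove $\DS \strongweiequiv \DS \times \id_\Baire$. The reduction $\DS \strongweireducible \DS \times \id_\Baire$ is immediate: call $\DS \times \id_\Baire$ on $(L, 0^\mathbb{N})$ and output the first component. For the non-trivial direction $\DS \times \id_\Baire \strongweireducible \DS$, I would construct, uniformly in $(L,p) \in \LO \times \Baire$, a linear order $L' \in \LO$ whose $\DS$-solutions encode both a descending sequence in $L$ and the string $p$ itself, so that a single (input-free) extraction map $\Psi$ recovers both from any descending sequence in $L'$.

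The natural candidate I would write down first is a ``thickened'' copy of $L$: take $L'$ on the domain $\{\langle n,k\rangle \st n \in \dom(L),\ k \in \mathbb{N}\}$, coded in $\mathbb{N}$ via the triple $\langle n, k, p(k)\rangle$ so that the code of each element reveals one bit of $p$, and ordered by $\langle n_1,k_1\rangle <_{L'} \langle n_2,k_2\rangle$ iff $n_1 <_L n_2$, or $n_1=n_2$ and $k_1 > k_2$. Each $n$-block is then isomorphic to $\omega^*$. Ill-foundedness of $L'$ is equivalent to that of $L$, and the map $(L,p)\mapsto L'$ is clearly computable as a map between $\Baire$-names of elements of $\LO \times \Baire$ and of $\LO$.

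Given a descending sequence $x_i = \langle n_i, k_i, p(k_i)\rangle$ in $L'$, the third coordinates record $p$ at the visited positions $k_i$, while from the $n_i$'s we hope to read off a descending sequence in $L$. The main obstacle is the tension between the two tasks: if the $n$-coordinate is eventually constant then the $k_i$'s must strictly increase to infinity and we see a cofinite tail of $p$, but no new $L$-descent; if the $n$-coordinate strictly descends in $L$ infinitely often we immediately get a $\DS(L)$-solution, but within each block only finitely many $k$-values are visited, so $p$ is not recovered.

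To reconcile the two cases into a single uniform $\Psi$, I would iterate/refine the construction by inserting, above each $L$-element, a mandatory ``$p$-ladder'' that must be traversed before the descending sequence can pass to a strictly smaller $L$-block. Concretely, replace each $n \in \dom(L)$ not by a single $\omega^*$ but by a nested structure whose $n$-th instance encodes $p\!\restrict{n}$; since every descending sequence must either stabilise inside one such ladder (revealing all of $p$) or pass through ladders of strictly increasing length (also revealing all of $p$ in the limit) while producing strictly $L$-decreasing block indices, both pieces of information become recoverable in the limit. The remaining verification, that the construction is a strong Weihrauch reduction with both maps computable, is then routine bookkeeping.
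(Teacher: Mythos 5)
Your overall framing is right ($\DS \strongweiequiv \DS \times \id_\Baire$, with only the direction $\DS \times \id_\Baire \strongweireducible \DS$ requiring work), and you correctly diagnose why the naive $\omega^*$-thickening fails. But the repair you propose does not close the gap you identified. First, in a linear order nothing can be made ``mandatory'' for a descending sequence: a descending sequence may skip any designated elements, so inserting a ladder ``above'' each $L$-element that ``must be traversed'' before passing to a smaller block is not something the order structure can enforce. Second, and more importantly, your own case analysis still has the fatal case in it: if the descending sequence ``stabilises inside one such ladder'', you recover $p$ but you visit only finitely many $L$-blocks, so no infinite $<_L$-descending sequence can be extracted, and $\Psi$ has nothing valid to output on that component. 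For the ladders to exclude this case they would have to be well-founded, but then they cannot be ``stabilised inside'' and your first disjunct is vacuous --- at which point the ladders are doing no work and the construction has not been specified in a way one could verify. As written, the proof does not go through.

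The missing observation is that no auxiliary elements are needed at all: since the terms of an infinite $<_L$-descending sequence are pairwise distinct natural numbers, they are automatically unbounded in $\mathbb{N}$. The paper's proof exploits exactly this. Given $(L,p)$, relabel each $n \in \dom(L)$ as the pair $(p[n],n)$, where $p[n]$ is the length-$n$ prefix of $p$, and order these pairs by $(p[n],n) \le_M (p[m],m)$ iff $n \le_L m$. Then $M$ is computably isomorphic to $L$ (so it is a legitimate $\DS$-instance), and from any descending sequence $\sequence{(p[n_i],n_i)}{i\in\mathbb{N}}$ in $M$ the input-free map $\Psi$ reads off the $<_L$-descending sequence $\sequence{n_i}{i\in\mathbb{N}}$ from the second coordinates and recovers $p = \bigcup_{i\in\mathbb{N}} p[n_i]$ from the first, since the $n_i$ are distinct and hence the prefix lengths are unbounded. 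This resolves the tension you describe without any case distinction.
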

\begin{proof}
	Let $p\in\Baire$ and $L$ be an ill-founded linear order. Define 
	\begin{gather*}
		M:=\{ (p[n],n)\st n\in L \},\\
		(p[n],n) \le_M (p[m],m) \defiff n \le_L m.
	\end{gather*}
	It is easy to see that $M$ is computably isomorphic to $L$, and hence it is a valid input for $\DS$. In particular, letting $\sequence{(p[n_i],n_i)}{i\in\mathbb{N}}\in \DS(M)$, we have that $\sequence{n_i}{i\in\mathbb{N}}$ is a descending sequence in $L$ and $p = \bigcup_{i\in\mathbb{N}} p[n_i]$.
\end{proof}

\begin{definition}
	Let $\codedBound{\boldfaceGamma}\pmfunction{\boldfaceGamma(\mathbb{N}) }{\mathbb{N}}$ be the first-order problem that takes as input a finite $\boldfaceGamma$ subset of the natural numbers and returns a bound for it. Formally
	\begin{gather*}
		\dom(\codedBound{\boldfaceGamma}):=\{ A\in \boldfaceGamma(\mathbb{N}) \st (\forall^\infty n)(A(n)=0) \},\\ 
		\codedBound{\boldfaceGamma}(A):=\{ n\in\mathbb{N} \st (\forall m\ge n)(A(m)=0) \}.
	\end{gather*}
\end{definition}

The principle $\PiBound$ has been studied in \cite{KiharaADauriacChoice} under the name $\codedChoice{\boldfaceSigma^1_1}{cof}{\mathbb{N}}$: notice indeed that the reduction $\codedChoice{\boldfaceSigma^1_1}{cof}{\mathbb{N}}\strongweireducible\PiBound$ is trivial. On the other hand, given a finite $\boldfacePi^1_1$ subset $X$ of $\mathbb{N}$ we can consider the set 
\[ Y:=\{ n\in \mathbb{N} \st (\exists m\ge n)(m\in X) \}. \]
Clearly $Y$ is a $\boldfacePi^1_1$ initial segment of $\mathbb{N}$, and therefore $\setcomplement[\mathbb{N}]{Y}$ is a valid input for $\codedChoice{\boldfaceSigma^1_1}{cof}{\mathbb{N}}$. Moreover a name for $Y$ can be uniformly computed from a name of $X$ and $\codedChoice{\boldfaceSigma^1_1}{cof}{\mathbb{N}}(\setcomplement[\mathbb{N}]{Y})\subset \PiBound(X)$. This shows that $\PiBound\strongweireducible\codedChoice{\boldfaceSigma^1_1}{cof}{\mathbb{N}}$ and hence the two problems are (strongly) Weihrauch equivalent.

In other words, given an instance $X$ of $\PiBound$ we can, without loss of generality, assume that $X$ is an initial segment of $\mathbb{N}$.

\begin{proposition}
	\thlabel{thm:pibound<BS}
	$\PiBound\strictlyweireducible\DS$.
\end{proposition}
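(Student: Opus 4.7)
The plan is to prove the two directions of the strict reduction separately. For $\PiBound \weireducible \DS$, I would first invoke the remark preceding the statement to assume that the $\PiBound$-instance $X$ is a finite initial segment of $\mathbb{N}$. Unpacking the $\repmap{\boldfacePi^1_1}$-name of $X$, we are given a sequence $(T_n)_{n \in \mathbb{N}}$ of subtrees of $\baire$ such that $n \in X$ iff $T_n$ is well-founded; since $X$ is finite, $T_n$ is ill-founded for all sufficiently large $n$. I would then construct the linear order $L$ as the ordered sum $\sum_{n \in \mathbb{N}} \KB(T_n)$, placing $\KB(T_n)$ strictly below $\KB(T_m)$ whenever $n < m$ (encoded as a subset of $\mathbb{N}$ via a computable bijection with the disjoint union of the $T_n$). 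This $L$ is uniformly computable from the name of $X$ and is ill-founded, since at least one $\KB(T_n)$ already is.

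The backward reduction extracts, from any descending sequence $\sigma_0 >_L \sigma_1 >_L \cdots$, the index $j_0$ of the piece containing the very first term $\sigma_0$; I claim this $j_0$ is a bound for $X$. Indeed, the indices $j_i$ of the pieces containing $\sigma_i$ form a non-increasing sequence of naturals, so they stabilize at some $j_\infty \leq j_0$. The tail of the descending sequence then lies entirely inside $\KB(T_{j_\infty})$, which forces $T_{j_\infty}$ to be ill-founded, and hence $j_\infty \notin X$. Since $X$ is an initial segment, $j_\infty$ itself is already a bound for $X$, and a fortiori so is $j_0$.

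For the strict separation $\DS \not\weireducible \PiBound$, I would diagonalize against Kleene's computable pseudo-well-order, i.e.\ the computable ill-founded linear order $L$ without any hyperarithmetic descending sequence already invoked in the proof of \thref{thm:bs_not_<=_ucbaire}. If $\DS \weireducible \PiBound$ were witnessed by computable maps $\Phi,\Psi$, then for a computable name $p$ of $L$ and any natural number $n$ in the $\PiBound$-solution set of $\Phi(p)$, the value $\Psi(p,n)$ would be a computable name of a descending sequence through $L$, contradicting the defining property of $L$ (as computable sequences are, in particular, hyperarithmetic). The crux of this half is that first-order problems transmit only finitely much information back to the reduction, so a computable $\DS$-instance whose solutions escape the hyperarithmetic hierarchy cannot be handled. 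Neither half is technically delicate: the Kleene-Brouwer sum is a routine variant of the classical analysis in \cite[Lem.\ V.1.3]{Simpson09}, and the separation rides on a landmark result already cited in the paper.
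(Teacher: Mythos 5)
Your proof is correct and takes essentially the same route as the paper's: the forward reduction is the same construction (your ordered sum $\sum_n \KB(T_n)$ is exactly the paper's lexicographically ordered $\bigcup_n \{n\}\times L_n$, with the index of the first term of the descending sequence read off as the bound), and strictness is argued identically, from the fact that $\PiBound$-solutions are natural numbers while Kleene's computable linear order has no hyperarithmetic descending sequence.
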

\begin{proof}
	Let $X$ be a $\boldfacePi^1_1$ initial segment of $\mathbb{N}$. By considering the Kleene-Brouwer ordering, we can think of a name for $X$ as a sequence $\sequence{L_n}{n\in\mathbb{N}}$ of linear orders s.t.\ $n\in X$ iff $L_n$ is well-founded.

	Define the linear order $L:=\bigcup_n \{n\}\times L_n$, ordered lexicographically. Notice that $L$ is ill-founded as $X$ is not all of $\mathbb{N}$. Moreover, for every $<_L$-descending sequence $\sequence{(n_i,a_i)}{i\in\mathbb{N}}$, we have that $n_0\in \PiBound(X)$. Indeed, for every $n\in X$ and every $a\in L_n$, the pair $(n,a)$ lies in the well-founded part of $L$.

	The fact that the reduction is strict follows from the fact that every solution to $\PiBound$ is computable, whereas there is a computable input for $\DS$ with no hyperarithmetic solution.
\end{proof}

We now show that $\firstOrderPart{\DS}\weiequiv \PiBound$. Let us first prove the following lemma, which will also be useful to prove \thref{thm:below_DS_below_lim}.

\begin{lemma}
	\thlabel{thm:Fs_construction}
	Suppose that $f$ is a problem which is Weihrauch reducible to $\DS$ via the computable maps $\Phi,\Psi$. For every $f$-instance $X$, let $\le^X$ be the linear order defined by $\Phi^X$. We can uniformly compute a sequence $\sequence{F_s}{s\in\mathbb{N}}$ of finite $<^X$-descending sequences s.t.\ (1) for every $s$, $\Psi^{X\oplus F_s}$ outputs some $j\in\mathbb{N}$; (2) for cofinitely many $s$, $F_s$ extends to an infinite $<^X$-descending sequence.
\end{lemma}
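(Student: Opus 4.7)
The plan is to construct $(F_s)$ by dovetailing over the two computable objects naturally attached to the reduction: the tree $T$ of finite $<^X$-descending sequences (computable from $X$ via $\Phi^X$) and the c.e.\ set $V = \{\sigma \in \finStrings{\mathbb{N}} \st \Psi^{X\oplus\sigma}(0)\defined\}$. The key observation is that $V$ is closed under $\prefix$-extension (if $\Psi$ halts on a prefix it halts on any larger string), and that for any infinite descending sequence $p$ in $\leq^X$, correctness of the reduction forces $\Psi^{X\oplus p}(0)\defined$, so some initial segment of $p$ already lies in $V$. Hence $T\cap V$ contains infinitely many prefixes of every infinite descending sequence of $\leq^X$, and dually the tree $T\setminus V$ is well-founded.

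Given this setup, I would produce $F_s$ by the following uniform search. Enumerate triples $(\sigma,\tau,k)$ where $\sigma$ is verified to lie in $T\cap V$ by stage $k$ (both the descending check via $\Phi^X$ and the halting of $\Psi^{X\oplus\sigma}(0)$) and where $\tau$ is a descending extension in $T$ with $|\tau| = |\sigma|+s$. Existence of at least one such triple is guaranteed by taking $\sigma$ to be a prefix of any fixed infinite descending sequence of $\leq^X$ past the halting stage of $\Psi$, and any further descending continuation for $\tau$. Let $F_s$ be the $\sigma$-component of the first triple so discovered. Property (1) is then immediate since $F_s\in V$.

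For property (2), the plan is to single out the first $\sigma_\ast\in T\cap V$ in the enumeration whose last element lies in the ill-founded part of $\leq^X$. Every $\sigma$ enumerated before $\sigma_\ast$ ends in the well-founded part, and if one could establish that each such $\sigma$ admits descending extensions in $T$ of only bounded length, it would follow that for all sufficiently large $s$ the search can no longer be served by the earlier $\sigma$'s, so that $F_s$ is forced to coincide with $\sigma_\ast$ or with a later candidate whose last element is also ill-founded, which does extend to an infinite descending sequence.

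The main obstacle is precisely that ``bounded length of extensions'' claim: the well-founded part of $\leq^X$ may have arbitrary countable ordinal rank, and an element of limit rank supports $<^X$-descending chains of every natural number length. A naive ``extension of length $s$'' filter therefore cannot by itself separate ill-founded elements from well-founded ones of limit rank. Overcoming this obstacle is where I expect the bulk of the technical work to lie; my best guess is to strengthen the extension criterion by requiring iteratively nested extensions (an extension of length $s$ whose last element itself admits an extension of length $s$, and so on), together with an ordinal analysis on the well-founded c.e.\ tree $T\setminus V$ to bound the number of bad candidates that can survive. Alternatively, a priority-style bookkeeping that revisits and demotes candidates whose extension search stalls, tied to the ranks within $T\setminus V$, should suffice.
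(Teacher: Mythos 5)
You have correctly identified the fatal obstacle in your own construction, and it is indeed fatal as stated: a well-founded element of $\le^X$ of limit rank supports finite $<^X$-descending chains of every length, so the filter ``$\sigma$ admits a descending extension of length $s$'' never eliminates such a $\sigma$, and if $\Psi^{X\oplus\sigma}(0)\defined$ for such a $\sigma$ and it is enumerated first, your $F_s$ is non-extendible for every $s$, violating (2). The proposed repairs (iterated nested extensions, ordinal analysis of $T\setminus V$, priority demotion) cannot close the gap either: nested extensions of any fixed arithmetic depth are still satisfied by well-founded elements of sufficiently large countable rank, and any procedure that actually \emph{recognizes} which candidates are extendible would be deciding membership in the ill-founded part of $\le^X$, a $\Pi^1_1$-complete task that is not available at this level (this is precisely why $\DS$ has computable instances with no hyperarithmetic solutions).

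The missing idea, which the paper uses, is that one never needs to recognize extendibility --- one only needs a selection rule under which extendibility, once present among the candidates, automatically transfers to the selected one. Concretely: at stage $s$ let $D_s$ be the (finite, monotone in $s$) set of nonempty finite $<^X_s$-descending sequences $F$ with $\Psi^{X\oplus F}$ halting in $s$ steps, and choose $F_s\in D_s$ so that $\min_{<^X}(F)\le^X \min_{<^X}(F_s)$ for all $F\in D_s$, i.e.\ the candidate whose $<^X$-least element sits highest in the order. If any $F\in D_s$ extends to an infinite $<^X$-descending sequence, that infinite tail lies $<^X$-below $\min_{<^X}(F)\le^X\min_{<^X}(F_s)$ and hence can be appended to $F_s$, so $F_s$ is extendible too. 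Since some prefix of a genuine infinite descending sequence on which $\Psi$ halts enters $D_s$ at a finite stage, $F_s$ is extendible for all sufficiently large $s$, giving (2) with no well-foundedness test at all. Your setup of $T$ and $V$ and your verification of (1) are fine; it is only the tie-breaking criterion that needs to be replaced by this order-theoretic one.
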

\begin{proof}
	Fix an $f$-instance $X$ and run $\Phi^X$ for $s$ steps. This produces a finite linear order $\le^X_s$. Define
	\begin{align*}
		D_s := \{F \subseteq\, \le^X_s \st {}&{} F \text{ is a} <^X_s\text{-descending sequence and } \length{F}\ge 1 \text{ and } \\  
			& \Psi^{X \oplus F} \text{ outputs some } j \in \mathbb{N} \text{ in }s \text{ steps}\}.
	\end{align*}
	Note that $D_s$ is finite and $t<s$ implies $D_t\subset D_s$. If $D_s\neq\emptyset$ we define $F_s$ to be the $<_\mathbb{N}$-least element of $D_s$ such that
	\[ \textstyle (\forall F \in D_s)\left(\min_{<^X}(F) \le^X_s \min_{<^X}(F_s)\right). \]
	This ensures that if any $F \in D_s$ extends to an infinite $<^X$-descending sequence, then so does $F_s$. Observe that $\sequence{F_s}{s}$ is uniformly computable from $X$. If $D_s=\emptyset$ we define $F_s:=F_t$ where $t$ is the first index greater than $s$ s.t.\ $D_t\neq\emptyset$. (We will show below that such $t$ exists, so we can computably search for it.)

	Notice that for cofinitely many $s$, $D_s\neq\emptyset$. Indeed, let $S$ be an infinite $<^X$-descending sequence (there must exist one because $<^X$ is a $\DS$-instance). Since $\Psi^{X \oplus S}$ outputs some $f$-solution $j$ of $X$, there is some finite nonempty initial segment $F$ of $S$ and some $t \in \mathbb{N}$ such that $\Psi^{X \oplus F}$ outputs $j$ in $t$ steps. Hence for all sufficiently large $s$, we have that $F \in D_s$. This shows that the sequence $\sequence{F_s}{s\in\mathbb{N}}$ is well-defined. Moreover, as already observed, for every $t\ge s$, $F_t$ extends to an infinite $<^X$-descending sequence.

	The fact that, for every $s$, $\Psi^{X\oplus F_s}$ outputs some $j\in\mathbb{N}$ follows from the definition of $D_s$.
\end{proof}

In particular, if $f$ has codomain $\mathbb{N}$ the above lemma implies that, for cofinitely many $s$, $\Psi^{X\oplus F_s}$ outputs some $f$-solution for $X$.

\begin{theorem} 
	\thlabel{thm:below_BS_below_Pi11Bound}
	$\firstOrderPart{\DS} \weiequiv \PiBound$.
\end{theorem}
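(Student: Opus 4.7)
The plan is to establish both reductions $\PiBound \weireducible \firstOrderPart{\DS}$ and $\firstOrderPart{\DS} \weireducible \PiBound$. The first is immediate: by \thref{thm:pibound<BS} we have $\PiBound \weireducible \DS$, and since $\PiBound$ is a first-order problem, the characterization of $\firstOrderPart{\DS}$ as the Weihrauch-maximal first-order problem below $\DS$ yields $\PiBound \weireducible \firstOrderPart{\DS}$.

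For the opposite direction it suffices to show that every first-order problem $f$ with $f \weireducible \DS$ also satisfies $f \weireducible \PiBound$. Suppose $f \weireducible \DS$ is witnessed by computable maps $\Phi,\Psi$. Given an $f$-instance $X$, let $<^X$ denote the linear order computed by $\Phi^X$, and apply \thref{thm:Fs_construction} to obtain the uniformly computable sequence $\sequence{F_s}{s\in\mathbb{N}}$ of finite $<^X$-descending sequences, each yielding a convergent output $\Psi^{X \oplus F_s}(0) \in \mathbb{N}$. The key idea is to isolate the ``bad'' stages, i.e.\ those $s$ for which $F_s$ does not extend to any infinite $<^X$-descending sequence; concretely, set
\[ A_X := \{ s \in \mathbb{N} \st \min\nolimits_{<^X}(F_s) \text{ lies in the well-founded part of } <^X \}. \]
Since being in the well-founded part of $<^X$ is a $\boldfacePi^1_1$ condition relative to $X$, the set $A_X$ is uniformly $\boldfacePi^1_1$ in $X$, and by the second conclusion of \thref{thm:Fs_construction} it is finite. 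Hence $A_X$ is a legitimate input for $\PiBound$.

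Given a bound $N \in \PiBound(A_X)$, the finite sequence $F_N$ extends to some infinite $<^X$-descending sequence $S$. Since the halting computation $\Psi^{X \oplus F_N}(0)$ uses only finitely many oracle bits, $\Psi^{X \oplus S}(0)$ yields the same natural number, which by the reduction $f \weireducible \DS$ is an $f$-solution for $X$. This produces the reduction $f \weireducible \PiBound$ via the forward map $X \mapsto A_X$ and the backward map $(X,N) \mapsto \Psi^{X \oplus F_N}(0)$. The only delicate point I would guard against is confirming that $A_X$ is genuinely uniformly $\boldfacePi^1_1$ in a name for $X$ (rather than merely in $X$ together with the auxiliary sequence $\sequence{F_s}{s}$), but this is immediate from the uniform computability of the $F_s$ granted by \thref{thm:Fs_construction} combined with the standard $\boldfacePi^1_1$ definition of well-foundedness. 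Thus essentially no machinery beyond the lemma is required.
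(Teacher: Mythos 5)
Your proof is correct and follows essentially the same route as the paper: both directions use \thref{thm:pibound<BS} and \thref{thm:Fs_construction} in the same way, and your set $A_X$ (stages $s$ where $\min_{<^X}(F_s)$ is in the well-founded part) is literally the same $\boldfacePi^1_1$ set as the paper's $\{s : F_s \notin \mathrm{Ext}\}$, just phrased via well-foundedness of the minimum instead of non-extendibility. The remaining details (finiteness of the set, and that a bound yields an extendible $F_N$ whose committed output is a valid $f$-solution) match the paper's argument exactly.
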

\begin{proof}
	If $f \weireducible \PiBound$, then $f \weireducible \DS$ by \thref{thm:pibound<BS}. Since $\PiBound$ is first order, $f \weireducible \firstOrderPart{\DS}$.
	
	To prove the converse reduction, suppose that $f \weireducible \DS$ as witnessed by the maps $\Phi$ and $\Psi$. Given an $f$-instance $X$, let $\sequence{F_s}{s\in\mathbb{N}}$ be as in \thref{thm:Fs_construction}. Let $\le^X$ denote the linear order represented by $\Phi^X$. Define the following $\Pi^{1,X}_1$ set:
	\[ A := \{s \in \mathbb{N}\st F_s \notin \mathrm{Ext} \}, \]
	where $\mathrm{Ext}$ denotes the set of finite sequences that extend to an infinite $<^X$-descending sequence.

	Notice that $A$ is finite as, for cofinitely many $s$, $F_s$ is extendible. In particular $A$ is a valid instance of $\PiBound$ and, for every $b\in\PiBound(A)$, $F_b$ is extendible to an infinite $<^X$-descending sequence. By construction, $\Psi^{X\oplus F_b}$ commits to some $j\in\mathbb{N}$. The fact that $F_b$ is extendible guarantees that $j$ is a valid $f$-solution of $X$.
\end{proof}

\begin{corollary}
    \thlabel{thm:DS<CBaire}
	$\DS \strictlyweireducible \CBaire$.
\end{corollary}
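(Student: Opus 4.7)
The forward reduction $\DS \weireducible \CBaire$ is already recorded in \thref{thm:bs_not_<=_ucbaire}, so the only work is to show the strict part $\CBaire \not\weireducible \DS$. My plan is to pass to first-order parts: because $\firstOrderPart{\cdot}$ is characterized as a maximum over first-order problems Weihrauch-below the argument, it is automatically monotone in the sense that $f \weireducible g$ implies $\firstOrderPart{f} \weireducible \firstOrderPart{g}$ (any first-order witness for $\firstOrderPart{f}$ lives in the larger set defining $\firstOrderPart{g}$). Thus any putative reduction $\CBaire \weireducible \DS$ would descend to $\firstOrderPart{\CBaire} \weireducible \firstOrderPart{\DS}$.

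Next I would identify both sides using results already in the paper. By \thref{thm:fo_part_cbaire}, the left-hand side is Weihrauch-equivalent to $\codedChoice{\boldfaceSigma^1_1}{}{\mathbb{N}}$. On the right-hand side, \thref{thm:below_BS_below_Pi11Bound} gives $\firstOrderPart{\DS} \weiequiv \PiBound$, and the discussion immediately following the definition of $\PiBound$ already establishes $\PiBound \weiequiv \codedChoice{\boldfaceSigma^1_1}{cof}{\mathbb{N}}$. The assumption would therefore collapse to $\codedChoice{\boldfaceSigma^1_1}{}{\mathbb{N}} \weireducible \codedChoice{\boldfaceSigma^1_1}{cof}{\mathbb{N}}$, meaning that full $\Sigma^1_1$-choice on $\mathbb{N}$ reduces to its restriction to cofinite $\Sigma^1_1$ sets.

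To close the argument I would invoke the known separation $\codedChoice{\boldfaceSigma^1_1}{cof}{\mathbb{N}} \strictlyweireducible \codedChoice{\boldfaceSigma^1_1}{}{\mathbb{N}}$ available in the Kihara--A'Dauriac analysis of $\Sigma^1_1$-choice principles \cite{KiharaADauriacChoice}, yielding the desired contradiction. The principal obstacle I foresee is making sure the separation is cited in the exact form required; should the citation not apply verbatim, a short self-contained proof would construct a $\Sigma^1_1$ singleton $\{n\}$ in which identifying $n$ encodes information (e.g.\ membership in a $\Pi^1_1$-complete set) that cannot be recovered from committing on a number chosen from a cofinite $\Sigma^1_1$ set, since such a commitment depends only on a finite prefix of the $\PiBound$-style input and the particular solution chosen, neither of which carries such discriminating information.
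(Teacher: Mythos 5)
Your proposal is correct and follows essentially the same route as the paper: both pass to first-order parts, identify $\firstOrderPart{\CBaire}\weiequiv\codedChoice{\boldfaceSigma^1_1}{}{\mathbb{N}}$ and $\firstOrderPart{\DS}\weiequiv\PiBound$, and derive a contradiction from a separation in \cite{KiharaADauriacChoice}. The only cosmetic difference is that the paper obtains the contradiction by parallelizing (using $\parallelization{\codedChoice{\boldfaceSigma^1_1}{}{\mathbb{N}}}\not\weireducible\parallelization{\PiBound}$, their Cor.\ 3.23), from which the unparallelized separation you invoke follows immediately.
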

\begin{proof}
	If $\CBaire\weireducible\DS$ then, by \thref{thm:fo_part_cbaire}, $\codedChoice{\boldfaceSigma^1_1}{}{\mathbb{N}}\weireducible \PiBound$. However, this would imply that $\parallelization{\codedChoice{\boldfaceSigma^1_1}{}{\mathbb{N}}}\weireducible \parallelization{\PiBound}$, contradicting \cite[Cor.\ 3.23]{KiharaADauriacChoice}.
\end{proof}

\begin{definition}
	Let $f\pmfunction{\mathbf{X}}{\mathbb{N}}$ be a multi-valued function. We say that $f$ is \textdef{upwards-closed} if whenever $n \in f(x)$, then $m \in f(x)$ for all $m > n$.
\end{definition}

It is straightforward from the definition that $\PiBound$ is upwards-closed.

\begin{lemma}
	\thlabel{prop:singlevalued_and_upclosed_then_CN}
	If $f$ is upwards-closed then $\DetPartX{f}{\mathbb{N}}\weireducible \CNatural$.
\end{lemma}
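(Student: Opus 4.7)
The plan is to reduce $\DetPartX{f}{\mathbb{N}}$ to $\CNatural$ by encoding index/value pairs. Fixing a computable pairing $\langle\cdot,\cdot\rangle\colon\mathbb{N}^2\to\mathbb{N}$, on input $(p,y)\in\dom(\DetPartX{f}{\mathbb{N}})$ I would consider the partial function $h(m):=\Phi_p(m0^\omega)(0)$. By upwards-closedness, $f(y)=\{m\in\mathbb{N} : m\ge n_0\}$ for $n_0:=\min f(y)$, and the defining property of $\DetPartX{f}{\mathbb{N}}$ forces $h(m)=x$ for every $m\ge n_0$, where $x:=\DetPartX{f}{\mathbb{N}}(p,y)$ (since $m0^\omega$ is a name of $m\in f(y)$).

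I would then enumerate $\langle m,v\rangle$ into a c.e.\ set $B\subseteq\mathbb{N}$ as soon as some stage witnesses an $m'\ge m$ with $h(m')\downarrow$ and $h(m')\ne v$; this is plainly c.e.\ uniformly in $p$. The reduction feeds $\mathbb{N}\setminus B$ to $\CNatural$ and returns the second coordinate of whatever pair $\CNatural$ produces.

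Correctness reduces to two observations. First, $\langle n_0,x\rangle\notin B$, since every $m'\ge n_0$ gives $h(m')=x$ and hence no witness of disagreement exists; this guarantees $B\subsetneq\mathbb{N}$ is a valid $\CNatural$ instance. Second, if $\langle m,v\rangle\notin B$ then for every $m'\ge m$ with $h(m')\downarrow$ we must have $h(m')=v$; applied to $m':=\max(m,n_0)$, where $h(m')=x$ is defined, this forces $v=x$. Thus every pair in $\mathbb{N}\setminus B$ has second coordinate $x$, and the post-processing is correct.

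The main obstacle I expect is picking the right encoding. A naive attempt to use $\CNatural$ to output an index $n\ge n_0$ (and then read off $h(n)=x$) fails, because enumerating $[0,n_0-1]$ into $B$ requires information about $f(y)$ that is generally not c.e.\ in $(p,y)$, and moreover $h(n)$ may diverge for the "leftover" $n<n_0$ that $\CNatural$ might return. Bundling the index together with a candidate value into a single pair avoids both difficulties: the defining test for $B$ becomes purely about $h$'s eventual value, and the existence of any single $m'\ge n_0$ suffices to pin $v$ down to $x$ uniformly.
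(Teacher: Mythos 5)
Your proof is correct and takes essentially the same approach as the paper's: both use $\CNatural$ to select a pair certifying that the backward computation has stabilized on all sufficiently large inputs, with upwards-closedness guaranteeing that a valid pair exists (take the minimum of $f(y)$) and that any surviving pair yields the right value. The only cosmetic differences are that the paper guesses a pair (index, halting time) and recovers the value by running the computation rather than packaging the value into the pair as you do, and that it argues for an arbitrary single-valued $g \weireducible f$ via the maximality characterization of $\DetPartX{f}{\mathbb{N}}$ instead of unwinding the explicit definition.
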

\begin{proof}
	Let $g$ be a single-valued function with codomain $\mathbb{N}$ and suppose that $g \weireducible f$ as witnessed by $\Phi, \Psi$. Given a name $p$ for a $g$-instance $x$, we use $\CNatural$ to guess some $n, t$ such that $\Psi(p,n)$ converges to some $k$ in at most $t$ steps, and such that for no $m > n$ it ever happens that $\Psi(p,m)$ converges to anything but $k$. 
	Since $f$ is upwards-closed and $g$ is single-valued, such $n,t$ must exist. Moreover, the associated $k$ is equal to $g(x)$.
\end{proof}

\begin{proposition}
    $\DetPartX{\PiBound}{\mathbb{N}}\weiequiv \DetPartX{\CNatural}{\mathbb{N}} \weiequiv \CNatural$, and therefore $\DetPartX{\DS}{\mathbb{N}}\weiequiv \CNatural$.
\end{proposition}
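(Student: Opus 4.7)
The plan is to establish each of the three Weihrauch equivalences asserted in the chain, then derive $\DetPartX{\DS}{\mathbb{N}}\weiequiv \CNatural$ from these together with monotonicity of $\DetPartX{\cdot}{\mathbb{N}}$.

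Two of the inequalities are essentially immediate. First, $\PiBound$ is upwards-closed (any $m\geq n$ is a bound whenever $n$ is), so \thref{prop:singlevalued_and_upclosed_then_CN} gives $\DetPartX{\PiBound}{\mathbb{N}}\weireducible \CNatural$. Second, $\DetPartX{\CNatural}{\mathbb{N}}\weireducible \CNatural$ is a general instance of $\DetPartX{f}{\mathbb{N}}\weireducible f$.

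The heart of the argument is a single construction that simultaneously witnesses $\CNatural \weireducible \DetPartX{\PiBound}{\mathbb{N}}$ and $\CNatural \weireducible \DetPartX{\CNatural}{\mathbb{N}}$. Given $p$ enumerating a proper subset $A\subsetneq \mathbb{N}$, I would consider the non-decreasing computable approximation
\[ \phi(n) := \min\bigl(\mathbb{N}\setminus\{p(0),\dots,p(n-1)\}\bigr), \]
which stabilizes at some stage $n^*$ to $m^* := \min(\mathbb{N}\setminus A)$. The set $B := \{n\in\mathbb{N} : \exists m>n,\ \phi(m) > \phi(n)\}$ is c.e., hence admits a $\boldfacePi^1_1$-name, and by monotonicity of $\phi$ it equals $\{0,\dots,n^*-1\}$, so it is a proper subset of $\mathbb{N}$. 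Pairing $B$ with a program $\Phi_q$ that, on the name of any $n\in\mathbb{N}$, outputs $\phi(n)$ produces a valid instance of both $\DetPartX{\CNatural}{\mathbb{N}}$ and $\DetPartX{\PiBound}{\mathbb{N}}$: on the set $\mathbb{N}\setminus B = \{n : n\geq n^*\}$ of $\CNatural$-solutions to $B$, and equally on the set $\{b : b\geq n^*\}$ of $\PiBound$-bounds of $B$, the program $\Phi_q$ is constantly $m^*$. The returned value $m^*$ lies in $\mathbb{N}\setminus A$, giving the desired $\CNatural$-solution.

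The final claim $\DetPartX{\DS}{\mathbb{N}}\weiequiv \CNatural$ then follows by combining the already-proved equivalences with monotonicity: \thref{thm:pibound<BS} and monotonicity give $\DetPartX{\PiBound}{\mathbb{N}}\weireducible \DetPartX{\DS}{\mathbb{N}}$, while $\DetPartX{\DS}{\mathbb{N}}$ is a single-valued first-order problem below $\DS$, so by \thref{thm:below_BS_below_Pi11Bound} it is below $\PiBound$, whence $\DetPartX{\DS}{\mathbb{N}}\weireducible \DetPartX{\PiBound}{\mathbb{N}}$ by maximality. The main obstacle is the reduction $\CNatural\weireducible \DetPartX{\CNatural}{\mathbb{N}}$: its key step is the self-certifying trick of encoding the stabilization time of $\phi$ as the c.e.\ set $B$, so that the stringent single-valuedness condition built into $\DetPartX{\cdot}{\mathbb{N}}$ is automatically met on every solution.
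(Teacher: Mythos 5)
Your proof is correct and takes essentially the same approach as the paper: the upper bound is the same appeal to \thref{prop:singlevalued_and_upclosed_then_CN}, and the lower bound uses exactly the paper's construction --- the stabilizing approximation (your $\phi$, the paper's $m(s)$) together with the finite c.e.\ set of stages at which it still changes --- which the paper merely packages as $\UCNatural\weireducible\PiBound$ combined with $\CNatural\weiequiv\UCNatural$ rather than as a direct $\DetPartX{\cdot}{\mathbb{N}}$-instance. The concluding step for $\DetPartX{\DS}{\mathbb{N}}$ via $\firstOrderPart{\DS}\weiequiv\PiBound$ and monotonicity is likewise the paper's argument.
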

\begin{proof}
    Let us first notice that $\CNatural\weiequiv\UCNatural$ (\cite[Prop.\ 6.2]{BdBPLow12}) and therefore $\DetPartX{\CNatural}{\mathbb{N}} \weiequiv \CNatural$. The fact that $\DetPartX{\PiBound}{\mathbb{N}}\weireducible \CNatural$ follows from \thref{prop:singlevalued_and_upclosed_then_CN}. To prove the converse reduction it is enough to show that $\UCNatural\weireducible \PiBound$. 
    
    Let $\sequence{n_i}{i\in\mathbb{N}}$ be an enumeration of the complement of $\{x\}\subset \mathbb{N}$. Define 
    \begin{gather*}
        m(s):=\min\{j\in\mathbb{N}  \st (\forall i<s)(n_i \neq j) \},\\
         A:=\{ s \in\mathbb{N} \st (\exists t>s )(m(t)\neq m(s)) \}.
    \end{gather*}
    Clearly $\lim_{s\to\infty} m(s) =x$, which implies that $A$ is finite. Since $m$ is computable (relative to $\sequence{n_i}{i\in\mathbb{N}}$), $A$ is a valid input for $\PiBound$. Moreover, for every $b\in\PiBound(A)$ we have $m(b)=x$.
    
	This implies that $\CNatural\weireducible \DetPartX{\DS}{\mathbb{N}}$. To conclude the proof we notice that, for every single-valued $g$ with codomain $\mathbb{N}$ we have
	\[  g\weireducible \DS \Rightarrow g \weireducible \PiBound \Rightarrow g\weireducible \DetPartX{\PiBound}{\mathbb{N}}\weiequiv \CNatural.  \qedhere\]
\end{proof}

Notice that $\PiBound\not\weireducible \CNatural$: indeed $\parallelization{\CNatural}\weiequiv \mflim$, while $\UCBaire\strictlyweireducible\parallelization{\PiBound}$ (see \thref{thm:delta11-DS<sigma11-DS}). This implies that $\DetPartX{\PiBound}{\mathbb{N}}\strictlyweireducible \PiBound$. In this regard, we observe the following:

\begin{proposition}
	The Weihrauch degree of $\CNatural$ is the highest Weihrauch degree containing both of the following:
	\begin{enumerate}
		\item a representative which is single-valued and has codomain $\mathbb{N}$;
		\item a representative which is upwards-closed.
	\end{enumerate}
\end{proposition}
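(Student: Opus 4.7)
The plan is to argue separately that $[\CNatural]$ itself has both kinds of representatives, and that $[\CNatural]$ is an upper bound for every Weihrauch degree that has both kinds of representatives.

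For the first part, $\UCNatural \weiequiv \CNatural$ already supplies a single-valued representative with codomain $\mathbb{N}$ in $[\CNatural]$. To produce an upwards-closed representative in the same degree, I would define $g\pmfunction{\Baire}{\mathbb{N}}$ with $\dom(g) = \dom(\UCNatural)$ by
\[ g(p) := \{n \in \mathbb{N} \st n \ge \UCNatural(p)\}. \]
By construction $g$ is upwards-closed, and $g \weireducible \UCNatural$ is immediate. For $\UCNatural \weireducible g$, given an enumeration $p$ of $\mathbb{N} \setminus \{x\}$ and an output $n \ge x$ of $g(p)$, I would wait for the least stage $s$ at which the set $\{p(0), \ldots, p(s-1)\} \cap [0,n]$ has size exactly $n$; since $x \in [0,n]$, the unique missing element of $[0,n]$ at that stage must be $x$.

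For the second part, suppose $d$ is a Weihrauch degree containing a single-valued representative $f_1$ with codomain $\mathbb{N}$ and an upwards-closed representative $f_2$. Since $f_1, f_2 \in d$ we have $f_1 \weireducible f_2$, and because $f_1$ is single-valued with codomain $\mathbb{N}$, \thref{thm:detpart_total} yields $f_1 \weireducible \DetPartX{f_2}{\mathbb{N}}$. Since $f_2$ is upwards-closed, \thref{prop:singlevalued_and_upclosed_then_CN} gives $\DetPartX{f_2}{\mathbb{N}} \weireducible \CNatural$. Chaining, $d = [f_1] \weireducible [\CNatural]$.

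No step presents a serious obstacle: the only ingredient beyond the previously established lemmas is the construction of the upwards-closed representative $g$, which is essentially $\UCNatural$ with its single output replaced by its upper set and so causes no technical difficulty.
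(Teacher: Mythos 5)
Your proposal is correct, and the overall architecture matches the paper's: exhibit $\UCNatural$ for point 1, exhibit some upwards-closed problem equivalent to $\CNatural$ for point 2, and obtain maximality by chaining \thref{thm:detpart_total} with \thref{prop:singlevalued_and_upclosed_then_CN} exactly as you do. The one place you genuinely diverge is the choice of upwards-closed representative: the paper uses the named problem $\codedBound{\boldfaceSigma^0_1}$ (bounding a finite $\boldfaceSigma^0_1$ subset of $\mathbb{N}$) and proves $\codedBound{\boldfaceSigma^0_1}\weiequiv\CNatural$ by tracking the least non-enumerated element and using the bound as a stabilization stage, whereas you simply take the upward closure of $\UCNatural$ and recover $x$ from any $n\ge x$ by waiting until $n$ of the $n+1$ elements of $[0,n]$ have been enumerated. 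Your construction is more elementary and self-contained; the paper's buys a little more, since $\codedBound{\boldfaceSigma^0_1}$ is the natural $\boldfaceSigma^0_1$-analogue of $\PiBound$ and its equivalence with $\CNatural$ is of independent interest in the surrounding discussion. Both are fully rigorous; no gaps.
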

\begin{proof}
	To prove that $\CNatural$ satisfies point 1, consider $\UCNatural$, which is Weihrauch equivalent to $\CNatural$ (\cite[Prop.\ 6.2]{BdBPLow12}). To prove that $\CNatural$ satisfies point $2$, consider the problem $\codedBound{\boldfaceSigma^0_1}$ that produces a bound for a finite $\boldfaceSigma^0_1$ subset of $\mathbb{N}$. Clearly $\codedBound{\boldfaceSigma^0_1}$ is upwards closed. The reduction $\codedBound{\boldfaceSigma^0_1}\weireducible\CNatural$ follows from the fact that, for every $A\in \dom(\codedBound{\boldfaceSigma^0_1})$, the set 
	\[ \{ n\in\mathbb{N} \st (\forall m \ge n)( m\notin A ) \} \]
	is a $\Pi^{0,A}_1$ subset of $\codedBound{\boldfaceSigma^0_1}(A)$. To prove the converse reduction, let $p$ be a name for some $B\in \dom(\CNatural)$. Define $m(s)$ to be the least number not enumerated in $p$ by stage $s$. Clearly $\lim_{s\to\infty} m(s) = \min B$. In particular this implies that there are only finitely many stages $s$ s.t.\ $m(s)\neq \min B$. Using $\codedBound{\boldfaceSigma^0_1}$ we can obtain a stage $b$ s.t.\ $m(b)=\min B$, hence solving $\CNatural$.

	Finally the maximality of $\CNatural$ follows from \thref{prop:singlevalued_and_upclosed_then_CN}: indeed suppose $f: \textbf{X} \to \mathbb{N}$ is Weihrauch equivalent to some $g$ which is upwards-closed. By \thref{prop:singlevalued_and_upclosed_then_CN}, we have $\DetPartX{g}{\mathbb{N}}\weireducible \CNatural$. By definition of $\DetPart{\cdot}$, we have $f \weireducible \DetPartX{g}{\mathbb{N}}$, hence $f \weireducible \CNatural$.
\end{proof}

Let us now characterize the deterministic part of $\DS$.

\begin{theorem}
	\thlabel{thm:below_DS_below_lim}
	$\DetPart{\DS}\weiequiv\mflim$.
\end{theorem}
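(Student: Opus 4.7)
I split the proof into $\DetPart{\DS}\weireducible\mflim$ and $\mflim\weireducible\DetPart{\DS}$.

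For $\DetPart{\DS}\weireducible\mflim$, fix a single-valued $g\pfunction\mathbf{Y}\to\Baire$ with $g\weireducible\DS$ via computable $\Phi,\Psi$. For each $g$-instance $X$ and each coordinate $n\in\mathbb{N}$, I apply a coordinatewise variant of \thref{thm:Fs_construction}: replace the halting condition ``$\Psi^{X\oplus F}$ commits to an output in $s$ steps'' by ``$\Psi^{X\oplus F}(n)$ converges in $s$ steps''. This yields a uniformly computable double sequence $F_s^n$ of finite descending sequences in $L:=\Phi^X$ such that $\Psi^{X\oplus F_s^n}(n)\downarrow$, and for cofinitely many $s$, $F_s^n$ extends to an infinite descending sequence of $L$. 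Single-valuedness of $g$ then forces $\Psi^{X\oplus F_s^n}(n)=g(X)(n)$ for all such $s$, so $s\mapsto \Psi^{X\oplus F_s^n}(n)$ is a uniformly computable sequence eventually constant with value $g(X)(n)$. A single call to $\mflim$ (using $\widehat{\mflim}\weiequiv\mflim$, which is immediate from interleaving) then extracts all values $g(X)(n)$ simultaneously, producing $g(X)$.

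For $\mflim\weireducible\DetPart{\DS}$, since $\mflim$ is single-valued with codomain $\Baire$, \thref{thm:detpart_total} reduces the task to $\mflim\weireducible\DS$. Given a convergent sequence $\sequence{p_n}{n}\to p$ in $\Baire$, I form the computable tree
\[ T:=\{\sigma\in\baire : (\exists n\in\mathbb{N})(\sigma\sqsubseteq p_n[n])\}. \]
Since $p_n[n]\concat 0^\omega\to p$ pointwise, every prefix of $p$ is in $T$; conversely, any hypothetical alternate path $q\neq p$ with $q(k_0)\neq p(k_0)$ at some $k_0$ would require infinitely many $p_m$'s to satisfy $p_m(k_0)=q(k_0)$, contradicting $\sequence{p_n}{n}\to p$. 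Hence $[T]=\{p\}$. Let $L:=\KB(T)$, an ill-founded computable linear order. By the standard lemma on $\KB$-descending sequences (as used in \thref{thm:CBaire_below_lim_cp_DS}), for any infinite descending sequence $\sequence{\sigma_i}{i}$ in $L$ we have $\sigma_i\concat 0^\omega\to p$ in $\Baire$.

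To extract $p$ effectively from $\sequence{\sigma_i}{i}$ and the input $\sequence{p_n}{n}$ without invoking $\mflim$, the post-processor exploits that $|\sigma_i|\to\infty$ (any infinite descending chain must leave each finite truncation $T\cap\baire^{\leq N}$, which is $\KB$-well-founded) together with the pointwise convergence $\sigma_i\concat 0^\omega\to p$. For each $k$, it searches for an index $i$ with $|\sigma_i|>k$ whose associated string $\sigma_i$ is realised in the input $\sequence{p_n}{n}$ by sufficiently many stages (i.e.\ $p_m[|\sigma_i|]=\sigma_i$ for many $m$, which holds cofinitely often iff $\sigma_i\sqsubseteq p$), and outputs $\sigma_i(k)$. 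The main obstacle is calibrating the density condition so that (a) only $\sigma_i\sqsubseteq p$ pass it, ensuring $\sigma_i(k)=p(k)$, and (b) enough valid $\sigma_i$ satisfy it along any descending sequence, ensuring that the search terminates; both follow from the fact that every off-path subtree of $T$ is well-founded and finitely branching, hence finite, combined with $p_n[k]\to p[k]$ for every fixed $k$.
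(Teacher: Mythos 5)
The first direction, $\DetPart{\DS}\weireducible\mflim$, is correct and is essentially the paper's own argument: the paper defines $f_n(X):=f(X)(n)$ and applies \thref{thm:Fs_construction} to each $f_n$, which is exactly your coordinatewise variant followed by one (parallelized) application of $\mflim$.

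The second direction, $\mflim\weireducible\DS$, has a genuine gap. A minor issue first: your tree $T=\{\sigma:(\exists n)\,\sigma\sqsubseteq p_n[n]\}$ is only c.e.\ relative to the input (membership requires an unbounded search over $n$), not computable, while $\DS$ expects a characteristic function; this alone could be patched via $\codedDS{\boldfaceSigma^0_1}\weiequiv\DS$. The fatal problem is the extraction step, precisely the ``calibration of the density condition'' you defer: no such calibration exists. An initial segment of a $\KB(T)$-descending sequence may consist entirely of off-path nodes, and neither the descending sequence nor the input provides a computable certificate that a given $\sigma_i$ is a prefix of $p$ (that condition is genuinely $\Sigma^0_2$, and off-path nodes can be ``realised'' by arbitrarily many, though finitely many, stages). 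Concretely: present $p_n=1^\omega$ for every $n$ and the descending sequence $1,11,111,\dots$; your post-processor must commit to $p(0)=1$ after reading finitely much. Now modify the input to $p_m=1^\omega$ for $m\le M'$ and $p_m=0^\omega$ for $m>M'$, with $M'$ large enough to cover everything read and to witness the nodes $1^l$ already output; this input converges to $0^\omega$, and $1,11,\dots,1^{j+1},0,00,000,\dots$ is a valid descending sequence of the new $\KB(T')$ extending what was read, so the committed answer $1$ is wrong. This shows that recovering the path from a $\KB$-descending sequence inherently costs a $\mflim$ \emph{after} $\DS$ --- which is exactly the content of \thref{thm:CBaire_below_lim_cp_DS} --- and cannot be absorbed into the backward functional. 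The paper proves $\mflim\weireducible\DS$ by an entirely different construction: the relativized Marcone--Shore linear order of type $\omega+\omega^*$ in which \emph{every} element of the $\omega^*$-part already computes the jump $\mfJ(p)$, so the very first element of any descending sequence certifies the answer. You would need a construction with that self-certifying feature; the Kleene--Brouwer order of a tree with unique path $p$ does not have it.
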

\begin{proof}
	Let us first prove that $\mflim\weireducible\DS$. Let $\mfJ$ be the Turing jump operator, i.e.\ $\mfJ(p)(e)=1$ iff $\varphi_e^p(e)$ halts. It is known that $\mfJ\strongweiequiv \mflim$ (see \cite[Thm.\ 6.7]{BGP17}). By relativizing the construction in \cite[Lem.\ 4.2]{MarconeShore2011} we have that, for every $p$, we can $p$-computably build a linear order $L$ of type $\omega+\omega^*$ s.t.\ every descending sequence through $L$ computes $\mfJ(p)$. This shows that $\mflim\weiequiv\mfJ\weireducible \DS$.
	
	To prove that $\DetPart{\DS} \weireducible \mflim$, suppose that $f \pfunction\textbf{X} \to \Baire$ is single-valued and $f\weireducible\DS$ as witnessed by the maps $\Phi$, $\Psi$. For every $n$, define $f_n$ by $f_n(X):=f(X)(n)$. The maps $\Phi$ and $\Psi$ witness that $f_n\weireducible\DS$ as well (modulo a trivial coding). Given an $f$-instance $X$, consider the sequences $\sequence{F_{s,n}}{s\in\mathbb{N}}$ obtained by applying \thref{thm:Fs_construction} to each $f_n$. Define the sequence $\sequence{p_s}{s\in\mathbb{N}}$ in $\Baire$ as $p_s(n):= \Psi^{X\oplus F_{s,n}}(0)$. Notice that, by \thref{thm:Fs_construction}, for every $n$, $\Psi^{X\oplus F_{s,n}}$ outputs some number, therefore $p_s(n)$ is well-defined and is uniformly computable from $X$. Moreover, since $f_n$ is single-valued and, for cofinitely many $s$, $F_{s,n}$ is extendible, the sequence $\sequence{\Psi^{X\oplus F_{s,n}}(0)}{s\in\mathbb{N}}$ is eventually constant and equal to $f_n(X)$. In particular this shows that, letting $p:=\lim_{s\to\infty} p_s$, for each $n$ we have $p(n)=f_n(X)$, i.e.\ $p=f(X)$.
\end{proof}

This result shows that, despite the fact that $\DS$ can have very complicated solutions, it is rather weak from the uniform point of view. In fact, its lower Weihrauch cone misses many arithmetic problems. In particular we have:

\begin{corollary}
	\thlabel{thm:lpo'_incomp_BS}
	$\DS\weiincomparable\LPO'$.
\end{corollary}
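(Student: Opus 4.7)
The plan is to establish the two non-reductions separately, exploiting two complementary features of $\DS$: the existence of a computable instance with no hyperarithmetic solution, and the characterization $\DetPart{\DS} \weiequiv \mflim$ from \thref{thm:below_DS_below_lim}.

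For $\DS \not\weireducible \LPO'$, I would exploit the finite codomain of $\LPO'$. Suppose, for contradiction, that the reduction holds via computable maps $\Phi,\Psi$, and let $L$ be the computable ill-founded linear order without hyperarithmetic descending sequences used in the proof of \thref{thm:bs_not_<=_ucbaire}. Then $\Phi(L)$ is a computable $\LPO'$-instance, and its $\LPO'$-answer is some bit $b\in\{0,1\}$, which is trivially computable. Hence $\Psi(L,b)$ would be a computable (\emph{a fortiori} hyperarithmetic) descending sequence in $L$, contradicting the defining property of $L$.

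For $\LPO' \not\weireducible \DS$, I plan to use the deterministic-part characterization together with the universal property \thref{thm:detpart_total}. Since the codomain $\mathbf{2}$ of $\LPO'$ computably embeds into $\Baire$, and $\LPO'$ is single-valued, a reduction $\LPO' \weireducible \DS$ would force $\LPO' \weireducible \DetPart{\DS} \weiequiv \mflim$. Parallelizing both sides and applying the monotonicity of $\parallelization{\cdot}$, together with $\parallelization{\LPO'} \weiequiv \mflim'$ (as stated in Section \ref{sec:background}) and $\parallelization{\mflim} \weiequiv \mflim$ (which holds since $\mflim \weiequiv \parallelization{\LPO}$ and parallelization is idempotent), one would conclude $\mflim' \weireducible \mflim$, contradicting the well-known strict hierarchy $\mflim \strictlyweireducible \mflim'$ from \cite{BGP17}.

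The main obstacle to avoid would be trying to prove $\LPO' \not\weireducible \mflim$ by direct combinatorial or topological arguments, which is delicate given the flexibility of $\mflim$ in computing $\Delta^0_2$ information from its input. The parallelization trick neatly sidesteps this difficulty by transferring the desired separation up to the parallelized level, where the strict jump hierarchy is already established.
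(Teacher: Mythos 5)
Your proof is correct and follows essentially the same route as the paper's: $\DS\not\weireducible\LPO'$ because $\LPO'$ has only computable solutions while $\DS$ has a computable instance with no hyperarithmetic solution, and $\LPO'\not\weireducible\DS$ via single-valuedness together with $\DetPart{\DS}\weiequiv\mflim$. The only difference is that where the paper cites $\LPO'\not\weireducible\mflim$ from the literature, you supply a short self-contained argument by parallelization ($\parallelization{\LPO'}\weiequiv\mflim'$, $\parallelization{\mflim}\weiequiv\mflim$, and $\mflim\strictlyweireducible\mflim'$), which is valid.
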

\begin{proof}
    Since $\LPO$ is single-valued, so is $\LPO'$. Since $\LPO'\not\weireducible \mflim$ (see \cite[Cor.\ 12.3 and Thm.\ 12.7]{BolWei11}), it follows from \thref{thm:below_DS_below_lim} that $\LPO'\not\weireducible \DS$. On the other hand, $\DS\not\weireducible\LPO'$, as $\LPO'$ always has computable solutions.
\end{proof}

Notice that \thref{thm:below_DS_below_lim} implies also that $\CBaire \not\weireducible \CCantor \compproduct \DS$. Indeed, on the one hand  $\DetPart{\CBaire}\weiequiv \UCBaire$ (\thref{thm:detpart_cbaire}), while, on the other hand, by \thref{thm:det(f*g)<=det(f)*g}
if $f$ is single-valued and $f\weireducible \CCantor\compproduct \DS$ then $f\weireducible \DS$ (as $\DetPart{\CCantor}\weiequiv\id{}$) and hence $\DetPart{\CCantor\compproduct\DS}\weiequiv\DetPart{\DS}\weiequiv\mflim$.

Using \thref{thm:lpo'_incomp_BS} we can prove that $\DS$ is not closed under (parallel) product:

\begin{theorem}
    \thlabel{thm:DS_not_closed_under_product}
	$\LPO'\weireducible \DS\times \mflim$ and therefore $\DS$ is not closed under product.
	\thlabel{thm:lpo'_DS_times_lim}
\end{theorem}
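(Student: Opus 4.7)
The plan is to build, computably from the $\LPO'$-instance $\sequence{p_n}{n \in \mathbb{N}}$ converging to $p$, a pair consisting of a $\mflim$-instance and a $\DS$-instance whose joint solutions decide $\LPO(p)$. For the $\mflim$-component I would simply resubmit the input sequence itself, so that the $\mflim$-solution is $p$.

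For the $\DS$-component, the key observation is that, writing $q := \sequence{p_n}{n \in \mathbb{N}}$, the statement $\LPO(p) = 0$ is $\Pi^0_2(q)$ (it unfolds to $\forall k\,\forall n\,\exists m \geq n,\ p_m(k) = 0$), while $\LPO(p) = 1$ is $\Sigma^0_2(q)$ (it unfolds to $\exists k\,\exists N\,\forall n \geq N,\ p_n(k) \neq 0$). Using the standard correspondence between such statements and ill-foundedness of $q$-computable trees, I would build $q$-computable trees $T_0, T_1 \subseteq \baire$ so that $T_i$ is ill-founded iff $\LPO(p) = i$, and set $L_i := \KB(T_i)$. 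Then $L_i$ is ill-founded iff $T_i$ is. I take $L$ to be the linear order obtained by placing all of $L_0$ strictly below all of $L_1$. Since exactly one of ``$\LPO(p) = 0$'' and ``$\LPO(p) = 1$'' holds, exactly one of $L_0, L_1$ is ill-founded, so $L$ is always ill-founded and a valid $\DS$-instance, computable from $q$.

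For the backward map, given $p$ (from $\mflim$) and a descending sequence $d$ through $L$ (from $\DS$), I would run two searches in parallel: search (a) looks for some $k$ with $p(k) \neq 0$ and outputs $1$ on success; search (b) looks for some $i$ with $d(i) \in L_0$ (a decidable test, since $L_0$ is $q$-computable) and outputs $0$ on success. Correctness follows from a tail analysis of $d$: if $\LPO(p) = 1$ then $L_0$ is well-founded, so $d$ must remain entirely in $L_1$ (any entry into $L_0$ would force the whole tail of $d$ to stay in $L_0$, since $L_0$ lies below $L_1$, contradicting well-foundedness), hence (a) succeeds and (b) does not; if $\LPO(p) = 0$ then $L_1$ is well-founded, so the tail of $d$ must lie in $L_0$, hence (b) succeeds and (a) does not. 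Exactly one search halts, and its outcome is $\LPO'(q)$.

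The main substantive step is setting up $T_0$ and $T_1$ so that ill-foundedness lines up with the intended arithmetic statements about $q$; this is routine descriptive set theory, and is the step where the argument actually uses that $\LPO(p)$ is a boundary $\Sigma^0_2/\Pi^0_2$ question relative to $q$. The remaining pieces --- the forward map, the parallel searches, and the tail analysis --- are then immediate from the construction. Conceptually, $\mflim$ alone cannot handle the $\Pi^0_2$ side and $\DS$ alone cannot decide which of the two sides holds, but in parallel they cover both directions, which is why $\LPO'$ lands exactly in $\DS \times \mflim$.
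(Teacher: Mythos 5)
Your proof is correct, but it takes a genuinely different route from the paper's. You exploit the fact that both outcomes of the $\LPO'$-question are $\boldfaceSigma^1_1$ (indeed arithmetic) relative to the input $q = \sequence{p_n}{n\in\mathbb{N}}$: you encode ``$\LPO(p)=0$'' and ``$\LPO(p)=1$'' as ill-foundedness of $q$-computable trees $T_0$, $T_1$ via the usual Skolem-function and witness-prefix trees, pass to $\KB(T_0)+\KB(T_1)$, and let the location of the tail of the descending sequence reveal which side holds; the $\mflim$-component is needed only to semi-decide the positive side through its $\Sigma^{0,p}_1$ form, and your tail analysis and the mutual exclusivity of the two searches are sound. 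The paper instead gives a bespoke dynamic construction of a single order of controlled order type ($n+\omega^*$, $\omega^*$, $\omega^*+\zeta$, or $n+\zeta$) driven by the approximation $g(s)$ to the least nonzero position of $p$, so that already the $\le_L$-least element $q_0$ of the descending sequence encodes a candidate index $i$, which is then evaluated against the $\mflim$-output. Your approach is more modular and generalizes cleanly (essentially: any two-valued problem whose fibers are $\boldfaceSigma^1_1$ in the input and one of whose outcomes is c.e.\ in the $\mflim$-output reduces to $\DS\times\mflim$), whereas the paper's explicit construction yields the finer order-type information that it reuses for \thref{thm:lpo'<=findS} and \thref{thm:Sigma0kDS_LQO=Delta0_k+1DS_LO}. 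Both arguments correctly rely on $\mflim$ for the part that $\DS$ alone cannot do, consistent with \thref{thm:lpo'_incomp_BS}.
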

\begin{proof}
	Let $\sequence{p_n}{n\in\mathbb{N}}$ be a sequence in $\Baire$ converging to an instance $p$ of $\LPO$. For each $s$ define 
	\[ g(s)= \begin{cases}
		i+1 & \text{if } i\le s \land p_s(i)\neq 0 \land (\forall j<i)(p_s(j)=0),\\
		0 & \text{otherwise.}
	\end{cases}\]
	Let us define a linear order $L$ inductively: at stage $s=0$ we put $0$ into $L$. At stage $s+1$ we do the following:
	\begin{enumerate}
		\item if $g(s)=g(s+1)$ we put $2(s+1)$ immediately below $2s$;
		\item if $g(s)\neq g(s+1)$ and $g(s+1)=0$ we put $2(s+1)$ at the bottom;
		\item if $g(s)\neq g(s+1)$ and $g(s+1)>0$  we put $2(s+1)$ at the top and we put $2s+1$ immediately above $0$.
	\end{enumerate}

	This construction produces a linear order on a computable subset of $\mathbb{N}$. It is clear that $g$ and $L$ are uniformly computable in $\sequence{p_n}{n\in\mathbb{N}}$. Notice that if $\LPO(p)=1$ then there is an $s$ s.t.\ for every $t\ge s$, $g(t)=g(s)$ (this follows by definition of limit in the Baire space). In particular, $L$ has order type $n+\omega^*$. On the other hand, if $\LPO(p)=0$ we distinguish three cases: if $g(s)$ is eventually constantly $0$ then $L$ has order type $\omega^*$. If there are infinitely many $s$ s.t.\ $g(s)>0$ then $g$ is unbounded (because for each $i$, $\lim_s p_s(i) = p(i) = 0$ so $g$ eventually stays above $i$). In particular, if there are infinitely many $s$ and infinitely many $t$ s.t.\ $g(s)=0$ and $g(t)>0$ then $L$ has order type $\omega^*+\zeta$, where $\zeta:=\omega^*+\omega$ is the order type of the integers. If instead $g(s) > 0$ for all sufficiently large $s$, then $L$ has order type $n+\zeta$. In all cases, $L$ is ill-founded.
	
	We consider the input $(L,\sequence{p_n}{n\in\mathbb{N}})$ for $\DS\times\mflim$.	Given an $<_L$-descending sequence $\sequence{q_n}{n\in\mathbb{N}}$, we compute a solution for $\LPO'(\sequence{p_n}{n\in\mathbb{N}})=\LPO(p)$ as follows: if $q_0$ is odd or $g(q_0/2)=0$ then we return $0$, otherwise we return $p(i)$ where $i$ is s.t.\ $g(q_0/2)=i+1$.
	
	Notice that if $\LPO(p)=1$ then the $\omega^*$ part of $\leq_L$ is the final segment of the even numbers that starts with the first index $2s$ s.t.\ for every $t\ge s$, $g(t)=i+1$ and $p(i)=1$. In particular every $<_L$-descending sequence starts with some even $q_0$ s.t.\ $g(q_0/2)>0$.
	On the other hand, if $\LPO(p)=0$ then, by definition of $\LPO$, we have that $p=0^\mathbb{N}$. In this case, the above procedure must return $0$ so it produces the correct solution. This proves that $\LPO' \weireducible \DS \times \mflim$.

	The fact that $\DS$ is not closed under product follows from the fact that $\mflim \weireducible \DS$ (\thref{thm:below_DS_below_lim}) and \thref{thm:lpo'_incomp_BS}.
\end{proof}

\subsection{Combinatorial principles on linear orders}

We introduce the following notation to phrase many combinatorial principles from reverse mathematics as multi-valued functions.

\begin{definition}
	Let $\findC{X}{Y}\pmfunction{\mathbf{LO}}{\mathbf{LO}}$ be the partial multi-valued function defined as 
	\[ \findC{X}{Y}(L):=\{ M\in \LO\st M\subset L \text{ and } \operatorname{ordtype}(M)\in Y \}, \]
	with domain being the set of $L\in \LO$ s.t.\  $\operatorname{ordtype}(L)\in X$ and there is some $M\subset L$ s.t.\ $\operatorname{ordtype}(M)\in Y$.

	Similarly we define $\findS{X}\pmfunction{\mathbf{LO}}{\Baire}$ to be the partial multi-valued function that takes as input a countable linear order $L$ s.t.\ $\operatorname{ordtype}(L)\in X$ and produces a string $\coding{b,x_0,x_1,\hdots}$ s.t.\ $b\in\{0,1\}$ and, for all $i$, if $b=0$ then $x_i <_L x_{i+1}$ while if $b=1$ then $x_{i+1}<_L x_i$.

	If $X$ or $Y$ is not specified, we assume that it contains every countable order type.
\end{definition}
There is an extensive literature that studies the ``ascending/descending sequence principle'' ($\mathrm{ADS}$) and the ``chain/antichain principle'' ($\mathrm{CAC}$) (see e.g.\ \cite{HS07, JKLLS09}). These principles and, several of their variations, have been studied from the point of view of Weihrauch reducibility in \cite{ADSS17}. 

Notice that, in particular, the problem $\ADS$ (given a linear order, produce an infinite ascending sequence or infinite descending sequence) corresponds to $\findS{}$. Similarly  the problem $\GSADS$ (given a stable --- i.e.\ of order type $\omega+n,n+\omega^*$ or $\omega+\omega^*$ --- linear order, produce an infinite ascending or descending sequence), corresponds to $\findS{X}$, where $X =\{\omega+n,n+\omega^*,\omega+\omega^*\}$. 
\begin{proposition}
	\thlabel{thm:lpo'<=findS}
	$\LPO'\weireducible \findS{\{\omega,n+\omega^*\}}$.
\end{proposition}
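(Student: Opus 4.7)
The plan is to reduce $\LPO'$ to $\findS{\{\omega, n + \omega^*\}}$ by constructing, uniformly from a convergent sequence $\sequence{p_n}{n\in\mathbb{N}}$, a linear order $L$ of order type $\omega$ when $\LPO(\lim_n p_n) = 0$ and of type $n + \omega^*$ (for some $n$) when $\LPO(\lim_n p_n) = 1$. Since any monotone infinite sequence in $\omega$ must be ascending while any monotone infinite sequence in $n + \omega^*$ must be descending (into the $\omega^*$-tail), the direction bit $b$ returned by $\findS{\{\omega, n+\omega^*\}}(L)$ is forced to equal $\LPO(\lim_n p_n)$, which gives the reduction.

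For the construction, I would first define, for each $s$, $m(s) := \min\{i \leq s : p_s(i) \neq 0\}$, putting $m(s) := s+1$ if no such $i$ exists, and then $\bar m(s) := \max_{t \leq s} m(t)$. Both are computable from $\sequence{p_n}{n}$, and $\bar m$ is non-decreasing. The key observation is that convergence in Baire space forces: if $\LPO(\lim_n p_n) = 1$ with least nonzero index $i^*$, then $m(s) = i^*$ for all sufficiently large $s$, so $\bar m$ stabilizes at some $K \geq i^*$; if instead $\LPO(\lim_n p_n) = 0$, then for every $N$, each of the finitely many columns $p_s(i)$ with $i < N$ eventually vanishes, forcing $m(s) \geq N$ for all sufficiently large $s$, hence $\bar m(s) \to \infty$.

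Next I would put the linear order on $\mathbb{N}$ by declaring $a <_L b$ iff $\bar m(a) + 2^{-a} < \bar m(b) + 2^{-b}$, a computable rational comparison with no ties (distinct stages have distinct values since the dyadic part has magnitude below $1$ and cannot compensate for an integer gap). If $\bar m$ stabilizes at $K$ with first witness $s_0$, then stages $s \geq s_0$ satisfy $f(s) := \bar m(s) + 2^{-s} = K + 2^{-s}$, a strictly decreasing sequence in $(K, K + 2^{-s_0}]$, yielding an $\omega^*$-tail; the remaining stages $s < s_0$ satisfy $\bar m(s) \leq K-1$, hence $f(s) < K$, contributing a finite initial segment placed strictly below every post-stab stage. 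This gives $L \cong s_0 + \omega^*$. If instead $\bar m \to \infty$, then for any $a$ the relation $b <_L a$ forces $\bar m(b) \leq \bar m(a)$ (the dyadic correction cannot bridge integer levels), and since $\bar m$ assumes each value only finitely often, every initial segment of $L$ is finite; as $L$ is infinite, $L \cong \omega$.

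The main subtlety will be in calibrating the secondary term $2^{-s}$: it has to be small enough never to interleave elements across distinct $\bar m$-levels (so well-foundedness is preserved when $\bar m \to \infty$) yet large enough to reverse the stage order within a single stabilized level (producing the $\omega^*$-tail when $\bar m$ stops increasing). Once $L$ is built, the reduction is immediate: invoke $\findS{\{\omega, n+\omega^*\}}$ on $L$ and return its direction bit.
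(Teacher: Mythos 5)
Your proposal is correct and follows essentially the same route as the paper: both track the position of the first nonzero entry of the approximations $p_s$, build a computable linear order whose type is $\omega$ when that position diverges ($\LPO(p)=0$) and $n+\omega^*$ when it stabilizes ($\LPO(p)=1$), and then read off the direction bit returned by $\findS{\{\omega,n+\omega^*\}}$. The only difference is cosmetic: the paper inserts stages inductively (immediately above/below, or at the top) guided by its tracking function $g$, whereas you order stages by the rational rank $\bar m(s)+2^{-s}$; both yield the same dichotomy of order types.
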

\begin{proof}
	Let $\sequence{p_i}{i\in\mathbb{N}}$ be a sequence in $\Baire$ converging to an instance $p$ of $\LPO$. For every $s\in\mathbb{N}$ we define (as we did in the proof of \thref{thm:lpo'_DS_times_lim})
	\[ g(s)= \begin{cases}
		i+1 & \text{if } i\le s \land p_s(i)\neq 0 \land (\forall j<i)(p_s(j)=0),\\
		0 & \text{otherwise.}
	\end{cases} \]
	Let us define a linear order $\le_L$ on $\mathbb{N}$ inductively: for each stage $s$ we define a linear order on $\{0,\hdots,s\}$. At stage $s=0$ there are no decisions to make. At stage $s+1$ we do the following:
	\begin{enumerate}
		\item if $0=g(s)=g(s+1)$ we put $s+1$ immediately above $s$;
		\item if $0<g(s)=g(s+1)$ we put $s+1$ immediately below $s$;
		\item if $g(s)\neq g(s+1)$ we put $s+1$ at the top.
	\end{enumerate}
	It is clear that $g$ and $\leq_L$ are uniformly computable in $\sequence{p_n}{n\in\mathbb{N}}$. Notice that if $\LPO(p)=1$ then there is an $s$ s.t.\ for every $t\ge s$, $g(t)=i+1$, where $i$ is the smallest integer s.t.\ $p(i)=1$ (this follows by definition of limit in the Baire space). In particular, $\le_L$ has order type $n+\omega^*$. On the other hand, if $\LPO(p)=0$ then $g$ is either eventually constantly $0$ or unbounded. In both cases the linear order $\le_L$ has order type $\omega$. In other words $(\mathbb{N},\le_L)$ has order type $\omega$ iff $\LPO'(\sequence{p_i}{i\in\mathbb{N}})=0$. Since the output of $\findS{\{\omega,n+\omega^*\}}((\mathbb{N},\le_L))$ comes with an indication of the order type of the solution, this defines a reduction from $\LPO'$ to $\findS{\{\omega,n+\omega^\ast\}}$.
\end{proof}

\begin{corollary}
    \thlabel{thm:gsads_incomp_ds}
	$\findS{\{\omega, n+\omega^*\}} \weiincomparable \DS$, and hence $\GSADS \weiincomparable \DS$.
\end{corollary}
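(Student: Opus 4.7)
The statement encompasses four non-reductions:
\[ \findS{\{\omega, n+\omega^*\}}\not\weireducible \DS,\quad \DS \not\weireducible \findS{\{\omega, n+\omega^*\}},\quad \GSADS \not\weireducible \DS,\quad \DS \not\weireducible \GSADS. \]
My plan is to handle the two ``easy'' directions first via the preceding results, and then deal with the other two directions using a uniform arithmetic-complexity argument.

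The non-reduction $\findS{\{\omega, n+\omega^*\}}\not\weireducible \DS$ is immediate from the previous lemma and corollary: by \thref{thm:lpo'<=findS} we have $\LPO' \weireducible \findS{\{\omega,n+\omega^*\}}$, while by \thref{thm:lpo'_incomp_BS} we have $\LPO' \not\weireducible \DS$, so composing via transitivity gives a contradiction. The same reasoning yields $\GSADS \not\weireducible \DS$: the linear order constructed in the proof of \thref{thm:lpo'<=findS} already has order type $\omega$ or $n+\omega^*$, both of which are stable, so that very construction also witnesses $\LPO' \weireducible \GSADS$.

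For $\DS \not\weireducible \findS{\{\omega, n+\omega^*\}}$ and $\DS \not\weireducible \GSADS$, I would argue that both of the right-hand principles are \emph{arithmetic}, i.e.\ Weihrauch reducible to $\mflim^{(k)}$ for some finite $k$. Combined with the observation made after \thref{thm:bs_not_<=_ucbaire} that $\DS$ is not arithmetic (since Kleene's computable ill-founded linear order has no hyperarithmetic, hence no arithmetic, descending sequence), this yields both non-reductions at once: a reduction would transform an arithmetic realizer of $\findS{\{\omega, n+\omega^*\}}$ or $\GSADS$ into an arithmetic realizer of $\DS$.

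To see that $\findS{\{\omega, n+\omega^*\}}$ (and analogously $\GSADS$) admits an arithmetic realizer, observe that for $L$ of type $\omega$ the $<_L$-ascending enumeration of the domain is computable in $L$, whereas for $L$ of type $n+\omega^*$ the $\omega^*$-part coincides with the $\Pi^{0,L}_2$ set of elements with infinitely many $<_L$-predecessors; using $\mflim'$ we can enumerate this part and extract a descending sequence through it. The additional order types $\omega+n$ and $\omega+\omega^*$ appearing in the domain of $\GSADS$ are handled in the same fashion by isolating the $\omega$-part or the $\omega^*$-part arithmetically. The main technical subtlety is to uniformly detect the order type of $L$ so as to output the correct indicator bit $b\in\{0,1\}$ (ascending vs.\ descending); this detection is itself an arithmetic question in $L$ (for instance, $L$ is of type $\omega$ iff every element has finitely many predecessors, a $\Pi^{0,L}_3$ condition), so a sufficiently high iterate of $\mflim$ resolves it uniformly and allows us to dispatch to the appropriate subroutine.
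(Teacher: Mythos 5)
Your proof is correct and follows essentially the same route as the paper: one direction via $\LPO' \weireducible \findS{\{\omega,n+\omega^*\}}$ combined with $\LPO' \not\weireducible \DS$, the other via the observation that $\GSADS$ (and hence $\findS{\{\omega,n+\omega^*\}}$) is arithmetic while $\DS$ is not. The only cosmetic differences are that the paper gets $\GSADS \not\weireducible \DS$ by noting that $\findS{\{\omega,n+\omega^*\}}$ is a restriction of $\GSADS$ rather than re-running the $\LPO'$ construction, and that it asserts the arithmeticity of $\GSADS$ without the realizer sketch you supply.
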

\begin{proof}
	The fact that $\findS{\{\omega, n+\omega^*\}} \not\weireducible \DS$ follows from \thref{thm:lpo'<=findS} and the fact that $\LPO'\not\weireducible\DS$ (\thref{thm:lpo'_incomp_BS}). Moreover, since $\findS{\{\omega, n+\omega^*\}}$ is a restriction of $\GSADS$, we have $\GSADS\not\weireducible\DS$.

	To show that the converse reduction cannot hold it is enough to notice that $\GSADS$ is an arithmetic problem, while $\DS\not\weireducible\UCBaire$ (\thref{thm:bs_not_<=_ucbaire}).
\end{proof}
In particular this implies that $\ADS$, as well as the stable chain/antichain principle $\mathsf{SCAC}$, and the weakly stable chain/antichain principle $\mathsf{WSCAC}$ are Weihrauch incomparable with $\DS$ (as they are all arithmetic problems, and $\GSADS$ is reducible to all of them, see \cite{ADSS17}).

\begin{proposition}
	$\findC{}{\{ \omega,n+\omega^*\}} \weireducible \DS$.
\end{proposition}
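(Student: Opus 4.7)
My plan is to construct an ill-founded linear order $L'$ from $L$, call $\DS$ on it, and extract a suitable sub-linear-order $M \subset L$ from the returned descending sequence. A natural candidate is $L' := L^* + L$, placing the reverse order $L^*$ strictly below a fresh copy of $L$. Since every infinite linear order contains a copy of $\omega$ or $\omega^*$, $L'$ is ill-founded exactly when $L$ lies in the domain of $\findC{}{\{\omega,n+\omega^*\}}$: an infinite ascending chain in $L$ yields an infinite descending chain in $L^*$, while an infinite descending chain in $L$ carries over to the upper copy. The map $L \mapsto L'$ is clearly computable.

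Given a descending sequence $(y_n)_{n\in\mathbb{N}} = ((\epsilon_n, a_n))_n$ returned by $\DS(L')$, the fact that $L^*$ sits strictly below $L$ in $L'$ forces exactly one of two behaviours: either $\epsilon_n = 1$ for all $n$, in which case the projection $\{a_n : n \in \mathbb{N}\}$ is an infinite descending sequence in $L$ and provides a suborder of type $\omega^*$; or there is a first index $n_0$ with $\epsilon_{n_0} = 0$, after which the sequence remains confined to $L^*$, so the tail's projection $\{a_n : n \geq n_0\}$ is an infinite ascending sequence in $L$ and provides a suborder of type $\omega$. In both pure cases, the desired order type in $\{\omega, n + \omega^*\}$ is immediately available from the projection.

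The hard part will be turning this case analysis into a uniformly computable characteristic function for $M$: the natural universe, the projection of the sequence, is only $\Sigma^0_1$-enumerated, and in the mixed case the combined projection can have a bad order type like $\omega + k$. My plan to overcome this is to refine the construction of $L'$ so that membership in $U_M$ can be decided from $\chi_L$ together with a finite initial segment of $(y_n)$; one natural route is to arrange the elements of the upper $L$-copy so that any transition into the lower $L^*$-copy forces the entering $L^*$-entry to dominate in $L$ all previously visited $L$-entries, so that the finite initial $L$-block is absorbed at the bottom of the ascending tail (yielding order type $\omega$ in the mixed case) and the output universe becomes describable by a decidable condition. The output order relation on $U_M$ is then just the restriction of $\leq_L$, which is decidable from $\chi_L$.
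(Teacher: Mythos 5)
Your overall strategy --- sum $L$ with its reversal, apply $\DS$, and project the resulting descending sequence back into $L$ --- is the right one, but you have stacked the two copies in the wrong order, and that is precisely what creates the ``mixed case'' obstruction that your final paragraph then fails to resolve. The paper uses $Q := L + L^*$, with the plain copy of $L$ at the \emph{bottom} and the reversed copy on \emph{top}. With that orientation, a $<_Q$-descending sequence that crosses from the top copy into the bottom copy contributes finitely many points (ascending in $L$) followed by an infinite $<_L$-descending tail; since adjoining finitely many points to an $\omega^*$-chain inside $L$ always yields a suborder of type $n+\omega^*$, the \emph{entire} projection is an acceptable solution in every case and nothing ever needs to be discarded. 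With your orientation $L^* + L$, the crossover case yields a finite $<_L$-descending block followed by an infinite $<_L$-ascending tail, and, as you yourself note, the union can have order type $\omega + k$ with $k>0$, which is not in $\{\omega, n+\omega^*\}$.

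The repair you sketch does not close this gap. Discarding the finite initial block is not an option, because at no finite stage can you tell whether the sequence will ever leave the top copy, so you cannot decide --- or even correctly enumerate --- which of the already-seen elements belong to $M$. The alternative you propose, rearranging $L'$ so that any element entered at a crossover dominates in $L$ everything previously visited, is not something you can build into $L'$: in the sum order every lower-copy element sits below every upper-copy element, so $\DS$ is free to jump to a lower-copy element that is $<_L$-below earlier terms, and any attempt to forbid such jumps by making lower-copy elements comparable to only some upper-copy elements breaks transitivity (and hence linearity) of $L'$. The fix is simply to flip the sum: with $L$ on the bottom the three-way case analysis closes immediately and the projected sequence itself is a valid solution, which is all the paper's proof does.
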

\begin{proof}
	Given a linear order $(L,\le_L)$ we can computably build the linear order $Q:=L+L^*$. Formally we define $(Q,\le_Q)$ as $Q:=\{0\}\times L \cup \{1\}\times L$ and 
	\[ (a,p)\le_Q (b,q) \defiff a<b \lor (a=b=0 \land p\le_L q) \lor (a=b=1 \land q \le_L p). \]
	Notice that $Q$ is always ill-founded, hence it is a valid input for $\DS$. Given $\sequence{q_i}{i\in\mathbb{N}}\in \DS(Q)$, we computably build the sequence $\sequence{x_i}{i\in\mathbb{N}}$ defined by $x_i:=\pi_1 q_i$ where $\pi_i:= (a_0,a_1)\mapsto a_i$. 
	
	We distinguish $3$ cases:
	\begin{enumerate}
		\item if $\pi_0 q_i =0$ for every $i$ then $\sequence{x_i}{i\in\mathbb{N}}$ is an $\omega^*$-sequence in $L$;
		\item if $\pi_0 q_i =1$ for every $i$ then $\sequence{x_i}{i\in\mathbb{N}}$ is an $\omega$-sequence in $L$;
		\item if there is a $k$ s.t.\ for all $i<k$ we have $\pi_0 q_i = 1$ and for all $j\ge k$ we have $\pi_0 q_j = 0$ then, by point $1$, $\sequence{x_j}{j\ge k}$ is an $\omega^*$-sequence in $L$, hence $\sequence{x_i}{i\in\mathbb{N}}$ is of type $n+\omega^*$, with $n\le k$.
	\end{enumerate}
	In any case the sequence $\sequence{x_i}{i\in\mathbb{N}}$ is a valid solution for $\findC{}{\{ \omega,n+\omega^*\}}$.
\end{proof}

\subsection{Relations with Ramsey theorems}
We now explore the relations between $\DS$ and Ramsey's theorem for $n$-tuples and $k$ colors. Let us recall the basic definitions.

\begin{definition}
	For every $A\subset \mathbb{N}$, let $[A]^n:=\{ B\subset A\st |B|=n\}$ be the set of subsets of $A$ with cardinality $n$. A map $c\colon[\mathbb{N}]^n\to k$ is called a \textdef{$k$-coloring} of $[\mathbb{N}]^n$, where $k\ge 2$. An infinite set $H$ s.t.\ $c([H]^n)=\{i\}$ for some $i<k$ is called a \textdef{homogeneous solution} for $c$, or simply \textdef{homogeneous}.
	
	The set $\mathcal{C}_{n,k}$ of $k$-colorings of $[\mathbb{N}]^n$ can be seen as a represented space, where a name for a coloring $c$ is the string $p\in\Baire$ s.t.\ for each $(i_0,\hdots,i_{n-1})\in[\mathbb{N}]^n$, $p(\coding{i_0,\hdots,i_{n-1}})=c(i_0,\hdots,i_{n-1})$.
	
	We define $\RT{n}{k}\mfunction{\mathcal{C}_{n,k}}{\Cantor}$ as the total multi-valued function that maps a coloring $c$ to the set of all homogeneous sets for $c$. Similarly we define $\RT{n}{\mathbb{N}}\mfunction{\bigcup_{k\ge 1}\mathcal{C}_{n,k} }{\Cantor}$ as $\RT{n}{\mathbb{N}}(c):=\RT{n}{k}(c)$, where $k-1$ is the maximum of the range of $c$. Note that the input for $\RT{n}{\mathbb{N}}$ does not include information on which color appears in the range of the coloring.

	We also define $\cRT{n}{k}\mfunction{\mathcal{C}_{n,k}}{k}$ as the multi-valued function that produces only the color of a homogeneous solution. We define $\cRT{n}{\mathbb{N}}$ analogously. 
\end{definition}

Notice that $\cRT{n}{k}\weiequiv \RT{n}{k}$ iff $n=1$. Indeed the output of $\cRT{n}{k}$ is always computable, while for $n>1$ there are computable $k$-colorings with no computable homogeneous solutions. Similarly $\cRT{n}{\mathbb{N}}\weiequiv \RT{n}{\mathbb{N}}$ iff $n=1$. Moreover the equivalence cannot be lifted to a strong Weihrauch equivalence. Indeed $\RT{1}{k}$ and $\cRT{1}{k}$ are incomparable from the point of view of strong Weihrauch reducibility. The uniform computational content of Ramsey's theorems is well-studied (see e.g.~\cite{BRramsey17,DDHMS16, DGHPP18,patey}).

In comparing $\RT{n}{k}$ with $\DS$, we immediately notice that $\RT{2}{2}\not\weireducible \DS$. This follows from the fact that $\ADS\weireducible \RT{2}{2}$ (see e.g.\ \cite{HS07}), while $\ADS\not\weireducible\DS$ (see the remarks after \thref{thm:gsads_incomp_ds}). Hence $\RT{n}{k} \not\weireducible \DS$ for all $n,k \geq 2$.

\begin{proposition}
	\thlabel{thm:RT1N<Pi11Bound}
	$\RT{1}{\mathbb{N}}\strictlyweireducible\PiBound$, and hence $\RT{1}{\mathbb{N}}\strictlyweireducible\DS$.
\end{proposition}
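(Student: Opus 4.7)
The plan is to establish $\RT{1}{\mathbb{N}} \weireducible \PiBound$ by directly reducing the pigeonhole task to bounding a $\boldfacePi^1_1$ set extracted from the coloring, and then to separate the two degrees by noting that $\RT{1}{\mathbb{N}}$ is arithmetic while $\PiBound$ is not.

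For the reduction, given a coloring $c\in\mathcal{C}_{1,\mathbb{N}}$ I would define
\[ B := \{m\in\mathbb{N} \st c^{-1}(c(m)) \text{ is finite}\}. \]
Checking membership in $B$ amounts to asking whether $(\exists N)(\forall n\ge N)(c(n)\neq c(m))$, which is a $\Sigma^0_2$ (hence $\boldfacePi^1_1$) condition uniformly in $c$; concretely, from $c$ one builds the sequence $\sequence{T_m}{m\in\mathbb{N}}$ of trees whose paths enumerate $c^{-1}(c(m))$ in strictly increasing order, so that $T_m$ is well-founded iff $m\in B$. Since $c$ has finite range, the set $B$ decomposes as the finite union $\bigcup\{c^{-1}(i)\st c^{-1}(i)\text{ finite}\}$, hence is finite, making it a valid input to $\PiBound$. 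Given any bound $b$ returned by $\PiBound(B)$, the element $b$ itself lies outside $B$, so $c^{-1}(c(b))$ is infinite; then $H:=\{m \st c(m)=c(b)\}$ is computable from $c$ and $b$ and is an infinite $c$-homogeneous set, solving $\RT{1}{\mathbb{N}}(c)$.

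For the strict direction $\PiBound\not\weireducible\RT{1}{\mathbb{N}}$, my plan is to argue that $\RT{1}{\mathbb{N}}$ is arithmetic: given $c$, one call to $\mflim$ produces $c'$, which gives a $\boldfacePi^0_1(c')$-name for the set $A=\{i\st c^{-1}(i)\text{ infinite}\}$ of infinitely-used colors, and then a single call to $\CNatural$ picks some $i\in A$, from which $c^{-1}(i)$ is uniformly enumerable. This shows $\RT{1}{\mathbb{N}}\weireducible \CNatural\compproduct \mflim$, hence in particular $\RT{1}{\mathbb{N}}\weireducible\mflim^{(k)}$ for some fixed $k$. On the other hand, $\PiBound$ is not arithmetic: one can uniformly encode an arbitrary $\boldfacePi^1_1$ condition $\phi$ as the finite $\boldfacePi^1_1$ set that equals $\{0\}$ if $\phi$ holds and $\emptyset$ otherwise, and any bound recovers the answer to $\phi$; since $\boldfacePi^1_1$ is not $\boldfacePi^0_k$ for any $k$, no reduction $\PiBound\weireducible\mflim^{(k)}$ can exist. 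The main obstacle is making the encoding step clean enough to conclude; an alternative that sidesteps it is to compare parallelizations, using that $\parallelization{\PiBound}$ already computes $\UCBaire$-level information (via \cite[Cor.\ 3.23]{KiharaADauriacChoice} and the equivalence $\PiBound\weiequiv\codedChoice{\boldfaceSigma^1_1}{cof}{\mathbb{N}}$) while $\parallelization{\RT{1}{\mathbb{N}}}\weireducible\mflim'$ remains arithmetic, so the two cannot be Weihrauch equivalent.

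The final clause $\RT{1}{\mathbb{N}}\strictlyweireducible\DS$ then follows immediately from the reduction combined with $\PiBound\strictlyweireducible\DS$ (\thref{thm:pibound<BS}).
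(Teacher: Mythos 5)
Your reduction $\RT{1}{\mathbb{N}}\weireducible\PiBound$ is essentially the paper's: your set $B=\{m \st c^{-1}(c(m))\text{ is finite}\}$ is literally the paper's $X=\{n \st (\forall^\infty j)(c(n)\neq c(j))\}$, the finiteness argument (finite range, each non-recurring color has finite preimage) is the same, and the extraction of the homogeneous set from a bound $b$ via the color $c(b)$ is the same. The tree construction you give for the $\repmap{\boldfacePi^1_1}$-name is a fine way to make explicit what the paper leaves to the general formula-to-name translation.

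The genuine problem is in your primary argument that $\PiBound$ is not arithmetic. Encoding a $\boldfacePi^1_1$ condition $\phi$ as the set that is $\{0\}$ if $\phi$ holds and $\emptyset$ otherwise does \emph{not} work: $\PiBound$ only returns \emph{some} bound, and $1$ is a valid bound for both $\{0\}$ and $\emptyset$, so the output carries no information about $\phi$. (This is the whole point of $\PiBound$ being a weak, upwards-closed problem; if a bound determined membership, $\PiBound$ would compute $\chiPi$ and hence $\PiCA$ after parallelizing, which contradicts $\parallelization{\PiBound}\strictlyweireducible\DS\compproduct\DS\weiequiv\CBaire$.) So that route cannot be repaired by "making the encoding cleaner". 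Your fallback via parallelizations, however, is sound and is essentially the paper's argument rearranged: the paper separates by noting $\RT{1}{\mathbb{N}}\weireducible\UCBaire$ (indeed $\RT{1}{\mathbb{N}}\strictlyweireducible\CNatural'$) while $\parallelization{\PiBound}\not\weireducible\UCBaire$, whereas you use $\UCBaire\weireducible\parallelization{\PiBound}$ (proved in the paper in \thref{thm:delta11-DS<sigma11-DS}) against $\parallelization{\RT{1}{\mathbb{N}}}\weireducible\mflim'$. Either version works; just drop the $\{0\}$-versus-$\emptyset$ encoding and lead with the parallelization argument. The final clause about $\DS$ is handled exactly as in the paper.
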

\begin{proof}
	Given a coloring $c\colon\mathbb{N}\to k$, consider the $\Sigma^{0,c}_2$ set
	\[ X:= \{ n \in \mathbb{N} \st (\forall^\infty j)(c(n) \neq c(j))  \}. \]
	It is easy to see that $X$ is finite, as $\ran(c)\subset k$ and if there is no $c$-homogeneous set with color $i$ then there are finitely many $j\in\mathbb{N}$ s.t.\ $c(j)=i$. In particular, given a bound $b$ for $X$ there is a homogeneous solution with color $c(b)$.
	
	The separation follows from the fact that $\PiBound\not\weireducible\UCBaire$ (as $\parallelization{\PiBound}\not\weireducible\UCBaire$, see \cite[Fact 3.25]{KiharaADauriacChoice}), while $\RT{1}{\mathbb{N}}\strictlyweireducible\UCBaire$ (in particular $\RT{1}{\mathbb{N}}\strictlyweireducible \CNatural'$, see \cite[Prop.\ 7.2 and Cor.\ 7.6]{BRramsey17}). The fact that $\RT{1}{\mathbb{N}}\strictlyweireducible\DS$ follows from $\PiBound\strictlyweireducible\DS$ (\thref{thm:pibound<BS}).
\end{proof}

We now show that $\RT{1}{\mathbb{N}}$ is the strongest problem among those that are reducible to $\DS$ and whose instances always have finitely many solutions.

\begin{definition}
	Let $f\pmfunction{\mathbf{X}}{\mathbb{N}}$. We say that $f$ is \textdef{pointwise finite} if, for each $x\in\dom(f)$, $|f(x)|$ is finite.
\end{definition}
Notice that $\cRT{1}{k}$ and $\cRT{1}{\mathbb{N}}$ are pointwise finite, as for each $k$-coloring $c$ we have $|\cRT{1}{k}(c)|=|\cRT{1}{\mathbb{N}}(c)|\le k$.

\begin{lemma}
	\thlabel{thm:f<upwards_closed_f<RT1N}
	Let $g$ be upwards-closed and let $f$ be pointwise finite. If $f\weireducible g$ then $f\weireducible \RT{1}{\mathbb{N}}$.
\end{lemma}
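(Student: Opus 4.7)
The plan is to turn a Weihrauch reduction $f \weireducible g$ into a reduction $f \weireducible \RT{1}{\mathbb{N}}$ by cooking up, from any name of an $f$-instance, a coloring whose homogeneous color is automatically an $f$-solution. The key observation is that upwards-closedness forces the set of $g$-answers to $\Phi(p)$ to be cofinite, and pointwise finiteness forces a fixed post-processor's outputs to live in a finite set.

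Concretely, fix computable witnesses $\Phi,\Psi$ of $f \weireducible g$. Given a name $p$ of some $x \in \dom(f)$, define the (uniformly computable) coloring $c_p \colon \mathbb{N} \to \mathbb{N}$ by
\[
c_p(n) := \begin{cases} \Psi(p,\hat n)(0) & \text{if this computation halts in at most } n \text{ steps,} \\ 0 & \text{otherwise,} \end{cases}
\]
where $\hat n$ is a canonical name of $n \in \mathbb{N}$. Since $g$ is upwards-closed and $g(\Phi(p))$ is nonempty, there is some $N$ such that every $n \ge N$ lies in $g(\Phi(p))$; hence for all $n \ge N$, $\Psi(p,\hat n)$ halts with an output in $f(x)$. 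Consequently there exists $N' \ge N$ such that $c_p(n) \in f(x)$ for all $n \ge N'$. Together with pointwise finiteness of $f$, this implies that $c_p$ has finite range, so $c_p$ is a legitimate input to $\RT{1}{\mathbb{N}}$.

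For the backward map, given a homogeneous set $H$ for $c_p$ with common color $k$, output $k = c_p(\min H)$. Since $H$ is infinite, it contains some $n \ge N'$, so $k = c_p(n) \in f(x)$, i.e., $k$ is a valid $f$-solution to $x$. This yields $f \weireducible \RT{1}{\mathbb{N}}$.

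There is no serious obstacle here: the mild technical point is organizing the definition of $c_p$ so that (i) it is genuinely computable from $p$ (handled by the $n$-step time bound) and (ii) its range is forced into the finite set $f(x)$ on a cofinite tail (handled by combining upwards-closedness of $g$ with pointwise finiteness of $f$). Once these are in place, the homogeneity of $H$ does the rest automatically.
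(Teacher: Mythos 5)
Your overall strategy --- turn the outputs of $\Psi(p,\cdot)$ into a coloring and let $\RT{1}{\mathbb{N}}$ select a color occurring infinitely often --- is exactly the paper's, but your implementation has a genuine gap at the step ``there exists $N' \ge N$ such that $c_p(n) \in f(x)$ for all $n \ge N'$.'' Upwards-closedness of $g$ guarantees that $\Psi(p,\hat n)$ halts for every $n \ge N$, but it gives no control over \emph{how long} those computations take: the halting time of $\Psi(p,\hat n)$ need not be bounded by $n$. If, say, every $\Psi(p,\hat n)$ needs $n+1$ steps, your default clause fires for every $n$, so the color $0$ occurs infinitely often (indeed here $c_p$ is constantly $0$). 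Then $\{n : c_p(n)=0\}$ is a perfectly legitimate homogeneous set that $\RT{1}{\mathbb{N}}$ may return, and your backward map outputs $0$, which need not lie in $f(x)$. So the reduction as written is not correct, and the claimed $N'$ need not exist.

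The repair is the paper's dovetailing: run all computations $\Psi(p,n)$, $n\in\mathbb{N}$, in parallel, and each time one of them converges to some $j$, set $c(i):=j$ for the least $i$ not yet colored. This keeps $c$ computable and total (cofinitely many of the computations converge, by upwards-closedness), and --- crucially --- every color actually assigned is the genuine output of some converged computation $\Psi(p,n)$; there is no artificial default value. Only the finitely many $n \notin g(\Phi(p))$ can contribute colors outside $f(x)$, so any color occurring infinitely often is an $f$-solution, and pointwise finiteness of $f$ ensures the range of $c$ is finite, so $c$ is a valid $\RT{1}{\mathbb{N}}$-instance. With that modification your argument goes through.
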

\begin{proof}
    Suppose that $f \weireducible g$ as witnessed by $\Phi, \Psi$. Let $p$ be the name for the $f$-instance $x$ we are given.
	
	We define a coloring $c$ as follows: we dove-tail all computations $\Psi(p,n)$ for $n \in \mathbb{N}$. Whenever some computation converges to some $j \in \mathbb{N}$, we define $c(i):=j$ where $i$ is the first element on which $c$ is not defined yet. Since $g$ is upwards-closed, we know that for all but finitely many $n$, $\Psi(p,n)$ has to converge to some $j_n \in f(x)$. This implies that $\ran(c)$ contains only finitely many distinct elements. Moreover, any element repeating infinitely often is a correct solution to $f(x)$, therefore we can find a $y\in f(x)$ by applying $\RT{1}{\mathbb{N}}$ to $c$ and returning the color of the solution.
\end{proof}

\begin{theorem}
	\thlabel{thm:f<DS_iff_f<RT1N}
	If $f$ is pointwise finite then $f\weireducible \DS$ iff $f\weireducible \RT{1}{\mathbb{N}}$.
\end{theorem}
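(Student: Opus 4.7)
The right-to-left direction is immediate from \thref{thm:RT1N<Pi11Bound}, which gives $\RT{1}{\mathbb{N}} \weireducible \DS$: any $f$ reducing to $\RT{1}{\mathbb{N}}$ automatically reduces to $\DS$ by transitivity, without using the pointwise finiteness hypothesis.

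For the left-to-right direction, my plan is to chain together three results already proved in the paper. Since $f$ is by definition a first-order problem (its codomain is $\mathbb{N}$), the assumption $f \weireducible \DS$ immediately implies $f \weireducible \firstOrderPart{\DS}$. By \thref{thm:below_BS_below_Pi11Bound}, $\firstOrderPart{\DS} \weiequiv \PiBound$, so $f \weireducible \PiBound$. Now $\PiBound$ is upwards-closed (as noted right after its definition), and $f$ is pointwise finite by hypothesis. Therefore \thref{thm:f<upwards_closed_f<RT1N}, applied with $g := \PiBound$, yields $f \weireducible \RT{1}{\mathbb{N}}$.

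There is essentially no obstacle here: the theorem is a direct corollary of the combination of the characterization of $\firstOrderPart{\DS}$ via $\PiBound$ and the lemma bounding pointwise finite problems reducible to upwards-closed ones. The only thing worth double-checking is that the hypothesis of \thref{thm:f<upwards_closed_f<RT1N} is satisfied, namely that $f$ is pointwise finite and that $\PiBound$ is upwards-closed; both are immediate from the relevant definitions. Thus the proof reduces to a short composition of reductions, with no new construction needed.
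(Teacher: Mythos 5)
Your proof is correct and follows exactly the same route as the paper: the right-to-left direction via $\RT{1}{\mathbb{N}}\weireducible\DS$ (\thref{thm:RT1N<Pi11Bound}), and the left-to-right direction by passing through $f\weireducible\firstOrderPart{\DS}\weiequiv\PiBound$ (\thref{thm:below_BS_below_Pi11Bound}) and then applying \thref{thm:f<upwards_closed_f<RT1N} with $g=\PiBound$. The only difference is that you make explicit the (trivial) step that a problem with codomain $\mathbb{N}$ reducing to $\DS$ reduces to its first-order part, which the paper leaves implicit.
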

\begin{proof}
	The right-to-left implication always holds as $\RT{1}{\mathbb{N}}\strictlyweireducible\DS$ (\thref{thm:RT1N<Pi11Bound}). On the other hand, if $f$ is pointwise finite and $f\weireducible\DS$ then, by \thref{thm:below_BS_below_Pi11Bound} we have $f\weireducible\PiBound$. Since $\PiBound$ is upwards-closed, by \thref{thm:f<upwards_closed_f<RT1N} we have $f\weireducible\RT{1}{\mathbb{N}}$.
\end{proof}

By \thref{thm:f<upwards_closed_f<RT1N} we also have the following:

\begin{proposition}
	The Weihrauch degree of $\RT{1}{\mathbb{N}}$ is the highest Weihrauch degree such that:
\begin{enumerate}
	\item it contains a representative which is pointwise finite;
	\item it is Weihrauch reducible to some problem which is upwards-closed.
\end{enumerate}
\end{proposition}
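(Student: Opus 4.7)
The plan is to verify two things: first, that the degree of $\RT{1}{\mathbb{N}}$ itself enjoys both properties (so the claim is non-vacuous), and second, that any Weihrauch degree enjoying both properties must lie at or below $\RT{1}{\mathbb{N}}$.

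For the first part I would argue as follows. The degree of $\RT{1}{\mathbb{N}}$ contains a pointwise finite representative, namely $\cRT{1}{\mathbb{N}}$: as the paper already remarks, $\cRT{1}{\mathbb{N}}\weiequiv \RT{1}{\mathbb{N}}$ and $|\cRT{1}{\mathbb{N}}(c)|\le k$ whenever $c$ uses at most $k$ colors. The second condition is witnessed by $\PiBound$: we have $\RT{1}{\mathbb{N}}\weireducible \PiBound$ by \thref{thm:RT1N<Pi11Bound}, and $\PiBound$ is upwards-closed directly from its definition (bounding a finite set remains a bound if we replace it by any larger number).

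For the maximality claim, suppose $f$ is a Weihrauch degree with both properties. Fix a pointwise finite representative $f_1\weiequiv f$ and an upwards-closed problem $g$ with $f\weireducible g$. Then $f_1\weireducible g$, and \thref{thm:f<upwards_closed_f<RT1N} applies to give $f_1\weireducible \RT{1}{\mathbb{N}}$. Since $f\weiequiv f_1$, this yields $f\weireducible \RT{1}{\mathbb{N}}$, as required.

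The proof is genuinely short because the substantive content has already been absorbed into \thref{thm:f<upwards_closed_f<RT1N}; there is no real obstacle here beyond assembling the ingredients. The only point worth being careful about is that the two conditions are stated for \emph{the degree}, not for a single representative; so one must be explicit that pointwise finiteness and the reduction to an upwards-closed problem can be exhibited by different representatives or different witnesses, and that the lemma nevertheless applies once we pick a pointwise finite $f_1$ in the degree.
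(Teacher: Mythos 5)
Your proof is correct and follows essentially the same route as the paper: both establish the two properties via $\cRT{1}{\mathbb{N}}$ and $\PiBound$ (using \thref{thm:RT1N<Pi11Bound}), and both derive maximality from \thref{thm:f<upwards_closed_f<RT1N}. Your extra care about the two conditions being witnessed by possibly different representatives of the degree is a point the paper leaves implicit, but the argument is the same.
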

\begin{proof}
	Point 1 holds because $\cRT{1}{\mathbb{N}}$ is pointwise finite and $\cRT{1}{\mathbb{N}}\weiequiv \RT{1}{\mathbb{N}}$. Point 2 holds because $\RT{1}{\mathbb{N}}\strictlyweireducible\PiBound$ (\thref{thm:RT1N<Pi11Bound}) and $\PiBound$ is upwards-closed. Finally, the maximality follows from \thref{thm:f<upwards_closed_f<RT1N}.
\end{proof}

\begin{lemma}
	If $f$ is upwards-closed and $f\weireducible \RT{1}{\mathbb{N}}$ then $f\weireducible\CNatural$.
\end{lemma}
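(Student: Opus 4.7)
The plan is to strip the reduction $f\weireducible\RT{1}{\mathbb{N}}$ down to a single numerical candidate that must lie in $f(x)=[\min f(x),\infty)$ by upwards-closedness, and then retrieve that candidate with a single call to $\CNatural$. Fix reduction witnesses $\Phi,\Psi$ for $f\weireducible\RT{1}{\mathbb{N}}$, let $p$ name an $f$-instance $x$, and set $c:=\Phi(p)$; by definition of $\RT{1}{\mathbb{N}}$, $c$ has finite range. For each color $i\in\mathbb{N}$, let $H_i:=c^{-1}(i)$ and let $\chi_{H_i}\in\Cantor$ be its characteristic function, which is uniformly computable from $p$ and $i$. Whenever $H_i$ is infinite, $\chi_{H_i}$ is a valid homogeneous solution to $c$, so $v_i:=\Psi(p,\chi_{H_i})(0)$ converges to a value in $f(x)$; when $H_i$ is finite, $v_i$ may diverge or land outside $f(x)$, and there is no computable way to tell the two cases apart.

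The key point is that this need not matter. Since $c$ has finite range, at most finitely many $v_i$'s are ever under consideration, and if one sets
\[ M:=\max\{v_i\st i\in\ran(c) \text{ and } v_i \text{ converges}\}, \]
then $M\ge v_{i_0}\ge \min f(x)$ for any $i_0$ with $H_{i_0}$ infinite, so $M\in f(x)$ by upwards-closedness. Thus computing $M$ from $p$ alone is already enough to solve $f$.

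To retrieve $M$ via $\CNatural$, let $M_s$ denote the running maximum obtained after simulating each $\Psi(p,\chi_{H_i})(0)$ for $s$ steps, answering queries to $\chi_{H_i}(j)$ by inspecting $c(j)$ (with $M_s:=0$ until some value appears). Because each individual computation is monotone in $s$ (once it converges it stays converged) and only finitely many colors ever appear, the sequence $(M_s)$ is non-decreasing and eventually constant at $M$. Hence
\[ P:=\{b\in\mathbb{N}\st (\forall t>b)(M_t=M_b)\} \]
is a $\Pi^{0,p}_1$ set which is nonempty (it contains every sufficiently large $b$), and so it is a valid $\CNatural$-instance; for any $b\in P$ one computes $M_b=M\in f(x)$ from $p$ and $b$. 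This mirrors the strategy used earlier to show $\codedBound{\boldfaceSigma^0_1}\weireducible\CNatural$.

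The main subtlety I expect is preventing the ``spurious'' $v_i$'s, arising from finite color classes and therefore not governed by the reduction, from spoiling the output. Taking the maximum and invoking the upwards-closedness of $f$ is exactly what neutralizes them: any such spurious value can only push $M$ upward, and once $M\ge \min f(x)$ it is automatically a legitimate $f$-solution.
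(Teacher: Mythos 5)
Your proof is correct and follows essentially the same strategy as the paper's: both arguments exploit that the coloring has finitely many colors, take an upper bound of all (possibly spurious) converging outputs of $\Psi$, use upwards-closedness to absorb the values coming from finite color classes, and make a single call to $\CNatural$ on a $\Pi^{0,p}_1$ set of correct guesses. The only cosmetic differences are that the paper first passes to $\cRT{1}{\mathbb{N}}$ and packages the answer directly into the $\Pi^{0,p}_1$ set of tuples, whereas you feed the full color classes to $\Psi$ and use $\CNatural$ to locate a stabilization stage of the running maximum.
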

\begin{proof}
	Recall that $\RT{1}{\mathbb{N}}\weiequiv\cRT{1}{\mathbb{N}}$ and let $\Phi,\Psi$ be two computable maps witnessing $f\weireducible\cRT{1}{\mathbb{N}}$. Let $p$ be a name for some $x\in\dom(f)$ and let $c$ be the coloring represented by $\Phi(p)$. We define the following $\Pi^{0,p}_1$ set 
	\begin{align*}
		A:= \{\coding{n,c_0,\hdots,c_k,s}\st {}&{} (\forall i)(\exists j\le k)(c(i)=c_j) \text{ and }\\
			& (\forall j\le k)(\exists i<s)(c(i)=c_j) \text{ and }\\
			& (\forall j\le k)(\Psi(p,c_j)\defined \rightarrow \Psi(p,c_j)\le n) \}.
	\end{align*} 
	Notice that, if $\coding{n,c_0,\hdots,c_k,s}\in A$ then, by the first two conditions, there is a $j\le k$ s.t.\ $c_j$ is a valid solution for $\cRT{1}{\mathbb{N}}(c)$. In particular $\Psi(p,c_j)\downarrow$ and is a correct solution for $f(x)$ (as $\Phi$ and $\Psi$ witness that $f \weireducible \cRT{1}{\mathbb{N}}$). Since $f$ is upwards-closed, every number greater than $\Psi(p,c_j)$ is a valid solution. In particular, the third condition implies that $n\ge \Psi(p,c_j)$ and therefore $n\in f(x)$.
\end{proof}
Notice that the previous lemma provides an alternative proof for the fact that $\PiBound\not\weireducible \RT{1}{\mathbb{N}}$, as $\PiBound\not\weireducible\CNatural$.

If we consider only bounded pointwise finite functions, we can improve \thref{thm:f<DS_iff_f<RT1N} by replacing $\RT{1}{\mathbb{N}}$ with $\RT{1}{k}$.
\begin{lemma}
	\thlabel{thm:f<RT1N_iff_f<RT1k}
	If $f$ has codomain $k$, then $f \weireducible \RT{1}{\mathbb{N}}$ iff $f \weireducible \RT{1}{k}$.
\end{lemma}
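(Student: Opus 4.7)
The right-to-left direction is immediate because $\RT{1}{k} \weireducible \RT{1}{\mathbb{N}}$ holds trivially: any $k$-coloring is simultaneously an $\mathbb{N}$-coloring with range contained in $\{0,\dots,k-1\}$, so the identity reduction works. Thus I'll focus on the forward direction.

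Suppose $f \weireducible \RT{1}{\mathbb{N}}$. Since $\cRT{1}{\mathbb{N}} \weiequiv \RT{1}{\mathbb{N}}$ (as remarked in the paper for $n=1$), I may assume the reduction is to $\cRT{1}{\mathbb{N}}$ via computable maps $\Phi,\Psi$, so $\Psi$ takes as second argument a natural number (the color of an infinite homogeneous set). Given a name $p$ for $x\in\dom(f)$, let $c := \Phi(p)$, which is a $\cRT{1}{\mathbb{N}}$-instance, hence a coloring with finite but a priori unknown range. I would then define a $k$-coloring $c'\colon \mathbb{N}\to\{0,\dots,k-1\}$ by setting $c'(n) := \Psi(p, c(n))$ whenever this computation halts in at most $n$ steps with output strictly below $k$, and $c'(n) := 0$ otherwise. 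This map $p\mapsto c'$ is computable.

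The central claim is that every color appearing infinitely often in $c'$ already lies in $f(x)$. I'd establish this by case analysis over $v\in\ran(c)$. If $c^{-1}(v)$ is infinite, then $v$ is a valid $\cRT{1}{\mathbb{N}}$-solution to $c$, so $\Psi(p,v)$ halts in some number $T_v$ of steps with output $j_v \in f(x) \subseteq \{0,\dots,k-1\}$; hence $c'(n) = j_v$ for all $n \geq T_v$ with $c(n)=v$, and the ``otherwise'' clause fires on at most $T_v$ of them. If instead $c^{-1}(v)$ is finite, it contributes only finitely many $n$'s altogether, regardless of whether $\Psi(p,v)$ halts. Summing over the finitely many $v\in\ran(c)$ shows both that the default value $0$ is assigned via the ``otherwise'' clause at only finitely many $n$, and that a color $j<k$ appears infinitely often in $c'$ precisely when $j = j_v$ for some $v$ with $c^{-1}(v)$ infinite, in which case $j\in f(x)$.

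Granted the claim, the reduction $f \weireducible \RT{1}{k}$ falls out cleanly: set $\Phi'(p):= c'$, and let $\Psi'(p,H)$ read the least element $m$ of the homogeneous solution $H$ and return $c'(m)$, which is the color of $H$ and therefore belongs to $f(x)$. The delicate point I expect will require the most care is the bookkeeping around the default value $0$ — ensuring it cannot be assigned infinitely often unless $0$ is already a legitimate $j_v$ coming from some infinite color class of $c$. This is exactly what the finiteness counts in the previous paragraph handle, and making them precise is the main (still routine) obstacle in the write-up.
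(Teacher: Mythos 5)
Your proof is correct, and it takes a more direct route than the paper's. You work straight from the reduction $f \weireducible \cRT{1}{\mathbb{N}}$: you push the $\mathbb{N}$-coloring $c = \Phi(p)$ forward through $\Psi$ with a time bound, default to $0$ when the computation has not yet halted with an output below $k$, and then observe that since $\ran(c)$ is finite and every color with infinite preimage is a genuine $\cRT{1}{\mathbb{N}}$-solution, the default and any spurious values occur at only finitely many positions — so every infinitely-occurring color of $c'$ lies in $f(x)$. That bookkeeping is exactly the point that needs care, and you handle it. The paper instead routes through \thref{thm:RT1N<Pi11Bound} ($\RT{1}{\mathbb{N}} \weireducible \PiBound$) and proves the stronger claim that $f \weireducible g$ for \emph{any} upwards-closed $g$ implies $f \weireducible \RT{1}{k}$: it dovetails the computations $\Psi(p,n)$ over all $n \in \mathbb{N}$ and records each output $j < k$ as the next value of the coloring, with upwards-closedness guaranteeing that all but finitely many of these outputs are valid $f$-solutions. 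The paper's detour buys a reusable statement about upwards-closed problems in the spirit of \thref{thm:f<upwards_closed_f<RT1N}, whereas your argument is more self-contained, needing only $\cRT{1}{\mathbb{N}} \weiequiv \RT{1}{\mathbb{N}}$ and the finiteness of the range of an $\RT{1}{\mathbb{N}}$-instance, and avoiding $\PiBound$ entirely.
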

\begin{proof}
	The right-to-left implication is trivial, so let us prove the left-to-right one. Since $\RT{1}{\mathbb{N}} \weireducible \PiBound$ and $\PiBound$ is upwards-closed, it suffices to show that if $g$ is upwards-closed and $f \weireducible g$, then $f \weireducible \RT{1}{k}$. The proof closely follows the one of \thref{thm:f<upwards_closed_f<RT1N}. Suppose that $f \weireducible g$ as witnessed by $\Phi, \Psi$. Let $p$ be the name for the $f$-instance $x$ we are given. We define a $k$-coloring $c$ as follows: we dove-tail all computations $\Psi(p,n)$ for $n \in \mathbb{N}$. Whenever some computation converges to some $j < k$, we define $c(i):=j$ where $i$ is the first element on which $c$ is not defined yet. Since $g$ is upwards-closed, we know that for all but finitely many $n$, $\Psi(p,n)$ has to converge to some $j_n < k$ which lies in $f(x)$. Moreover, any element repeating infinitely often is a correct solution to $f(x)$, therefore we can find a $y\in f(x)$ by applying $\RT{1}{k}$ to $c$ and returning the color of the solution.
\end{proof}

\begin{theorem}
	\thlabel{thm:below_DS_below_RT1k}
	If $f$ has codomain $k$, then $f \weireducible \DS$ iff $f \weireducible \RT{1}{k}$.
\end{theorem}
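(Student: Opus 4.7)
The proof will be a straightforward combination of the two preceding results. The right-to-left direction is immediate: $\RT{1}{k}\weireducible \RT{1}{\mathbb{N}}$ trivially, and $\RT{1}{\mathbb{N}}\strictlyweireducible \DS$ by \thref{thm:RT1N<Pi11Bound}, so $\RT{1}{k}\weireducible \DS$ and Weihrauch reducibility is transitive.

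For the left-to-right direction, the plan is to chain together \thref{thm:f<DS_iff_f<RT1N} and \thref{thm:f<RT1N_iff_f<RT1k}. Since $f$ has codomain $k$, for every instance $x\in\dom(f)$ the solution set $f(x)$ is a nonempty subset of $\{0,\ldots,k-1\}$ and in particular finite, so $f$ is pointwise finite. Thus \thref{thm:f<DS_iff_f<RT1N} applies, giving $f\weireducible \RT{1}{\mathbb{N}}$ from the assumption $f\weireducible \DS$. Now \thref{thm:f<RT1N_iff_f<RT1k} applies to $f$ directly (the codomain hypothesis is exactly what we have), yielding $f\weireducible \RT{1}{k}$.

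No step here is an obstacle, as all the work has already been done: the passage from $\DS$ down to $\PiBound$ was handled in \thref{thm:below_BS_below_Pi11Bound}, the extraction of a homogeneous coloring from an upwards-closed problem was handled in \thref{thm:f<upwards_closed_f<RT1N}, and the refinement from $\mathbb{N}$ colors to $k$ colors was handled in \thref{thm:f<RT1N_iff_f<RT1k}. The theorem is essentially a packaging corollary; the only thing to verify is that ``codomain $k$'' implies ``pointwise finite'', which is immediate.
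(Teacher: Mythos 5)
Your proof is correct and follows exactly the same route as the paper: the right-to-left direction via $\RT{1}{k}\weireducible\RT{1}{\mathbb{N}}\weireducible\DS$, and the left-to-right direction by chaining \thref{thm:f<DS_iff_f<RT1N} and \thref{thm:f<RT1N_iff_f<RT1k}, with the (correct) observation that codomain $k$ implies pointwise finiteness. Nothing is missing.
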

\begin{proof}
	The right-to-left implication always holds as $\RT{1}{k}\weireducible\RT{1}{\mathbb{N}}$ trivially and $\RT{1}{\mathbb{N}}\strictlyweireducible\DS$ (\thref{thm:RT1N<Pi11Bound}). The left-to-right implication follows from \thref{thm:f<DS_iff_f<RT1N} and \thref{thm:f<RT1N_iff_f<RT1k}.
\end{proof}

To conclude the section we notice how we can improve the results if we restrict our attention to single-valued functions. Define the problem $\mflim_k \pfunction k^{\mathbb{N}} \to k$ as the limit in the discrete space $k$. 

\begin{lemma}
	\thlabel{thm:f<limk_iff_f<RT1k}
	If $f$ has codomain $k$ and is single-valued, then $f \weireducible \mflim_k$ iff $f \weireducible \RT{1}{k}$.
\end{lemma}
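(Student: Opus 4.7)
The plan is to establish the two implications separately. The easier direction, $f \weireducible \mflim_k \Rightarrow f \weireducible \RT{1}{k}$, follows from the single reduction $\mflim_k \weireducible \RT{1}{k}$: interpreting a sequence $\sequence{c_n}{n\in\mathbb{N}} \in k^{\mathbb{N}}$ converging to $c \in k$ as the $k$-coloring $n \mapsto c_n$, all but finitely many inputs receive color $c$, so every infinite homogeneous set has color $c$; returning the color of any homogeneous solution therefore solves $\mflim_k$. Transitivity of $\weireducible$ then gives the claim.

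For the converse, which is where single-valuedness is needed, the idea is to chain the principal results of Sections~\ref{sec:detpart} and~\ref{sec:ds}. Starting from $f \weireducible \RT{1}{k}$, \thref{thm:below_DS_below_RT1k} yields $f \weireducible \DS$. Since $f$ is single-valued, the defining property of the deterministic part together with \thref{thm:below_DS_below_lim} gives $f \weireducible \DetPart{\DS} \weiequiv \mflim$. The remaining step is to ``project'' this reduction onto the discrete codomain $k$, i.e.\ to show $f \weireducible \mflim_k$.

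For this final step, fix a reduction $f \weireducible \mflim$ witnessed by computable $\Phi, \Psi$. Given a name $p$ of an $f$-instance $x$, the map $\Phi(p)$ produces a sequence $\sequence{q_n}{n\in\mathbb{N}}$ converging in $\Baire$ to some $q$, and $\Psi(p,q)$ is a name for $f(x) \in k$. We define a sequence $\sequence{v_n}{n\in\mathbb{N}}$ in $k^{\mathbb{N}}$ by simulating, for $n$ steps, the computation of the first output of $\Psi(p,q_n)$; if this simulation converges to a value below $k$, we let $v_n$ be that value, otherwise we set $v_n := 0$. The only bit of care required is to verify that $\sequence{v_n}{n\in\mathbb{N}}$ converges to $f(x)$ in the discrete space $k$: this holds because the first output of $\Psi(p,\cdot)$ depends on only finitely many bits of its second argument, so for $n$ large enough $q_n$ matches $q$ on that prefix and the simulation terminates within $n$ steps, forcing $v_n = f(x)$. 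Feeding $\sequence{v_n}{n\in\mathbb{N}}$ into $\mflim_k$ then recovers $f(x)$, completing the reduction. The main obstacle is making this final projection step rigorous; once one notes that $f(x) \in k$ and that $\Psi$'s first output is computed from a finite prefix, convergence in the discrete space $k$ is automatic.
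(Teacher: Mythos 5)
Your proof is correct, but for the substantive direction it takes a genuinely different route from the paper. The paper works directly with the given reduction to $\cRT{1}{k}\weiequiv\RT{1}{k}$: writing $c$ for the coloring produced by the forward functional, it observes that all but finitely many $i$ have $c(i)$ equal to a color occurring infinitely often, hence a valid $\cRT{1}{k}$-solution, so by single-valuedness $\Psi(p,c(i))=f(x)$ for cofinitely many $i$; the sequence of (time-truncated) values $\Psi(p,c(i))$ is then fed straight into $\mflim_k$. You instead chain the earlier machinery: $f\weireducible\RT{1}{k}\weireducible\DS$, then $f\weireducible\DetPart{\DS}\weiequiv\mflim$ by \thref{thm:detpart_total} and \thref{thm:below_DS_below_lim}, and finally a projection of $\mflim$ down to $\mflim_k$ via the use principle. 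All three links are sound --- the only point you gloss over is that $\DetPart{\DS}$ is the maximum over single-valued functions with codomain $\Baire$, so one should note that $k$ computably embeds into $\Baire$ with computable inverse before concluding $f\weireducible\DetPart{\DS}$ (the paper records exactly this monotonicity of $\DetPartX{\cdot}{\mathbf{X}}$ in the codomain). Your final projection step is carefully and correctly handled, including the guard against spurious outputs $\ge k$ at early stages. The trade-off: the paper's argument is shorter and self-contained at that point in the text, while yours is heavier but isolates a reusable fact (a single-valued $f$ with codomain $k$ reduces to $\mflim$ iff it reduces to $\mflim_k$) and makes the role of \thref{thm:below_DS_below_lim} explicit; note, though, that \thref{thm:below_DS_below_lim} is proved by essentially the same stabilization-plus-truncation idea, so the combinatorial core is ultimately the same.
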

\begin{proof}
	The left-to-right implication is trivial as $\mflim_k\weireducible\RT{1}{k}$. To prove the converse direction recall that $\RT{1}{k}\weiequiv\cRT{1}{k}$ and let the reduction $f\weireducible\cRT{1}{k}$ be witnessed by the maps $\Phi,\Psi$. Let $p$ be a name for some $x\in\dom(f)$ and let $c$ be the coloring represented by $\Phi(p)$. Notice that, since $f$ is single-valued, for every solution $j\in \cRT{1}{k}(c)$ we have $\Psi(p,j)=f(x)$. Furthermore, since the range of $c$ is finite, there are only finitely many $i$ such that $c(i)$ is not a solution. If we then define 
	\[ n_i:= \begin{cases}
		\Psi(p,c(i)) & \text{if } \Psi(p,c(i)) \text{ converges in }i\text{ steps and } \Psi(p,c(i))<k, \\
		0	&	\text{otherwise,}
	\end{cases}\]
	we have that the sequence $\sequence{n_i}{i\in\mathbb{N}} \in k^{\mathbb{N}}$ converges to $f(x)$. Therefore we can use $\mflim_k$ to obtain $f(x)$.
\end{proof}

\begin{theorem}
	\thlabel{thm:below_DS_below_lim_k}
	If $f$ has codomain $k$ and is single-valued, then $f \weireducible \mflim_k$ iff $f \weireducible \DS$.
\end{theorem}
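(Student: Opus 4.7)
The plan is to derive this theorem as an immediate consequence of the two preceding results, which together already capture the content of the equivalence. Since $f$ is single-valued with codomain $k$, \thref{thm:f<limk_iff_f<RT1k} applies and gives
\[ f \weireducible \mflim_k \iff f \weireducible \RT{1}{k}. \]
Separately, since $f$ has codomain $k$ (and regardless of single-valuedness), \thref{thm:below_DS_below_RT1k} gives
\[ f \weireducible \RT{1}{k} \iff f \weireducible \DS. \]
Chaining these two biconditionals yields $f \weireducible \mflim_k \iff f \weireducible \DS$, which is the desired statement.

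In more detail, for the forward direction I would argue: assume $f \weireducible \mflim_k$; then by \thref{thm:f<limk_iff_f<RT1k} we get $f \weireducible \RT{1}{k}$, and then by \thref{thm:below_DS_below_RT1k} we conclude $f \weireducible \DS$. For the reverse direction, assume $f \weireducible \DS$; then by \thref{thm:below_DS_below_RT1k} we get $f \weireducible \RT{1}{k}$, and applying \thref{thm:f<limk_iff_f<RT1k} gives $f \weireducible \mflim_k$. Both chains use precisely the hypotheses of the theorem (single-valuedness enters only through the second link; codomain $k$ is needed in both).

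Since all the non-trivial computational work has already been carried out in proving \thref{thm:below_DS_below_RT1k} (via pointwise-finiteness and the upwards-closed structure of $\PiBound$, together with \thref{thm:below_BS_below_Pi11Bound}) and \thref{thm:f<limk_iff_f<RT1k} (via the explicit convergent sequence built from $\Psi(p,c(i))$), there is no further obstacle. The entire proof is a one-line invocation of these two lemmas, so the theorem is presented as a corollary rather than requiring any new construction.
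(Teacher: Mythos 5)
Your proof is correct and matches the paper's in all essentials: the right-to-left direction is exactly the paper's chain through \thref{thm:below_DS_below_RT1k} and \thref{thm:f<limk_iff_f<RT1k}, and your left-to-right direction (via $\mflim_k \weireducible \RT{1}{k} \weireducible \DS$) is a trivially equivalent variant of the paper's route, which instead uses $\mflim_k \weireducible \mflim \weireducible \DS$ from \thref{thm:below_DS_below_lim}. Both are one-line corollaries of the preceding results, so no further comment is needed.
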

\begin{proof}
	The left-to-right implication follows from the fact that $\mflim\strictlyweireducible\DS$ (\thref{thm:below_DS_below_lim}), while the other direction follows from \thref{thm:below_DS_below_RT1k} and \thref{thm:f<limk_iff_f<RT1k}.
\end{proof}

\section{Presentation of orders}
\label{sec:presentations}

In this section we study how the presentation of a linear/quasi order can influence the uniform computational strength of the problems $\DS$ and $\BS$.

\begin{definition}
	For every $\boldfaceGamma \in \{ \boldfaceSigma^0_k, \boldfacePi^0_k, \boldfaceDelta^0_k, \boldfaceSigma^1_1, \boldfacePi^1_1, \boldfaceDelta^1_1 \}$ we define the problem $\codedDS{\boldfaceGamma}\pmfunction{\boldfaceGamma(\mathbf{LO})}{\Baire}$ as $\codedDS{\boldfaceGamma}(L) := \DS(L)$. Similarly we define $\codedBS{\boldfaceGamma}\pmfunction{\boldfaceGamma(\mathbf{QO})}{\Baire}$ as $\codedBS{\boldfaceGamma}(P) := \BS(P)$.
\end{definition}

It is not the case that $\codedDS{\boldfaceGamma} \weiequiv \codedBS{\boldfaceGamma}$ in general. In particular, we will show that $\codedBS{\boldfaceSigma^0_k} \not\weireducible \codedDS{\boldfaceSigma^0_k}$ (\thref{thm:Sigma0kDS_LQO=Delta0_k+1DS_LO}) and $\codedBS{\boldfaceSigma^1_1} \not\weireducible \codedDS{\boldfaceSigma^1_1}$ (\thref{thm:Pi11DS<Pi11BS}).

Furthermore, we strengthen \thref{thm:DS<CBaire} by showing that $\codedDS{\boldfaceSigma^1_1} \strictlyweireducible \CBaire$ (\thref{thm:atr2_not<=_Sigma11DS_LO}). In other words, even if we are allowed to feed $\DS$ a code for a $\boldfaceSigma^1_1$ linear ordering, we still cannot compute $\CBaire$. On the other hand, we already showed that if we are allowed to perform a relatively small amount of post-processing (namely $\mflim$) on the output of $\DS$, then we can compute $\CBaire$ (\thref{thm:CBaire_below_lim_cp_DS}). In particular, the use of $\mflim$ absorbs any difference in uniform strength between $\DS$ and $\codedDS{\boldfaceSigma^1_1}$ and collapses the whole hierarchy (up to $\codedDS{\boldfaceSigma^1_1}$) to $\CBaire$.

Many of our separations are derived by analyzing the first-order part of the problems in question, or more generally by characterizing the problems satisfying certain properties (such as single-valuedness or having restricted codomain) which lie below the problems in question. On the contrary, we prove \thref{thm:atr2_not<=_Sigma11DS_LO} using very different techniques due to Angl\`es d'Auriac and Kihara \cite{KiharaADauriacChoice}.

Before beginning our analysis, we record some preliminary observations. Note that $\DS=\codedDS{\boldfaceDelta^0_1}$ and $\BS=\codedBS{\boldfaceDelta^0_1}$. It is straightforward to see that, for every $\boldfaceGamma$, $\codedDS{\boldfaceGamma} \weireducible \codedBS{\boldfaceGamma}$. Moreover, for every $\boldfaceGamma,\boldfaceGamma'$ s.t.\ $\boldfaceGamma(X)\subset \boldfaceGamma'(X)$ we have $\codedDS{\boldfaceGamma} \weireducible \codedDS{\boldfaceGamma'}$ and $\codedBS{\boldfaceGamma} \weireducible \codedBS{\boldfaceGamma'}$.

Notice also that the set of bad sequences through a $\boldfaceDelta^1_1$-quasi-order is $\boldfaceDelta^1_1$, hence it is straightforward to see that $\codedBS{\boldfaceDelta^1_1}\weireducible\codedChoice{\boldfaceSigma^1_1}{}{\Baire}\weiequiv\CBaire$. This shows also that $\codedBS{\boldfaceGamma}\weireducible \CBaire$ for every arithmetic $\boldfaceGamma$.

\begin{proposition}
    \thlabel{thm:gammaDS_cylinder}
	For every $\boldfaceGamma \in \{ \boldfaceSigma^0_k, \boldfacePi^0_k, \boldfaceDelta^0_k, \boldfaceSigma^1_1, \boldfacePi^1_1, \boldfaceDelta^1_1 \}$ the problems $\codedDS{\boldfaceGamma}$ and $\codedBS{\boldfaceGamma}$ are cylinders.
\end{proposition}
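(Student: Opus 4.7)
The plan is to adapt the construction from \thref{thm:DS_cylinder} to the $\boldfaceGamma$-coded setting. Since $f \strongweireducible f \times \id$ trivially holds for any $f$, it suffices to prove $f \times \id \strongweireducible f$ for $f \in \{\codedDS{\boldfaceGamma}, \codedBS{\boldfaceGamma}\}$.

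Starting with $\codedDS{\boldfaceGamma}$: given an input $(L, q) \in \boldfaceGamma(\mathbf{LO}) \times \Baire$ for $\codedDS{\boldfaceGamma} \times \id$, I would define
\[ M := \{\coding{q[n], n} \st n \in \dom(L)\} \subseteq \mathbb{N}, \quad \coding{q[n], n} \le_M \coding{q[m], m} \defiff n \le_L m, \]
where pairs $(\sigma, k) \in \baire \times \mathbb{N}$ are encoded in $\mathbb{N}$ via a fixed computable injection. The relation $\le_M$ is then the conjunction of a $\boldfaceDelta^0_1(q)$-condition (asserting that $a, b \in \mathbb{N}$ decode as pairs of the prescribed form) with the $\boldfaceGamma$-condition ``$n \le_L m$'' from the name of $L$. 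Using the computable closure properties of $\repmap{\boldfaceGamma}$ recalled in Section~\ref{sec:background}, one uniformly computes a $\repmap{\boldfaceGamma}$-name of $M$ from $q$ and a $\repmap{\boldfaceGamma}$-name of $L$. The map $n \mapsto \coding{q[n], n}$ being an order isomorphism from $L$ onto $M$, the order $M$ is a $\codedDS{\boldfaceGamma}$-instance precisely when $L$ is.

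The backward map of the strong reduction then extracts, from any $\sequence{\coding{q[n_i], n_i}}{i \in \mathbb{N}} \in \DS(M)$, both the descending sequence $\sequence{n_i}{i \in \mathbb{N}}$ in $L$ and the point $q = \bigcup_{i \in \mathbb{N}} q[n_i]$; the latter equality holds because the $n_i$ are pairwise distinct naturals, hence unbounded in $\mathbb{N}$. The argument for $\codedBS{\boldfaceGamma}$ is identical, applied to a quasi-order $P$ in place of $L$, producing $Q$ with $\coding{q[n], n} \preceq_Q \coding{q[m], m} \defiff n \preceq_P m$. The single extra verification is that in a $\BS(Q)$-solution $\sequence{\coding{q[n_i], n_i}}{i \in \mathbb{N}}$ the $n_i$ remain pairwise distinct: otherwise $n_i = n_j$ for some $i < j$ would give $n_i \preceq_P n_j$ by reflexivity, contradicting badness.

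The main (but minor) obstacle is the uniform check, across all listed pointclasses, that the defining condition for $\le_M$ (resp.\ $\preceq_Q$) can be converted into a $\repmap{\boldfaceGamma}$-name. This is routine for each of $\boldfaceSigma^0_k, \boldfacePi^0_k, \boldfaceDelta^0_k, \boldfaceSigma^1_1, \boldfacePi^1_1, \boldfaceDelta^1_1$, since each of these classes is closed under conjunction with $\boldfaceDelta^0_1$-conditions and under computable substitution.
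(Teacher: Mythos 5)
Your proposal is correct and matches the paper's proof, which simply states that the result follows by a straightforward generalization of the construction in \thref{thm:DS_cylinder}; you carry out exactly that generalization, including the (correct) observations that the listed pointclasses are closed under conjunction with $\boldfaceDelta^0_1$-conditions and that reflexivity forces the indices in a bad sequence to be pairwise distinct.
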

\begin{proof}
	The proof is a straightforward generalization of the proof of \thref{thm:DS_cylinder}. 
\end{proof}

\begin{theorem}
	\thlabel{thm:codedBS=jump_of_BS}
	For every $k\in\mathbb{N}$ and every $\boldfaceGamma\in \{ \boldfaceSigma, \boldfacePi, \boldfaceDelta\}$ 
	\begin{gather*}
		\codedDS{\boldfaceGamma^0_{k+1}} \weiequiv \codedDS{\boldfaceGamma^0_1}\compproduct \mflim^{[k]}\weiequiv \codedDS{\boldfaceGamma^0_1}^{(k)},\\ 
		\codedBS{\boldfaceGamma^0_{k+1}} \weiequiv \codedBS{\boldfaceGamma^0_1}\compproduct \mflim^{[k]}\weiequiv \codedBS{\boldfaceGamma^0_1}^{(k)}.
	\end{gather*}
\end{theorem}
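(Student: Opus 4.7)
The second equivalence on each line is immediate: $\codedDS{\boldfaceGamma^0_1}$ and $\codedBS{\boldfaceGamma^0_1}$ are cylinders by \thref{thm:gammaDS_cylinder}, and for any cylinder $f$ we have $f^{(k)} \weiequiv f \compproduct \mflim^{[k]}$ (as recorded in Section~\ref{sec:background}). So it suffices to prove the first equivalence. The $\codedDS$ and $\codedBS$ arguments are identical, because both principles depend only on the fact that the underlying relation $\le_L$ (respectively $\preceq_P$) is a subset of $\mathbb{N}$ whose $\boldfaceGamma^0_{k+1}$-presentation is being manipulated, and the hierarchy conversions we use depend only on this presentation, not on the order-theoretic interpretation. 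We therefore describe the plan for $\codedDS$.

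For the forward direction $\codedDS{\boldfaceGamma^0_{k+1}} \weireducible \codedDS{\boldfaceGamma^0_1} \compproduct \mflim^{[k]}$, the plan is to use a single call to $\mflim^{[k]}$ to translate a $\repmap{\boldfaceGamma^0_{k+1}}$-name for a set $A \subseteq \mathbb{N}$ into a $\repmap{\boldfaceGamma^0_1}$-name for the same $A$, and then to apply $\codedDS{\boldfaceGamma^0_1}$ to the resulting presentation of $\le_L$. The translation exploits $\mflim^{[k]} \weiequiv \Comprehension{\boldfaceSigma^0_k}$ together with the parallelizability of $\mflim^{[k]}$ (inherited from $\mflim$): one oracle call produces $\chi_B$ simultaneously for any computably given sequence of $\boldfaceSigma^0_k$ (equivalently $\boldfacePi^0_k$) sets $B$. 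When $\boldfaceGamma = \boldfaceSigma$, I would expand the input name as $A = \bigcup_m B_m$ with each $B_m \in \boldfacePi^0_k$ uniformly in $m$, compute all $\chi_{B_m}$ in parallel with one $\mflim^{[k]}$-call, and use them to enumerate $A$; the case $\boldfaceGamma = \boldfacePi$ applies the same argument to the complement, and the case $\boldfaceGamma = \boldfaceDelta$ does both in parallel and pairs the resulting enumerations into a characteristic function.

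For the converse $\codedDS{\boldfaceGamma^0_1} \compproduct \mflim^{[k]} \weireducible \codedDS{\boldfaceGamma^0_{k+1}}$, the plan is to invoke the cylindrical decomposition lemma to obtain a computable $\Phi$ with $\codedDS{\boldfaceGamma^0_1} \compproduct \mflim^{[k]} \weiequiv \codedDS{\boldfaceGamma^0_1} \circ \Phi \circ \mflim^{[k]}$. On input $p$, the value $q := \Phi(\mflim^{[k]}(p))$ is by construction a $\repmap{\boldfaceGamma^0_1}$-name of some ill-founded linear order $L$, and $q \leq_T p^{(k)}$ because $\mflim^{[k]}$ is equivalent to computing the $k$-th Turing jump. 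Hence $\le_L$ admits a $\Sigma^{0,p}_{k+1}$, $\Pi^{0,p}_{k+1}$, or $\Delta^{0,p}_{k+1}$ definition respectively for $\boldfaceGamma \in \{\boldfaceSigma, \boldfacePi, \boldfaceDelta\}$, and from $p$ alone we can uniformly compute a $\repmap{\boldfaceGamma^0_{k+1}}$-name for $L$. Applying $\codedDS{\boldfaceGamma^0_{k+1}}$ to that name yields a descending sequence through $L$, which we then pass to the post-processing map of the cylindrical decomposition to solve the original compositional-product instance.

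The main obstacle I expect is the bookkeeping in the converse direction: one must turn the qualitative statement that $q$ is computable from $p^{(k)}$ into an actual $\repmap{\boldfaceGamma^0_{k+1}}$-name constructed uniformly from $p$. This reduces to the standard relativization principle that a set c.e.\ (respectively co-c.e., decidable) in $p^{(k)}$ via a fixed computable operator has a uniformly computable $\boldfaceSigma^0_{k+1}$ (respectively $\boldfacePi^0_{k+1}$, $\boldfaceDelta^0_{k+1}$) name in terms of $p$ alone, which is exactly the effective content needed to close the argument for all three pointclasses at once.
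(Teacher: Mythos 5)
Your proposal is correct and follows essentially the same route as the paper: the forward direction uses $\mflim^{[k]} \weiequiv \Comprehension{\boldfaceSigma^0_k} \weiequiv \Comprehension{\boldfacePi^0_k} \weiequiv \Comprehension{\boldfaceDelta^0_{k+1}}$ to convert a $\boldfaceGamma^0_{k+1}$-name of the order into a $\boldfaceGamma^0_1$-name, and the converse uses the cylindrical decomposition lemma to observe that the intermediate name $q = \Phi_e(\mflim^{[k]}(p))$ is $\Delta^{0,p}_{k+1}$, so that $\le_{L_p}$ is uniformly $\Gamma^{0,p}_{k+1}$. Your explicit case analysis over $\boldfaceGamma \in \{\boldfaceSigma,\boldfacePi,\boldfaceDelta\}$ in the forward direction merely spells out what the paper leaves implicit.
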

\begin{proof}
	Fix $k$ and $\boldfaceGamma$ as above. The reduction $\codedDS{\boldfaceGamma^0_{k+1}}\weireducible \codedDS{\boldfaceGamma^0_1}\compproduct \mflim^{[k]}$ follows from the fact that 
	\[ \mflim^{[k]}\weiequiv \Comprehension{\boldfaceSigma^0_k}\weiequiv \Comprehension{\boldfacePi^0_k}\weiequiv \Comprehension{\boldfaceDelta^0_{k+1}},\] 
hence we can use $\mflim^{[k]}$ to compute a $\boldfaceGamma^0_1$-name for the input linear order, and then apply $\codedDS{\boldfaceGamma^0_1}$ to get a descending sequence. 
	
	Let us now prove the converse reduction. Since both $\mflim^{[k]}$ and $\codedDS{\boldfaceGamma^0_1}$ are cylinders, by the cylindrical decomposition there is an $e$ s.t.\ 
	\[ \codedDS{\boldfaceGamma^0_1}\compproduct \mflim^{[k]}\weiequiv \codedDS{\boldfaceGamma^0_1} \circ \Phi_e\circ \mflim^{[k]}. \]
	Given any $p \in \dom(\codedDS{\boldfaceGamma^0_1} \circ \Phi_e\circ \mflim^{[k]})$, the string $q:=\Phi_e(\mflim^{[k]}(p))$ is a $\boldfaceGamma^0_1$-name for a linear order $L_p$. Since $q$ is  $\Delta^{0,p}_{k+1}$, the condition $a\le_{L_p} b$ is  $\Gamma^{0,p}_{k+1}$ for every $a,b$.
	This shows that, given an input $p$ we can uniformly compute a $\boldfaceGamma^0_{k+1}$-name for the linear order $L_p$, and hence use $\codedDS{\boldfaceGamma^0_{k+1}}$ to compute an $<_{L_p}$-descending sequence.

	The equivalence $\codedDS{\boldfaceGamma^0_1}\compproduct \mflim^{[k]}\weiequiv \codedDS{\boldfaceGamma^0_1}^{(k)}$ follows from the fact that $\codedDS{\boldfaceGamma^0_1}$ is a cylinder.

	The same reasoning works, mutatis mutandis, to show that 
	\[ \codedBS{\boldfaceGamma^0_{k+1}} \weiequiv \codedBS{\boldfaceGamma^0_1}\compproduct \mflim^{[k]}\weiequiv \codedBS{\boldfaceGamma^0_1}^{(k)}. \qedhere\] 
\end{proof}

Using this theorem, the relativized version of \thref{thm:bs=ds} can be proved explicitly as follows:
\begin{corollary}
	\thlabel{thm:delta0kDS=delta0kBS}
For every $k\ge 1$, $\codedDS{\boldfaceDelta^0_k} \weiequiv \codedBS{\boldfaceDelta^0_k}.$
\end{corollary}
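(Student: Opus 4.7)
The plan is to reduce this to the already-established results. The case $k=1$ is exactly \thref{thm:bs=ds}, since $\codedDS{\boldfaceDelta^0_1} = \DS$ and $\codedBS{\boldfaceDelta^0_1} = \BS$ by definition. For $k \ge 2$, I would invoke \thref{thm:codedBS=jump_of_BS} to rewrite
\[ \codedDS{\boldfaceDelta^0_k} \weiequiv \DS \compproduct \mflim^{[k-1]} \quad\text{and}\quad \codedBS{\boldfaceDelta^0_k} \weiequiv \BS \compproduct \mflim^{[k-1]}. \]

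Since the compositional product $\compproduct$ is well-defined on Weihrauch degrees (i.e.\ respects $\weiequiv$ in both arguments), the equivalence $\DS \weiequiv \BS$ from \thref{thm:bs=ds} immediately yields $\DS \compproduct \mflim^{[k-1]} \weiequiv \BS \compproduct \mflim^{[k-1]}$, and hence $\codedDS{\boldfaceDelta^0_k} \weiequiv \codedBS{\boldfaceDelta^0_k}$.

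There is no real obstacle here: the content lies entirely in \thref{thm:bs=ds} (where the nontrivial construction converting a non-well quasi-order into an ill-founded linear order lives) and in \thref{thm:codedBS=jump_of_BS} (which lifts from $\boldfaceDelta^0_1$-presentations to $\boldfaceDelta^0_k$-presentations via the jump calculus). The corollary is just the composition of these two facts. If one wished to make the argument self-contained without appealing to compositional products, one could alternatively observe that the construction in the proof of \thref{thm:bs=ds} is uniformly computable, and hence relativizes to transform a $\boldfaceDelta^0_k$-presentation of a non-well quasi-order $P$ into a $\boldfaceDelta^0_k$-presentation of an ill-founded linear order $L$ such that $\DS(L) \subseteq \BS(P)$; this gives the only nontrivial direction $\codedBS{\boldfaceDelta^0_k} \weireducible \codedDS{\boldfaceDelta^0_k}$ directly.
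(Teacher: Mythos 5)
Your proposal is correct and matches the paper's own proof: both reduce the claim to \thref{thm:bs=ds} together with \thref{thm:codedBS=jump_of_BS}, using the fact that the compositional product respects Weihrauch equivalence to conclude $\codedBS{\boldfaceDelta^0_k} \weiequiv \BS\compproduct \mflim^{[k-1]}\weiequiv \DS \compproduct \mflim^{[k-1]}\weiequiv \codedDS{\boldfaceDelta^0_k}$. Your closing remark about relativizing the construction of \thref{thm:bs=ds} directly is a reasonable alternative, but the paper takes the same jump-calculus route you chose as the main argument.
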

\begin{proof}
	See \ref{errata}.
\end{proof}


\subsection{\texorpdfstring{{\protect\boldmath $\Gamma$}$^0_k$\normalfont-$\mathsf{DS}$ and {\protect\boldmath $\Gamma$}$^0_k$\normalfont-$\mathsf{BS}$}{Gamma0k-DS and Gamma0k-BS}}
\label{sec:arithmetic_DS_BS}

We will now show that the hierarchy of $\boldfaceGamma$-$\DS$ problems does not collapse at any finite level. First we study the hierarchy of $\codedDS{\boldfaceDelta^0_k}$ problems by characterizing their first-order parts (\thref{thm:fop_Delta0kDS}). Then we prove the analogues of \thref{thm:below_DS_below_RT1k} and \thref{thm:below_DS_below_lim_k} for $\codedDS{\boldfaceDelta^0_k}$ (\thref{thm:below_Delta0kDS_below_other}).

For any sequence of problems $f_s \pmfunction{\textbf{X}_s}{\textbf{Y}_s}$, $s \in \mathbb{N}$, the \emph{countable coproduct} of the sequence is the problem $\bigsqcup_{s \in \mathbb{N}} f_s \pmfunction{\bigsqcup_s \textbf{X}_i}{\bigsqcup_s \textbf{Y}_i}$ defined by $\left(\bigsqcup_{s \in \mathbb{N}} f_s\right)(s,x) := \{s\} \times f_s(x)$. The problem $\bigsqcup_{s \in \mathbb{N}} f_s$ allows us access to exactly one $f_s$ of our choice.

\begin{theorem}
	\thlabel{thm:fop_Delta0kDS}
	For every $k \ge 1$,
	\[\firstOrderPart{\codedDS{\boldfaceDelta^0_k} } \weiequiv  \left(\bigsqcup_{s \in \mathbb{N}} \codedChoice{\boldfaceDelta^0_k}{}{s}\right) \compproduct \PiBound.\] 
\end{theorem}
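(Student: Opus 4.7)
The proof splits into two reductions, which I describe separately. For the right-to-left direction, since the right-hand side already has codomain $\mathbb{N}$, it suffices to show $\left(\bigsqcup_s \codedChoice{\boldfaceDelta^0_k}{}{s}\right) \compproduct \PiBound \weireducible \codedDS{\boldfaceDelta^0_k}$; reducibility to $\firstOrderPart{\codedDS{\boldfaceDelta^0_k}}$ then follows automatically.

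For the upper bound $\firstOrderPart{\codedDS{\boldfaceDelta^0_k}} \weireducible \left(\bigsqcup_s \codedChoice{\boldfaceDelta^0_k}{}{s}\right) \compproduct \PiBound$, let $f$ be first-order with $f \weireducible \codedDS{\boldfaceDelta^0_k}$ via computable maps $\Phi,\Psi$. Given a name $p$ for an $f$-instance $x$, $\Phi(p)$ codes a $\boldfaceDelta^0_k$-linear order $L$. I would relativize \thref{thm:Fs_construction} to the $\boldfaceDelta^0_k$-name, producing a sequence $\sequence{F_s}{s \in \mathbb{N}}$ of finite candidate descending sequences (uniformly $\boldfaceDelta^0_k$ in $p$) enjoying the two usual properties: $\Psi^{p \oplus F_s}(0)\defined$ for every $s$, and $F_s$ extends to an infinite $<_L$-descending sequence for cofinitely many $s$. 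The set $A := \{s : F_s \text{ does not extend}\}$ is then $\boldfacePi^1_1$ in $p$ and finite, so applying $\PiBound$ to $A$ returns some $b$ with $F_b$ extendible, making $j := \Psi^{p \oplus F_b}(0)$ a valid $f$-solution. To deliver $j$ as a first-order output I would compute a computable bound $N(p,b)$ on $j$ and feed the $\boldfaceDelta^0_k$-singleton $\{j\} \subseteq \{0,\ldots,N(p,b)\}$ into $\codedChoice{\boldfaceDelta^0_k}{}{N(p,b)+1}$; this singleton is uniformly $\boldfaceDelta^0_k$ in $(p,b)$ because $F_b$ is.

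For the lower bound I would use \thref{thm:codedBS=jump_of_BS} in the form $\codedDS{\boldfaceDelta^0_k} \weiequiv \DS \compproduct \mflim^{[k-1]}$, which allows a preprocessing by $\mflim^{[k-1]}$ before calling $\DS$. Given a $\PiBound$-instance $A$ together with a computable rule sending each putative bound $b$ to a $\boldfaceDelta^0_k$-name for $B_b \subseteq \{0,\ldots,s_b-1\}$, I use $\mflim^{[k-1]}$ to turn these $\boldfaceDelta^0_k$-names into characteristic functions uniformly in $b$. For each $b$ I then build a tree $T_b$ (computable relative to the preprocessed data) whose branches are pairs $(c,y)$ with $c \in B_b$ and $y$ a path witnessing that $b$ is a bound of $A$, the latter via the standard product-of-trees encoding of the $\boldfacePi^1_1$ condition. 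Setting $L' := \sum_b \KB(T_b)$ gives a computable ill-founded linear order; feeding it to $\DS$, one reads off from any infinite descending sequence its eventually-constant first coordinate $b \in \PiBound(A)$ and, from the Kleene-Brouwer tail within the corresponding block, a color $c \in B_b$, so that the pair $(b,c)$ is a valid solution for the left-hand side.

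The main obstacle is the bounding step in the upper bound: the compositional product only permits computable post-processing of the $\PiBound$-output, so $N(p,b)$ must be genuinely computable in $(p,b)$ rather than merely $\mflim^{[k-1]}$-computable. This calls for a mild redesign of the back-filling step in the construction of $F_s$, so that $\Psi^{p \oplus F_b}(0)$ is pinned below an explicit function of $b$ whenever $b \in \PiBound(A)$. In the lower bound the delicate point is verifying that $\KB(T_b)$ is ill-founded exactly when $b \in \PiBound(A)$: one has to use the non-emptiness of $B_b$ at valid bounds (guaranteed by the compositional product) and the fact that ill-foundedness of the combined tree is the conjunction of its two ill-foundedness sub-conditions.
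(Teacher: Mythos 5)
Your two-lemma strategy matches the paper's, but both halves have real gaps, and in each case the gap sits exactly where the paper's construction does something different from yours. In the upper bound you use $\bigsqcup_s \codedChoice{\boldfaceDelta^0_k}{}{s}$ to extract the final answer $j=\Psi^{p\oplus F_b}(0)$ from a $\boldfaceDelta^0_k$ singleton, which forces you to exhibit a computable bound $N(p,b)$ on $j$. You correctly flag that this fails as written: the back-filling in \thref{thm:Fs_construction} can make $F_b$ an element of some $D_t$ with $t$ only $\boldfaceDelta^0_k$-definable from $b$, so $j$ has no computable bound. The fix (restrict $D_s$ to sequences on which $\Psi$ halts within $s$ steps, and add the finite $\Sigma^0_{k-1}$ set $\{s : D_s=\emptyset\}$ to the $\PiBound$-instance) is available but you do not carry it out. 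The paper sidesteps the issue entirely: it applies the choice not to the singleton $\{j\}$ but to the finite $\boldfaceDelta^0_k$ set $B\subseteq D_b\subseteq\{0,\hdots,b\}$ of candidate finite descending sequences whose $<^x$-minimum is $\le^x$-maximal, so the bound $b+1$ comes for free, every element of $B$ is extendible, and $j$ is computed afterwards from the selected $F$. That design choice is the real reason $\bigsqcup_s\codedChoice{\boldfaceDelta^0_k}{}{s}$, rather than something like $\CNatural$, appears in the statement.

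The more serious gap is in the lower bound. From a descending sequence through $L'=\sum_b\KB(T_b)$ you propose to read off ``its eventually-constant first coordinate $b$'' and a colour from ``the Kleene--Brouwer tail within the corresponding block''. Neither is a computable operation on the descending sequence: the backward functional of a Weihrauch reduction cannot wait to see where the block index stabilises. The construction must be arranged so that the \emph{first} element of the descending sequence already determines a valid pair $(b,c)$; this is exactly what the paper's order on pairs $(\sigma,n)$, with $\sigma<_{\KB}\tau$ as primary key, is engineered to do, using that $\PiBound$-instances may be taken to be initial segments so that solutions are upward closed. Your block sum can be repaired along the same lines (order the blocks increasingly in $b$, admit into $T_b$ only non-root nodes carrying a colour $c\in B_b$, and invoke upward closure of $\PiBound(A)$ to argue that the block of the first element is itself a valid bound), but you neither state nor verify this, and it is the crux of the direction. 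A secondary problem: you apply $\mflim^{[k-1]}\weiequiv\Comprehension{\boldfaceDelta^0_k}$ to the names $\Phi_1(b,p_2)$ for \emph{all} putative $b$, yet these are guaranteed to be valid $\boldfaceDelta^0_k$-names only when $b\in\PiBound(A)$; outside its domain the realizer of $\Comprehension{\boldfaceDelta^0_k}$ may diverge. The paper avoids this by never evaluating the names, using them only syntactically inside the $\boldfaceDelta^0_k$-definition of a single linear order.
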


We split the proof into two lemmas.

\begin{lemma}
	\thlabel{thm:f<=Delta0kDS_LO->f<=Delta0kCs_*_PiBound}
	For every $k\ge 1$, if $f\pmfunction{\mathbf{X}}{\mathbb{N}}$ and $f \weireducible \codedDS{\boldfaceDelta^0_k}$ then 
	\[ f \weireducible \left(\bigsqcup_{s \in \mathbb{N}} \codedChoice{\boldfaceDelta^0_k}{}{s}\right) \compproduct \PiBound.\] 
\end{lemma}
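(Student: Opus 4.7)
The plan is to adapt the strategy from \thref{thm:below_BS_below_Pi11Bound} (which handled the $k=1$ case by showing $\firstOrderPart{\DS} \weiequiv \PiBound$) to the $\boldfaceDelta^0_k$ setting. Given a reduction $f \weireducible \codedDS{\boldfaceDelta^0_k}$ via computable maps $\Phi,\Psi$ and an input (name) $X$, the string $\Phi(X)$ is a $\boldfaceDelta^0_k$-name of an ill-founded linear order $L^X$, and $\Psi(X,D)(0)$ gives an $f$-solution for any infinite $<_{L^X}$-descending sequence $D$. The obstacle compared to the $\DS$ case is that the order $L^X$ is no longer computable from $X$, so the finite descending sequences $F_s$ produced by (the analog of) \thref{thm:Fs_construction} now need a $\boldfaceDelta^0_k$-oracle; the role of the second compositional factor $\bigsqcup_s\codedChoice{\boldfaceDelta^0_k}{}{s}$ will be precisely to retrieve such an $F_s$ from $X$.

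Concretely, fix a computable enumeration $G_0,G_1,\dots$ of all finite sequences of naturals. For each $s$, let
\[ D_s := \{ i \st G_i \subseteq \{0,\dots,s\},\ G_i \text{ is a } <_{L^X}\text{-descending sequence},\ \Psi(X,G_i)(0)\defined \text{ in }\le s \text{ steps}\}, \]
which is a $\boldfaceDelta^0_k$-subset of a finite computable set (uniformly in $X$ and $s$). Define $F_s := G_{i_s}$, where $i_s$ is the $<_\mathbb{N}$-least index in $D_s$ such that $\min_{<_{L^X}}(G_{i_s})$ is $<_{L^X}$-maximal among $\{G_j \st j\in D_s\}$ (and inherit from a later nonempty $D_t$ if $D_s=\emptyset$). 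The argument of \thref{thm:Fs_construction} transfers verbatim: if some $F\in D_s$ extends to an infinite $<_{L^X}$-descending sequence $S$, then the tail of $S$ consists of elements $<_{L^X} \min(F) \le_{L^X} \min(F_s)$ and so also extends $F_s$; hence $F_s$ extends for cofinitely many $s$, in which case $\Psi(X,F_s)(0)\in f(X)$.

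The reduction then proceeds in two steps mirroring the compositional product. First, apply $\PiBound$ to the set $A := \{s \st F_s \text{ does not extend}\}$. Unpacking, "$F_s$ does not extend" is $\forall F\,(F=F_s \to F \text{ has no infinite } <_{L^X}\text{-descending extension})$; the condition "$F=F_s$" is $\boldfaceDelta^0_k$-in-$X$ (a finite conjunction of $<_{L^X}$-queries together with the maximality/leastness clauses), and "$F$ has no extension" is $\boldfacePi^1_1$-in-$X$, so $A$ is uniformly $\boldfacePi^1_1$-in-$X$ and finite by the previous paragraph. The output is a bound $b$ such that $F_b$ extends. Second, let $M = M(b)$ computably bound the number of indices of finite sequences with entries in $\{0,\dots,b\}$ and length $\le b$; then $B_b := \{ i < M \st G_i = F_b \}$ is a $\boldfaceDelta^0_k$-singleton subset of $\{0,\dots,M-1\}$, uniformly in $X$ and $b$. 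Apply $\codedChoice{\boldfaceDelta^0_k}{}{M}$ to obtain $i\in B_b$ and output $\Psi(X,G_i)(0) = \Psi(X,F_b)(0)$.

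The main points requiring care are the uniform complexity bookkeeping: verifying that $D_s$, the selection rule defining $F_s$, the $\boldfacePi^1_1$-name of $A$, and the $\boldfaceDelta^0_k$-name of $B_b$ are all computed uniformly at the claimed pointclass from $X$ (and from $b$ in the last case). These all reduce to the closure of $\boldfaceDelta^0_k$ under finite Boolean combinations and quantification over finite sets, and the closure of $\boldfacePi^1_1$ under arithmetic operations; nothing deeper is needed. The resulting two-stage procedure is exactly a reduction to $\bigl(\bigsqcup_s \codedChoice{\boldfaceDelta^0_k}{}{s}\bigr) \compproduct \PiBound$.
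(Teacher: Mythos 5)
Your construction follows the paper's proof almost step for step: the same set $D$ of finite descending sequences on which $\Psi$ converges, stratified into finite pieces $D_s$; the same $\Pi^1_1$ finiteness argument feeding $\PiBound$; and the same use of $\bigsqcup_s \codedChoice{\boldfaceDelta^0_k}{}{s}$ to extract an extendible finite sequence afterwards. The only substantive deviation is that you commit to a single canonical $F_s$ (least index with $<_{L^X}$-maximal minimum) and apply the choice to a singleton, whereas the paper applies the choice to the whole set $B$ of maximal-minimum elements of $D_b$, all of which are extendible; both variants are legitimate instances of $\codedChoice{\boldfaceDelta^0_k}{}{s}$.

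There is, however, one edge case where your reduction as written breaks down. You define $A := \{s \st F_s \text{ does not extend}\}$, with $F_s$ inherited from the first later stage $t$ with $D_t \neq \emptyset$ whenever $D_s = \emptyset$. If that inherited $F_t$ is extendible, then $s \notin A$ even though $D_s = \emptyset$; in particular $\PiBound(A)$ may return a bound $b$ with $D_b = \emptyset$. In that situation $F_b = G_{i_t}$ for some $t > b$, and its index need not lie below your computable bound $M(b)$ (which only accounts for sequences with entries in $\{0,\dots,b\}$), so $B_b = \{i < M \st G_i = F_b\}$ can be empty and the call to $\codedChoice{\boldfaceDelta^0_k}{}{M}$ is not a valid instance. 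The fix is immediate and costs nothing in complexity: enlarge $A$ to also contain every $s$ with $D_s = \emptyset$ (this set is finite and arithmetic in $X$, hence the union is still a finite $\Pi^{1,X}_1$ set), or equivalently define $A := \{s \st (\forall F \in D_s)(F \notin \mathrm{Ext}_X)\}$ as the paper does, so that $s \in A$ vacuously when $D_s = \emptyset$. Then any $b \in \PiBound(A)$ satisfies $D_b \neq \emptyset$, $F_b \in D_b$ has entries in $\{0,\dots,b\}$, and your index bound $M(b)$ is correct. With that patch the argument is complete.
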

\begin{proof}
	Fix Turing functionals $\Phi$ and $\Psi$ which witness that $f \weireducible \codedDS{\boldfaceDelta^0_k}$. Given an $f$-instance with name $x$, $\Phi^x$ is a $\Delta^{0,x}_k$-code for the linear order $\le^x$. Consider the $\Sigma^{0,x}_k$ set
	\begin{align*}
		D := \{F \in \mathbb{N} \st {}&{} F \text{ codes a non-empty finite} <^x\text{-descending sequence and } \\
		&  \Psi^{x \oplus F} \text{ outputs some } j \in \mathbb{N}\}.
	\end{align*}
	
	We can uniformly express $D$ as the increasing union over $s \in \mathbb{N}$ of finite sets $D_s \subseteq \{0,\hdots,s\}$, which are uniformly $\Pi^{0,x}_{k-1}$.
	
	We now define the set
	\[ A:= \{ s \in\mathbb{N} \st (\forall F\in D_s)( F \notin \mathrm{Ext}_x) \}, \]
	where $\mathrm{Ext}_x$ is the set of finite sequences that extend to an infinite $<^x$-descending sequence. It is easy to see that $A$ is $\Pi^{1,x}_1$, as being extendible in a $\Delta^0_k$-linear order is a $\Sigma^1_1$ property.
	
	We show that $A$ is finite. Since $\le^x$ is a $\codedDS{\boldfaceDelta^0_k}$-instance, we can fix an infinite $<^x$-descending sequence $S$. By definition of Weihrauch reducibility, $\Psi^{x \oplus S}$ outputs some $f$-solution $j \in \mathbb{N}$. By the continuity of $\Psi$, there is some finite non-empty initial segment $F$ of $S$ such that $\Psi^{x \oplus F}$ outputs $j$. Hence for all sufficiently large $s$, we have $F \in D_s$.
	
	This shows that we can apply $\PiBound$ to $A$ to obtain some $b \in \mathbb{N}$ which bounds $A$. Note that $D_b$ must be nonempty. We now define the following non-empty subset of $D_b$:
	\[ B:=\left\{ F \in D_b \st (\forall G\in D_b)\left(\min_{<^x}(G) \le^x \min_{<^x}(F)\right) \right\}.\]
	Notice that all the quantifications are bounded. In particular, $B$ is a (non-empty) $\Delta^{0,x}_k$ subset of $D_b$ because $D_b$ is $\Pi^{0,x}_{k-1}$ and $\le^x$ is $\Delta^{0,x}_k$. Notice also that the definition of $B$ ensures that each of its elements is extendible (as we know that there is some extendible element in $D_b$). In particular, this shows that, for every $F\in B$, it is enough to run $\Psi^{x \oplus F}$ to compute an $f$-solution for the original instance. We can find such $F \in B$ by applying $\left(\bigsqcup_s \codedChoice{\boldfaceDelta^0_k}{}{s}\right)(b,B)$.
\end{proof}

Notice that $(\bigsqcup_s \codedChoice{\boldfaceDelta^0_1}{}{s})$ is computable, hence in case $k=1$ we obtain \thref{thm:bs=ds}.

\begin{lemma}
	\thlabel{thm:Delta0kCs_*_PiBound<=Delta0kDS_LO}
	For every $k\ge 1$,
	\[\left(\bigsqcup_{s \in \mathbb{N}} \codedChoice{\boldfaceDelta^0_k}{}{s}\right) \compproduct \PiBound\weireducible \codedDS{\boldfaceDelta^0_k}. \]
\end{lemma}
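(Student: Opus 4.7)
The plan is to adapt the construction from the proof of \thref{thm:pibound<BS} so as to encode both the $\PiBound$-instance and the subsequent $\boldfaceDelta^0_k$-choice into a single $\boldfaceDelta^0_k$-linear order. By the cylindrical decomposition and the reformulation preceding \thref{thm:pibound<BS}, we may assume the input to the composition consists of a sequence $(L_n)_{n \in \mathbb{N}}$ of computable linear orders with $L_n$ well-founded iff $n \in A$ (where $A \subseteq \mathbb{N}$ is the initial segment coded by the $\PiBound$-instance), together with a uniformly computable procedure producing, for each $n \in \mathbb{N}$, a pair $(s_n, B_n)$ where $B_n$ is a $\boldfaceDelta^0_k$-subset of $\{0, \ldots, s_n-1\}$ and $B_n \neq \emptyset$ whenever $n \notin A$.

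Given this, we form the linear order $L$ on the set $\{(n,x,a) : n \in \mathbb{N},\, x \in B_n,\, a \in L_n\}$ (encoded in $\mathbb{N}$ via a computable pairing), ordered lexicographically. The characteristic function of $\leq_L$ is $\boldfaceDelta^0_k$, because domain membership depends on the $\boldfaceDelta^0_k$-condition $x \in B_n$ together with the computable conditions $x < s_n$ and $a \in L_n$, while lexicographic comparison itself is computable.

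Three facts then need to be checked. First, $L$ is ill-founded: since $A$ is finite, some $n \notin A$ exists, for which $L_n$ is ill-founded and $B_n$ is nonempty, so $\{n\}\times\{x\}\times L_n$ embeds into $L$ as an ill-founded suborder for any $x \in B_n$. Second, the downward-closed subset $I := \{(n,x,a) \in L : n \in A\}$ of $L$ is well-founded: it is a finite sum over $n \in A$ of suborders of $\mathbb{N}\times L_n$ under lex, and each such suborder is well-founded whenever $L_n$ is (which holds for $n \in A$). Third, any descending sequence $(n_i,x_i,a_i)_{i \in \mathbb{N}}$ in $L$ must have $(n_0, x_0, a_0) \notin I$, since otherwise the entire sequence would stay inside the well-founded $I$; hence $n_0 \in \PiBound(A)$, and $x_0 \in B_{n_0}$ by construction, so we can output $(n_0, (s_{n_0}, x_0))$ as a valid solution to the composition, computably from the descending sequence.

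The main subtlety is that for $n \in A$, the procedure producing $(s_n, B_n)$ is not constrained and might return garbage. This is harmless: whatever $B_n$ is, $\{n\}\times\{(x,a) : x \in B_n,\, a \in L_n\}$ remains a suborder of $\mathbb{N}\times L_n$ under lex, which is well-founded because $L_n$ is well-founded for $n \in A$, so neither the well-foundedness of $I$ nor the $\boldfaceDelta^0_k$-complexity of $L$ is affected.
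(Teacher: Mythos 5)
Your construction is essentially the paper's: both proofs encode the $\PiBound$-instance into the most significant coordinate of a lexicographic product (you use the index $n$ ordered by $\le_\mathbb{N}$ together with a descent witness $a\in L_n$; the paper uses a node $\sigma$ of the coding tree ordered by $\le_{\KB}$, which packages the index as $\sigma(0)$) and put the finite $\boldfaceDelta^0_k$-choice into a less significant coordinate, so that the well-foundedness of $\mathbb{N}$ forces the first term of any descending sequence to reveal both a valid bound and a valid choice element. The order-theoretic part of your argument (downward-closure and well-foundedness of $I$, relying on the initial-segment normalization of $\PiBound$) is correct.

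The gap is in the claim that $\le_L$ is $\boldfaceDelta^0_k$. You posit ``a uniformly computable procedure producing, for each $n\in\mathbb{N}$, a pair $(s_n,B_n)$'' and accordingly call the condition $x<s_n$ computable. But the cylindrical decomposition only guarantees that the forward functional converges to a total name of a nonempty choice instance when applied to actual elements of $\PiBound(A)$; for $n\in A$ it may simply diverge, so $s_n$ and $B_n$ need not exist. Your ``garbage'' remark covers wrong-but-convergent outputs, not divergence, and divergence is what threatens the complexity claim: ``the computation of $s_n$ halts and $x$ is below its value'' is only a $\Sigma^0_1$ condition, which in particular is not $\boldfaceDelta^0_1$ when $k=1$. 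The paper repairs exactly this point by including in the domain of its order the decidable requirement that the relevant computation halt within $\abslength{\sigma}$ steps, so only pairs for which convergence is already visible are admitted. You need an analogous device (e.g.\ a stage parameter attached to your first coordinate), or else treat $k=1$ separately (there $\bigsqcup_{s}\codedChoice{\boldfaceDelta^0_1}{}{s}$ is computable and the statement collapses to $\PiBound\weireducible\DS$) and argue that the $\Sigma^0_1$ convergence condition is absorbed into $\boldfaceDelta^0_k$ for $k\ge 2$.
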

\begin{proof}
	Using the cylindrical decomposition we can write
	\[ \left(\bigsqcup_{s \in \mathbb{N}} \codedChoice{\boldfaceDelta^0_k}{}{s}\right) \compproduct \PiBound \weiequiv \left(\left(\bigsqcup_{s \in \mathbb{N}} \codedChoice{\boldfaceDelta^0_k}{}{s}\right) \times \id\right) \circ \Phi_e \circ (\PiBound\times \id{}) \]
	for some computable map $\Phi_e$. Let $\Phi_1,\Phi_2$ be computable maps s.t.\ $\Phi_e(p) = \coding{\Phi_1(p),\Phi_2(p)}$. Then we have
	\begin{align*}
		\left(\left(\bigsqcup_{s \in \mathbb{N}} \codedChoice{\boldfaceDelta^0_k}{}{s}\right) \times \id{}\right) \circ \Phi_e \circ (\PiBound\times \id{})(\coding{p_1,p_2}) = \\
		\coding{ \left(\bigsqcup_{s \in \mathbb{N}} \codedChoice{\boldfaceDelta^0_k}{}{s}\right)\Phi_1(\PiBound(p_1),p_2), \Phi_2(\PiBound(p_1),p_2)  }.
	\end{align*}
	Given an instance $\coding{p_1,p_2}$ of the above composition, we can think of $p_1$ as coding an input $A$ to $\PiBound$ via a tree $T$ s.t.\ for each $i$, $i\in A$ iff the subtree $T_i:=\{ \sigma \in T \st \sigma(0)=i \}$ of $T$ is well-founded. For any $b \in \PiBound(p_1)$, $\Phi_1(b,p_2)$ must be a name for an instance of $\bigsqcup_{s \in \mathbb{N}} \codedChoice{\boldfaceDelta^0_k}{}{s}$. Then $\pi_1\Phi_1(b,p_2)$ is a number $s$ and $\pi_2\Phi_1(b,p_2)$ is a $\boldfaceDelta^0_k$-name for a non-empty subset $A_s$ of $\{0,\hdots, s-1\}$, where $\pi_i(\coding{p_1,p_2})=p_i$ denotes the projection on the $i$-th component. Regardless of whether $b \in \PiBound(p_1)$, we will interpret $\pi_1\Phi_1(b,p_2)$ and $\pi_2\Phi_1(b,p_2)$ as above.
		
	We define a $\Delta^{0,\coding{p_1,p_2}}_k$ linear order as follows. First define
	\begin{align*}
		L:=\{ (\sigma, n) \st {}&{} \sigma\in p_1 \text{ and }  \\
			& \pi_1 \Phi_1(\sigma(0), p_2) \text{ outputs a number in less than } \abslength{\sigma} \text{ steps and} \\
			& n \text{ lies in the set named by } \pi_2 \Phi_1(\sigma(0), p_2) \}.
	\end{align*}
    We order the elements of $L$ by
	\[ (\sigma,n)\le_L (\tau,m) \defiff \sigma <_{\KB} \tau \lor (\sigma = \tau \land n \le m ).  \]
	It is easy to see that $(L,\le_L)$ is $\Delta^{0,\coding{p_1,p_2}}_k$. Notice that it is a linear order, as the pairs are ordered lexicographically where the first components are ordered according to the Kleene-Brouwer order on $\baire$ and the second components are ordered according to the order on $\mathbb{N}$. 

	Let $\sequence{q_i}{i\in\mathbb{N}}$ be an $<_L$-descending sequence, with $q_i=(\sigma_i,n_i)$. Notice that for each $i$ there is a $j>i$ s.t.\ $\sigma_j <_{\KB} \sigma_i$. Indeed, if there is an $i$ s.t.\ for all $j>i$ we have $\sigma_j = \sigma_i$ then, by definition of $\le_L$, the sequence $\sequence{n_j}{j>i}$ would be a descending sequence in the natural numbers, which is impossible.
	
	This implies that there is a subsequence $\sequence{q_{i_k}}{k\in\mathbb{N}}$ s.t.\ $\sequence{\sigma_{i_k}}{k\in\mathbb{N}}$ is a $<_{\KB}$-descending sequence. In particular, this implies that $T_{\sigma_0(0)}$ is ill-founded, i.e.\ $\sigma_0(0)\in \PiBound(p_1)$. Moreover, by definition of $L$, this implies that $n_0$ lies in the set named by $\pi_2 \Phi_1(\sigma_0(0), p_2)$.

	In other words, given an $<_L$-descending sequence $\sequence{q_i}{i\in\mathbb{N}}$ we have that $(\pi_1 q_0) (0) \in \PiBound(p_1)$ and $\pi_2 q_0 \in \left(\bigsqcup_{s \in \mathbb{N}} \codedChoice{\boldfaceDelta^0_k}{}{s}\right)\Phi_1(\PiBound(p_1),p_2)$. From this we can compute $\Phi_2(\pi_1 q_0, p_2)$ as well. This establishes the desired reduction.
\end{proof}

This completes the proof of \thref{thm:fop_Delta0kDS}.

With a small modification of the argument in the proof of \thref{thm:f<=Delta0kDS_LO->f<=Delta0kCs_*_PiBound} we can prove the following:

\begin{proposition}
	\thlabel{thm:below_Delta0kDS_below_other}
	Fix $k\ge 1$. For every $f\pmfunction{\mathbf{X}}{\mathbb{N}}$, 
	\[ f \weireducible \codedDS{\boldfaceDelta^0_k} \iff f \weireducible \PiBound \times \mflim^{[k-1]}.\]
	If, in particular, $f$ has codomain $N$ for some $N\ge 1$ then 
	\[ f \weireducible \codedDS{\boldfaceDelta^0_k} \iff f \weireducible \RT{1}{N}\compproduct \mflim^{[k-1]}.\]
	If, additionally, $f$ is single-valued, then 
	\[f \weireducible \codedDS{\boldfaceDelta^0_k} \iff  f \weireducible \mflim_N\compproduct\mflim^{[k-1]}. \]
\end{proposition}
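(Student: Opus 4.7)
The plan is to lift the characterizations \thref{thm:below_BS_below_Pi11Bound}, \thref{thm:below_DS_below_RT1k}, and \thref{thm:below_DS_below_lim_k} to the jumped setting by exploiting the equivalence $\codedDS{\boldfaceDelta^0_k} \weiequiv \DS \compproduct \mflim^{[k-1]}$ from \thref{thm:codedBS=jump_of_BS}. For the easy direction of each equivalence, I combine this with the reductions $\PiBound \weireducible \DS$ (\thref{thm:pibound<BS}), $\RT{1}{N} \weireducible \RT{1}{\mathbb{N}} \weireducible \DS$ (\thref{thm:RT1N<Pi11Bound}), and $\mflim_N \weireducible \mflim \weireducible \DS$ (\thref{thm:below_DS_below_lim}), together with the monotonicity of $\compproduct$ and the standard inequality $f \times g \weireducible f \compproduct g$. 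For instance, $\PiBound \times \mflim^{[k-1]} \weireducible \PiBound \compproduct \mflim^{[k-1]} \weireducible \DS \compproduct \mflim^{[k-1]} \weiequiv \codedDS{\boldfaceDelta^0_k}$; the other two easy directions are analogous.

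For the hard direction of the first equivalence, I adapt the proof of \thref{thm:below_BS_below_Pi11Bound}. Suppose $\Phi, \Psi$ witness $f \weireducible \codedDS{\boldfaceDelta^0_k}$; then for every $f$-instance $X$, $\Phi^X$ is a $\boldfaceDelta^0_k$-name for a linear order $\le^X$. I define the sequence $\sequence{F_s}{s\in\mathbb{N}}$ as in \thref{thm:Fs_construction} but referring to the true order $\le^X$ in place of its stage-by-stage computable approximation: the resulting $F_s$ is $\Delta^{0,X}_k$-definable, and the extendibility argument of \thref{thm:Fs_construction} still yields that for cofinitely many $s$, $F_s$ extends to an infinite $<^X$-descending sequence on which $\Psi^{X \oplus F_s}$ commits to a valid $f$-solution. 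Setting $A := \{s \in \mathbb{N} \st F_s \notin \mathrm{Ext}_X\}$, I obtain a finite set; since extendibility in a $\boldfaceDelta^0_k$-order is $\boldfaceSigma^1_1$, a $\boldfacePi^1_1$-name of $A$ can be computed from $X$ alone. I then invoke $\PiBound$ on $A$ and $\mflim^{[k-1]}$ on $X$ in parallel---the crucial point being that neither call depends on the output of the other. Given $b := \PiBound(A)$ and the $\boldfaceDelta^0_1$-name of $\le^X$ extracted from $\mflim^{[k-1]}(X)$, I compute $F_b$ and return $\Psi^{X \oplus F_b}$, yielding $f \weireducible \PiBound \times \mflim^{[k-1]}$.

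The remaining equivalences follow by chaining through the first one and lifting the codomain-restricted characterizations to the jumped setting. If $f$ has codomain $N$ and $f \weireducible \PiBound \compproduct \mflim^{[k-1]}$, then relative to the output of $\mflim^{[k-1]}$ the $\PiBound$-instance is computable, so the dove-tailing construction building the finite-range $N$-coloring $c$ from the proofs of \thref{thm:f<upwards_closed_f<RT1N} and \thref{thm:f<RT1N_iff_f<RT1k} replaces the call to $\PiBound$ by a call to $\RT{1}{N}$, giving $f \weireducible \RT{1}{N} \compproduct \mflim^{[k-1]}$. When $f$ is additionally single-valued, the argument of \thref{thm:f<limk_iff_f<RT1k} applied in the same post-$\mflim^{[k-1]}$ setting converts the $\RT{1}{N}$-call into a $\mflim_N$-call via the eventually-constant $N$-valued sequence whose limit is $f(x)$, yielding $f \weireducible \mflim_N \compproduct \mflim^{[k-1]}$. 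The main obstacle will be the parallel-vs-compositional subtlety in the first equivalence: one must carefully verify that the $\boldfacePi^1_1$-name of $A$ is computable from $X$ directly, without first decoding the order via $\mflim^{[k-1]}$, since otherwise the first characterization would only hold with $\compproduct$ in place of $\times$.
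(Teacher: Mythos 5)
Your proposal is correct and follows essentially the same route as the paper's proof: the easy directions via $\codedDS{\boldfaceDelta^0_k} \weiequiv \DS \compproduct \mflim^{[k-1]}$, and the hard direction by rerunning the $\sequence{F_s}{s\in\mathbb{N}}$/$\PiBound$ construction for a $\boldfaceDelta^0_k$-presented order, with $\mflim^{[k-1]} \weiequiv \Comprehension{\boldfaceDelta^0_k}$ computing the (now $\boldfaceDelta^0_k$-definable) sequence $\sequence{F_s}{s\in\mathbb{N}}$ in parallel with the $\PiBound$-call on the $\Pi^{1,X}_1$ set $A$ --- exactly the $\times$ versus $\compproduct$ point you flag as crucial. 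For the $\RT{1}{N}$ and $\mflim_N$ variants the paper simply continues the same explicit construction (applying $\RT{1}{N}$, resp.\ $\mflim_N$, to the colouring $s \mapsto \Psi^{X\oplus F_s}(0)$) instead of relativizing \thref{thm:f<RT1N_iff_f<RT1k} and \thref{thm:f<limk_iff_f<RT1k} after the $\mflim^{[k-1]}$ stage as you do, but the two are interchangeable.
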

\begin{proof}
	The right-to-left implication follows from \thref{thm:pibound<BS} and \thref{thm:codedBS=jump_of_BS}:
	\begin{align*}
		\PiBound \times \mflim^{[k-1]} & \weireducible \PiBound\compproduct\mflim^{[k-1]} \\ 
		& \weireducible \DS\compproduct\mflim^{[k-1]}\weiequiv\codedDS{\boldfaceDelta^0_k}.
	\end{align*}
	
	To prove the left-to-right implication, fix a pair of Turing functionals $\Phi$ and $\Psi$ witnessing the reduction $f \weireducible \codedDS{\boldfaceDelta^0_k}$. Fix an $f$-instance with name $x$ and let $\le^x$ be the $\Delta^{0,x}_k$ linear order defined by $\Phi^x$. 
	
	Define $D$, $D_s$ and $A$ as in the proof of \thref{thm:f<=Delta0kDS_LO->f<=Delta0kCs_*_PiBound}. In that proof, we applied $\PiBound$ to $A$ to obtain $b \in \mathbb{N}$. Then we restricted our attention to $B \subseteq D_b$. Here we will still apply $\PiBound$ to $A$, but we will concurrently consider a subset $B_s$ of each $D_s$. For each $s$, define
	\begin{gather*}
		B_s:=\left\{ F \in D_s \st (\forall G\in D_s)\left(\min_{<^x}(G) \le^x \min_{<^x}(F)\right) \right\},\\
		F_s:= \min B_s,
	\end{gather*}
	where $F_s$ is intended to be the empty sequence if $B_s$ (and hence $D_s$) is empty.

	Notice that $B_s \subset \{0,\hdots,s-1\}$ is $\boldfaceDelta^0_k$ (as each $D_s$ is $\boldfacePi^0_{k-1}$) and therefore $F_s$ is $\boldfaceDelta^0_k$. Since $\mflim^{[k-1]}\weiequiv \Comprehension{\boldfaceDelta^0_k}$, it can determine which $B_s$ is nonempty, and compute $F_s$ if $B_s$ is nonempty. Therefore the sequence $\sequence{F_s}{s\in\mathbb{N}}$ can be computed using $\mflim^{[k-1]}$. For every $b\in \PiBound(A)$ we have that $F_b$ is extendible to an infinite $<^x$-descending sequence and that $\Psi^{x \oplus F_s}$ converges to some $f$-solution $j$ (see also the proof of \thref{thm:f<=Delta0kDS_LO->f<=Delta0kCs_*_PiBound}).

	Assume now that $f$ has codomain $N$ for some $N\ge 1$. We can modify the above argument as follows: after computing the sequence $\sequence{F_s}{s\in\mathbb{N}}$, we consider the $\RT{1}{N}$-instance $c$ defined as 
	\[ c(s):= \begin{cases}
		0 & \text{if } F_s=\coding{},\\
		\Psi^{x \oplus F_s}(0) &\text{otherwise}.
	\end{cases}\]	
	Since $F_s$ is nonempty and extendible for cofinitely many $s$, if $c(s) = i$ for infinitely many $s$ (i.e., $c$ has an $\RT{1}{N}$-solution of color $i$), then there is an extendible $F_s$ s.t.\ $\Psi^{x \oplus F_s}(0)=i$, hence $i$ is an $f$-solution.
	
	If, additionally, $f$ is single-valued, then there is only one possible $i$ s.t.\ $c$ has a homogeneous solution with color $i$. This shows that the sequence $\sequence{c(s)}{s\in\mathbb{N}}$ has a limit, and therefore it suffices to use $\mflim_N$ to get the solution.

	The fact that $\RT{1}{N}\compproduct \mflim^{[k-1]}$ and $\mflim_N \compproduct \mflim^{[k-1]}$ are reducible to $\codedDS{\boldfaceDelta^0_k}$ follows from the fact that the compositional product is a degree theoretic operation, as $\RT{1}{N}\weireducible \DS$ (\thref{thm:below_DS_below_RT1k}), $\mflim_N\weireducible \DS$ (\thref{thm:below_DS_below_lim}) and $\codedDS{\boldfaceDelta^0_k}\weiequiv \DS\compproduct\mflim^{[k-1]}$ (\thref{thm:codedBS=jump_of_BS}).
\end{proof}

Notice that $\PiBound \times \mflim^{[k-1]}$ is not a first-order problem, so the first statement in \thref{thm:below_Delta0kDS_below_other} is not an alternative characterization of $\firstOrderPart{\codedDS{\boldfaceDelta^0_k}}$. It can be rephrased as 
\[ \firstOrderPart{\codedDS{\boldfaceDelta^0_k}} \weiequiv \firstOrderPart{(\PiBound \times \mflim^{[k-1]})}.\]

This concludes our discussion of the first-order problems that are Weihrauch reducible to $\codedDS{\boldfaceDelta^0_k}$. As for the deterministic part of $\codedDS{\boldfaceDelta^0_k}$:

\begin{corollary}
	\thlabel{thm:detpart_Delta0kDS}
	For every $k\ge 1$, $\DetPart{\codedDS{\boldfaceDelta^0_k}} \weiequiv \mflim^{[k]}$. 
\end{corollary}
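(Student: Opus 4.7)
The plan is to assemble the result from three pieces already established in the paper. First, Theorem \thref{thm:codedBS=jump_of_BS} gives the Weihrauch equivalence $\codedDS{\boldfaceDelta^0_k} \weiequiv \DS^{(k-1)}$. Since $\DetPart{\cdot}$ is a degree-theoretic operator (the corollary after \thref{thm:detpart_total}), this yields
\[ \DetPart{\codedDS{\boldfaceDelta^0_k}} \weiequiv \DetPart{\DS^{(k-1)}}. \]

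Next, I would invoke $\DS$ being a cylinder (\thref{thm:DS_cylinder}) to apply \thref{thm:detpart_commutes_jump}, which gives $\DetPart{\DS^{(k-1)}} \weiequiv \DetPart{\DS}^{(k-1)}$. Combining with \thref{thm:below_DS_below_lim}, which identifies $\DetPart{\DS} \weiequiv \mflim$, we get
\[ \DetPart{\DS}^{(k-1)} \weiequiv \mflim^{(k-1)}, \]
where this last Weihrauch equivalence is justified by the fact that both $\DetPart{\DS}$ (as is every deterministic part) and $\mflim$ are cylinders, so the Weihrauch equivalence $\DetPart{\DS} \weiequiv \mflim$ is actually a strong Weihrauch equivalence, and the jump operator respects $\strongweiequiv$.

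Finally, since $\mflim$ is a cylinder, we have $\mflim^{(n)} \weiequiv \mflim^{[n+1]}$ for every $n$ (as recalled in Section~\ref{sec:background}), so $\mflim^{(k-1)} \weiequiv \mflim^{[k]}$. Chaining all these equivalences yields the desired result. No step here should present a genuine obstacle; the proof is a short diagram chase given the substantial machinery on the deterministic part and on the presentation hierarchy that the paper has already built.
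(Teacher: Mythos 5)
Your proof is correct and follows essentially the same route as the paper, which likewise derives the result from $\DetPart{\DS}\weiequiv\mflim$ (\thref{thm:below_DS_below_lim}) together with the commutation of the jump with the deterministic part for cylinders (\thref{thm:detpart_commutes_jump}). If anything, you spell out one detail the paper leaves implicit, namely that the equivalence $\DetPart{\DS}\weiequiv\mflim$ upgrades to a strong equivalence (both sides being cylinders) so that it is preserved under jumps.
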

\begin{proof}
	This follows from $\DetPart{\DS}\weiequiv\mflim$ (\thref{thm:below_DS_below_lim}) and the fact that, for cylinders, the jump commutes with the deterministic part (\thref{thm:detpart_commutes_jump}).
\end{proof}

\begin{theorem}
	\thlabel{thm:DS_hierarchy_theorem}
	For every $k\ge 1$,
	\[ \codedDS{\boldfaceDelta^0_k}\strictlyweireducible \codedDS{\boldfaceDelta^0_{k+1}}. \]
	In particular this shows that the $\boldfaceGamma$-$\DS$-hierarchy does not collapse at any finite level.
\end{theorem}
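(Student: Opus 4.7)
The plan is to exploit \thref{thm:detpart_Delta0kDS}, which tells us that $\DetPart{\codedDS{\boldfaceDelta^0_k}} \weiequiv \mflim^{[k]}$, together with the fact that the deterministic part is a monotone operator on Weihrauch degrees. The easy direction $\codedDS{\boldfaceDelta^0_k} \weireducible \codedDS{\boldfaceDelta^0_{k+1}}$ is immediate from $\boldfaceDelta^0_k(\mathbf{LO}) \subseteq \boldfaceDelta^0_{k+1}(\mathbf{LO})$, since the identity on names is a valid forward reduction and the backward map is trivial.

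For strictness, I would argue by contradiction. Suppose $\codedDS{\boldfaceDelta^0_{k+1}} \weireducible \codedDS{\boldfaceDelta^0_k}$. Since $\DetPart{\cdot}$ is a degree-theoretic interior operator (monotone with respect to $\weireducible$), we would obtain
\[ \mflim^{[k+1]} \weiequiv \DetPart{\codedDS{\boldfaceDelta^0_{k+1}}} \weireducible \DetPart{\codedDS{\boldfaceDelta^0_k}} \weiequiv \mflim^{[k]}. \]
Since $\mflim$ is a cylinder, $\mflim^{[n+1]} \weiequiv \mflim^{(n)}$, so this reduces to $\mflim^{(k)} \weireducible \mflim^{(k-1)}$, contradicting the well-known strictness of the jump hierarchy on Weihrauch degrees (each jump strictly increases the degree).

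The main step is really just to invoke \thref{thm:detpart_Delta0kDS}, which does all the work; the only subtlety to mention is that $\codedDS{\boldfaceDelta^0_k}$ is a cylinder (\thref{thm:gammaDS_cylinder}), so the commutation of $\DetPart{\cdot}$ with jumps in \thref{thm:detpart_commutes_jump} is legitimately applicable in the derivation of \thref{thm:detpart_Delta0kDS}. The statement about non-collapse of the full $\boldfaceGamma$-$\DS$-hierarchy at finite levels then follows immediately: the chain $\codedDS{\boldfaceDelta^0_1} \strictlyweireducible \codedDS{\boldfaceDelta^0_2} \strictlyweireducible \cdots$ already witnesses that no finite-level collapse can occur, and the $\boldfaceSigma^0_k$ and $\boldfacePi^0_k$ versions are sandwiched between consecutive $\boldfaceDelta^0_k$ levels via the trivial inclusions of pointclasses.
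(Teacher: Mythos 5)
Your proof is correct and takes essentially the same route as the paper, which explicitly offers \thref{thm:detpart_Delta0kDS} as one of two ways to conclude: where you invoke monotonicity of $\DetPart{\cdot}$ on a hypothetical reduction $\codedDS{\boldfaceDelta^0_{k+1}} \weireducible \codedDS{\boldfaceDelta^0_k}$, the paper equivalently exhibits the explicit witness $\LPO^{(k)}$, which satisfies $\LPO^{(k)} \weireducible \mflim^{[k+1]}$ but $\LPO^{(k)} \not\weireducible \mflim^{[k]}$ because $\mflim^{[k]}$ is $\Sigma^0_{k+1}$-measurable while $\LPO^{(k)}$ is the characteristic function of a $\Sigma^0_{k+1}$-complete set. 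One small caution: your parenthetical ``each jump strictly increases the degree'' is false for general Weihrauch degrees (e.g.\ degrees of constant functions are fixed by the jump), but the specific fact you need --- strictness of the hierarchy $\mflim^{[k]} \strictlyweireducible \mflim^{[k+1]}$ --- is standard and is exactly what the $\LPO^{(k)}$ witness establishes.
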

\begin{proof}
	This follows directly from \thref{thm:below_Delta0kDS_below_other} or, alternatively, from \thref{thm:detpart_Delta0kDS}. Indeed it suffices to notice that, for every $k\ge 1$, $\LPO^{(k)}\weireducible \mflim^{[k+1]}$ but $\LPO^{(k)}\not\weireducible \mflim^{[k]}$, as $\LPO^{(k)}$ is the characteristic function of a $\Sigma^0_{k+1}$-complete set while $\mflim^{[k]}$ is $\Sigma^0_{k+1}$-measurable.
\end{proof}

\begin{theorem}
	\thlabel{thm:Delta0_k+1-DS_LO=Pi0kDS_LO}
	For every $k\ge 1$, $\codedDS{\boldfaceDelta^0_{k+1}} \weiequiv \codedDS{\boldfacePi^0_k}$.
\end{theorem}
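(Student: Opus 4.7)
The direction $\codedDS{\boldfacePi^0_k}\weireducible\codedDS{\boldfaceDelta^0_{k+1}}$ is immediate, since every $\boldfacePi^0_k$-name for a linear order is computably a $\boldfaceDelta^0_{k+1}$-name. The content lies in $\codedDS{\boldfaceDelta^0_{k+1}}\weireducible\codedDS{\boldfacePi^0_k}$, which I plan to establish by transforming a given $\boldfaceDelta^0_{k+1}$-presented linear order $L$ on $\mathbb{N}$ into a $\boldfacePi^0_k$-presented linear order $L'$ whose $<_{L'}$-descending sequences computably yield $<_L$-descending sequences.

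A $\boldfaceDelta^0_{k+1}$-name for $L$ unpacks into uniformly $\boldfacePi^0_k$-presented families $(P_n)_{n\in\mathbb{N}}$ and $(Q_n)_{n\in\mathbb{N}}$ of subsets of $\mathbb{N}^2$ with $\le_L=\bigcup_n P_n$ and $\{(a,b)\in\mathbb{N}^2:a\not\le_L b\}=\bigcup_n Q_n$; since $L$ is a linear order, $P_n\cap Q_m=\emptyset$ for all $n,m$. I take the domain of $L'$ to be the $\boldfacePi^0_k$-set $D$ of pairs $(a,s)$ such that for every $b\le a$ there is some $n\le s$ with $(a,b)\in P_n$ or $(b,a)\in P_n$: this ``stability certificate'' condition is a bounded universal of a bounded disjunction of $\boldfacePi^0_k$-sets, hence $\boldfacePi^0_k$, and for each $a$ it holds at all sufficiently large $s$. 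I order $D$ by: $(a,s)\le_{L'}(b,t)$ iff $a=b$ and $s\le t$, or $a\ne b$ and there exists $n\le\max(s,t)$ with $(a,b)\in P_n$; this too is $\boldfacePi^0_k$.

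Totality on $D$ follows from the stability requirement (for $(a,s),(b,t)\in D$ with $a\ne b$, WLOG $b\le a$, and $(a,s)\in D$ provides $n\le s\le\max(s,t)$ witnessing one of the two comparisons); antisymmetry uses $P_n\cap Q_m=\emptyset$ combined with the linear-order axioms of $L$; and transitivity in the nontrivial case (all of $a,b,c$ distinct) is handled as follows: from $(a,s)\le_{L'}(b,t)\le_{L'}(c,u)$ we obtain $a\le_L c$ and $a\ne c$, and either $c\le a$ (in which case $(a,s)\in D$ gives $(a,c)\in P_{n}$ with $n\le s$) or $a<c$ (in which case $(c,u)\in D$ gives $(a,c)\in P_n$ with $n\le u$, using $P_n\cap Q_m=\emptyset$ to rule out $(c,a)\in P_n$); either way $n\le\max(s,u)$. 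Given any $<_{L'}$-descending sequence $((a_i,s_i))_i$, consecutive repeats of the first coordinate can only occur finitely often (as each run forces the $s_i$'s to strictly decrease in $\mathbb{N}$), so we computably extract a $<_L$-descending subsequence from $(a_i)_i$. Conversely any $<_L$-descending sequence in $L$ lifts to a $<_{L'}$-descending sequence in $L'$ by choosing the certificate stages sufficiently large.

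The main obstacle is the verification of transitivity, which depends crucially on the asymmetric use of ``$b\le a$'' in the definition of $D$: naive choices (e.g.\ requiring stability only for $b\le s$, or using $\max$ instead of the $a$-dependent bound) allow the composition of two certified comparisons to yield no certified comparison between the endpoints, because the index $n$ witnessing $(a,c)\in P_n$ need not be bounded by $\max(s,u)$. The chosen definition of $D$ sidesteps this because whenever $(a,s),(c,u)\in D$, the smaller of $a,c$ (say $c\le a$) is among the elements whose comparison with $a$ has been stabilized by stage $s$. Once this calibration is fixed the rest of the argument consists of routine verifications that the lift exists and that the first-coordinate extraction is computable.
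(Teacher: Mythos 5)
Your overall strategy---directly recoding a $\boldfaceDelta^0_{k+1}$-presented order as a $\boldfacePi^0_k$-presented order on pairs (element, stability certificate)---is viable and is in fact close in spirit to the paper's construction, which also attaches ``settling-time'' data to each element (though the paper first reduces to the case $k=1$ via $\codedDS{\boldfaceGamma^0_{k+1}}\weiequiv\codedDS{\boldfaceGamma^0_1}^{(k)}$ and then works with a convergent sequence of characteristic functions). Your calibration of the certificate so that the $a$-versus-$c$ comparison is certified by the domain membership of whichever of $(a,s),(c,u)$ has the larger first coordinate is exactly the right mechanism for transitivity, and the extraction of a $<_L$-descending sequence from first coordinates is fine.

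However, there is a genuine gap: you assume the order has domain $\mathbb{N}$, whereas $\mathbf{LO}$ consists of linear orders whose domain is merely \emph{contained} in $\mathbb{N}$, and for a $\boldfaceDelta^0_{k+1}$-name you cannot computably re-index to make the domain total (membership in $\dom(L)$, i.e.\ $a\le_L a$, is itself only a $\boldfaceDelta^0_{k+1}$ condition). Your certificate for $(a,s)\in D$ demands, for \emph{every} natural number $b\le a$, a positive witness $n\le s$ with $(a,b)\in P_n$ or $(b,a)\in P_n$; if some $b\le a$ lies outside $\dom(L)$, no such witness ever appears, so $a$ never enters $D$ at any stage. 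Consequently $D$ only contains elements $a$ with $\{0,\dots,a\}\subseteq\dom(L)$, and for an ill-founded $L$ with, say, domain the even numbers, $L'$ is finite or empty and hence not a valid $\codedDS{\boldfacePi^0_k}$-instance. The fix is to allow \emph{negative} certificates as well: require that for each $b\le a$, by stage $s$ one has seen either a $P$-witness for one of the two comparisons or $Q$-witnesses for both non-comparisons (together with a $P$-witness that $(a,a)\in\le_L$, so that $a\in\dom(L)$). This keeps $D$ within $\boldfacePi^0_k$, guarantees every element of $\dom(L)$ eventually enters $D$, and your totality and transitivity arguments go through unchanged because for $a,b\in\dom(L)$ linearity plus $\bigl(\bigcup_n P_n\bigr)\cap\bigl(\bigcup_m Q_m\bigr)=\emptyset$ rules out the double-negative case. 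This is precisely the role played in the paper's proof by tracking the settling time of the characteristic function of $\dom(L)$ (the sequence $q$ there), which settles to an answer for every $i$ whether or not $i$ is in the domain.
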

\begin{proof}
	The right-to-left reduction is trivial. To prove the left-to-right one it suffices to show that $\codedDS{\boldfaceDelta^0_1}' \weiequiv \codedDS{\boldfacePi^0_1}$ and the proof will follow from \thref{thm:codedBS=jump_of_BS} as 
	\[ \codedDS{\boldfaceDelta^0_{k+1}} \weiequiv \codedDS{\boldfaceDelta^0_1}' \compproduct \mflim^{[k-1]} \weiequiv \codedDS{\boldfacePi^0_1} \compproduct \mflim^{[k-1]} \weiequiv  \codedDS{\boldfacePi^0_k}.\]

	Let $p=\sequence{p_n}{n\in\mathbb{N}}$ be a sequence in $\Baire$ converging to the characteristic function of an ill-founded linear order $L$. In the following it is convenient to consider also the sequence $q=\sequence{q_n}{n\in\mathbb{N}}$, where $q_n(i):=p_n(\coding{i,i})$. Clearly $q$ converges to the characteristic function of $\dom(L)$ and is uniformly computable from $p$.

	For sake of readability, define the formula
	\[ \varphi(\sequence{x_n}{n\in\mathbb{N}}, \sigma) := (\forall i < \abslength{\sigma})( x_{\sigma(i)}(i) \neq x_{\sigma(i)+1}(i) \land (\forall j>\sigma(i))(x_j(i)=x_{j+1}(i)) ). \]
	Intuitively $\varphi$ says that, for each $i<\abslength{\sigma}$, $\sigma(i)$ codes the positions in which the sequence $\sequence{x_n}{n\in\mathbb{N}}$ changes for the last time in the $i$-th row. Let us also write $x_\sigma := \abslength{\sigma}-1$. 
	We define
	\begin{align*}
		M:=\{ (\sigma,\tau)\in\baire\times \baire \st {}& \varphi(q,\sigma) \land{} \\
			& q_{\sigma(x_\sigma)+1}(x_\sigma)=1 \land{} \\
			& \varphi(p,\tau) \land \abslength{\tau}= \coding{x_\sigma,x_\sigma}+1\}
	\end{align*}

	Notice that the first two conditions imply that $x_\sigma\in L$. Intuitively $x_{\sigma}$ is the $\le_\mathbb{N}$-largest element that is witnessed by $\sigma$ to enter in $L$. The last line says that $\tau$ is exactly as long as needed to witness all the relations between the elements of $L$ that are $\le_\mathbb{N} x_\sigma$.

	We order the set $M$ as follows:
	\[ (\sigma_0,\tau_0) \le_M (\sigma_1,\tau_1) \defiff x_{\sigma_0} \le_L x_{\sigma_1} \]
	Notice that $M$ is a $\Pi^{0,p}_1$ linear order as $M$ is $\Pi^{0,p}_1$ and the order $\le_M$ is $p$-computable: indeed, given two pairs $(\sigma_0,\tau_0),(\sigma_1,\tau_1)\in M$, we can use the longer string between $\tau_0$ and $\tau_1$ to $p$-compute whether $x_{\sigma_0} \le_L x_{\sigma_1}$. Notice also that, for each $l$, there is exactly one string $\sigma$ of length $l$ witnessing $\varphi(q,\sigma)$ (by minimality). The third line in the definition of $M$ implies that if $\sigma$ satisfies the first two conditions then there is a unique $\tau$ s.t.\ $(\sigma,\tau)\in M$. The linearity of $M$ follows by the linearity of $L$.

	To conclude the proof it is enough to notice that if $\sequence{(\sigma_i,\tau_i)}{i\in\mathbb{N}}$ is an $<_M$-descending sequence then $\sequence{x_{\sigma_i}}{i\in\mathbb{N}}$ is an $<_L$-descending sequence.
\end{proof}

The following is essentially a classical result (see e.g.\ \cite[Thm.\ 2.4]{Downey98}). The proof is simple enough that we can briefly sketch it.
\begin{theorem}
	\thlabel{thm:Sigma01_DS=DS}
	For every $k\ge 1$, $\codedDS{\boldfaceSigma^0_k} \weiequiv \codedDS{\boldfaceDelta^0_k}$.
\end{theorem}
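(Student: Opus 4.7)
The direction $\codedDS{\boldfaceDelta^0_k} \weireducible \codedDS{\boldfaceSigma^0_k}$ is trivial from the inclusion $\boldfaceDelta^0_k \subseteq \boldfaceSigma^0_k$. My plan for the converse is to establish the base case $\codedDS{\boldfaceSigma^0_1} \weireducible \DS$ by an explicit construction and then bootstrap: applying \thref{thm:codedBS=jump_of_BS} with $\boldfaceGamma = \boldfaceSigma$ gives $\codedDS{\boldfaceSigma^0_k} \weiequiv \codedDS{\boldfaceSigma^0_1} \compproduct \mflim^{[k-1]}$, so combined with the base case and the analogous identity $\codedDS{\boldfaceDelta^0_k} \weiequiv \DS \compproduct \mflim^{[k-1]}$ one chains
\[ \codedDS{\boldfaceSigma^0_k} \weiequiv \codedDS{\boldfaceSigma^0_1} \compproduct \mflim^{[k-1]} \weireducible \DS \compproduct \mflim^{[k-1]} \weiequiv \codedDS{\boldfaceDelta^0_k}. \]

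For the base case, a $\boldfaceSigma^0_1$-name of $L$ gives an enumeration of $\le_L$; by uniformly replacing each finite stage $s$ with its transitive closure, I may assume the set of pairs $E_s$ enumerated by stage $s$ is already transitively closed. Let $R(a,b,s)$ be the decidable predicate $(a,b) \in E_s$. The plan is to construct a decidable set
\[ M := \{(a,s) \in \mathbb{N}^2 : a \le s,\; R(a,a,s),\; (\forall b \le s)(R(b,b,s) \to R(a,b,s) \lor R(b,a,s))\} \]
together with the decidable relation
\[ (a_1,s_1) \le_M (a_2,s_2) \defiff (a_1 = a_2 \land s_1 \le s_2) \lor (a_1 \ne a_2 \land R(a_1, a_2, \max(s_1, s_2))). \]
Intuitively, $(a,s) \in M$ witnesses both that $a$ has entered $\dom(L)$ by stage $s$ and that the order of $a$ with every currently-enumerated element $b \le s$ has also been decided by then.

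The main obstacle is verifying that $\le_M$ is actually a linear order, and the delicate point is transitivity. Given $(a_1, s_1) <_M (a_2, s_2) <_M (a_3, s_3)$ with distinct $a_i$, transitive closure at stage $\max(s_1, s_2, s_3)$ yields $R(a_1, a_3, \max(s_1, s_2, s_3))$, but one really needs $R(a_1, a_3, \max(s_1, s_3))$. Here one applies the $M$-condition of whichever of $(a_1, s_1), (a_3, s_3)$ has the larger $s$-coordinate to decide the order between $a_1$ and $a_3$ at stage $\max(s_1, s_3)$; the wrong direction combines with $R(a_1, a_2, \cdot), R(a_2, a_3, \cdot)$ via antisymmetry of $L$ to force $a_1 = a_3$, a contradiction. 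Totality and antisymmetry of $\le_M$ follow by similar appeals to the $M$-condition of the element with the larger $s$-coordinate.

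Finally, $M$ is ill-founded iff $L$ is, with uniform projection of descending sequences. Given an $L$-descending sequence $b_0 >_L b_1 >_L \cdots$, one picks stages $s_0 < s_1 < \cdots$ growing fast enough so that each $(b_i, s_i) \in M$ (the $M$-condition is a finite conjunction that eventually becomes true) and so that $b_{i+1} <_L b_i$ has been enumerated by stage $s_{i+1}$; then $\sequence{(b_i, s_i)}{i \in \mathbb{N}}$ is $<_M$-descending. Conversely, along any $<_M$-descending sequence, a maximal run of consecutive indices with constant first coordinate $a$ forces strictly decreasing $s$-coordinates and is therefore finite; thus keeping one representative per run yields an infinite subsequence whose first coordinates strictly $L$-descend. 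All constructions are uniform, completing the reduction $\codedDS{\boldfaceSigma^0_1} \weireducible \DS$ and hence the theorem.
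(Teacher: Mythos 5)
Your reduction of the general case to $k=1$ via \thref{thm:codedBS=jump_of_BS} is sound (and differs from the paper only in packaging: the paper gives a single stage-tagging construction that works uniformly at every level $k$). The problem is in the base case. The third clause in your definition of $M$ quantifies over \emph{all} $b\le s$ that have entered the domain by stage $s$, and nothing forces this clause ever to become true: the enumeration may keep introducing fresh domain elements whose comparisons with $a$ are delayed arbitrarily. Concretely, take $L$ of order type $\omega^*$ with $\dom(L)=\mathbb{N}$, named by an enumeration that lists $(b,b)$ at stage $b$ but lists the comparison pair for $\{a,b\}$ (with $a\neq b$) only at stage $2^{a+b}$. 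Then for every $a$ and every $s\ge a$, the witness $b=s$ satisfies $R(s,s,s)$ while neither $R(a,s,s)$ nor $R(s,a,s)$ holds, and taking transitive closures does not help since no comparison pair involving $s$ is available by stage $s$. Hence $M=\emptyset$, which is not a valid $\DS$-instance. Your justification that ``the $M$-condition is a finite conjunction that eventually becomes true'' is exactly where this breaks down: the conjunction's length grows with $s$.

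The fix---and this is what the paper does---is to stop insisting that comparisons be decided by the tagging stage. Put $(a,s)$ into $M$ exactly when $s$ is the \emph{first} stage with $R(a,a,s)$; this makes the field of $M$ decidable, which is the only purpose the tag serves, and guarantees that every element of $\dom(L)$ has exactly one representative in $M$. Then define $(a_1,s_1)\le_M(a_2,s_2)$ iff $a_1\le_L a_2$, computed by an unbounded search for whichever of $(a_1,a_2)$, $(a_2,a_1)$ appears first in the enumeration: the search terminates because both elements are already known to lie in $\dom(L)$, where $L$ is total and antisymmetric, so on $\dom(L)^2$ the relation $a_1\le_L a_2$ coincides with $a_1=a_2\lor\lnot(a_2\le_L a_1)$ and is $\Delta^0_1$ relative to the name. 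With this $M$, the projection $(a,s)\mapsto a$ is an order isomorphism onto $L$, so both directions of the ill-foundedness transfer are immediate and your delicate case analysis for transitivity is not needed.
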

\begin{proof}
	Given a $\boldfaceSigma^0_k$ linear order $L$, we can uniformly consider a sequence $\sequence{(L_s,\le_s)}{s\in\mathbb{N}}$ of $\boldfaceDelta^0_k$ linear orders approximating $L$. We then define
	\begin{gather*}
		M:=\{ (q,s) \st q\in L_s \text{ and } (\forall t<s)(q\notin L_t) \}, \\
		(p,s)\le_M (q,t) \defiff p\le_L q.
	\end{gather*}
	Notice that $(p,s)\le_M (q,t)$ can be written also as $p=q \lor (\forall i)(q\not\le_i p)$, hence $M$ is $\Delta^{0,L}_k$.
	Moreover, since for every $q\in L$ there is a unique $s$ s.t.\ $(q,s)\in M$, it is easy to see that $M$ is computably isomorphic to $L$. In particular, given an $<_M$-descending sequence we can obtain an $<_L$-descending sequence by projection.
\end{proof}

\begin{corollary}
	\thlabel{thm:recap_coded}
    For every $k \geq 1$, we have
\[ \codedDS{\boldfacePi^0_k} \weiequiv \codedDS{\boldfaceDelta^0_{k+1}} \weiequiv \codedDS{\boldfaceSigma^0_{k+1}}. \]
\end{corollary}
\begin{proof}
	The equivalence $\codedDS{\boldfacePi^0_k} \weiequiv \codedDS{\boldfaceDelta^0_{k+1}}$ was proved in \thref{thm:Delta0_k+1-DS_LO=Pi0kDS_LO}. The equivalence $\codedDS{\boldfaceDelta^0_{k+1}} \weiequiv \codedDS{\boldfaceSigma^0_{k+1}}$ was proved in \thref{thm:Sigma01_DS=DS}.
\end{proof}

\begin{theorem}
	\thlabel{thm:Sigma0kDS_LQO=Delta0_k+1DS_LO}
	For every $k\ge 1$, $\LPO^{(k)}\weireducible \codedBS{\boldfaceSigma^0_k}$ and therefore $\codedBS{\boldfaceSigma^0_k}\not\weireducible \codedDS{\boldfaceSigma^0_k}$.
\end{theorem}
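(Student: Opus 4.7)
The plan is to prove $\LPO^{(k)} \weireducible \codedBS{\boldfaceSigma^0_k}$ by induction on $k\ge 1$; the stated non-reducibility will then follow routinely from the characterization of $\DetPart{\codedDS{\boldfaceSigma^0_k}}$ obtained earlier in this section.

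For the base case $\LPO' \weireducible \codedBS{\boldfaceSigma^0_1}$, given an input $(p_n)$ in $\Cantor$ converging to $p$, I would first invoke Shoenfield's limit lemma to produce a function $c \colon \mathbb{N} \to \{0,1\}$ computable uniformly in $(p_n)$ with $\lim_s c(s) = \LPO(p)$. On the underlying set $\{0,1\} \times \mathbb{N}$ define the basic $\Sigma^0_1$ relation $R$ by $(\lambda, i)\,R\,(\lambda', j)$ iff $(\lambda, i) = (\lambda', j)$ or there exists $s \ge \max(i,j)$ with $c(s) \ne \lambda$, and let $\preceq$ be its transitive closure (still $\Sigma^0_1$). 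Writing $\lambda^\ast := \LPO(p)$ and letting $N$ be a stage past which $c$ has stabilized, every basic arrow originating from row $1-\lambda^\ast$ fires (take $s := \max(i,j,N)$), whereas no basic arrow originates from $(\lambda^\ast, i)$ with $i \ge N$. Therefore in $\preceq$ each element of row $1-\lambda^\ast$ is $\preceq$-below everything, while $\{(\lambda^\ast, i) : i \ge N\}$ is an antichain, witnessing that $\preceq$ is not a wqo. In any infinite bad sequence $(q_n)$, no $q_n$ can lie in row $1-\lambda^\ast$ (else $q_n \preceq q_{n+1}$), so $q_0$ lies in row $\lambda^\ast$ and reading off its first coordinate recovers $\LPO(p)$.

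For the inductive step, $\LPO^{(k+1)} = (\LPO^{(k)})' \weireducible \LPO^{(k)} \compproduct \mflim$ together with the inductive hypothesis gives $\LPO^{(k+1)} \weireducible \codedBS{\boldfaceSigma^0_k} \compproduct \mflim$, and \thref{thm:codedBS=jump_of_BS} (together with associativity of $\compproduct$) gives $\codedBS{\boldfaceSigma^0_k} \compproduct \mflim \weiequiv \codedBS{\boldfaceSigma^0_1}\compproduct\mflim^{[k]}\weiequiv \codedBS{\boldfaceSigma^0_{k+1}}$, closing the induction.

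The main subtlety in the base case is verifying that taking the transitive closure of $R$ does not create spurious within-row comparabilities in row $\lambda^\ast$ that would spoil the antichain; this amounts to a handful of two- and three-step transitivity checks while tracking carefully which stages $s \ge \max(i,j)$ can satisfy $c(s) \ne \lambda$ for each label. Once $\LPO^{(k)} \weireducible \codedBS{\boldfaceSigma^0_k}$ has been established, the separation is immediate: such a reduction combined with single-valuedness of $\LPO^{(k)}$ would yield $\LPO^{(k)} \weireducible \DetPart{\codedDS{\boldfaceSigma^0_k}} \weiequiv \DetPart{\codedDS{\boldfaceDelta^0_k}} \weiequiv \mflim^{[k]}$ by \thref{thm:Sigma01_DS=DS} and \thref{thm:detpart_Delta0kDS}, contradicting (as noted in the proof of \thref{thm:DS_hierarchy_theorem}) that $\LPO^{(k)}$ is the characteristic function of a $\Sigma^0_{k+1}$-complete set whereas $\mflim^{[k]}$ is merely $\Sigma^0_{k+1}$-measurable.
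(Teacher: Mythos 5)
There is a fatal gap at the very first step of your base case. You claim that from a sequence $\sequence{p_n}{n\in\mathbb{N}}$ converging to $p$ one can compute (uniformly in the sequence) a function $c$ with $\lim_s c(s) = \LPO(p)$. No such $c$ exists in general: the predicate $\LPO(\lim_n p_n)=1$ unwinds to $(\exists i)(\exists N)(\forall n\ge N)(p_n(i)\neq 0)$, which is properly $\Sigma^0_2$ relative to the sequence, not $\Delta^0_2$, so Shoenfield's limit lemma does not apply. (It applies to $\LPO$ itself, whose value is $\Sigma^{0,p}_1$ in the actual input $p$; the jump pushes this up a level.) Indeed, the existence of your $c$ is exactly the assertion $\LPO'\weireducible\mflim_2\weireducible\mflim$, which is false (this is the nonreduction underlying \thref{thm:lpo'_incomp_BS}); worse, combined with $\mflim\weireducible\DS$ (\thref{thm:below_DS_below_lim}) it would give $\LPO'\weireducible\codedDS{\boldfaceSigma^0_1}$ and thereby contradict the very separation you are trying to prove. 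The quasi-order you build on $\{0,1\}\times\mathbb{N}$ is internally consistent given $c$, but since $c$ cannot be produced, the base case collapses.

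The paper's construction is designed precisely to avoid needing a convergent approximation to $\LPO(p)$. It uses the function $g(s)$ (as in \thref{thm:lpo'_DS_times_lim}) recording the position of the first apparently nonzero bit of $p_s$; this $g$ stabilizes at a positive value when $\LPO(p)=1$, but when $\LPO(p)=0$ it may either stabilize at $0$ or fail to converge entirely. The $\Sigma^0_1$ quasi-order is then built by a priority-style process in which every change of $g$ collapses all previously enumerated elements (other than designated bottom elements) into a single equivalence class, so that a bad sequence is forced to reveal whether $g$ stabilized and, if so, at what kind of value. You would need to replace your first step with an argument of this shape --- one that extracts $\LPO(p)$ from a bad sequence without ever computing a limit approximation to it. Your inductive step and your derivation of the second statement from the first (via single-valuedness, $\DetPart{\codedDS{\boldfaceSigma^0_k}}\weiequiv\mflim^{[k]}$, and the $\Sigma^0_{k+1}$-completeness of $\LPO^{(k)}$) are both fine and agree with the paper.
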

\begin{proof}
    The second statement follows from the first because $\LPO^{(k)} \not\weireducible \codedDS{\boldfaceDelta^0_k}$ (proof of \thref{thm:DS_hierarchy_theorem}) and $\codedDS{\boldfaceDelta^0_k} \weiequiv \codedDS{\boldfaceSigma^0_k}$ (\thref{thm:Sigma01_DS=DS}).
    
	To prove the first statement, it is enough to show that $\LPO'\weireducible\codedBS{\boldfaceSigma^0_1}$, and the claim will follow by \thref{thm:codedBS=jump_of_BS} as 
	\[ \LPO^{(k)} \weireducible \LPO'\compproduct\mflim^{[k-1]} \weireducible \codedBS{\boldfaceSigma^0_1}\compproduct \mflim^{[k-1]}\weiequiv\codedBS{\boldfaceSigma^0_k}. \]

	Let $\sequence{p_s}{s\in\mathbb{N}}$ be a sequence in $\Baire$ converging to an instance $p$ of $\LPO$. For every $s\in\mathbb{N}$ we define (as we did in the proofs of \thref{thm:lpo'_DS_times_lim} and \thref{thm:lpo'<=findS})
	\[ g(s)= \begin{cases}
		i+1 & \text{if } i\le s \land p_s(i)\neq 0 \land (\forall j<i)(p_s(j)=0),\\
		0 & \text{otherwise.}
	\end{cases} \]
	
	Let us define a quasi-order $Q$ inductively: at stage $s=0$ we add $\coding{g(0),0}$. At stage $s+1$ we do the following:
	\begin{enumerate}
		\item if $g(s)=g(s+1)$ we put $\coding{g(s),s+1}$ immediately below $\coding{g(s),s}$;
		\item if $g(s)\neq g(s+1)$ we put $\coding{g(s+1),s+1}$ at the top and we put $\coding{-1,s+1}$ at the bottom. Moreover we collapse to a single equivalence class all the elements $\coding{g,t}$ with $t\le s$ and $g\neq -1$.
	\end{enumerate}
	This construction produces a quasi-order $(Q,\preceq_Q)$ which is computable in $\sequence{p_s}{s\in\mathbb{N}}$. 

	Notice that if there is an $s$ s.t.\ for every $t\ge s$, $g(t)=g(s)$ (in particular, this is the case if $\LPO(p)=1$) then the equivalence classes of $\preceq_Q$ form a linear order of type $n+\omega^*$ and every $\preceq_Q$-bad sequence is a descending sequence of the form $\sequence{\coding{g(s),s_n}}{n\in\mathbb{N}}$ for some strictly increasing sequence $\sequence{s_n}{n\in\mathbb{N}}$. On the other hand, if the sequence $\sequence{g(s)}{s\in\mathbb{N}}$ does not stabilize then the equivalence classes of $\preceq_Q$ are linearly ordered as $\omega^*$, where all the elements $\coding{g,s}$ with $g\neq -1$ are equivalent and lie in the top equivalence class. This shows that the construction produces a non-well quasi-order.
 
	For every $\preceq_Q$-bad sequence $\sequence{\coding{g_n,s_n}}{n\in\mathbb{N}}$ produced by $\codedBS{\boldfaceSigma^0_1}(Q)$, we compute the solution for $\LPO'(\sequence{p_s}{s\in\mathbb{N}})=\LPO(p)$ by returning $0$ if $g_1 \le 0$ and $1$ otherwise. We consider two cases. If the sequence $\sequence{g(s)}{s\in\mathbb{N}}$ stabilizes, then the sequence $\sequence{g_n}{n\in\mathbb{N}}$ is constant. Furthermore, its value is $0$ if $\LPO(p)=0$, otherwise its value is positive. On the other hand, if the sequence $\sequence{g(s)}{s\in\mathbb{N}}$ does not stabilize, then $\LPO(p) = 0$. Furthermore, for every $n>0$, we have $g_n=-1 \leq 0$. (The first element $\coding{g_0,s_0}$ may lie in the top equivalence class, in which case $g_0$ may be positive. Hence we check $g_1$ instead of $g_0$).
\end{proof}

\subsection{\texorpdfstring{{\protect\boldmath $\Gamma$}$^1_1$\normalfont-$\mathsf{DS}$ and {\protect\boldmath $\Gamma$}$^1_1$\normalfont-$\mathsf{BS}$}{Gamma11-DS and Gamma11-BS}}
\label{sec:analytic_DS_BS}

We now turn our attention to the analytic classes. Notice first of all that being a descending sequence through a $\boldfaceSigma^1_1$ linear order is a $\boldfaceSigma^1_1$-property, hence $\codedDS{\boldfaceSigma^1_1}\weireducible\codedChoice{\boldfaceSigma^1_1}{}{\Baire}\weiequiv \CBaire$. We will show that $\codedDS{\boldfaceSigma^1_1}$ is the strongest $\DS$-principle that is still reducible to $\CBaire$ (\thref{thm:Pi11CA<=Pi11DS}).
\begin{proposition}
	\thlabel{thm:Delta11-DS=DS*UCBaire}
	$\codedDS{\boldfaceDelta^1_1}\weiequiv \DS \compproduct \UCBaire$ and $\codedBS{\boldfaceDelta^1_1}\weiequiv \BS \compproduct \UCBaire$.
\end{proposition}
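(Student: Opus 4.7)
The plan is to prove both equivalences by identical reductions; I will focus on $\codedDS{\boldfaceDelta^1_1} \weiequiv \DS \compproduct \UCBaire$, with the $\BS$ case being a word-for-word replacement of $\DS$ by $\BS$ and of $\mathbf{LO}$ by $\mathbf{QO}$.

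For the reduction $\codedDS{\boldfaceDelta^1_1} \weireducible \DS \compproduct \UCBaire$, the key observation will be that $\Comprehension{\boldfaceDelta^1_1} \weireducible \UCBaire$: given a $\repmap{\boldfaceDelta^1_1}$-name $p$ for a set $A\subseteq \mathbb{N}$, the characteristic function of $A$ is the unique $\chi \in \Cantor$ satisfying $(\forall n)((\chi(n)=1 \to n\in A) \land (\chi(n)=0 \to n\notin A))$, which is a $\boldfaceSigma^{1,p}_1$ condition with a singleton solution set, so one call to $\boldfaceSigma^1_1\text{-}\mathsf{UC}_\Baire \weiequiv \UCBaire$ suffices. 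Thus, given a $\repmap{\boldfaceDelta^1_1}$-name of an ill-founded linear order $L$, I would use $\UCBaire$ to compute the corresponding $\repmap{\mathbf{LO}}$-name of $L$ and then apply $\DS$.

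For the converse $\DS \compproduct \UCBaire \weireducible \codedDS{\boldfaceDelta^1_1}$, I would invoke the cylindrical decomposition lemma: since both $\DS$ (\thref{thm:DS_cylinder}) and $\UCBaire$ are cylinders, there exists a computable $\Phi$ such that $\DS \compproduct \UCBaire \weiequiv \DS \circ \Phi \circ \UCBaire$. So given $p$ coding a subtree $T \subseteq \baire$ with a unique path $q \in [T]$, let $L_q$ be the linear order whose $\repmap{\mathbf{LO}}$-name is $\Phi(q)$. The main step is to show that a $\repmap{\boldfaceDelta^1_1}$-name of $L_q$ is uniformly computable from $p$: for all $a,b \in \mathbb{N}$,
\[ a \le_{L_q} b \iff (\exists q' \in [T])(\Phi(q')(\coding{a,b})\defined = 1) \iff (\forall q' \in [T])(\Phi(q')(\coding{a,b})\defined = 1), \]
where both equivalences hold because $[T]$ is a singleton; these respectively furnish $\boldfaceSigma^{1,p}_1$ and $\boldfacePi^{1,p}_1$ descriptions of $\le_{L_q}$. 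Feeding this name into $\codedDS{\boldfaceDelta^1_1}$ then outputs a descending sequence through $L_q$, which is exactly what $\DS \circ \Phi \circ \UCBaire$ is supposed to produce.

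The $\BS$ case will follow the same template, using that $\BS$ is a cylinder by \thref{thm:gammaDS_cylinder}; descending sequences are replaced by bad sequences and $L_q$ by a quasi-order, but the $\boldfaceDelta^1_1$-definability computation is identical. I do not foresee a significant obstacle: the only slightly delicate point will be writing out the two-sided definability of $\le_{L_q}$ (or $\preceq_{P_q}$ in the $\BS$ case) using the uniqueness of the path through $T$ and verifying that the resulting $\repmap{\boldfaceDelta^1_1}$-name is indeed a valid input for $\codedDS{\boldfaceDelta^1_1}$ (respectively $\codedBS{\boldfaceDelta^1_1}$).
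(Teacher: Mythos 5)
Your proposal is correct and follows essentially the same route as the paper's proof: $\Comprehension{\boldfaceDelta^1_1}\weireducible\UCBaire$ for one direction (the paper cites this from the literature where you prove it directly via $\boldfaceSigma^1_1\text{-}\mathsf{UC}_\Baire$), and the cylindrical decomposition lemma plus the $\boldfaceDelta^{1}_1$-definability of $\le_{L_q}$ over the singleton $[T]$ for the other. The paper phrases the latter by first showing the unique path $x$ is $\Delta^{1,T}_1$ and then composing with $\Phi$, whereas you quantify over $q'\in[T]$ directly; this is only a cosmetic difference.
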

\begin{proof}
    We will only prove the first statement. The proof of the second statement is similar.
    
	To prove the left-to-right reduction, given a $\boldfaceDelta^1_1$ name for $L$ we use $\Comprehension{\boldfaceDelta^1_1}$ (which is known to be equivalent to $\UCBaire$, see \cite[Thm.\ 3.11]{KMP20}) to compute a $\boldfaceDelta^0_1$ name for $L$. We can then apply $\DS$ to find a descending sequence through $L$.

	To prove the converse reduction, using the cylindrical decomposition we can write 
	\[ \DS\compproduct\UCBaire \weiequiv \DS \circ \Phi_e \circ \UCBaire \]
	for some computable function $\Phi_e$.  In particular, given $T\subset \baire$ with a unique path $x$, $\Phi_e(x)$ is the characteristic function of a linear order $L$. Notice that $x$ is $\Delta^{1,T}_1$-computable. Indeed,
	\begin{align*}
		x(n)=k & \iff (\exists \sigma \in T)(\sigma\in \mathrm{Ext} \land \sigma(n)=k) \\
			& \iff (\forall \tau \in T)(\tau\in \mathrm{Ext} \rightarrow \tau(n)=k),
	\end{align*}
	where $\mathrm{Ext}$ is the set of finite strings that extend to a path through $T$ ($\sigma\in \mathrm{Ext}$ is a $\Sigma^{1,T}_1$ property). We can therefore obtain a $\Delta^{1,T}_1$ name for $L$ as 
	\[ a\le_L b \iff \Phi_e(x)(\coding{a,b})=1, \]
	and hence we use $\codedDS{\boldfaceDelta^1_1}$ to find a descending sequence through $L$.
\end{proof}
In particular, this implies that $\boldfaceDelta^1_1$ is the first level at which we can compute $\UCBaire$. Indeed, for every $k$, we showed in the proof of \thref{thm:DS_hierarchy_theorem} that $\LPO^{(k)}\not\weireducible\codedDS{\boldfaceDelta^0_k}$, while $\mflim^{[k]}\weireducible \UCBaire$ (see \cite[Sec.\ 6]{BdBPLow12}).

By adapting the proof of \thref{thm:delta0kDS=delta0kBS}, we can relativize \thref{thm:bs=ds} and obtain the following:
\begin{corollary}
	\thlabel{thm:delta11DS=delta11BS}
    $\codedDS{\boldfaceDelta^1_1} \weiequiv \codedBS{\boldfaceDelta^1_1}.$
\end{corollary}
\begin{proof}
	See \ref{errata}.
\end{proof}

Similarly, the proofs of \thref{thm:fop_Delta0kDS} and of \thref{thm:below_Delta0kDS_below_other} lead to the following equivalences:

\begin{theorem}
	\thlabel{thm:fop_Delta11-DS}
	\[ \firstOrderPart{\codedDS{\boldfaceDelta^1_1}} \weiequiv \firstOrderPart{(\PiBound \times \UCBaire)} \weiequiv \left(\bigsqcup_{s \in \mathbb{N}} \codedChoice{\boldfaceDelta^1_1}{}{s}\right) \compproduct \PiBound. \]
\end{theorem}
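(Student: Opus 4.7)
The plan is to mirror the three-step proof consisting of Lemmas \thref{thm:f<=Delta0kDS_LO->f<=Delta0kCs_*_PiBound}, \thref{thm:Delta0kCs_*_PiBound<=Delta0kDS_LO} and Proposition \thref{thm:below_Delta0kDS_below_other}, systematically replacing the arithmetic pointclass $\boldfaceDelta^0_k$ by $\boldfaceDelta^1_1$ and the operator $\mflim^{[k-1]}\weiequiv \Comprehension{\boldfaceDelta^0_k}$ by $\UCBaire \weiequiv \Comprehension{\boldfaceDelta^1_1}$ (via \cite[Thm.\ 3.11]{KMP20}). I will close a chain showing
\[
\firstOrderPart{\codedDS{\boldfaceDelta^1_1}} \weireducible \left(\bigsqcup_{s\in\mathbb{N}}\codedChoice{\boldfaceDelta^1_1}{}{s}\right)\compproduct \PiBound \weireducible \codedDS{\boldfaceDelta^1_1}
\]
and then identify $\firstOrderPart{(\PiBound\times\UCBaire)}$ with the same degree.

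For the first reduction, given witnesses $\Phi,\Psi$ of $f\weireducible \codedDS{\boldfaceDelta^1_1}$ with $f$ first-order, and an $f$-instance named by $x$, let $\le^x$ be the $\boldfaceDelta^{1,x}_1$ linear order coded by $\Phi^x$. The set $D$ of finite nonempty $<^x$-descending sequences $F$ with $\Psi^{x\oplus F}$ halting is $\boldfaceDelta^{1,x}_1$ (the descending clause uses only bounded quantifiers over a $\Delta^1_1$ order). The set $A:=\{s\st (\forall F<s)(F\notin D \lor F\notin \mathrm{Ext}_x)\}$ is $\Pi^{1,x}_1$, since extendibility in a $\boldfaceDelta^1_1$-order is still $\Sigma^{1}_1$; the finiteness argument carries over verbatim, so $A$ is a valid $\PiBound$-instance. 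Feeding the bound $b$, the set $B$ of $F\in D\cap\{0,\dots,b-1\}$ maximising $\min_{<^x}(F)$ is a nonempty $\boldfaceDelta^{1,x}_1$ subset of $\{0,\dots,b-1\}$ consisting only of extendible sequences, and $\codedChoice{\boldfaceDelta^1_1}{}{b}$ picks one, from which $\Psi^{x\oplus F}(0)$ returns an $f$-solution. For the converse reduction I reuse the Kleene--Brouwer construction of \thref{thm:Delta0kCs_*_PiBound<=Delta0kDS_LO} unchanged, merely observing that when $\pi_2\Phi_1(\sigma(0),p_2)$ names a $\boldfaceDelta^1_1$ subset rather than a $\boldfaceDelta^0_k$ one, the resulting linear order $(L,\le_L)$ is $\boldfaceDelta^{1}_1$ (its membership is $\boldfaceDelta^1_1$, its order $\Delta^0_1$); the extraction of a $\PiBound$-bound and a $\codedChoice{\boldfaceDelta^1_1}{}{s}$-witness from a $<_L$-descending sequence is identical. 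Since the source is first-order, this closes the first equivalence.

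For the second equivalence, one direction is immediate: by \thref{thm:pibound<BS} and \thref{thm:Delta11-DS=DS*UCBaire},
\[
\PiBound\times\UCBaire \weireducible \DS\compproduct\UCBaire \weiequiv \codedDS{\boldfaceDelta^1_1},
\]
so $\firstOrderPart{(\PiBound\times\UCBaire)}\weireducible \firstOrderPart{\codedDS{\boldfaceDelta^1_1}}$ by monotonicity. For the converse, I imitate \thref{thm:below_Delta0kDS_below_other}: keep $D$ and $A$ as above and set
\[
B_s:=\left\{F\in D\cap\{0,\dots,s-1\}\st (\forall G\in D\cap\{0,\dots,s-1\})\left(\min_{<^x}(G)\le^x\min_{<^x}(F)\right)\right\},\quad F_s:=\min B_s,
\]
interpreting $F_s$ as the empty sequence when $B_s$ is empty. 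Each $B_s$ is $\boldfaceDelta^{1,x}_1$ uniformly in $s$, so $\Comprehension{\boldfaceDelta^1_1}\weiequiv \UCBaire$ computes $\sequence{F_s}{s\in\mathbb{N}}$ from $x$; in parallel, $\PiBound(A)$ delivers a bound $b$ for which $F_b$ is extendible, and $\Psi^{x\oplus F_b}(0)$ is the desired first-order solution. Since the target is first-order, this gives $\firstOrderPart{\codedDS{\boldfaceDelta^1_1}}\weireducible \firstOrderPart{(\PiBound\times\UCBaire)}$, completing the chain.

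The only real obstacle is bookkeeping the boldface-$\Delta^1_1$ pointclass arithmetic: one must verify that the key predicates (extendibility in $\le^x$, membership in $D$, the set $B$, the uniform sequence $B_s$) have the same complexity they had in the arithmetic case, now one level higher. Since extendibility of a finite sequence in a $\boldfaceDelta^1_1$ linear order is still a plain $\Sigma^1_1$ predicate with no additional analytic quantifier, and since bounded quantification over $\{0,\dots,s-1\}$ preserves $\boldfaceDelta^1_1$, these complexity checks go through without new ideas, and $\Comprehension{\boldfaceDelta^1_1}\weiequiv \UCBaire$ plays exactly the role that $\mflim^{[k-1]}\weiequiv \Comprehension{\boldfaceDelta^0_k}$ played in the arithmetic case.
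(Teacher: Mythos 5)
Your proposal is correct and takes essentially the same route as the paper: the paper's own proof of this theorem consists solely of the remark that the proofs of \thref{thm:fop_Delta0kDS} and \thref{thm:below_Delta0kDS_below_other} adapt to this setting, with $\boldfaceDelta^0_k$ replaced by $\boldfaceDelta^1_1$ and $\mflim^{[k-1]} \weiequiv \Comprehension{\boldfaceDelta^0_k}$ replaced by $\UCBaire \weiequiv \Comprehension{\boldfaceDelta^1_1}$, which is precisely the adaptation you carry out. Your complexity bookkeeping (extendibility in a $\boldfaceDelta^1_1$ order remaining $\boldfaceSigma^1_1$, the sets $D$, $B$, $B_s$ remaining $\boldfaceDelta^1_1$ under bounded quantification, and the simplification of taking $D_s = D \cap \{0,\dots,s\}$ since no stratification into a lower pointclass is needed) is exactly what makes the transfer go through.
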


The deterministic part of $\codedDS{\boldfaceDelta^1_1}$ and $\codedDS{\boldfaceSigma^1_1}$ can be easily characterized using \thref{thm:Delta11-DS=DS*UCBaire}, as the following proposition shows.
\begin{proposition}
	\thlabel{thm:detpart_Delta11-DS}
	$\UCBaire\weiequiv \DetPart{\codedDS{\boldfaceDelta^1_1}}\weiequiv\DetPart{\codedDS{\boldfaceSigma^1_1}} $.
\end{proposition}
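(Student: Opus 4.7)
The plan is to prove the claim via a short cycle of reductions, exploiting the characterization $\codedDS{\boldfaceDelta^1_1}\weiequiv \DS\compproduct\UCBaire$ from \thref{thm:Delta11-DS=DS*UCBaire}, the bound $\codedDS{\boldfaceSigma^1_1}\weireducible \CBaire$ noted earlier in the excerpt, and the identity $\DetPart{\CBaire}\weiequiv \UCBaire$ from \thref{thm:detpart_cbaire}. Everything else follows by monotonicity of $\DetPart{\cdot}$ and known facts about $\UCBaire$.

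First I would establish $\UCBaire \weireducible \DetPart{\codedDS{\boldfaceDelta^1_1}}$. Using \thref{thm:Delta11-DS=DS*UCBaire} and the fact that $\UCBaire$ is single-valued, the corollary to \thref{thm:det(f*g)<=det(f)*g} gives
\[ \DetPart{\codedDS{\boldfaceDelta^1_1}} \weiequiv \DetPart{\DS\compproduct \UCBaire} \weiequiv \DetPart{\DS}\compproduct \UCBaire \weiequiv \mflim \compproduct \UCBaire, \]
where the last equivalence uses \thref{thm:below_DS_below_lim}. Since $\mflim \weireducible \UCBaire$ and $\UCBaire$ is closed under compositional product, $\mflim\compproduct\UCBaire \weiequiv \UCBaire$. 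This already yields $\DetPart{\codedDS{\boldfaceDelta^1_1}} \weiequiv \UCBaire$.

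Next, applying monotonicity of $\DetPart{\cdot}$ to the chain $\codedDS{\boldfaceDelta^1_1} \weireducible \codedDS{\boldfaceSigma^1_1} \weireducible \CBaire$, I get
\[ \DetPart{\codedDS{\boldfaceDelta^1_1}} \weireducible \DetPart{\codedDS{\boldfaceSigma^1_1}} \weireducible \DetPart{\CBaire}. \]
By \thref{thm:detpart_cbaire}, $\DetPart{\CBaire}\weiequiv\UCBaire$, and by the previous paragraph the leftmost term is also $\weiequiv \UCBaire$. Hence $\DetPart{\codedDS{\boldfaceSigma^1_1}}\weiequiv \UCBaire$ as well, closing the cycle.

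There is no real obstacle here: the argument is essentially a bookkeeping exercise that combines the structural fact $\codedDS{\boldfaceDelta^1_1}\weiequiv \DS\compproduct \UCBaire$ with the already established computation $\DetPart{\DS}\weiequiv\mflim$ and the known behaviour of $\DetPart{\cdot}$ under compositional product when the right factor is single-valued. The only subtle point is to remember that we need $\UCBaire$ to be single-valued in order to distribute $\DetPart{\cdot}$ over the compositional product in the correct direction.
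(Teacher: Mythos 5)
Your argument is correct and follows essentially the same route as the paper: the lower bound comes from $\codedDS{\boldfaceDelta^1_1}\weiequiv \DS\compproduct\UCBaire$ together with single-valuedness of $\UCBaire$, the upper bound from $\codedDS{\boldfaceSigma^1_1}\weireducible\CBaire$ and $\DetPart{\CBaire}\weiequiv\UCBaire$, with monotonicity closing the chain. The only cosmetic difference is that the paper obtains the lower bound directly from $\UCBaire\weireducible\codedDS{\boldfaceDelta^1_1}$ being single-valued, whereas you compute $\DetPart{\codedDS{\boldfaceDelta^1_1}}$ exactly by distributing $\DetPart{\cdot}$ over the compositional product; both are valid.
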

\begin{proof}
	The reductions $\UCBaire\weireducible \DetPart{\codedDS{\boldfaceDelta^1_1}}\weireducible\DetPart{\codedDS{\boldfaceSigma^1_1}}$ are straightforward from $\UCBaire\weireducible \codedDS{\boldfaceDelta^1_1}$ (\thref{thm:Delta11-DS=DS*UCBaire}), $\codedDS{\boldfaceDelta^1_1}\weireducible\codedDS{\boldfaceSigma^1_1}$ (trivial) and the fact that $\UCBaire$ is single-valued. To prove that $\DetPart{\codedDS{\boldfaceSigma^1_1}}\weireducible\UCBaire$ it is enough to notice that $\codedDS{\boldfaceSigma^1_1}\weireducible \CBaire$, and therefore $\DetPart{\codedDS{\boldfaceSigma^1_1}}\weireducible\DetPart{\CBaire}\weiequiv \UCBaire$ (\thref{thm:detpart_cbaire}).
\end{proof}

In particular, the deterministic part does not help us separate $\codedDS{\boldfaceDelta^1_1}$ and $\codedDS{\boldfaceSigma^1_1}$. Instead, we separate them by considering their first-order parts. We characterized $\firstOrderPart{\codedDS{\boldfaceDelta^1_1}}$ in \thref{thm:fop_Delta11-DS}. Notice that our proof (see the proof of \thref{thm:below_Delta0kDS_below_other}) cannot be extended to establish the same result for $\codedDS{\boldfaceSigma^1_1}$, because the definition of the corresponding $\sequence{F_s}{s\in\mathbb{N}}$ would not be $\boldfaceSigma^1_1$.

\begin{proposition}
	\thlabel{thm:par_Sigma11C_N<=Sigma11DS_LO}
	$\parallelization{\codedChoice{\boldfaceSigma^1_1}{}{\mathbb{N}}} \weireducible \codedDS{\boldfaceSigma^1_1}$.
\end{proposition}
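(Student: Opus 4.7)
The plan is to encode the entire sequence $\sequence{A_n}{n \in \mathbb{N}}$ of nonempty $\boldfaceSigma^1_1$ subsets of $\mathbb{N}$ in a single ill-founded $\boldfaceSigma^1_1$-presented linear order $L$ whose descending sequences simultaneously reveal a choice function $\sequence{x_n}{n \in \mathbb{N}}$ with $x_n \in A_n$. The key design choice, which avoids the $\mflim$-style post-processing that a naive Kleene--Brouwer construction would force on us, is to order the underlying set of partial choice sequences primarily by length (longer is smaller) and to break ties lexicographically.

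Concretely, I would let
\[ D:=\{\sigma\in\finStrings{\mathbb{N}} \st \length{\sigma}\geq 1 \text{ and } \sigma(i)\in A_i \text{ for all } i<\length{\sigma}\} \]
and define $\sigma<_L\tau$ iff $\length{\sigma}>\length{\tau}$, or $\length{\sigma}=\length{\tau}$ and $\sigma <_{lex} \tau$. The first verification step is that this yields a $\boldfaceSigma^1_1(\mathbf{LO})$-instance: membership in $D$ is a uniform finite conjunction of the $\boldfaceSigma^1_1$ conditions $\sigma(i)\in A_i$ (with the number of conjuncts being the computable datum $\length{\sigma}$), so the order relation has a $\boldfaceSigma^1_1$ description. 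Linearity, transitivity and antisymmetry are immediate from the analogous properties of $<_\mathbb{N}$ and $<_{lex}$.

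Next I would use that each $A_n$ is nonempty to produce an explicit infinite $<_L$-descending sequence: for any choice of $x_n\in A_n$, the prefixes $(x_0,\dots,x_{k-1})$ with $k\geq 1$ all lie in $D$ and have strictly increasing lengths, so $L$ is ill-founded. The reduction is then completed by showing that from \emph{any} $<_L$-descending sequence $\sequence{\sigma_i}{i\in\mathbb{N}}$ we can computably recover a choice function: by definition of $<_L$ the lengths $\length{\sigma_i}$ are non-decreasing, and they cannot remain at any fixed value forever because the lexicographic order on $\mathbb{N}^n$ is well-founded for each $n$; hence $\length{\sigma_i}\to\infty$. For each $k\in\mathbb{N}$ I then set $i_k:=\min\{i \st \length{\sigma_i}>k\}$ and output $x_k:=\sigma_{i_k}(k)$, which belongs to $A_k$ by the very definition of $D$.

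The main thing to get right is precisely the choice of order on $D$: under the $\KB$ order the lengths need not be monotone along a descending sequence and one would be forced to recover choices via limits, which would only yield $\parallelization{\codedChoice{\boldfaceSigma^1_1}{}{\mathbb{N}}} \weireducible \mflim \compproduct \codedDS{\boldfaceSigma^1_1}$ rather than the direct Weihrauch reduction being claimed. The length-primary order side-steps this, because once $\length{\sigma_i}$ crosses $k$ the entry $\sigma_i(k)$ is already a legitimate choice for $A_k$.
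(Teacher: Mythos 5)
Your construction is essentially identical to the paper's proof: the paper also takes the set of finite partial choice sequences $\sigma$ with $\sigma(i)\in A_i$ for $i<\length{\sigma}$ (tagged explicitly as pairs $(n,\sigma)$ with $n=\length{\sigma}$), orders them with length as the primary key (longer is $<_L$-smaller) and lexicographically within each length, and recovers the choice function from the fact that the fixed-length levels are $\le_{lex}$-well-founded, so the lengths along any descending sequence must tend to infinity. Your argument is correct, and your closing remark about why a Kleene--Brouwer-style ordering would only give the reduction up to an extra $\mflim$ is exactly the right reason for the design choice.
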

\begin{proof}
	Let $\sequence{A_i}{i\in\mathbb{N}}$ be a sequence of non-empty $\boldfaceSigma^1_1$ subsets of $\mathbb{N}$. We define 
	\begin{gather*}
		L:=\{ (n, \sigma)\in \mathbb{N}\times \baire \st \abslength{\sigma}=n \land (\forall i<n)(\sigma(i)\in A_i) \},\\
		(n,\sigma) \le_L (m,\tau) \iff n> m \lor (n=m \land \sigma \le_{lex} \tau).
	\end{gather*}
	It is easy to see that $L$ is a $\boldfaceSigma^1_1$ linear order (the linearity follows from the linearity of $\le$ and of $\le_{lex}$).

	Let $\sequence{(n_i,\sigma_i)}{i\in\mathbb{N}}$ be an $<_L$-descending sequence. Notice that, since each $A_i\subset \mathbb{N}$, for each $n$ the set $\{ \sigma\in\baire \st (n,\sigma )\in L\}$ is $\le_{lex}$-well-founded. Therefore there must be a subsequence $\sequence{(n_{i_k},\sigma_{i_k})}{k\in\mathbb{N}}$ s.t.\ the sequence $\sequence{n_{i_k}}{k\in\mathbb{N}}$ is strictly increasing.

	This implies that, for each $n$, there is some $m$ s.t.\ $\abslength{\sigma_m}\ge n$. In particular, by definition of $L$, $(\forall i<n )(\sigma_m(i)\in A_i)$ and the claim follows.
\end{proof}

\thref{thm:par_Sigma11C_N<=Sigma11DS_LO} implies that $\codedChoice{\boldfaceSigma^1_1}{}{\mathbb{N}}\weireducible \firstOrderPart{\codedDS{\boldfaceSigma^1_1}}$. This, together with $\firstOrderPart{\CBaire}\weiequiv \codedChoice{\boldfaceSigma^1_1}{}{\mathbb{N}}$ (\thref{thm:fo_part_cbaire}) and the observation that $\codedDS{\boldfaceSigma^1_1}\weireducible \CBaire$, immediately yields the following:

\begin{corollary}
    $\firstOrderPart{\CBaire}\weiequiv\firstOrderPart{\codedDS{\boldfaceSigma^1_1}}\weiequiv \codedChoice{\boldfaceSigma^1_1}{}{\mathbb{N}}.$
\end{corollary}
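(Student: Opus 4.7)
The plan is a short sandwich argument built from results already in the excerpt. I would begin by recording the outer bounds. Since being a descending sequence through a $\boldfaceSigma^1_1$ linear order is itself a $\boldfaceSigma^1_1$ property, we have $\codedDS{\boldfaceSigma^1_1} \weireducible \codedChoice{\boldfaceSigma^1_1}{}{\Baire} \weiequiv \CBaire$ (this is essentially the opening remark of Section~\ref{sec:analytic_DS_BS}). The first-order part operator is monotone with respect to Weihrauch reducibility (it is the maximum over a subset of the lower Weihrauch cone), so this gives
\[ \firstOrderPart{\codedDS{\boldfaceSigma^1_1}} \weireducible \firstOrderPart{\CBaire}, \]
and by \thref{thm:fo_part_cbaire} the right-hand side is Weihrauch equivalent to $\codedChoice{\boldfaceSigma^1_1}{}{\mathbb{N}}$.

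For the reverse direction I would invoke \thref{thm:par_Sigma11C_N<=Sigma11DS_LO}, which gives $\parallelization{\codedChoice{\boldfaceSigma^1_1}{}{\mathbb{N}}} \weireducible \codedDS{\boldfaceSigma^1_1}$, hence a fortiori $\codedChoice{\boldfaceSigma^1_1}{}{\mathbb{N}} \weireducible \codedDS{\boldfaceSigma^1_1}$. Because $\codedChoice{\boldfaceSigma^1_1}{}{\mathbb{N}}$ is first-order (its codomain is $\mathbb{N}$), the characterization of the first-order part as the maximum first-order problem in the lower Weihrauch cone yields $\codedChoice{\boldfaceSigma^1_1}{}{\mathbb{N}} \weireducible \firstOrderPart{\codedDS{\boldfaceSigma^1_1}}$.

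Chaining the two inequalities gives
\[ \codedChoice{\boldfaceSigma^1_1}{}{\mathbb{N}} \weireducible \firstOrderPart{\codedDS{\boldfaceSigma^1_1}} \weireducible \firstOrderPart{\CBaire} \weiequiv \codedChoice{\boldfaceSigma^1_1}{}{\mathbb{N}}, \]
which closes the loop and proves all three problems are Weihrauch equivalent. There is no real obstacle here: the corollary is a one-line assembly of \thref{thm:fo_part_cbaire}, \thref{thm:par_Sigma11C_N<=Sigma11DS_LO}, the trivial upper bound $\codedDS{\boldfaceSigma^1_1} \weireducible \CBaire$, and the monotonicity/maximality properties of $\firstOrderPart{\cdot}$. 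The only (mild) thing to double-check is that the defining characterization of $\firstOrderPart{\cdot}$ really does license both uses above, i.e.\ that reducibility to $f$ for a first-order $g$ upgrades to reducibility to $\firstOrderPart{f}$, and that $f \weireducible g$ implies $\firstOrderPart{f} \weireducible \firstOrderPart{g}$; both are immediate from the max characterization stated after \thref{def:first_order_part}.
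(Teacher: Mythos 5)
Your argument is correct and is essentially identical to the paper's: the corollary is derived there from exactly the same three ingredients (Proposition \thref{thm:par_Sigma11C_N<=Sigma11DS_LO}, Proposition \thref{thm:fo_part_cbaire}, and the observation that $\codedDS{\boldfaceSigma^1_1}\weireducible\CBaire$), combined via the monotonicity and maximality of $\firstOrderPart{\cdot}$. Nothing is missing.
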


As a consequence, $\CBaire$ and $\codedDS{\boldfaceSigma^1_1}$ cannot be separated by means of their first-order part. But $\codedDS{\boldfaceDelta^1_1}$ and $\codedDS{\boldfaceSigma^1_1}$ can, albeit somewhat indirectly:

\begin{proposition}
    \thlabel{thm:delta11-DS<sigma11-DS}
	$\codedDS{\boldfaceDelta^1_1}\strictlyweireducible \codedDS{\boldfaceSigma^1_1}$.
\end{proposition}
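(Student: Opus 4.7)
The reduction $\codedDS{\boldfaceDelta^1_1}\weireducible\codedDS{\boldfaceSigma^1_1}$ is immediate, since every $\boldfaceDelta^1_1$-name for an input is computably also a $\boldfaceSigma^1_1$-name, so the identity on codes witnesses the reduction.

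For strictness I plan to argue by contradiction via the first-order part. Suppose $\codedDS{\boldfaceSigma^1_1}\weireducible\codedDS{\boldfaceDelta^1_1}$. Since $\firstOrderPart{\cdot}$ is defined as a maximum over the Weihrauch lower cone (\thref{def:first_order_part}), it is monotone on Weihrauch degrees, and this hypothesis would yield $\firstOrderPart{\codedDS{\boldfaceSigma^1_1}}\weireducible\firstOrderPart{\codedDS{\boldfaceDelta^1_1}}$. By the corollary immediately preceding this proposition, $\firstOrderPart{\codedDS{\boldfaceSigma^1_1}}\weiequiv\codedChoice{\boldfaceSigma^1_1}{}{\mathbb{N}}$, whereas \thref{thm:fop_Delta11-DS} identifies $\firstOrderPart{\codedDS{\boldfaceDelta^1_1}}$ with $\firstOrderPart{(\PiBound\times\UCBaire)}$. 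Since $\codedChoice{\boldfaceSigma^1_1}{}{\mathbb{N}}$ is itself first-order, this forces $\codedChoice{\boldfaceSigma^1_1}{}{\mathbb{N}}\weireducible \PiBound\times\UCBaire$.

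Next I would parallelize this reduction. Using that $\weireducible$ lifts through $\parallelization{\cdot}$, that parallelization distributes over $\times$, and that $\UCBaire$ is parallelizable, I obtain $\parallelization{\codedChoice{\boldfaceSigma^1_1}{}{\mathbb{N}}}\weireducible \parallelization{\PiBound}\times\UCBaire$. The aim is then to derive a contradiction from the separation $\parallelization{\codedChoice{\boldfaceSigma^1_1}{}{\mathbb{N}}}\not\weireducible \parallelization{\PiBound}$ of Kihara and Angl\`es d'Auriac \cite[Cor.~3.23]{KiharaADauriacChoice}.

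The main obstacle is that this separation, as stated, does not immediately exclude the extra side call to $\UCBaire$. I would resolve this either by inspecting the adversarial family of $\boldsymbol{\Sigma}^1_1$-choice instances used in their proof and observing that the uniquely determined, hyperarithmetic output of the $\UCBaire$-component of any hypothetical reduction can be absorbed into the parameter relative to which the family is constructed (thereby reducing to their original setting), or, failing that, by running a direct diagonalization against all computable pairs $(\Phi,\Psi)$ witnessing a reduction to $\parallelization{\PiBound}\times\UCBaire$, exploiting the single-valuedness of $\UCBaire$ to keep the diagonalization tractable.
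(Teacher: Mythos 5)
Your reduction $\codedDS{\boldfaceDelta^1_1}\weireducible\codedDS{\boldfaceSigma^1_1}$ and your route to $\codedChoice{\boldfaceSigma^1_1}{}{\mathbb{N}}\weireducible\PiBound\times\UCBaire$ and then to $\parallelization{\codedChoice{\boldfaceSigma^1_1}{}{\mathbb{N}}}\weireducible\parallelization{\PiBound}\times\UCBaire$ match the paper's argument. But the obstacle you flag at the end is a genuine gap, and neither of your proposed workarounds is carried out or likely to succeed as described. The first one (absorbing the $\UCBaire$ output into the parameter of the Kihara--Angl\`es d'Auriac construction) does not obviously work: the $\UCBaire$-instance produced by a hypothetical reduction varies with the input to $\parallelization{\codedChoice{\boldfaceSigma^1_1}{}{\mathbb{N}}}$, so its output is not a single fixed oracle that a relativized form of \cite[Cor.~3.23]{KiharaADauriacChoice} could be run over; you would need uniformity that you have not established. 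The second workaround is only a promise of a diagonalization, with no content.

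The missing idea --- and the first thing the paper proves --- is the lemma $\UCBaire\weireducible\parallelization{\PiBound}$. Given a tree $T\subseteq\baire$ with a unique path $x$, the sets $A_n:=\{k\in\mathbb{N}\st(\forall\sigma\in T)((\exists y\in[T])(\sigma\pprefix y)\rightarrow\sigma(n)\le k)\}$ are $\Pi^{1,T}_1$ and cofinite (bounded below by $x(n)$), so $\parallelization{\PiBound}$ yields a function $f$ with $x(n)\le f(n)$ for all $n$; intersecting $T$ with the $f$-computably compact space of $f$-bounded sequences lets you compute $x$ uniformly from $f$. With this lemma the side call to $\UCBaire$ is absorbed outright: $\parallelization{\PiBound}\times\UCBaire\weireducible\parallelization{\PiBound}\times\parallelization{\PiBound}\weiequiv\parallelization{\PiBound}$, and the contradiction with $\parallelization{\codedChoice{\boldfaceSigma^1_1}{}{\mathbb{N}}}\not\weireducible\parallelization{\PiBound}$ is immediate, with no need to reopen the proof of \cite[Cor.~3.23]{KiharaADauriacChoice} or to diagonalize. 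So your skeleton is right, but the argument does not close without this lemma.
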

\begin{proof}
	Notice first of all that $\UCBaire\weireducible \parallelization{\PiBound}$. Indeed, given a tree $T\subset\baire$ with a unique path, we can consider the following sequence of $\Pi^{1,T}_1$ sets:
	\[A_n:= \{ k \in \mathbb{N} \st (\forall \sigma \in T)( (\exists x\in [T])(\sigma \pprefix x ) \rightarrow \sigma(n)\le k) \}.\]
	Clearly each $A_n$ is bounded by $x(n)$, where $x$ is the unique path through $T$. Given a solution $f\in \parallelization{\PiBound}(\sequence{A_n}{n\in\mathbb{N}})$, consider the space $X:=\{ \sigma\in \baire \st (\forall i<\length{\sigma})(\sigma(i)\le f(i)) \}$ and define $T_f:= T \cap X$. Notice that $[T_f]=[T]$. In particular, since $[X]$ is $f$-computably compact, we can uniformly (in $f$) compute the unique path through $[T_f]$ (see \cite[Thm.\ 7.23 and Cor.\ 7.26]{BGP17}). 

	If $\parallelization{\codedChoice{\boldfaceSigma^1_1}{}{\mathbb{N}}} \weireducible \codedDS{\boldfaceDelta^1_1}$ then, by \thref{thm:fop_Delta11-DS}, $\codedChoice{\boldfaceSigma^1_1}{}{\mathbb{N}} \weireducible\UCBaire \times \PiBound$ and therefore 
	\[\parallelization{\codedChoice{\boldfaceSigma^1_1}{}{\mathbb{N}}} \weireducible\parallelization{(\UCBaire \times \PiBound)} \weiequiv \parallelization{\PiBound}, \]
	contradicting $\parallelization{\codedChoice{\boldfaceSigma^1_1}{}{\mathbb{N}}} \not\weireducible  \parallelization{\PiBound}$ (\cite[Cor.\ 3.23]{KiharaADauriacChoice}).
\end{proof}

To separate $\codedDS{\boldfaceSigma^1_1}$ from $\CBaire$ we generalize a technique based on inseparable $\Pi^1_1$ sets, first used in \cite{KiharaADauriacChoice} to separate $\parallelization{\codedChoice{\boldfaceSigma^1_1}{}{\mathbb{N}}}$ from $\CBaire$. Consider the problem $\ATR_2\mfunction{\LO\times \Cantor}{\{0,1\}\times\Baire}$ defined in \cite[Def.\ 8.2]{gohatr}. It can be seen as a two-sided version of $\ATR$: it takes in input a pair $(L,A)$ and produces a pair $(i,Y)$ s.t.\ either $i=0$ and $Y$ is a $<_L$-infinite descending sequence or $i=1$ and $Y$ is a jump (pseudo)hierarchy starting from $A$. Jun Le Goh proved that $\UCBaire\strictlyweireducible \ATR_2 \strictlyweireducible \CBaire$ (\cite[Cor.\ 8.5 and 8.7]{gohatr}). 

Before proving the next theorem, we introduce the following notion of reducibility: for every $A,B \subset \Baire$, we say that $A$ is \textdef{Muchnik reducible} to $B$, and write $A\muchnikreducible B$ if, for every $b\in B$ there is a Turing functional $\Phi_e$ s.t.\ $\Phi_e(b)\in A$. Muchnik reducibility is the non-uniform version of Medvedev reducibility. For an extended presentation on these notions of reducibility see e.g.\ \cite{Simpson2008}.
\begin{theorem}
	\thlabel{thm:atr2_not<=_Sigma11DS_LO}
	$\ATR_2 \weiincomparable \codedDS{\boldfaceSigma^1_1}$, and therefore $\codedDS{\boldfaceSigma^1_1}\strictlyweireducible\CBaire$.
\end{theorem}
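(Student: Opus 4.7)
The plan is to establish the two non-reductions comprising the incomparability; the strict inequality $\codedDS{\boldfaceSigma^1_1} \strictlyweireducible \CBaire$ then follows, because $\codedDS{\boldfaceSigma^1_1} \weireducible \CBaire$ is already known and, if equality held, Goh's reduction $\ATR_2 \weireducible \CBaire$ would contradict $\ATR_2 \not\weireducible \codedDS{\boldfaceSigma^1_1}$.

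For $\ATR_2 \not\weireducible \codedDS{\boldfaceSigma^1_1}$, I would generalize the inseparable-$\boldfacePi^1_1$-sets technique of Angl\`es d'Auriac and Kihara used in \cite{KiharaADauriacChoice}. Fix two disjoint $\boldfacePi^1_1$ sets $A_0, A_1 \subseteq \mathbb{N}$ admitting no hyperarithmetic separator, presented via uniformly computable trees $T_n^i \subseteq \baire$ with $n \in A_i$ iff $T_n^i$ is well-founded. Assuming a reduction witnessed by computable maps $\Phi, \Psi$, I would build, uniformly in $n$, a computable $\ATR_2$-instance $(L_n, 0)$ whose solutions encode membership of $n$ in $A_0$ versus $A_1$: for instance, take $L_n$ to be a $\KB$-based linear order whose well-foundedness tracks $n \in A_0$, so that on one side the only $\ATR_2$-solutions take the form $(1, Y)$ (a jump hierarchy along $L_n$) while on the other side they include descending-sequence solutions $(0, Y)$. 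The reduction then turns $(L_n,0)$ into a $\boldfaceSigma^1_1$-name $\Phi(n)$ for an ill-founded linear order $M_n$, and $\Psi$ converts any $<_{M_n}$-descending sequence into an $\ATR_2$-solution whose first bit distinguishes the two cases.

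The main obstacle is to turn this $\boldfaceSigma^1_1$-level extraction of the bit into a \emph{hyperarithmetic} one, so as to yield a $\boldfaceDelta^1_1$ separator of $A_0$ and $A_1$ and contradict inseparability. By continuity of $\Psi$, the distinguishing bit is committed on a finite initial $<_{M_n}$-descending chain, so the predicate ``some extendible finite $<_{M_n}$-descending chain forces $\Psi$ to commit to first bit $0$'' is a priori only $\boldfaceSigma^1_1$ in $n$. The plan is to collapse this to $\boldfaceDelta^1_1$ via a boundedness lemma for $\boldfaceSigma^1_1$-presented linear orders: the rank of any well-founded initial segment of $M_n$ is bounded by a hyperarithmetic ordinal, so extendibility of a finite chain can be certified within a hyperarithmetic layer of $M_n$. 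Carrying out this boundedness argument while ensuring that the construction of $L_n$ forces $\Psi$ to reveal the intended bit on such a hyperarithmetically certifiable chain is the technical heart of the proof and the point of departure from the $\parallelization{\codedChoice{\boldfaceSigma^1_1}{}{\mathbb{N}}}$-version in \cite{KiharaADauriacChoice}, because one must now exploit the linear-order structure of $M_n$ rather than an uncorrelated sequence of $\boldfaceSigma^1_1$ sets.

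For $\codedDS{\boldfaceSigma^1_1} \not\weireducible \ATR_2$, I would appeal to the companion Angl\`es d'Auriac--Kihara result $\parallelization{\codedChoice{\boldfaceSigma^1_1}{}{\mathbb{N}}} \not\weireducible \ATR_2$, provable via a Muchnik-reducibility argument in the style of \cite[Cor.\ 3.23]{KiharaADauriacChoice}: $\ATR_2$ is ``Muchnik-tame'' enough that a parallel $\boldfaceSigma^1_1$-choice problem on $\mathbb{N}$ outruns it. Combined with the reduction $\parallelization{\codedChoice{\boldfaceSigma^1_1}{}{\mathbb{N}}} \weireducible \codedDS{\boldfaceSigma^1_1}$ from \thref{thm:par_Sigma11C_N<=Sigma11DS_LO}, this at once yields $\codedDS{\boldfaceSigma^1_1} \not\weireducible \ATR_2$.
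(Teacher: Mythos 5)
There is a genuine gap in your argument for the main direction $\ATR_2 \not\weireducible \codedDS{\boldfaceSigma^1_1}$. The paper's proof does not attempt to extract the ``committed bit'' hyperarithmetically; its key move is a \emph{splicing} argument that exploits linearity: if two infinite $<_{\Phi(L_e)}$-descending sequences $\sequence{q_n}{n}$ and $\sequence{p_n}{n}$ existed on which $\Psi$ commits to different halves of the $\ATR_2$-answer, one compares $q_n$ and $p_n$ in the linear order and forms the hybrid $r = \coding{p_0,\hdots,p_n,q_{n+1},\hdots}$, which is again a descending sequence; $\Psi(r)$ is then forced to produce (say) an $<_{L_e}$-descending sequence while being computable from a sequence that computes none. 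This disjointness is exactly what makes the two $\Sigma^1_1$ index sets $B=\{e : DS_e \not\muchnikreducible S_e\}$ and $C=\{e : JH_e \not\muchnikreducible S_e\}$ disjoint, so that $\boldfaceSigma^1_1$-separation applied to $B$ and $C$ yields a $\boldfaceDelta^1_1$ set separating $\mathbf{wf_{LO}}$ from $\mathbf{hds}$ --- a contradiction with the $\Sigma^1_1$-completeness of any $\Sigma^1_1$ separator. Your proposal never establishes (or mentions) any analogue of this disjointness, and without it the two $\boldfaceSigma^1_1$ predicates ``some extendible committed chain forces bit $0$/bit $1$'' need not be separable, so no contradiction is reached.

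Moreover, the substitute you propose --- a boundedness lemma certifying extendibility of finite $<_{M_n}$-descending chains ``within a hyperarithmetic layer'' --- cannot work as stated. The set of extendible elements of an ill-founded computable (let alone $\boldfaceSigma^1_1$-presented) linear order is in general properly $\Sigma^1_1$: for the Harrison order the non-well-founded part is not hyperarithmetic, so no hyperarithmetic certificate of extendibility exists. Indeed, if extendibility were uniformly hyperarithmetically decidable, one could push $\firstOrderPart{\DS}$ below $\UCBaire$, contradicting $\firstOrderPart{\DS}\weiequiv\PiBound$ and $\PiBound\not\weireducible\UCBaire$. The paper sidesteps this entirely by working with Muchnik reducibility between solution sets and separating index sets, never needing to decide extendibility. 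Finally, two smaller points: for $\codedDS{\boldfaceSigma^1_1}\not\weireducible\ATR_2$ the paper argues via $\CBaire\weiequiv\mflim\compproduct\codedDS{\boldfaceSigma^1_1}$ versus $\mflim\compproduct\ATR_2\strictlyweireducible\CBaire$, whereas you invoke $\parallelization{\codedChoice{\boldfaceSigma^1_1}{}{\mathbb{N}}}\not\weireducible\ATR_2$, which you assert rather than prove and whose source you would need to pin down; your derivation of $\codedDS{\boldfaceSigma^1_1}\strictlyweireducible\CBaire$ from the incomparability is correct.
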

\begin{proof}
	The fact that $\codedDS{\boldfaceSigma^1_1} \not\weireducible \ATR_2$ follows from the fact that $\CBaire \weiequiv \mflim \compproduct\codedDS{\boldfaceSigma^1_1}$ while $\mflim\compproduct\ATR_2 \strictlyweireducible \CBaire$ (\cite[Cor.\ 8.5]{gohatr}).

	Let us now prove that $\ATR_2 \not\weireducible \codedDS{\boldfaceSigma^1_1}$.
	Assume towards a contradiction that there is a reduction witnessed by the maps $\Phi,\Psi$. Let $\sequence{L_e}{e\in\mathbb{N}}$ be an enumeration of the computable linear orders. Define the sets 
	\begin{gather*}
		S_e := \codedDS{\boldfaceSigma^1_1}(\Phi(L_e)),\\
		DS_e := \{ \sequence{x_n}{n} \in \Baire\st \sequence{x_n}{n} \text{ is an }<_{L_e}\text{-descending sequence} \},\\
		JH_e := \{ \sequence{y_n}{n} \in \Baire\st \sequence{y_n}{n} \text{ is a jump hierarchy on } L_e \}.
	\end{gather*}

	Notice that, for each $e$, $S_e$ is $\Sigma^1_1$ (being a descending sequence through a $\Sigma^1_1$ linear order is a $\Sigma^1_1$ condition) while $DS_e$ and $JH_e$ are arithmetic.

	Define now the sets 
	\begin{gather*}
		B:=\{ e\in\mathbb{N} \st DS_e \not\muchnikreducible S_e \},\\
		C:=\{ e\in\mathbb{N} \st JH_e \not\muchnikreducible S_e \},
	\end{gather*}
	where $\muchnikreducible$ represents Muchnik reducibility. In particular, if $X$ is (hyper)arithmetic and $Y$ is $\Sigma^1_1$ then $X\not\muchnikreducible Y$ is a $\Sigma^1_1$ condition, and therefore $B,C\in \Sigma^1_1(\mathbb{N})$.

	We now claim that $B\cap C=\emptyset$. Indeed, assume by contradiction that this is not the case and let $e\in B\cap C$. By definition of $B$ and $C$ this means that there are two descending sequences $\sequence{q_n}{n\in\mathbb{N}}$ and $\sequence{p_n}{n\in\mathbb{N}}$ in $\Phi(L_e)$ s.t.\ $\sequence{q_n}{n\in\mathbb{N}}$ does not compute any $<
	_{L_e}$-descending sequence and $\sequence{p_n}{n\in\mathbb{N}}$ does not compute any jump hierarchy on $L_e$. 

	In particular, if we run the backward functional $\Psi$ on $\sequence{q_n}{n\in\mathbb{N}}$ and $\sequence{p_n}{n\in\mathbb{N}}$ then, by continuity, there is an $n$ s.t.\ $\Psi(\sequence{q_i}{i<n})$ commits to producing a jump hierarchy on $L_e$ and $\Psi(\sequence{p_i}{i<n})$ commits to producing an $<_{L_e}$-descending sequence. Without loss of generality, assume that $q_n \le_{\Phi(L_e)} p_n$ (in the opposite case we just swap the roles of $\sequence{q_n}{n\in\mathbb{N}}$ and $\sequence{p_n}{n\in\mathbb{N}}$) and consider the sequence 
	\[ r:= \coding{p_0,\hdots, p_n,q_{n+1}, q_{n+2}, \hdots }. \]
	Notice that $\Psi(r)$ must produce an $<_{L_e}$-descending sequence, contradicting the fact that $\sequence{q_n}{n\in\mathbb{N}}$ does not compute any $<_{L_e}$-descending sequence.

	Let $\mathbf{wf_{LO}}$ be the set of indexes for the computable well-orderings and let $\mathbf{hds}$ be the set of indexes for computable linear orderings with a hyperarithmetic descending sequence. Notice that $\mathbf{wf_{LO}}\subset B$, because for each $e$ in $\mathbf{wf_{LO}}$, $DS_e = \emptyset\not\muchnikreducible A$ for every non-empty set $A$. Likewise, $\mathbf{hds}\subset C$, as any ill-founded linear order which has a hyperarithmetic descending sequence cannot support a jump hierarchy (see\footnotemark{} \cite[Thm.\ 4]{Friedman1976}).

	Since $B,C$ are disjoint and $\Sigma^1_1$, by $\Sigma^1_1$-separation there must be a $\Delta^1_1$ set separating them. Such a set would separate $\mathbf{wf_{LO}}$ and $\mathbf{hds}$ as well. This contradicts the fact that every $\Sigma^1_1$ set which separates $\mathbf{wf_{LO}}$ and $\mathbf{hds}$ must be $\Sigma^1_1$-complete \cite{GohPi11}.
\end{proof}\footnotetext{Friedman's result assumes that the linear order is adequate. We do not need this assumption because we choose to define jump hierarchies in a way such that each column (whether limit or successor) uniformly computes earlier columns, such as in \cite[Def.\ 3.1]{gohatr}. This allows us to run Friedman's proof without assuming adequacy.}

Finally we turn our attention to $\codedBS{\boldfaceSigma^1_1}$ and $\codedDS{\boldfacePi^1_1}$. We show below that these problems are much stronger in uniform computational strength than the problems considered so far. Indeed all the $\codedDS{\boldfaceGamma}$ problems, where $\boldfaceGamma = \boldfaceSigma^1_1$ or below, are s.t.\ 
\[ \codedDS{\boldfaceGamma}\strictlyweireducible \CBaire \weiequiv \mflim\compproduct\codedDS{\boldfaceGamma}. \]
In other words, $\codedDS{\boldfaceGamma}$ is arithmetically Weihrauch equivalent to $\CBaire$, which is prominent among the problems that are considered to be ``$\mathrm{ATR}_0$ analogues in the Weihrauch lattice'' \cite{KMP20}.

On the other hand, a natural analogue of $\boldfacePi^1_1\mathrm{-CA}_0$ in the Weihrauch lattice is $\PiCA$, which can be phrased as ``given a sequence $\sequence{T_n}{n\in\mathbb{N}}$ of trees in $\baire$, produce $x\in \Cantor$ s.t., for every $n$, $x(n)=1$ iff $[T_n]=\emptyset$''. 

We can notice that, using \cite[Thm.\ 6.5]{MarconeNWT96}, $\PiCA$ is equivalent to the problem of finding the leftmost path through an ill-founded tree. Using this fact we show that $\codedBS{\boldfaceSigma^1_1}$ and $\codedDS{\boldfacePi^1_1}$ are in the realm of $\boldfacePi^1_1\mathrm{-CA}_0$.
\begin{theorem}
	\thlabel{thm:Pi11ca<Sigma11DS_LQO}
	$\PiCA\weireducible\codedBS{\boldfaceSigma^1_1}$.
\end{theorem}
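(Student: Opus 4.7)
The plan is to exploit the recalled equivalence (from \cite{MarconeNWT96}) between $\PiCA$ and the problem of computing the leftmost branch of a given ill-founded tree $T\subseteq\baire$. I will construct, uniformly in $T$, a $\boldfaceSigma^1_1$-presented quasi-order $P$ on a computable copy of the nodes of $T$ (possibly together with auxiliary markers for each level of $T$), which is not a wqo, and from which any bad sequence uniformly computes the leftmost branch of $T$. Since $T$ is fixed, only one $\codedBS{\boldfaceSigma^1_1}$-call will be needed.

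The key $\boldfaceSigma^1_1$ primitive is extendibility of a node in $T$, which is uniformly $\boldfaceSigma^1_1$ in the $\boldfaceDelta^0_1$-name of $T$. My plan is to define $\sigma\preceq_P\tau$ as a disjunction that includes ``$\sigma=\tau$'' together with a clause of the form ``$\tau$ is extendible in $T$ and $\sigma$ is lexicographically no smaller than the leftmost-extendible extension of $\tau$'s parent at the relevant level''. The placement of the extendibility hypothesis on the right-hand element is crucial to keep $\preceq_P$ inside $\boldfaceSigma^1_1$, since the dual statement ``some node is not leftmost-extendible'' is $\boldfacePi^1_1$. I would then verify: (i) the sequence of prefixes of the leftmost branch of $T$ is bad in $P$, so $P$ is not a wqo whenever $T$ is ill-founded (adding, if necessary, a computable antichain of auxiliary markers so that this holds for every ill-founded $T$); (ii) $\preceq_P$ is transitive, by a case analysis exploiting that the extendibility clause can be chained consistently and that being ``to the right of'' at comparable lengths is transitive; and (iii) for any bad sequence $\langle\sigma_i\rangle$ through $P$, the elements $\sigma_i$ of sufficiently large length are forced to have initial segments agreeing with the leftmost branch of $T$.

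The main obstacle is (iii): an arbitrary bad sequence could in principle exploit incomparable pairs to drift away from the leftmost branch. My plan to close off this possibility is to choose the clauses of $\preceq_P$ so that any deviation from the leftmost branch at some level makes the deviating node $\preceq_P$-comparable, via the $\boldfaceSigma^1_1$ extendibility clause, to the correct leftmost-branch extension at a later level, thereby ejecting it from the bad sequence. This is the device on which the whole proof hinges, and it parallels the asymmetric use of $\boldfaceSigma^1_1$-clauses in the construction of \thref{thm:par_Sigma11C_N<=Sigma11DS_LO}; should the direct argument not suffice, my fallback is to tag each node with its level in $T$ and to attach an explicit $\omega^*$-chain of level-markers in the spirit of the level-by-level constructions of \thref{thm:codedBS=jump_of_BS}, so that bad sequences are forced to visit every level. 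Once $P$ is in place, the backward map $\Psi$ reads off the leftmost branch $x$ from $\langle\sigma_i\rangle$ by taking, for each $n$, the length-$n$ prefix of the first $\sigma_i$ of length $\geq n$; together with the cited equivalence, and since everything is uniform in $T$, this yields $\PiCA\weireducible\codedBS{\boldfaceSigma^1_1}$.
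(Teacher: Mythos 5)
Your overall strategy coincides with the paper's: reduce $\PiCA$ to finding the leftmost path of an ill-founded tree $T$ (via the cited equivalence), and build a $\boldfaceSigma^1_1$ quasi-order out of the $\boldfaceSigma^1_1$ extendibility predicate so that bad sequences are forced onto the leftmost path. But the step you yourself flag as the main obstacle, point (iii), is a genuine gap, and neither of your devices for closing it works. Making a node $\sigma$ that deviates from the leftmost path $\preceq_P$-comparable to ``the correct leftmost-branch extension at a later level'' only ejects $\sigma$ from a bad sequence if that particular extension actually occurs later in the sequence; a bad sequence is free to avoid the leftmost branch altogether and consist of pairwise incomparable deviating nodes, and nothing in your clauses rules this out. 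Worse, your fallback of adjoining a computable infinite antichain of auxiliary markers (to guarantee non-wqo-ness) is self-defeating: that antichain is itself a bad sequence, so $\codedBS{\boldfaceSigma^1_1}$ may return it and you learn nothing about $T$. (Non-wqo-ness needs no patch anyway: the prefixes of the leftmost path already form an infinite descending chain.)

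The paper's fix is to make the ``ejection'' clause independent of $\tau$: restrict the domain to the extendible nodes $Q := \{\sigma \in T \st [T_\sigma] \neq \emptyset\}$ (this restriction matters, since non-extendible nodes lying lexicographically left of the leftmost path could otherwise form uninformative antichains) and set $\sigma \preceq_Q \tau$ iff $(\exists \rho \in Q)(\rho <_{lex} \sigma)$ or $\tau \prefix \sigma$. The first disjunct is $\boldfaceSigma^1_1$, holds exactly when $\sigma$ is extendible but not a prefix of the leftmost path, and puts all such $\sigma$ into a single bottom equivalence class, below \emph{every} element of $Q$. Hence no such $\sigma$ can occur anywhere in a bad sequence (it would be $\preceq_Q$ every later term), the remaining elements are prefixes of the leftmost path ordered by reverse extension (so the classes form $1+\omega^*$), and badness forces strictly increasing lengths, whence the leftmost path is computed. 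Transitivity is immediate precisely because the first disjunct mentions only $\sigma$. Replacing your $\tau$-dependent clause by this one and dropping the auxiliary markers turns your outline into the paper's proof.
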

\begin{proof}
	Let $T\subset\baire$ be an ill-founded tree. For each $\sigma \in T$, let $T_\sigma:=\{\tau \in T \st \tau\prefix \sigma \lor \sigma\prefix \tau \}$. We define a quasi-order on the extendible strings in $T$:
	\begin{gather*}
		Q:=\{ \sigma \in T\st [T_\sigma]\neq \emptyset\},\\
		\sigma\preceq_Q \tau \defiff (\exists \rho \in Q)(\rho <_{lex} \sigma) \lor \tau \prefix \sigma.
	\end{gather*}

	It is easy to see that $(Q,\preceq_Q)$ is $\Sigma^{1,T}_1$. Moreover, all the $\sigma$ which are not prefixes of the leftmost path collapse in a bottom equivalence class. This shows that the equivalence classes of $Q$ are linearly ordered as $1+\omega^*$. To conclude the proof it is enough to notice that any $<_Q$-descending sequence gives longer and longer prefixes of the leftmost path, hence it computes $\PiCA$.
\end{proof}

\begin{corollary}
    $\codedDS{\boldfaceSigma^1_1} \strictlyweireducible \codedBS{\boldfaceSigma^1_1}$.
    \thlabel{thm:Pi11DS<Pi11BS}
\end{corollary}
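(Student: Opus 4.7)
The forward reduction $\codedDS{\boldfaceSigma^1_1} \weireducible \codedBS{\boldfaceSigma^1_1}$ is immediate: every linear order is a quasi-order, and any $<_L$-descending sequence is automatically a bad sequence when $\le_L$ is viewed as a quasi-order. This was already noted in the preliminary observations of Section \ref{sec:presentations}.

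The substantive content is the strictness $\codedBS{\boldfaceSigma^1_1} \not\weireducible \codedDS{\boldfaceSigma^1_1}$. My plan is to argue by contradiction and chain together two inequalities already established. By \thref{thm:Pi11ca<Sigma11DS_LQO} we have $\PiCA \weireducible \codedBS{\boldfaceSigma^1_1}$, and by \thref{thm:atr2_not<=_Sigma11DS_LO} we have $\codedDS{\boldfaceSigma^1_1} \weireducible \CBaire$. A hypothetical reduction $\codedBS{\boldfaceSigma^1_1} \weireducible \codedDS{\boldfaceSigma^1_1}$ would compose with these to yield $\PiCA \weireducible \CBaire$, so it is enough to refute this last reduction.

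The main step---and the expected obstacle---is establishing $\PiCA \not\weireducible \CBaire$. For this I would use a hyperarithmetic-solution argument: by Kreisel's basis theorem, every nonempty closed subset of $\Baire$ with a computable name contains a hyperarithmetic point, so any problem Weihrauch reducible to $\CBaire$ must produce hyperarithmetic solutions on computable inputs. On the other hand, one can exhibit a computable sequence $\sequence{T_n}{n\in\mathbb{N}}$ of subtrees of $\baire$ such that $\{n : [T_n] = \emptyset\}$ is $\Pi^1_1$-complete; then $\PiCA$ applied to this instance returns the characteristic function of a $\Pi^1_1$-complete set, which is certainly not hyperarithmetic, supplying the desired contradiction. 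This hyperarithmetic-solutions property of $\CBaire$ is not stated explicitly in the paper---only the corresponding fact for $\UCBaire$ is recorded (\thref{thm:ucbaire_hyperarithmetic_solution})---so one either appeals to Kreisel's basis theorem externally or adapts the proof of \thref{thm:ucbaire_hyperarithmetic_solution} to the $\boldfaceSigma^1_1$-choice setting using the equivalence $\CBaire \weiequiv \codedChoice{\boldfaceSigma^1_1}{}{\Baire}$ recorded in Section \ref{sec:background}.
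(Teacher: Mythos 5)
Your overall skeleton coincides with the paper's proof, which is precisely the chain $\codedDS{\boldfaceSigma^1_1} \weireducible \CBaire \strictlyweireducible \PiCA \weireducible \codedBS{\boldfaceSigma^1_1}$, using \thref{thm:Pi11ca<Sigma11DS_LQO} for the last link. The problem is your justification of the middle link, $\PiCA \not\weireducible \CBaire$. The basis fact you invoke is false: it is \emph{not} the case that every nonempty closed subset of $\Baire$ with a computable name contains a hyperarithmetic point. The standard counterexample is exactly the one this paper relies on elsewhere: the tree of finite descending sequences through a computable ill-founded linear order with no hyperarithmetic descending sequence (Harrison/Kleene) is a computable ill-founded subtree of $\baire$ with no hyperarithmetic path. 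Indeed, if $\CBaire$ had the hyperarithmetic-solutions property you describe, then $\DS \weireducible \CBaire$ would force $\DS$ to have hyperarithmetic solutions on computable inputs, contradicting \thref{thm:bs_not_<=_ucbaire}. Kreisel's basis theorem does not give hyperarithmetic members of $\Sigma^1_1$ (or even lightface closed) subsets of $\Baire$; hyperarithmetic points form a basis only in the unique-path setting of $\UCBaire$ (\thref{thm:ucbaire_hyperarithmetic_solution}), which is why that theorem is stated for $\UCBaire$ and not for $\CBaire$.

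The argument can be repaired with a tool already in the paper: $\PiCA$ is single-valued, so if $\PiCA \weireducible \CBaire$ then $\PiCA \weireducible \DetPart{\CBaire} \weiequiv \UCBaire$ by \thref{thm:detpart_cbaire}. Now your intended contradiction goes through, applying \thref{thm:ucbaire_hyperarithmetic_solution} to a computable sequence of trees $\sequence{T_n}{n\in\mathbb{N}}$ for which $\{n \st [T_n]=\emptyset\}$ is $\Pi^1_1$-complete, hence not hyperarithmetic. (The paper itself simply cites $\CBaire \strictlyweireducible \PiCA$ as known and does not spell this out.) The forward reduction and the reduction $\codedDS{\boldfaceSigma^1_1}\weireducible\CBaire$ in your write-up are fine.
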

\begin{proof}
    We have $\codedDS{\boldfaceSigma^1_1} \weireducible \CBaire \strictlyweireducible \PiCA \weireducible \codedBS{\boldfaceSigma^1_1}$.
\end{proof}

\begin{theorem}
	\thlabel{thm:Pi11CA<=Pi11DS}
	$\PiCA\weireducible \codedDS{\boldfacePi^1_1}$.
\end{theorem}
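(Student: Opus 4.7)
I plan to reduce the Weihrauch-equivalent problem of computing the leftmost path of an ill-founded tree $T\subseteq\baire$ to $\codedDS{\boldfacePi^1_1}$. Given such $T$ with leftmost path $x$, consider
\[
T' := \{\sigma \in T : \text{every } \tau \in \baire \text{ of length } |\sigma| \text{ with } \tau<_{lex}\sigma \text{ has } T_\tau \text{ well-founded}\},
\]
which is $\Pi^1_1$-in-$T$ (a countable conjunction of $\Pi^1_1$ conditions), closed under prefixes, and satisfies $[T']=\{x\}$; the extendible nodes of $T'$ are exactly the prefixes of $x$, while the non-extendible nodes partition into well-founded ``hanging subtrees'' branching off of the spine $\sequence{x[n]}{n\in\mathbb{N}}$.

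I then equip $T'$ with a linear order $L$ whose order type places the spine as an $\omega^*$-descending chain $x[0]>_L x[1]>_L\cdots$ and inserts each hanging subtree between its corresponding consecutive spine elements, with internal ordering given by the Kleene-Brouwer order of the subtree. Using well-foundedness witnesses, the order relation on $L$ can be made $\Pi^1_1$-in-$T$, yielding a $\Pi^1_1$-in-$T$ ill-founded linear order (the spine supplies the ill-foundedness).

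Given any descending sequence $\sequence{\sigma_i}{i\in\mathbb{N}}$ in $L$, the lengths $|\sigma_i|$ tend to infinity (since within-level and within-block descents are well-founded), so the sequence visits infinitely many spine elements $x[k_0], x[k_1], \ldots$ with $k_0<k_1<\cdots$, and $x$ is then recovered as $\bigcup_i x[k_i]$. The main obstacle is identifying spine elements along the descending sequence: distinguishing spine from hanging-subtree elements naively requires $\Sigma^1_1$ information. This is resolved by annotating each element of $L$ with canonical $\Pi^1_1$ well-foundedness certificates of its enclosing hanging subtree (with a null certificate for spine elements), so that non-spine elements are effectively recognizable along the descending sequence, enabling the spine to be read off computably without invoking $\mflim$.
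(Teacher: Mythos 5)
Your construction of the $\Pi^{1,T}_1$ tree $T'$ (the part of $T$ lying lexicographically on or to the left of the leftmost path $x$) is sound and coincides with the first step of the paper's proof. The gap is in the ordering and the recovery step. By interleaving each hanging subtree $W_n$ between the spine elements $x[n]$ and $x[n+1]$, you place every element of every $W_n$ into the \emph{ill-founded} part of $L$ (each such element has the entire tail of the spine below it), so an infinite $<_L$-descending sequence may consist entirely of non-spine elements, e.g.\ $w_0>_L w_1>_L\cdots$ with $w_i\in W_{n_i}$ and $n_i\to\infty$; it need not contain any spine element. Such a $w_i$ extends $x[n_i]$ but has $w_i(n_i)\neq x(n_i)$, so extracting the correct prefix $x[n_i]$ requires knowing $n_i$. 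Your proposed fix --- tagging elements with ``canonical $\Pi^1_1$ well-foundedness certificates'' so that non-spine elements become computably recognizable --- does not go through: the elements of a linear order in $\boldfacePi^1_1(\mathbf{LO})$ are natural numbers, whereas a well-foundedness certificate for an infinite tree (a rank function) is an infinite object and cannot be part of an element; and if one instead attaches a finite tag and pushes its correctness into the $\Pi^1_1$ membership predicate, one needs ``$\sigma$ is a prefix of the leftmost path'' (equivalently, ``the null tag on $\sigma$ is correct'') to be a $\Pi^1_1$ condition, but it has the $\Sigma^1_1$ conjunct $[T_\sigma]\neq\emptyset$ and is not $\Pi^1_1$. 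Hence incorrectly tagged elements cannot be excluded from $L$, can occur in descending sequences, and yield wrong bits. (Recovering $x$ as a pointwise limit of the descending sequence does work, but it only shows $\PiCA\weireducible\mflim\compproduct\codedDS{\boldfacePi^1_1}$, which, as you correctly note, is not the claim.)

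The difficulty is self-inflicted and disappears if you order $T'$ (essentially the paper's $L$) by the Kleene--Brouwer order. Every hanging element $\sigma$ satisfies $x[n]\prefix\sigma$ and $\sigma(n)<x(n)$ for its level $n$, hence is $\KB$-below \emph{every} prefix of $x$: below $x[m]$ for $m\le n$ because $x[m]\prefix\sigma$, and below $x[m]$ for $m>n$ because $\sigma<_{lex}x[m]$. So all hanging subtrees sit below the entire spine and therefore in the well-founded part of $L$ (the unique path of the tree $L$ being $x$). Consequently every element of every infinite $<_{\KB}$-descending sequence is itself a prefix of $x$, longer prefixes being $\KB$-smaller, and $x$ is read off by taking unions --- no identification of spine elements is needed. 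This is exactly the paper's proof.
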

\begin{proof}
	Let $T\subset \baire$ be an ill-founded tree. For each $\sigma \in T$, let $T_\sigma:=\{\tau \in T \st \tau\prefix \sigma \lor \sigma\prefix \tau \}$. We define a linear order
	\begin{gather*}
		L:= \{ \sigma\in T \st (\forall \tau \le_{lex} \sigma)([T_{\tau}]=\emptyset \lor \tau \prefix \sigma) \},\\
		\le_L := \le_{\KB(T)}.
	\end{gather*}
	Clearly $(L,\le_L)$ is a $\Pi^{1,T}_1$ linear order. Notice that if $\sigma\in L$ and $[T_\sigma]\neq\emptyset$ then $\sigma$ must be a prefix of the leftmost path. Moreover if $\rho$ is strictly lexicographically above the leftmost path then $\rho \notin L$. In other words, $L$ is the subset of $T$ that is lexicographically below the leftmost path.

	Moreover, every string that is not a prefix of the leftmost path lies in the well-founded part of $L$ (by definition of $\KB$). In particular every $<_L$-descending sequence computes arbitrarily long prefixes of the leftmost path.
\end{proof}

\section{Conclusions}
In this paper we explored the uniform computational content of the problem $\DS$, and showed how it lies ``on the side" w.r.t.\ the part of the Weihrauch lattice explored so far. We now draw the attention to some of the questions that did not receive an answer.

The problem $\KL$ is the multi-valued function corresponding to K\"onig's lemma, and it can be phrased as ``find a path through an infinite finitely-branching tree". It is known that $\KL \weiequiv \CCantor' \weiequiv \parallelization{\RT{1}{2}}$. 
\begin{question}
    $\KL \weireducible \DS$?
\end{question}
We know that, if such a reduction exists, it must be strict (as $\KL$ is an arithmetic problem). On the other hand, none of the characterizations we used in  Section~\ref{sec:ds} to describe the lower cone of $\DS$ can be used to prove a separation.

In Section~\ref{sec:detpar_literature} we introduced the problem $\wList_{\Cantor,\leq\omega}$. Similarly to $\DS$, this problem does not fit well within the effective Baire hierarchy: $\DetPart{\wList_{\Cantor,\leq\omega}}\weiequiv \mflim$, but $\wList_{\Cantor,\leq\omega}^{[3]} \weiequiv \UCBaire$ (\cite[Prop.\ 6.14 and Cor.\ 6.16]{KMP20}), hence in particular $\wList_{\Cantor,\leq\omega}$ is not arithmetic.
\begin{question}
    $\wList_{\Cantor,\leq\omega} \weireducible \DS$?
\end{question}
Our results imply that $\DS \not\weireducible \wList_{\Cantor,\leq\omega}$ (as $\DS\compproduct\DS \weiequiv \CBaire$), and hence a reduction would be strict.

In the context of $\codedDS{\boldfaceGamma}$, there are a few problems that resisted full characterization. In particular:

\begin{question}
    $\codedDS{\boldfaceDelta^0_2} \weireducible \codedBS{\boldfaceSigma^0_1}$?
\end{question}
We expect that an answer to this question will yield a solution for every $k$ (by relativization).

We notice that, in the statements involving $\codedBS{\boldfaceGamma}$ we proved slightly more than what claimed: indeed, in all the reductions, the quasi-order built is a \textdef{linear quasi-order}, i.e.\ a quasi-order whose equivalence classes are linearly ordered. Notice that every bad sequence through a non-well linear quasi-order is actually a descending sequence. If we introduce the problem $\codedDS{\boldfaceGamma}_{LQO}$ by restricting $\codedBS{\boldfaceGamma}$ to linear quasi-orders, our results imply that 
\[ \codedDS{\boldfaceDelta^0_k}\strictlyweireducible \codedDS{\boldfaceSigma^0_{k+1}}_{LQO} \weireducible \codedBS{\boldfaceSigma^0_{k+1}}. \]
A natural question is therefore
\begin{question}
    $\codedBS{\boldfaceSigma^0_{k+1}} \weireducible \codedDS{\boldfaceSigma^0_{k+1}}_{LQO}$?
\end{question}
A negative answer would imply that the possibility of having infinite antichains provides extra uniform strength.

A very important structure that is left out of the picture is the one of partial orders. In the same spirit of the paper we can consider the problems $\codedDS{\boldfaceGamma}_{PO}$ and $\codedBS{\boldfaceGamma}_{PO}$. The former is readily seen to be equivalent to $\CBaire$ (see also the comment before \thref{def:bs}). 
\begin{question}
    What is the relation between $\codedBS{\boldfaceSigma^0_1}_{PO}$ and the problems $\DS\weiequiv \codedDS{\boldfaceSigma^0_1}$, $\codedDS{\boldfaceSigma^0_1}_{LQO}$ and $\codedBS{\boldfaceSigma^0_1}$?
\end{question}
Answering these questions would yield very interesting insights on how the possibility to have equivalent non-equal elements can enhance the uniform computational strength.

\bibliographystyle{amsplain}
\bibliography{bibliography}

\end{document}